  \numberwithin{equation}{section}
\newcommand{\defeq}{=_{\operatorname{def}}}
  \newcommand{\from}{\colon}
  \newcommand{\fto}{\twoheadrightarrow}
  \newcommand{\ito}{\hookrightarrow}
  \newcommand{\sto}{\rightarrowtriangle}
  \newcommand{\weto}{\mathrel{\ensurestackMath{\stackon[-2pt]{\xrightarrow{\makebox[.8em]{}}}{\mathsmaller{\mathsmaller\weq}}}}}
  \newcommand{\cto}{\rightarrowtail}
  \renewcommand{\iff}{if and only if}
  \newcommand{\st}{such that}
  \newcommand{\tfae}{the following are equivalent}
  \newcommand{\wrt}{with respect to}
  \newcommand{\lp}{lifting property}
  \newcommand{\llp}{left \lp}
  \newcommand{\rlp}{right \lp}
  \newcommand{\ollp}{ordinary \llp}
  \newcommand{\elp}{enriched \lp}
  \newcommand{\ellp}{enriched \llp}
  \newcommand{\erlp}{enriched \rlp}
  \newcommand{\smo}{small object argument}
  \newcommand{\wfs}{weak factorisation system}
  \newcommand{\ewfs}{enriched \wfs}
  \newcommand{\di}{complemented inclusion}
  \newcommand{\ldi}{levelwise \di}
  \newcommand{\cartesian}{Cartesian\xspace} % when grammar does not demand capitalization
  \newcommand{\cat}[1]{\mathcal{#1}}
  \newcommand{\cats}[1]{\mathsf{s}\cat{#1}}
\newcommand{\splus}{\text{\tikz[baseline=(s.base)]{
        \node[outer sep=0pt,inner sep=0pt] (s) {$\mathsf{s}$};
        \path[use as bounding box] (s.north west) rectangle ($(s.south east)+(\f@size * 0.0125,0)$); % the +(0.0125,0) control the spacing after s in cm * the current fontsize. 0 = no spacing as if there was no subscript.
        \node[scale=0.5,xshift=(0.4pt * \f@size),yshift=(-0.2pt * \f@size)] at (s.south east) (p) {$\boldsymbol{+}$}; % Scale = size of the +, xhift,yshift = change the position of the +. \blodsymbol compansate a bit the effect of scalling on the thickness of the lines of +.
    }}}
   \newcommand{\catss}[1]{{\splus\cat{#1}}}
  \newcommand{\ncat}[1]{\mathsf{#1}}
  \newcommand{\Cat}{\ncat{Cat}}
  \newcommand{\Set}{\ncat{Set}}
  \newcommand{\sSet}{\ncat{sSet}}
   \newcommand{\ssSet}{{\splus\ncat{Set}}}
  \newcommand{\Simp}{\Delta}
  \newcommand{\sSimp}{\Delta_+}
  \newcommand{\cof}{\mathsf{cof}}
  \newcommand{\fib}{\mathsf{fib}}
  \newcommand{\Fam}{\operatorname{\ncat{Fam}}}
  \newcommand{\sFam}{\operatorname{\ncat{sFam}}}
  \newcommand{\ssFam}{\operatorname{{\splus\ncat{Fam}}}}
  \newcommand{\con}{\mathsf{con}}
  \newcommand{\psh}{\operatorname{\ncat{Psh}}}
  \newcommand{\spsh}{\operatorname{\ncat{sPsh}}}
  \newcommand{\yon}{\mathrm{y}}
  \newcommand{\End}{\operatorname{\ncat{End}}}
  \newcommand{\Ho}{\operatorname{Ho}}
   \newcommand{\Hoi}{\operatorname{Ho_\infty}}
  \renewcommand{\emptyset}{{\mathord\varnothing}}
  \newcommand{\set}[2]{\left\{#1\mathrel{}\middle|\mathrel{}#2\right\}}
  \newcommand{\union}{\cup}
  \newcommand{\fset}[1]{\underline{#1}}
  \newcommand{\weq}{\mathrel\sim}
  \newcommand{\iso}{\mathrel{\cong}}
  \newcommand{\adj}{\dashv}
  \newcommand{\colim}{\operatorname*{colim}}
  \let\bigcoprod\coprod
  \renewcommand{\coprod}{\sqcup}
  \newcommand{\fibprod}{\operatorname*{\times}}
  \newcommand{\myend}{\textstyle\int}
  \newcommand{\coend}{\textstyle\int}
  \newcommand{\cotensor}{\mathop\pitchfork}
  \newcommand{\pull}{\times}
  \newcommand{\push}{\sqcup}
  \newcommand{\op}{{\mathord\mathrm{op}}}
  \newcommand{\slice}{\mathbin\downarrow}
  \newcommand{\fslice}{\mathbin\twoheaddownarrow}
  \newcommand{\Hom}{\operatorname{Hom}}
  \newcommand{\Prob}{\operatorname{Prob}}
  \newcommand{\pprod}{\mathbin{\hat{\times}}}
  \newcommand{\ev}{\operatorname{ev}}
\newcommand{\app}{\operatorname{app}}
  \newcommand{\bd}{\partial}
  \newcommand{\simp}[1]{\mathord\Delta[#1]}
  \newcommand{\bdsimp}[1]{\mathord{\bd\simp{#1}}}
  \def\horn#1{\expandafter\horn@i#1,,\@nil}
  \def\horn@i#1,#2,#3\@nil{\mathord\Lambda^{#2}[#1]}
  \newcommand{\ssimp}[1]{\mathord\Delta_+[#1]}
  \newcommand{\bdssimp}[1]{\mathord{\bd\ssimp{#1}}}
  \def\shorn#1{\expandafter\shorn@i#1,,\@nil}
  \def\shorn@i#1,#2,#3\@nil{\mathord\Lambda_+^{#2}[#1]}
  \newcommand{\Ex}{\operatorname{Ex}}
  \newcommand{\Sk}{\operatorname{Sk}}
  \newcommand{\gprod}{\boxtimes}
  \newcommand{\fml}[1]{\mathscr{#1}}
  \newcommand{\id}{\operatorname{id}}
  \newcommand{\uvar}{\mathord{\relbar}}
  \newcommand{\Uvar}{\mathord{\Relbar}}
  \renewcommand{\epsilon}{\varepsilon}
  \renewcommand{\phi}{\varphi}
  \renewcommand{\hat}{\widehat}
  \newcommand{\hatop}[1]{\mathbin{\hat{#1}}}
  \newcommand{\eqdef}{\mathbin{=_{\mathrm{def}}}}
\newcommand{\tensorSetE}{\mathbin{\cdot}}
\newcommand{\tensorsSetsE}{\mathbin{\cdot}}
\newcommand{\tensorEsE}{\times}
\newcommand{\cotensorsSetsE}{\mathbin{\pitchfork}}
\newcommand{\cotensorsSetsEslice}{\mathbin{\pitchfork}}
\newcommand{\pbcotensorsSetsE}{\mathbin{\hat{\cotensorsSetsE}}}
\newcommand{\corepresentable}[2][]{\Delta^\op[{#2}]_{#1}}
\newcommand{\corepresentabledeg}[2][]{\Delta_-^\op[{#2}]_{#1}}
\newcommand{\coboundary}[2][]{\partial \Delta^\op[{#2}]_{#1}}
\newcommand{\coboundarydeg}[2][]{\partial \Delta_-^\op[{#2}]_{#1}}
\declaretheorem[style=plain,within=section]{corollary}
\declaretheorem[style=plain,numberlike=corollary]{lemma}
\declaretheorem[style=plain,numberlike=corollary]{proposition}
\declaretheorem[style=plain,numberlike=corollary]{theorem}
\declaretheorem[style=definition,numberlike=corollary]{remark}
\declaretheorem[style=plain,numberlike=corollary]{conjecture}
\declaretheorem[style=plain,name=Corollary,within=section]{corollary-s}
\declaretheorem[style=plain,name=Lemma,numberlike=corollary-s]{lemma-s}
\declaretheorem[style=plain,name=Proposition,numberlike=corollary-s]{proposition-s}
\declaretheorem[style=definition,name=Remark,numberlike=corollary-s]{remark-s}
\Crefname{corollary}{Corollary}{Corollaries}
\Crefname{lemma}{Lemma}{Lemmas}
\Crefname{proposition}{Proposition}{Propositions}
\Crefname{theorem}{Theorem}{Theorems}
\Crefname{conjecture}{Conjecture}{Conjectures}
\Crefname{remark}{Remark}{Remarks}
\Crefname{corollary-s}{Corollary}{Corollaries}
\Crefname{lemma-s}{Lemma}{Lemmas}
\Crefname{proposition-s}{Proposition}{Propositions}
\Crefname{remark-s}{Remark}{Remarks}
  \newenvironment{tikzeq}[1]
  {
    \begingroup
    \begin{equation}\label{#1}
    \begin{tikzpicture}[baseline=(current bounding box.center)]
  }
  {
    \end{tikzpicture}
    \end{equation}
    \endgroup
    \ignorespacesafterend
  }
  \newenvironment{tikzeq*}
  {
    \begingroup
    \begin{equation*}
    \begin{tikzpicture}[baseline=(current bounding box.center)]
  }
  {
    \end{tikzpicture}
    \end{equation*}
    \endgroup
    \ignorespacesafterend
  }
  \tikzset
  {
    diagram/.style=
    {
      matrix of math nodes,
      column sep={4.3em,between origins},
      row sep={4em,between origins},
      text height=1.5ex,
      text depth=.25ex
    },
    %a line crossing over the other one
    over/.style={preaction={draw=white,-,line width=6pt}},
    %font size of arrow labels
    every to/.style={font=\footnotesize},
    %%arrow styles
    %injective map/monomorphism
    inj/.style={right hook->},
    %surjective map/epimorphism
    surj/.style={-{Latex[open]}},
    %cofibration
    cof/.style={>->},
    %fibration
    fib/.style={->>},
    % strike through
    strike/.style={decoration={markings,mark=at position 0.5 with {\arrow{|}}},postaction={decorate}},
    % anodyne extension
    ano/.style={cof,strike},
    % trivial fibration
    tfib/.style={fib,strike},
     % trivial fibration
    tcof/.style={cof,strike},
  }
  \newcommand{\pb}[2]{\node at ($(#1)!0.25!(#2)$) {\tikz{\draw (3mm,0)--++(-90:3mm)--++(180:3mm)}}}
  \newcommand{\pbs}[2]{\node at ($(#1)!0.3!(#2)$) {\tikz{\draw (3mm,0)--++(-135:3mm)--++(180:3mm)}}}
  \newcommand{\pbdr}[2]{\node at ($(#1)!0.25!(#2)$) {\tikz{\draw (3mm,180)--++(90:3mm)--++(0:3mm)}}}
  \newcommand{\pbur}[2]{\node at ($(#1)!0.25!(#2)$) {\tikz{\draw (3mm,-90)--++(180:3mm)--++(90:3mm)}}}
  \newcommand{\MSC}[1]{%
  \let\thempfn\relax
  \footnotetext[0]{2020 Mathematics Subject Classification: #1.}
}
  \Crefname{subsection}{Subsection}{Subsections}
  \setlist[enumerate]{label=(\arabic*),itemsep=0ex}
  \Crefname{enumi}{Part}{Parts}
  \crefname{enumi}{part}{parts}
  \newlist{parts}{enumerate}{1}
  \setlist[parts]{label=\textup{(\roman*)},ref=\textup{(\roman*)}}
  \Crefname{partsi}{Part}{Parts}
  \crefname{partsi}{part}{parts}
  \newlist{conditions}{enumerate}{1}
  \setlist[conditions]{label=\textup{(\roman*)},ref=\textup{(\roman*)}}
  \Crefname{conditionsi}{Condition}{Conditions}
  \crefname{conditionsi}{condition}{Conditions}
  \newcommand{\axm}[1]{(#1\arabic*)}
  \newlist{axioms}{enumerate}{1}
  \Crefname{axiomsi}{Axiom}{Axioms}
  \crefname{axiomsi}{axiom}{axioms}
  \newlist{fibcat-axioms}{enumerate}{1}
  \setlist[fibcat-axioms]{label=\axm{F},ref=\axm{F},resume}
  \Crefname{fibcat-axiomsi}{Axiom}{Axioms}
  \crefname{fibcat-axiomsi}{axiom}{axioms}
  \newcommand{\customlabel}[2]{#2\def\@currentlabel{#2}\label{#1}}
  \DeclareFontFamily{U}{mathx}{\hyphenchar\font45}
  \DeclareFontShape{U}{mathx}{m}{n}{
    <5> <6> <7> <8> <9> <10>
    <10.95> <12> <14.4> <17.28> <20.74> <24.88>
    mathx10}{}
  \DeclareSymbolFont{mathx}{U}{mathx}{m}{n}
  \DeclareMathAccent{\widebar}{0}{mathx}{"73}
  \DeclareFontFamily{U}{MnSymbolA}{}
  \DeclareFontShape{U}{MnSymbolA}{m}{n}{
    <-6> MnSymbolA5
    <6-7> MnSymbolA6
    <7-8> MnSymbolA7
    <8-9> MnSymbolA8
    <9-10> MnSymbolA9
    <10-12> MnSymbolA10
    <12-> MnSymbolA12}{}
  \DeclareSymbolFont{MnSyA}{U}{MnSymbolA}{m}{n}
  \DeclareMathSymbol{\twoheaddownarrow}{\mathrel}{MnSyA}{27}
\theoremstyle{definition}
\newtheorem{definition}[corollary]{Definition}
\newtheorem{example}[corollary]{Example}
\DeclarePairedDelimiter\braces\lbrace\rbrace
\newcommand{\DeclareAbbrevation}[2]{\newcommand{#1}{\@ifnextchar{.}{#2}{#2.\@\xspace}}}
\DeclareAbbrevation{\ie}{i.e}
\DeclareAbbrevation{\eg}{e.g}
\DeclareAbbrevation{\cf}{cf}
\DeclareAbbrevation{\etc}{etc}
\DeclareAbbrevation{\resp}{resp}
\DeclareAbbrevation{\etal}{et al}
\DeclareAbbrevation{\ibid}{ibid}
\DeclareAbbrevation{\ca}{ca}
\DeclareAbbrevation{\vs}{vs}
\author{Nicola Gambino \and Simon Henry \and Christian Sattler \and Karol Szumi\l{}o}
\title{The effective model structure and $\infty$-groupoid objects}
\date{\today\thanks{This version of the paper reflects the one published in
{\em Forum of Mathematics, Sigma} (2022),  Vol. 10:e34 1--59, submitted 9 March 2021,
revised 19 January 2022, accepted 7 February 2022, available
\href{https://www.cambridge.org/core/services/aop-cambridge-core/content/view/D0AF56CC8BF93DC546007D5C6352E0E0/S2050509422000135a.pdf/the-effective-model-structure-and-dollarinfty-dollar-groupoid-objects.pdf}{here}.
 Two additional minor typos have been fixed.}}
\begin{document}

  \maketitle

\begin{abstract}
For a category $\cat E$ with finite limits and well-behaved countable coproducts, we construct
a model structure, called the effective model structure, on the category of simplicial objects
in~$\cat E$, generalising the Kan--Quillen model structure on simplicial sets. We then prove that the effective
model structure is left and right proper and satisfies descent in the sense of Rezk. As a consequence,
we obtain that the associated $\infty$-category has finite limits, colimits satisfying descent,
and is locally \cartesian closed when $\cat E$ is, but is not a higher topos in general.
We also characterise the $\infty$-category presented by the effective model structure, showing that it is the full sub-category of presheaves on $\cat E$ spanned by Kan complexes in $\cat E$, a result that suggests a close analogy with the theory of exact completions.
\MSC{18N40 (primary), 18N60, 55U10}
\end{abstract}

\section*{Introduction}

\noindent
\textbf{Context and motivation.}
Over the past two decades, there has been an explosion of interest in the connections between model categories and higher categories~\cites{cisinski2020higher,Gepner-Kock,Joyal-Tierney,Lurie,Rezk-Segal,Szumilo}. This
line of research led to the reformulation of significant parts of modern homotopy theory in terms of higher category theory, the development of  higher topos theory~\cites{Toen-Vezzosi,Lurie} and is of great importance for Homotopy Type Theory and the Univalent Foundations programme~\cites{Awodey-Warren,berg2018univalent,Gepner-Kock,Kapulkin-Lumsdaine,Shulman}. Central to these developments are model structures on categories of simplicial objects, \ie, functor categories of the form~$\cats E = [\Delta^\op,
\cat E]$, where $\cat E$ is a category, as considered in~\cite{Quillen}*{Section~II.4}, \cite{Goerss-Jardine}*{Chapter~II}, \cite{christensen2002quillen}*{Theorem~6.3} and \cite{Hormann}. In particular, the category of simplicial sets equipped with the Kan--Quillen model structure~\cite{Quillen} can be understood
as a presentation of the $\infty$-category of spaces, while categories of simplicial presheaves and
sheaves (\ie, simplicial objects in a Grothendieck topos) equipped with the Rezk model structure~\cite{Rezk} and the Joyal--Jardine model structure~\cites{Brown,Joyal,Jardine} can be seen as presentations of $\infty$-toposes and their hypercompletions, respectively~\cites{DHI,Lurie}.
%In fact, model structures on categories of simplicial objects have been introduced and studied extensively in the
%literature~\cite{Goerss-Jardine}.

The main contribution of this paper is to construct a new model structure, which we call the {\em effective model structure}, on categories of simplicial objects $\cats E$, assuming that $\cat E$ is merely a
countably lextensive category, \ie, a category with finite limits and countable coproducts, where the latter are required to be van Kampen colimits~\cites{CLW,Rezk}. The effective model structure is defined
so that when~$\cat{E} = \Set$, we recover the Kan--Quillen model structure on simplicial sets~\cite{Quillen}.
We also prove several results on the effective model structure and its associated $\infty$-category, which we
discuss below.

The initial motivation for this work was the desire to establish whether our earlier work on the constructive Kan--Quillen model structure~\cites{H,Gambino-Sattler,GSS,Sattler} could be developed further so as to obtain a new model structure on categories of simplicial sheaves. Indeed, in \cites{H,GSS} we worked with simplicial sets without using the law of excluded middle and the axiom of choice, thus opening the possibility of replacing them with simplicial objects in a Grothendieck topos. As we explored this idea, we realised that the resulting argument  admitted not only a clean presentation in terms of enriched weak factorisation systems~\cite{RiehlE:catht}*{Chapter~13}, but also a vast generalisation.

In fact, the existence of the effective model structure may be a surprise to some readers, since
assuming~$\cat E$ to be countably lextensive is significantly weaker than assuming it to be a Grothendieck topos and covers many more examples (such as the category of countable sets and the category of schemes). In particular, our
arguments do not require the existence of all small colimits, (local) \cartesian closure and local presentability, which are ubiquitous in the known constructions of model structures.

One reason for the interest in the effective model structure is that, when~$\cat{E}$ is a Grothendieck topos, the effective model structure on~$\cats E$ differs from the known model structures on simplicial sheaves and provides the first example of a peculiar combination of higher categorical structure. Indeed, the
associated $\infty$-category  has finite limits, colimits that satisfy descent and is locally \cartesian closed,
but is neither a higher Grothendieck topos~\cite{Lurie} nor a higher elementary topos~in the sense
of~\cites{Shulman-ncatcafe,Rasekh}, since its 0-truncation does not always have a subobject classifier (see~\cref{ex:arrow_sets2}). In this case, the effective model structure satisfies most of  the axioms for a model topos~\cite{Rezk}, but is not combinatorial. One key point here is that the effective model structure is not cofibrantly generated in the usual sense, but only in an enriched sense. The relation between the effective model structure and other model structures on categories of simplicial objects is discussed
further in \cref{rmk:comparison}.

This situation can be understood by analogy with the theory of exact completions in ordinary category theory~\cite{Carboni-Vitale}. There, it is known that the exact completion of a (Grothendieck) topos need not be a (Grothendieck) topos~\cite{Menni}. Indeed, we believe that the effective model structure will provide a starting point for the development of a homotopical counterpart of the theory of exact completions. As a first step in this direction, we prove that the $\infty$-category associated to the effective model structure on $\cats{E}$ is the full subcategory of the $\infty$-category of presheaves on~$\cat{E}$ spanned by Kan complexes in~$\cat E$, mirroring a corresponding description of the exact completion of~$\cat{E}$ in~\cite{Hu-Tholen}. We also make a conjecture (\cref{thm:conj-exlex}) on the relation between the effective model structure and $\infty$-categorical exact completions, which we leave for future work. In the long term, we hope that our work could be useful for the definition of a higher categorical version of the effective topos~\cite{Hyland}, which can be described as an exact completion~\cite{Carboni}.

Finally, our results may be of interest also in Homotopy Type Theory, since they help to clarify how the simplicial model of Univalent Foundations~\cite{Kapulkin-Lumsdaine}, in which types are interpreted as Kan complexes, is related to the setoid model of type theory~\cite{Hofmann}, in which types are interpreted as types equipped with an equivalence relation, by showing how not only the latter~\cite{Emmenegger-Palmgren} but also
the former is related to the theory of exact completions. Furthermore, we expect that
the effective model structure may lead to new models of Homotopy Type Theory, another topic that we leave for future research.

\smallskip

\noindent
\textbf{Main results.} In order to outline our main results, let us briefly describe the effective model structure, whose fibrant objects are to be thought of as Kan complexes, or $\infty$-groupoids, in~$\cat{E}$. In order to describe the fibrations of the effective model structure,  recall that, for $E \in \cat{E}$, we have a functor
\begin{equation}
\tag{$\ast$}
\Hom_\sSet(E, -)  \from \cats{E} \to \sSet
\end{equation}
sending $X \in \cats{E}$ to the simplicial set defined by $\Hom_\sSet(E,X)_n = \Hom(E, X_n)$, for $[n] \in \Delta$.  We can then define a map in $\cats{E}$ to be a fibration in $\cats{E}$ if its image under the functor in~($\ast$) is a Kan fibration in $\sSet$ for every $E \in \cat{E}$. Trivial fibrations are defined
analogously. Our main results are the following:
\begin{itemize}
\item \cref{model}, asserting the existence of the effective model structure, whose fibrations and trivial fibrations are defined as above;
\item \cref{thm:right-proper} and \cref{sE-left-proper}, asserting that the effective model structure is
right and left proper, respectively, and \cref{ms-descent-pushout}, showing that homotopy colimits in $\cats E$
satisfy descent;
\item  \cref{thm-descent-infty}  asserting that the $\infty$-category $\Ho_\infty(\cats{E})$ associated to the effective model structure has finite limits and $\alpha$-small colimits satisfying descent when $\cat E$ is $\alpha$-lextensive, and \cref{thm-lccc-infty}, showing that $\Ho_\infty(\cats{E})$ is also locally \cartesian closed when~$\cat E$ is so.
\item \cref{th:Joyal_Szumilo_conjecture}, characterising the $\infty$-category associated to the effective model structure.
\end{itemize}

Along the way, we prove several other results of independent interest. For example, we characterise completely the cofibrations of the effective model structure, which do not coincide with all monomorphisms (\cref{lem:cof_charac}) and we compare the effective model structure
with model structures studied in relation to Elmendorf's Theorem (\cref{th:Elmendorf}).

\smallskip

\noindent
\textbf{Novel aspects.} This paper differs significantly from our work in~\cites{H,GSS,Sattler} in both
scope and technical aspects. Regarding scope, apart from generalising the existence of the
model structure from the case $\cat E = \Set$ to that of a general countably lextensive category $\cat E$,
here we discuss a number of topics that are not even mentioned  for the case $\cat E = \Set$ in our earlier work, such as the structure and characterisation of the $\infty$-category associated to the effective model structure, the discussion of descent and the connections with Elmendorf's theorem.

Regarding the technical aspects, even if the general strategy for proving the existence of the effective model structure is inspired from the case $\cat E = \Set$ in~\cites{GSS}, several new ideas are necessary to implement it to the general case, as we explain below. This strategy involves in three steps. First, we introduce the notions of a (trivial) fibration in $\cats E$ as above and establish the existence of a fibration category structure on the category of Kan complexes (assuming in fact only that $\cat E$ has finite limits). Secondly, we construct the two weak factorisation systems of the model structure, one given by cofibrations and trivial fibrations and one given by trivial cofibrations and fibrations. Thirdly, we show that weak equivalences (as determined by the two weak factorisation systems) satisfy 2-out-of-3 by proving the so-called Equivalence Extension Property (\cref{equivalence-extension}).

In order to realise this plan, we prove several results that are not necessary for $\cat E =  \Set$. We mention only the key ones.
First, we develop a new version of the enriched small object argument (\cref{esmo}), which does not require existence of all colimits. In order to achieve it, we analyse the colimits required for our applications and prove that they exist in a countably lextensive category, exploiting crucially that some of the maps involved are complemented monomorphisms. Secondly, we show that the fibration category structure, where fibrations are defined as above, agrees with the weak factorisation systems, defined in terms  of enriched lifting properties (\cref{fibration-levelwise}). Thirdly, we obtain a characterisation of cofibrations in categories of simplicial objects (\cref{lem:cof_charac}), which requires a  new, purely categorical, argument that is
entirely different to the one used in~\cites{H,GSS,Sattler}. Finally, new ideas are required  in the proof of the Equivalence Extension
Property~(\cref{equivalence-extension}). For this, we need to construct explicitly dependent products (\ie, pushforward) functors along cofibrations~(\cref{th:Dependent_prod_along_cof}), which are not guaranteed to exist since $\cat E$ is not assumed to be locally \cartesian closed. The existence of these pushforward functors may be considered as a pleasant surprise since they are essential for our argument and no exponentials are assumed to be present in $\cat E$.

%Furthermore, while the proof of right properness follows the lines of the work in~\cites{Gambino-Sattler,H,GSS} by establishing a version of the so-called
%Frobenius property (\cref{frobenius}), the proof of left properness (\cref{sE-left-proper}) is more delicate, as it involves the study of semisimplicial objects in countably lextensive categories, which is not covered
%by our previous work.

The existence of the effective model structure is independent from that of the constructive Kan--Quillen model structure on simplicial sets~\cites{GSS,H}. Actually, the use of enriched category theory here, especially for expressing stronger versions of the lifting properties usually phrased in terms of mere existence of diagonal fillers, makes  explicit some of the informal conventions adopted in~\cites{GSS,H} when treating the case~$\cat E = \Set$. Also, the proofs in~\cites{GSS,H} make use of structure on $\Set$ that is not available in a countably lextensive category and therefore cannot be interpreted as taking place in the so-called internal logic of~$\cat E$~\cite{Elephant}*{Section~D1.3}. Even when $\cat E$ is a Grothendieck
topos, carrying out the proofs in the internal language of $\cat E$~\cite{maclane-moerdijk}*{Chapter~6}
would not make explicit the structure under consideration, thus making it more difficult for the results
to be accessible and applicable.

\smallskip

\noindent
\textbf{Outline of the paper.} The paper is organised in four parts.
The first, including only \cref{sec:fib_cat}, establishes the fibration category structure.
The second, including \cref{extensive-categories,sec:enrwfs,sec:cofibrations}, introduces
the two weak factorisation systems, having first developed an appropriate version of
the small object argument.
The third, including \cref{sec:prop-of-cof,sec:dependent-products,sec:frobenius,sec:equivalence-extension,sec:model-structure}, establishes the existence of the effective model structure, by constructing pushforward functors
and establishing the Frobenius and Equivalence Extension Property.
The fourth, including \cref{sec:descent,sec:Elmendorf,sec:Semi-simplicial,sec:the-infty-category}, proves
the key properties of the effective model structure, namely descent and properness, their $\infty$-categorical
counterparts,  and characterises
its associated $\infty$-category. Throughout the paper, we omit the proofs that can be carried out with minor modifications from~\cites{H,GSS},
but include the ones that require new ideas.

\smallskip

\noindent
\textbf{Remark.} The material in this paper is developed within ZFC set theory. Some of the material,
however, can be developed also in a constructive setting (see footnotes and \ref{app:constructivity} for details).

\smallskip

\noindent
\textbf{Acknowledgements.} We are grateful to Andr\'e Joyal for  questions and discussions, which in particular led to \cref{th:Joyal_Szumilo_conjecture}, and to Dan Christensen and Denis-Charles Cisinski for comments on an earlier version of the paper.  Nicola Gambino and Karol Szumi{\l}o gratefully acknowledge that this material is based upon work supported by the US Air Force Office for Scientific Research under awards number FA8655-13-1-3038 and FA9550-21-1-0007. Nicola Gambino was also supported by EPSRC under grant EP/V002325/1.
Simon Henry was partially supported by an NSERC Discovery Grant.
Christian Sattler was supported by Swedish Research Council grant 2019-03765.

 %Because of this, we conjecture that the effective model structure on $\cats{E}$ provides a way of constructing a higher cayteversion of the exact completion (see \cref{sec:the-infty-category} for details).
% Recall that, for a lextensive category $\cat{E}$, the so-called {\em exact completion} of $\cat{E}$ is obtained by freely adding quotients of equivalence relations in $\cat{E}$~\cite{Carboni-Vitale}. If we consider $\cat{E}$ as an $\infty$-category, via the nerve
%functor, one may also consider its completion under groupoid objects~\cite{Lurie}, which can be  thought of as generalised equivalence relations.
%

%In order to define the cofibrations, recall
%that~$\cat{E}$, being a lextensive category, admits a weak factorisation system whose the left class consists of complemented inclusions and whose right class consists of the split epimorphisms. Then, the cofibrations of the effective model structure  can be seen as the left class of the induced Reedy weak factorisation system on $\cats{E}$. Explicitly, these are the maps $i \from A \to B$ in $\cats{E}$ such that for all $[m] \in \Delta$, the relative latching morphism $A_m \sqcup_{L_m(A)} L_m B \to B_m$ is a complemented inclusion in $\cat{E}$.
%These classes of maps determine completely the model structure. While it is possible to describe the
%trivial cofibrations, we do not have a general description for weak equivalences.

  \section{Kan fibrations}\label{sec:fib_cat}

This section develops some simplicial homotopy theory in a category $\cat{E}$ with finite limits. The category of simplicial objects in $\cat{E}$ is defined by letting
\[
\cats{E} \defeq [\Simp^\op, \cat{E}] \, .
\]
In \cref{def:fibration_algebraic} we introduce the notion of a fibration in $\cats{E}$ with which we shall work throughout the paper. This notion is defined using the
enrichment of  $\cats{E}$ in $\sSet$ and generalises that of a Kan fibration in $\sSet$. The main result of this section, \cref{fibcat-Kan}, establishes a structure of a fibration category on the category of fibrant objects in $\cats{E}$. For applications throughout the paper, we also establish a fiberwise version of this fibration category in \cref{fibcat-fiberwise}. We also introduce the notion of a \emph{pointwise weak equivalence} (\cref{pwe}), which provides the weak equivalences of these fibration categories. In the subsequent sections we will extend these results to obtain the effective model structure on~$\cats E$, under the stronger assumption that~$\cat{E}$ is countably lextensive.
The weak equivalences of the effective model structure will not be the pointwise weak equivalences in general, although the two notions will coincide  for maps between fibrant objects.

% We identify $\cat E$ with the full subcategory of $\cats E$ of constant simplicial objects and do not distinguish notationally between an object $E \cat E$
% and the associated constant simplicial set.
Let us recall how the category $\cats{E}$ is enriched over $\sSet$ \wrt{} the \cartesian monoidal structure.
For a finite simplicial set $K$ and $X \in \cats{E}$, we define $K \cotensor X \in \cats E$ via the end formula
\begin{equation}
\label{equ:cotensor}
(K \cotensor X)_m \eqdef \coend_{[n] \in \Simp} X_n^{(K \times \simp{m})_n} \text{.}
\end{equation}
For $X, Y \in \cats{E}$, the simplicial hom-object  is then defined by letting\footnote{Here and in the following we use subscripts to indicate to which category
the hom-objects under consideration belong.}
\begin{equation}
\label{equ:sset-enrichment}
  \Hom_{\sSet}(X, Y)_m \eqdef \Hom_\Set( X, \simp{m} \cotensor Y ) \text{.}
\end{equation}
This makes $\cats E$ into a $\sSet$-enriched category so that the formula in~\eqref{equ:cotensor} gives the cotensor (over finite simplicial sets) with respect
to the enrichment.
Without further assumptions on $\cat E$, $\cats E$ does not admit all cotensors or tensors over simplicial sets.
We often identify an object $E \in \cat E$ with the constant simplicial object with value $E$. For example,
for $E \in \cat E$ and $Y \in \cats E$ we write $\Hom_\sSet(E, Y)$. Note that
\begin{align*}
  \Hom_\sSet(E, Y)_m & = \Hom_\Set(E, Y_m) \text{,} \\
 \Hom_\sSet(E, K \cotensor Y) & = K \cotensor \Hom_\sSet(E, Y) \text{.}
\end{align*}

The $\sSet$-enrichment allows us to define a notion of a homotopy between morphisms of $\cats{E}$.
Given maps $f_0, f_1 \from X \to Y$ in $\cats{E}$ (or one of its slice categories), a \emph{homotopy} $H$ from $f_0$ to $f_1$, written $H \from f_0 \sim f_1$,
is a map
\begin{equation}
\label{equ:homotopy}
H \from X \to \simp{1} \cotensor Y
\end{equation}
that restricts to $f_0$ on $\braces{0} \to \simp{1}$ and to $f_1$ on $\braces{1} \to \simp{1}$.
It is \emph{constant} if it factors through the canonical map $\simp{0} \cotensor Y \to \simp{1} \cotensor Y$, in which case $f_0 = f_1$.
Note that we can regard $H$ as a map $\simp{1} \to \Hom_{\sSet}(X, Y)$.
This generalises the usual notion of homotopy in simplicial sets.
For each $E \in \cat E$, the functor $\Hom_\sSet(E, \uvar)$ preserves homotopies because it preserves the cotensor with $\simp{1}$.

%With this, it is clear that a homotopy $H$ as above is stable under whiskering, \ie pre- and postcomposition with maps.
%Given $g \from W \to X$ or $h \from Y \to Z$, we denote the resulting homotopy $H g \from f_0 g \sim f_1 g$ or $h H \from h f_0 \sim h f_1$, respectively.

% We call a map $f \from A \to B$ a \emph{0-oriented} (respectively, \emph{1-oriented}) \emph{homotopy equivalence} if there is a map $g \from B \to A$ with homotopies $u \from g f \sim \id_A$ and $v \from f g \sim \id_B$ (respectively, $u \from \id_A \sim g f$ and $v \from id_B \sim f g$).
% We omit reference to the orientation if it is unimportant.

%Such a homotopy equivalence is called \emph{strong} if the homotopies satisfy the coherence condition $f u = v f$.
%This technical condition will play a role in \cref{sec:frobenius,sec:equivalence-extension}.

We need some definitions to introduce the notions of a Kan fibration and trivial Kan fibration in $\cats{E}$.
For a finite simplicial set $K$, we define the \emph{evaluation functor} $\ev_K \from \cats{E} \to \cat{E}$  via the end formula
\begin{equation}
\label{equ:evaluation}
  \ev_K(X) = X(K) \eqdef \coend_{[n] \in \Simp} X_n^{K_n} \text{.}
\end{equation}
We will usually write $X(K)$ rather than $\ev_K(X)$ for brevity.
However, in some situations the notation $\ev_K(X)$ will be more convenient, see the definition of pullback evaluation below.
The end above exists since, by the finiteness of $K$, it can be constructed from finite limits.
For example, $X(\simp{n}) = X_n$ and $X(\horn{2,k}) = X_1 \times_{X_0} X_1$.  Also note that
$X(K) = (K \cotensor X)_0$ and $X(K \times \simp{m}) = (K \cotensor X)_m$.

\begin{remark} \label{evaluation}
There are two alternative ways of viewing the evaluation functor. First, since $\cat E$ has finite limits, we can consider $X(K)$
as the value on $K$ of the right Kan extension of $X \colon \Delta^\op \to \cat{E}$ along the  inclusion
of $\Delta$ into the category of finite simplicial sets. Secondly, seeing $\cat{E}$ as a $\Set$-enriched category,
we can view $X(K)$ as a weighted limit, namely the limit of $X$, viewed as a diagram in $\cat{E}$, weighted
by $K$, viewed as a diagram in $\Set$.
Both of these observations show that~$X(K)$ is contravariantly functorial in $K$.
\end{remark}

We write $\hat{\ev}$ for the \emph{pullback evaluation} functor, which is the result  of applying
the so-called Leibniz construction~\cite{Riehl-Verity} to the two-variable functor $\ev$, \ie, the functor
sending a map $i \from A \to B$ between finite simplicial sets and a morphism $f \from X \to Y$ of $\cats{E}$ to
\begin{align}
\label{equ:pullback-evaluation}
  \hat{\ev}_i(f) \from \ev_{A}(X) \to \ev_{B}(X) \times_{\ev_{B}(Y)} \ev_{A}(Y) \text{ in } \cat{E} \\
  \nonumber \text{also written as } \hat{\ev}_i(f) \from X(A) \to X(B) \times_{Y(B)} Y(A) \text{.}
\end{align}

\begin{remark} \label{thm:leibniz-terminology}
We adopt the convention of prefixing with `pullback' (or `pushout') the name of a two-variable functor to indicate the
result of applying the Leibniz construction to it. So for example, we shall say pushout product for what is
also referred to as Leibniz product or corner product.
\end{remark}

We use standard notation for the sets of boundary inclusions and horn inclusions,
\begin{equation}
\label{equ:boundary-horns}
  I_\sSet = \{ \bdsimp{n} \to \simp{n} \mid n \ge 0 \} \text{ and } J_\sSet = \{ \horn{n,k} \to \simp{n} \mid n \ge k \ge 0, n > 0 \} \text{.}
\end{equation}

% \begin{definition} \label{def:fibration_algebraic}
% Let $f \from X \to Y$ be a map in $\cats E$. We say that $f$ is
% \begin{itemize}
% \item a \emph{trivial Kan fibration} if its pullback evaluation
% \[
% \ev_{\simp{n}}(X) \to \ev_{\horn{n,k}}(X) \times_{\ev_{\horn{n,k}}(Y)} \ev_{\simp{n}}(Y)
% \]
% is a split epimorphism for every map $\horn{n,k} \to \simp{n}$ in $I_\sSet$,
% \item a \emph{Kan fibration} if its pullback evaluation
% \[
% \ev_{\simp{n}}(X) \to \ev_{\bdsimp{n}}(X) \times_{\ev_{\bdsimp{n}}(Y)} \ev_{\simp{n}}(Y)
% \]
% is a split epimorphism for every map $\bdsimp{n} \to \simp{n}$ in $J_\sSet$.
% \end{itemize}
% \end{definition}

% \fxnote{Consider the following more concise version?}
% \addtocounter{theorem}{-1}
\begin{definition} \label{def:fibration_algebraic}
We say that a morphism in $\cats{E}$ is
\begin{itemize}
\item a \emph{trivial Kan fibration} if its pullback evaluations with all maps in $I_\sSet$ are split epimorphisms;
\item a \emph{Kan fibration} if its pullback evaluations with all maps in $J_\sSet$ are split epimorphisms.
\end{itemize}
\end{definition}
Explicitly, a map $f \colon X \to Y$ in $\cats E$ is a Kan fibration if the morphism
\begin{equation*}
  X(\simp{n}) \to X(\horn{n,k}) \times_{Y(\horn{n,k})} Y(\simp{n})
\end{equation*}
in $\cat{E}$ has a section, for all $n \ge k \ge 0$ and $n > 0$.
For $Y = 1$, this means that the morphism
\begin{equation*}
  X(\simp{n}) \to X(\horn{n,k})
\end{equation*}
has a section,  for all $n \ge k \ge 0$ and $n > 0$,
in which case we say that $X$ is a \emph{Kan complex}.
Note that for $\cat{E} = \Set$, these definitions reduce to the standard notions of a Kan fibration, trivial Kan fibration
and a Kan complex in simplicial sets.
In the following, we shall frequently write \emph{fibration}, \emph{trivial fibration} and \emph{fibrant object},
as we do not consider other notions of fibrations.

Although we have not yet introduced cofibrations and trivial cofibrations in $\cats{E}$, we
can use the standard classes of cofibrations and trivial cofibrations in $\sSet$, which are
the saturations of the generating sets $I_\sSet$ and $J_\sSet$, respectively.

The next proposition characterises fibrations and trivial fibrations by reducing them to the corresponding notions in $\sSet$
in terms of the $\sSet$-enrichment of $\cats{E}$, defined in~\eqref{equ:sset-enrichment}.

\begin{proposition}\label{prop:fibration_pointwise}
  Let $f \from X \to Y$ be a map in $\cats E$.
Then $f$  is a (trivial) fibration if and only if, for all $E \in \cat E$, the map
\begin{equation*}
  \Hom_\sSet(E, f) \from \Hom_\sSet(E,X) \to \Hom_\sSet(E,Y)
\end{equation*}
is a (trivial) fibration in $\sSet$.
\end{proposition}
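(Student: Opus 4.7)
The plan is to reduce everything to the fact that the representable functor $\homSet(E, -) \colon \cat E \to \Set$ preserves (and jointly reflects) finite limits and split epimorphisms, and that it commutes with the evaluation functor $\ev_K$ in the sense that
\[
  \homsSet(E, X)(K) = \homSet(E, X(K))
\]
for every finite simplicial set $K$. This identity holds because $X(K)$ is a finite weighted limit (\cref{evaluation}) and $\homSet(E, -)$ preserves such limits. Applied to the pullback evaluation, this gives the key identification
\[
  \hat{\ev}_i(\homsSet(E, f)) \;=\; \homSet(E, \hat{\ev}_i(f))
\]
for every map $i \from A \to B$ of finite simplicial sets, where on the left side the pullback evaluation is formed in $\sSet$ and on the right in $\cat E$.

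With this in hand, the forward direction is immediate: if $f$ is a (trivial) fibration then $\hat{\ev}_i(f)$ is a split epimorphism in $\cat E$ for every $i$ in $J_\sSet$ (resp.\ $I_\sSet$); since $\homSet(E, -)$ preserves split epimorphisms, so is $\homSet(E, \hat{\ev}_i(f)) = \hat{\ev}_i(\homsSet(E, f))$, which is precisely the condition for $\homsSet(E, f)$ to be a (trivial) fibration in $\sSet$.

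For the converse direction I would use the standard Yoneda-style trick: given $i \from A \to B$ in $J_\sSet$ (resp.\ $I_\sSet$), set
\[
  E \;\eqdef\; X(B) \times_{Y(B)} Y(A) \in \cat E,
\]
which exists because $\cat E$ has finite limits. By hypothesis, $\hat{\ev}_i(\homsSet(E, f)) = \homSet(E, \hat{\ev}_i(f))$ is a split epimorphism in $\Set$, hence surjective. Pulling back the identity of $E$ along this surjection produces a section of $\hat{\ev}_i(f) \from X(A) \to E$ in $\cat E$, showing that $f$ is a (trivial) fibration.

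The only slightly delicate step is justifying the identification $\homsSet(E, X)(K) = \homSet(E, X(K))$ together with its compatibility with pullback evaluation; everything else is a routine unpacking of definitions. I do not anticipate any real obstacle here, since the identity follows from $\homSet(E, -)$ preserving the finite limits used to build $X(K)$ out of the $X_n$ via the end in~\eqref{equ:evaluation}.
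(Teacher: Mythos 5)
Your proposal is correct and follows essentially the same route as the paper: the identification $\Hom_\sSet(E,X)(K) = \Hom_\Set(E,X(K))$, hence $\hat{\ev}_i(\Hom_\sSet(E,f)) = \Hom_\Set(E,\hat{\ev}_i(f))$, is exactly the content of the contravariant adjunction between $X(\uvar)$ and $\Hom_\sSet(\uvar,X)$ that the paper invokes, and both arguments conclude by taking $E$ to be the codomain of $\hat{\ev}_i(f)$ so that surjectivity yields a section. The only cosmetic point is to make sure the domain and codomain of $\hat{\ev}_i(f)$ are written consistently with the explicit description following \cref{def:fibration_algebraic} (namely $X(B) \to X(A) \times_{Y(A)} Y(B)$ for $i \from A \to B$), but this does not affect the argument.
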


\begin{proof}
  Note that the functors $X(\uvar) \from \sSet^\op \to \cat{E}$ and $\Hom_\sSet(\uvar, X) \from \cat{E}^\op \to \sSet$ are contravariantly adjoint.
  Thus for all maps $i \from A \to B$ between finite simplicial sets there is a bijective correspondence between the lifting problems
  \begin{tikzeq*}
  \matrix[diagram,column sep={between origins,8em}]
  {
    |(A)| A & |(EX)| \Hom_\sSet(E, X) &         & |(XB)| X(B)                   \\
    |(B)| B & |(EY)| \Hom_\sSet(E, Y) & |(E)| E & |(P)|  X(A) \pull_{Y(A)} Y(B) \\
  };

  \draw[->] (A)  to (B);
  \draw[->] (EX) to (EY);
  \draw[->] (A)  to (EX);
  \draw[->] (B)  to (EY);

  \draw[->,dashed] (B) to (EX);

  \draw[->] (E)  to (P);

  \draw[->] (XB) to node[right] {$\hat{\ev}_i(f)$} (P);

  \draw[->,dashed] (E) to (XB);
  \end{tikzeq*}
  the latter of which is equivalent to the morphism on the right being a split epimorphism (by setting $E = X(A) \pull_{Y(A)} Y(B)$).
\end{proof}

% \begin{proof} Let $A$ be a finite simplicial set, $E \in \cat E$ and $X \in \cats E$. By adjointness, we have an isomorphism
% \[ \Hom_\Set(A,\Hom_\sSet(E,X)) \iso \Hom_\Set(E, \ev_A(X)) \, .\]
% Using this identification, for each $E \in \cat E$ and $X,Y \in \cats E$ a lifting problem
% \[
% \begin{tikzcd}
% A \ar[r] \ar[d] & \Hom_\sSet(E,X) \ar[d] \\
% B \ar[ur,dotted] \ar[r] & \Hom_\sSet(E,Y)
% \end{tikzcd}
% \]
% against a map $A \to B$ of finite simplicial sets, is the same as a lifting problem
% \[
% \begin{tikzcd}
%  & X(B) \ar[d] \\
% E \ar[ur,dotted] \ar[r] & \ev_A(X) \times_{\ev_A(Y)} \ev_B(Y).
% \end{tikzcd}
% \]
% This second lifting problem as a solution for all $E \in \cat E$ if and only if the map on the right has a section, by applying it to $E=\ev_A(X) \times_{\ev_A(X)} \ev_B(Y)$. So asking that for all $E \in \cat E$ the map $\Hom_\sSet(E,X) \to \Hom_\sSet(E,Y)$ has the  right lifting property against $A \to B$  is the same as asking that the map $\ev_B(X) \to \ev_A(X) \times_{\ev_A(Y)} \ev_B(Y)$ has a section.
% \end{proof}

If $i \from A \to B$ is a map of finite simplicial sets and $p \from X \to Y$ is a morphism of $\cats{E}$, then we define
the \emph{pullback cotensor} of $i$ and $p$ (\cf \cref{thm:leibniz-terminology}) as the induced morphism
\begin{equation*}
  i \pbcotensorsSetsE p \from B \cotensorsSetsE X \to (A \cotensorsSetsE X) \pull_{A \cotensorsSetsE Y} (B \cotensorsSetsE X) \text{.}
\end{equation*}

\begin{lemma}\label{lem:basic_pointwise_fibrations} \leavevmode
  \begin{parts}
  \item\label{item:pbcot}
  The pullback cotensor in $\cats{E}$ of  a cofibration between finite simplicial sets and a fibration is a fibration.
  If the given cofibration or fibration  is trivial, then the result is a trivial fibration.
  \item\label{item:retract_pb_com} Fibrations and trivial fibrations in $\cats E$ are closed under composition, pullback, and retract.
  \item\label{item:tfib_right_cancel} Let $f \from X \to Y$ and $g \from Y \to Z$ be morphisms of $\cats{E}$. If $f \from X \to Y$ and $g f \from X \to Z$ are trivial fibrations, then so is $g \from Y \to Z$.
  \end{parts}
\end{lemma}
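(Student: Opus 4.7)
The plan is to deduce all three parts from \cref{prop:fibration_pointwise}, which for each $E \in \cat E$ reduces (trivial) fibrations in $\cats E$ to the corresponding classes in $\sSet$, and to combine this with the standard facts for the Kan--Quillen model structure on $\sSet$.

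For (i), the key observation is that for each $E \in \cat E$ the functor $\Hom_\sSet(E, \uvar) \from \cats E \to \sSet$ preserves cotensors by finite simplicial sets, via the identity $\Hom_\sSet(E, K \cotensor Y) = K \cotensor \Hom_\sSet(E, Y)$ recorded in the enrichment discussion above. Since the same functor also preserves pullbacks---being a right adjoint---it commutes with pullback cotensors along maps of finite simplicial sets, yielding
\[
\Hom_\sSet(E, i \pbcotensorsSetsE p) = i \pbcotensorsSetsE \Hom_\sSet(E, p).
\]
By \cref{prop:fibration_pointwise}, if $p$ is a (trivial) fibration in $\cats E$ then $\Hom_\sSet(E, p)$ is a (trivial) Kan fibration in $\sSet$. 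The standard pullback-cotensor statement for the Kan--Quillen model structure then gives that the right-hand side is a (trivial) Kan fibration in $\sSet$, and a second application of \cref{prop:fibration_pointwise} finishes the argument.

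For (ii), I would again appeal to \cref{prop:fibration_pointwise}. For each $E \in \cat E$ the functor $\Hom_\sSet(E, \uvar)$ preserves composition (by functoriality), pullbacks (as a right adjoint) and retracts (as any functor does), so each of the three closure properties in $\cats E$ reduces to the corresponding well-known property in $\sSet$.

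For (iii), one more application of \cref{prop:fibration_pointwise} reduces the claim to the analogous statement in $\sSet$: if $f$ and $gf$ are trivial Kan fibrations of simplicial sets, then so is $g$. Since every simplicial set is cofibrant in the Kan--Quillen structure, the trivial fibration $f$ admits a section $s \from Y \to X$ with $fs = \id_Y$. The commutative diagram built from the composites $Y \xrightarrow{s} X \xrightarrow{f} Y$, with vertical maps $g$, $gf$, $g$ into $Z$ and identities along the bottom, then exhibits $g$ as a retract of $gf$, so closure under retracts from (ii) concludes. The only slightly non-formal input is the existence of the section $s$, which is where I expect the only (mild) obstacle; the rest of the argument is a direct transfer along $\Hom_\sSet(E, \uvar)$.
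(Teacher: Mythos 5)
Your proposal is correct and follows essentially the same route as the paper: all three parts are transferred from the corresponding statements in $\sSet$ via \cref{prop:fibration_pointwise}, with part~(i) relying on $\Hom_\sSet(E,\uvar)$ preserving pullbacks and cotensors and hence pullback cotensors. The only difference is that you spell out the section-and-retract argument for the $\sSet$ case of~(iii), which the paper simply cites; that argument is fine since every simplicial set is cofibrant, so the trivial fibration $f$ indeed admits a section.
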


\begin{proof}
  All the statements are proved in the same way: they hold for simplicial sets
  (see, \eg,~\cite{Quillen}*{Theorem II.3.3}) and transfer to $\cats{E}$
  using \cref{prop:fibration_pointwise}.\footnote{Constructively, \cref{item:pbcot} is true in $\sSet$ by~\cite{GSS}*{Corollary~1.3.4}, \cref{item:retract_pb_com} is evident
    and \cref{item:tfib_right_cancel} is~\cite{GSS}*{Lemma~1.3.6}.}
  Note that transferring \ref{item:pbcot} from $\sSet$ to $\cats{E}$ relies on the fact that $\Hom_\sSet(E, \uvar)$ preserves pullbacks and cotensors
  and hence pullback cotensors.
\end{proof}

%Old reference:
%{trivial-fibration-cancellation} -> {item:tfib_right_cancel}
%{fibration-closure} -> {item:retract_pb_com}
%{pullback-cotensor} -> {item:cof_pbcot} and {item:tcof_pbcot}

\begin{definition}\label{pwe}
Let $f \from X \to Y$ in $\cats E$. We say that $f$ is a \emph{pointwise weak equivalence} if
\begin{align*}
  \Hom_\sSet(E,f) \from \Hom_\sSet(E,X) \to \Hom_\sSet(E,Y)
\end{align*}
is a weak equivalence in $\sSet$ for all $E \in \cat{E}$.
\end{definition}

For the next theorem, we use the definition of a fibration category as stated in~\cite{GSS}*{Section~1.6}.

\begin{theorem}\label{fibcat-Kan}
Let $\cat{E}$ be category with finite limits.
Then pointwise weak equivalences, Kan fibrations and trivial Kan fibrations
equip the category of Kan complexes in~$\cats{E}$ with the
structure of a fibration category.
\end{theorem}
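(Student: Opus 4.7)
The plan is to reduce the fibration category axioms to their counterparts in $\sSet$ by means of the family of functors $\Hom_\sSet(E, \uvar) \from \cats E \to \sSet$ for $E \in \cat E$, using \cref{prop:fibration_pointwise} and \cref{pwe}. This reduction is effective because each such functor preserves finite limits and cotensors by finite simplicial sets, and hence also pullback cotensors and the ingredients of the path object construction.

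The structural axioms go through easily. The terminal object $1 \in \cat E$, viewed as a constant simplicial object, is a Kan complex, and Kan complexes are closed under finite products by \cref{lem:basic_pointwise_fibrations}. Closure of fibrations and trivial fibrations under composition, pullback, and retract is also provided by \cref{lem:basic_pointwise_fibrations}, which in particular ensures that pullbacks of fibrations between Kan complexes remain in the subcategory of Kan complexes. Isomorphisms are trivial fibrations by \cref{prop:fibration_pointwise}, since isomorphisms in $\sSet$ are trivial Kan fibrations; and trivial fibrations are pointwise weak equivalences by the same principle, since trivial Kan fibrations in $\sSet$ are weak equivalences. Finally, 2-out-of-3 for pointwise weak equivalences follows at once from 2-out-of-3 in $\sSet$ applied to each $\Hom_\sSet(E, \uvar)$.

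The main content lies in the factorisation axiom. Given $f \from X \to Y$ between Kan complexes, I form the path object factorisation $f = p \circ j$ with
\[
P(f) \eqdef X \pull_Y (\simp{1} \cotensorsSetsE Y),
\]
the pullback being taken along $f$ and the endpoint evaluation $\ev_0 \from \simp{1} \cotensorsSetsE Y \to Y$; the map $j \from X \to P(f)$ is determined by $\id_X$ together with the constant-path inclusion $Y \to \simp{1} \cotensorsSetsE Y$ composed with $f$, while $p \from P(f) \to Y$ is induced by $\ev_1$. Applying \cref{lem:basic_pointwise_fibrations} to the trivial cofibration $\braces{0} \hookrightarrow \simp{1}$ and the fibration $Y \to 1$, the pullback cotensor is precisely $\ev_0$ and is a trivial fibration, so its pullback $P(f) \to X$ is also a trivial fibration; in particular $P(f)$ is a Kan complex. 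For $p$, I rewrite $P(f)$ as $(X \pull Y) \pull_{Y \pull Y} (\simp{1} \cotensorsSetsE Y)$ via the evident isomorphism, so that $p$ factors as $P(f) \to X \pull Y \to Y$; the first map is the pullback of the fibration $\simp{1} \cotensorsSetsE Y \to Y \pull Y$ (produced by \cref{lem:basic_pointwise_fibrations} applied to $\bdsimp{1} \hookrightarrow \simp{1}$) along $f \pull \id_Y$, and the second is the projection, a fibration because $X$ is Kan. For $j$, applying $\Hom_\sSet(E, \uvar)$ to the whole construction reproduces the classical path object factorisation in $\sSet$ of $\Hom_\sSet(E, f)$, whose first map is a weak equivalence, so $j$ is a pointwise weak equivalence.

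The main obstacle is essentially notational: one must confirm that the entire construction lives in the finite-limit world of $\cat E$. This is automatic, since $\simp{1} \cotensorsSetsE Y$ is built from a finite end and all pullbacks used are finite, so no cofibrant replacement or small object argument is required for objects already known to be Kan.
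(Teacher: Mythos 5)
Your proposal is correct and follows the same overall strategy as the paper: reduce everything to $\sSet$ along the functors $\Hom_\sSet(E, \uvar)$, which preserve finite limits and cotensors by finite simplicial sets. The one substantive difference is in the factorisation axiom. The paper constructs only a path object $X \to \simp{1} \cotensor X \to X \times X$ and then invokes Brown's factorisation lemma to obtain factorisations of arbitrary maps, whereas you build the mapping path space $P(f) = X \pull_Y (\simp{1} \cotensor Y)$ explicitly and verify by hand that $P(f) \to Y$ is a fibration and $X \to P(f)$ is a pointwise weak equivalence. This is essentially an unfolding of what Brown's lemma does internally, so nothing is gained in generality, but your version is self-contained and makes visible that only finite limits and cotensors by finite simplicial sets are used — the point you rightly emphasise at the end. (For the last step you could also avoid applying $\Hom_\sSet(E,\uvar)$ altogether: $j$ is a section of the trivial fibration $P(f) \to X$, which is a pointwise weak equivalence, so $j$ is one by 2-out-of-3.)

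One small point to tidy up: the definition of fibration category used here (from \cite{GSS}*{Section~1.6}) requires weak equivalences to satisfy the 2-out-of-6 property, not merely 2-out-of-3. Your argument covers this with no change — 2-out-of-6 holds for weak homotopy equivalences in $\sSet$ and therefore transfers levelwise to pointwise weak equivalences — but you should state the stronger property.
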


\begin{proof}
  Trivial fibrations are exactly the fibrations that are weak equivalences because this holds in $\sSet$.
  We need to verify the following axioms.
  \begin{fibcat-axioms}
  \item $\cats{E}$ has a terminal object and all objects are fibrant, which follows directly from the definitions.
  \item Pullbacks along fibrations exist because $\cat{E}$ (and hence $\cats{E}$) has all finite limits.
    Moreover, fibrations and acyclic fibrations are closed under pullback by point \ref{item:retract_pb_com} of \cref{lem:basic_pointwise_fibrations}.
  \item Every morphism factors as a weak equivalence followed by a fibration.
    By~\cite{Brown}*{p.~421, Factorization lemma} it suffices to construct a path object, i.e.,
    a factorisation of the diagonal $X \to X \times X$.
    Such factorisation is given by the cotensor $X \to  \simp{1} \cotensor X \to X \times X$.
    Applying $\Hom_\sSet(E,\uvar)$ to this factorisation gives
    \begin{equation*}
      \Hom_\sSet(E, X) \to \simp{1} \cotensor \Hom_\sSet(E, X) \to \Hom_\sSet(E, X) \times \Hom_\sSet(E, X)
    \end{equation*}
    which is a well known factorisation of the diagonal of $\Hom_\sSet(E, X)$ into a weak equivalence followed by a fibration in $\sSet$
    (since $\Hom_\sSet(E, X)$ is a Kan complex by \cref{prop:fibration_pointwise}).
    See, \eg,~\cite{Goerss-Jardine}*{p.~43}.
    Hence $X \to \simp{1} \cotensor X \to X \times X$ is also such factorisation in $\cats{E}$.
  \item Weak equivalences satisfy 2-out-of-6, which follows since this property holds in $\sSet$. \qedhere
  \end{fibcat-axioms}
\end{proof}

In view of our development in \cref{sec:equivalence-extension}, we generalise \cref{fibcat-Kan} to the case of a slice of $\cats E$ over a simplicial object $X$,
which we write $\cats{E} \slice X$. We then define  $\cats{E} \fslice X$ to be the full subcategory of $\cats{E} \slice X$ spanned by the fibrations over $X$.

First of all,  let us recall that the enrichment of $\cats{E}$ in simplicial sets, including the cotensor with finite simplicial sets, descends to its slices.
For $(A, f), (B, g) \in \cats{E} \slice X$, the hom-object $\Hom_\sSet((A, f), (B, g))$ is
the pullback of $\Hom_{\sSet}(A, B)$ along the map $f \from 1 \to \Hom_{\sSet}(A, X)$.
The cotensor of $(A, f) \in \cats{E} \slice X$ by a finite simplicial set~$K$ is
the pullback of $K \cotensor A$ along the map $X \to K \cotensor X$ (using the fact that the monoidal unit in $\sSet$ is the terminal object).
As before, for each $E$, the functor $\Hom_\sSet(E, \uvar) \from \cats{E} \slice X \to \sSet \slice \Hom_\sSet(E, X)$ preserves these cotensors.

\begin{lemma} \label{pullback-cotensor-slice}
  Let $X \in \cats{E}$.
  The pullback cotensor properties in \cref{item:pbcot} of \cref{lem:basic_pointwise_fibrations}
  hold in $\cats{E} \slice X$ as well.
\end{lemma}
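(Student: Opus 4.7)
The plan is to reduce to the unsliced version, \cref{item:pbcot} of \cref{lem:basic_pointwise_fibrations}, by exploiting the fact that a morphism in $\cats{E} \slice X$ is a (trivial) fibration precisely when its underlying morphism in $\cats{E}$ is. The key observation is that the pullback cotensor in $\cats{E} \slice X$, viewed as a morphism in $\cats{E}$, can be expressed as a pullback of the pullback cotensor taken in $\cats{E}$.

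Concretely, given a cofibration $i \from A \to B$ of finite simplicial sets and a morphism $p \from (Y, g) \to (Z, h)$ in $\cats{E} \slice X$, I would expand each sliced cotensor using the formula $K \cotensor (W, w) = X \pull_{K \cotensor X} (K \cotensor W)$ recalled just before the lemma. Since pullbacks in $\cats{E} \slice X$ are computed in $\cats{E}$, a direct limit chase rewrites the target of $i \pbcotensorsSetsE p$ in the slice as
\[
X \pull_{B \cotensor X} \bigl( A \cotensor Y \pull_{A \cotensor Z} B \cotensor Z \bigr),
\]
while its source is $X \pull_{B \cotensor X} (B \cotensor Y)$ directly from the definition. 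Under this identification, the sliced pullback cotensor is exactly the pullback along $X \to B \cotensor X$ of the pullback cotensor $i \pbcotensorsSetsE p$ computed in $\cats{E}$.

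The conclusion is then immediate: by \cref{item:pbcot} applied in $\cats{E}$, the map $i \pbcotensorsSetsE p$ is a fibration, and is trivial whenever either $i$ or $p$ is; by \cref{item:retract_pb_com} of the same lemma, (trivial) fibrations in $\cats{E}$ are stable under pullback, so the sliced pullback cotensor is a (trivial) fibration in $\cats{E}$, hence in $\cats{E} \slice X$. The only delicate point will be the limit bookkeeping in the intermediate step: one must verify that among the several constraints defining the iterated pullback target, agreement between $X$ and $A \cotensor Y$ over $A \cotensor X$ is already forced by agreement between $X$ and $B \cotensor Z$ over $B \cotensor X$ together with the pullback condition in $A \cotensor Y \pull_{A \cotensor Z} B \cotensor Z$. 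This follows from naturality of the cotensor in its simplicial variable and the identity $hp = g$.
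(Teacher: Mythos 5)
Your proposal is correct and follows essentially the same route as the paper's own (very terse) proof: identify the sliced pullback cotensor as a pullback in $\cats{E}$ of the unsliced pullback cotensor, then conclude by \cref{item:pbcot} together with stability of (trivial) fibrations under pullback from \cref{item:retract_pb_com}. Your explicit verification of the limit bookkeeping, including the observation that the constraint over $A \cotensor X$ is forced by naturality and $hp = g$, is accurate and simply fills in what the paper leaves implicit.
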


\begin{proof}
  This follows from their validity in~$\cats E$, \ie, \cref{item:pbcot} of \cref{lem:basic_pointwise_fibrations} and the stability of fibrations and trivial fibration under pullback, \ie, \cref{item:retract_pb_com} of \cref{lem:basic_pointwise_fibrations}.
\end{proof}

\begin{theorem}\label{fibcat-fiberwise} Let $X \in \cats{E}$. Then pointwise weak equivalences, fibrations and trivial fibrations equip the category $\cats{E} \fslice X$ with
the structure of a fibration category.
\end{theorem}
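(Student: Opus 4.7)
The plan is to mirror the proof of \cref{fibcat-Kan} axiom by axiom, working now in the slice category, and reducing everything pointwise to the corresponding fact in a slice of $\sSet$ via $\Hom_\sSet(E,\uvar)$. Note first that pointwise weak equivalences, fibrations, and trivial fibrations in $\cats{E} \fslice X$ are by definition those maps whose underlying map in $\cats{E}$ has the corresponding property; in particular, the identity $\trivfib = \fib \cap \weq$ is inherited from \cref{fibcat-Kan}. I would then verify the four fibration category axioms.

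For \axm{F1}, the terminal object in $\cats{E} \fslice X$ is $\id_X$, and every object of $\cats{E} \fslice X$ is, by definition, a fibration, so fibrant. For \axm{F2}, pullbacks along fibrations in $\cats{E} \fslice X$ are computed as pullbacks in $\cats{E}$, which exist by assumption on $\cat{E}$, and fibrations and trivial fibrations are closed under pullback by \cref{item:retract_pb_com} of \cref{lem:basic_pointwise_fibrations}. For \axm{F4}, $2$-out-of-$6$ for pointwise weak equivalences follows from the pointwise definition \cref{pwe} and $2$-out-of-$6$ in $\sSet$.

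The only step requiring care is \axm{F3}. By Brown's factorisation lemma it suffices to construct a path object for an arbitrary $(Y, p) \in \cats{E} \fslice X$, i.e.\ to factor the fibrewise diagonal $Y \to Y \pull_X Y$ as a weak equivalence followed by a fibration. I would take the fibrewise cotensor
\[
Y \to \simp{1} \cotensorsSetsEslice Y \to Y \pull_X Y \text{,}
\]
where $\simp{1} \cotensorsSetsEslice Y$ denotes the cotensor in the slice, namely the pullback of $\simp{1} \cotensor Y \to Y$ along $X \to \simp{1} \cotensor X$. Applying $\Hom_\sSet(E, \uvar)$ for any $E \in \cat{E}$, which preserves pullbacks and cotensors by finite simplicial sets and hence preserves the fibrewise cotensor, the factorisation is sent to the standard path object factorisation of the fibrewise diagonal of $\Hom_\sSet(E, Y) \to \Hom_\sSet(E, X)$ in $\sSet \slice \Hom_\sSet(E, X)$; this is a weak equivalence followed by a fibration in $\sSet$ because $\Hom_\sSet(E, p)$ is a Kan fibration by \cref{prop:fibration_pointwise}. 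Hence the first map is a pointwise weak equivalence. That the second map is a fibration over $X$ (hence a fibration in $\cats{E} \fslice X$) is a special case of the pullback cotensor property \cref{pullback-cotensor-slice} applied to the cofibration $\bdsimp{1} \to \simp{1}$ and the fibration $p \from Y \to X$.

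The only mildly delicate point is thus checking that the fibrewise cotensor delivers the intended path object; everything else transports directly from \cref{fibcat-Kan} and \cref{lem:basic_pointwise_fibrations} once one notes that $\Hom_\sSet(E,\uvar)$ restricts to a functor $\cats{E} \slice X \to \sSet \slice \Hom_\sSet(E, X)$ preserving the relevant enriched structure.
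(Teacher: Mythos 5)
Your proposal is correct and follows essentially the same route as the paper, which likewise verifies the axioms by repeating the argument of \cref{fibcat-Kan} in the slice and invokes \cref{pullback-cotensor-slice} to handle the path-object axiom via the fibrewise cotensor with $\simp{1}$. The only difference is that you spell out the pointwise reduction along $\Hom_\sSet(E,\uvar) \from \cats{E} \slice X \to \sSet \slice \Hom_\sSet(E,X)$ explicitly, which the paper leaves implicit.
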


\begin{proof}
  All axioms are verified by the same argument as in the proof of \cref{fibcat-Kan}.
  For (F3), we use \cref{pullback-cotensor-slice} which is a fiberwise version of \cref{item:pbcot} of \cref{lem:basic_pointwise_fibrations}
  used in the proof of \cref{fibcat-Kan}.
\end{proof}

 We conclude this section with a basic observation on homotopy equivalences.

\begin{proposition} \label{h-equiv-is-we}
  Homotopy equivalences in $\cats{E}$ (and in particular, in $\cats{E} \slice X$ for all $X \in \cats{E}$) are pointwise weak equivalences.
\end{proposition}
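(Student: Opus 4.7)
The plan is to reduce the statement to the known fact that homotopy equivalences in $\sSet$ are weak equivalences, by transferring homotopies from $\cats{E}$ to $\sSet$ via the functors $\Hom_\sSet(E, \uvar)$.

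First I would unwind the definition of a homotopy equivalence in $\cats{E}$: this is a map $f \from X \to Y$ admitting $g \from Y \to X$ together with homotopies $H \from gf \htp \id_X$ and $K \from fg \htp \id_Y$ in the sense of \eqref{equ:homotopy}. The crucial ingredient, already recorded in the excerpt right after the definition of homotopy, is that for each $E \in \cat{E}$ the functor $\Hom_\sSet(E, \uvar) \from \cats{E} \to \sSet$ preserves the cotensor with $\simp{1}$ and therefore preserves homotopies. Applying this functor to $f$, $g$, $H$, $K$ yields data exhibiting $\Hom_\sSet(E, f)$ as a homotopy equivalence in $\sSet$. Since homotopy equivalences in $\sSet$ are Kan--Quillen weak equivalences, we conclude that $\Hom_\sSet(E, f)$ is a weak equivalence in $\sSet$ for every $E$, which is exactly the definition of pointwise weak equivalence (\cref{pwe}).

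For the slice version, the same argument applies once one notes that the functor $\Hom_\sSet(E, \uvar) \from \cats{E} \slice X \to \sSet \slice \Hom_\sSet(E, X)$ preserves the cotensor with $\simp{1}$ in the slice (this is explicitly recorded in the paragraph preceding \cref{pullback-cotensor-slice}); hence it sends a homotopy equivalence in $\cats{E} \slice X$ to a homotopy equivalence in $\sSet \slice \Hom_\sSet(E, X)$. The underlying map of such a fiberwise homotopy equivalence is again a homotopy equivalence, hence a weak equivalence, in $\sSet$.

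There is essentially no obstacle here: the whole proof is a two-step transfer (homotopy in $\cats{E}$ $\Rightarrow$ homotopy in $\sSet$ $\Rightarrow$ weak equivalence in $\sSet$) that rests entirely on the preservation of the $\simp{1}$-cotensor by $\Hom_\sSet(E, \uvar)$, both on $\cats{E}$ and on its slices.
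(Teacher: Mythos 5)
Your proposal is correct and follows exactly the paper's own argument: the functors $\Hom_\sSet(E,\uvar)$ preserve the $\simp{1}$-cotensor and hence homotopies and homotopy equivalences, so the claim reduces to the fact that homotopy equivalences are weak equivalences in $\sSet$. The slice case is handled the same way in both.
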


\begin{proof}
  The functors $\Hom_\sSet(E, \uvar)$ preserve homotopies and hence also homotopy equivalences.
  Thus the conclusion follows from the fact that homotopy equivalences are weak equivalences in~$\sSet$.
\end{proof}

  \section{Lextensive categories and complemented inclusions}\label{extensive-categories}

This section, \cref{sec:enrwfs} and \cref{sec:cofibrations} constitute the second part of the paper,
whose ultimate goal is to construct two weak factorisation systems on $\cats E$, whose right
classes of maps are the fibrations and trivial fibrations of \cref{sec:fib_cat}, assuming that
$\cats E$ is a countably lextensive category. This section recalls some basic facts about lextensive categories.
Throughout it, we consider a fixed category with finite limits~$\cat{E}$
and study diagrams in~$\cat{E}$ indexed by a category $D$.
When convenient, we will regard cones under such diagrams as diagrams over
the category~$D^\rhd$, obtained by adding a new terminal object~$\star$ to~$D$.
We start by recalling the general notion of van Kampen colimit~\cites{Lurie,Rezk} in our setting.

\begin{definition} Let $Y_\bullet  \colon D \to \cat{E}$ be a diagram and assume
   $Y_\star = \colim_{d \in D} Y_d$ is its colimit in $\cat{E}$. We say that $Y_\star$ is
   \begin{parts}
  \item \emph{universal}, if it is preserved by pullbacks, i.e., if for every map $X_\star \to Y_\star$,
    $X_\star$ is the colimit of the induced diagram $X_d = X_\star \fibprod_{Y_\star} Y_d$.
  \item \emph{effective}, if given a \cartesian natural transformation $X \to Y$, the diagram $X$ has a colimit $X_\star$,
    and all the squares
    \begin{tikzeq*}
      \matrix[diagram]
      {
        |(Xd)| X_d & |(Xs)| X_\star \\
        |(Yd)| Y_d & |(Ys)| Y_\star \\
      };

      \draw[->] (Xd) to (Xs);
      \draw[->] (Yd) to (Ys);
      \draw[->] (Xd) to (Yd);
      \draw[->] (Xs) to (Ys);
    \end{tikzeq*}
    are pullback squares, i.e., the extended natural transformation over $D^\rhd$ is also \cartesian.
  \item \emph{van Kampen}, if it is both universal and effective.
  \end{parts}
\end{definition}

\begin{lemma}\label{van-Kampen-pseudo-limit}
  A colimit $Y_\star = \colim_{d \in D} Y_d$ in $\cat{E}$ is van Kampen \iff{} it is preserved by
  the pseudo-functor $\cat{E}^\op \to \Cat$ sending each $X \in \cat{E}$ to the slice category $\cat{E} \slice X$
  (with morphisms acting by pullbacks).
  In other words, the slice category $\cat{E} \slice Y_\star$ is the pseudo-limit $\lim_{d \in D} (\cat{E} \slice Y_d)$.
\end{lemma}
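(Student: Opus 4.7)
The plan is to identify both sides of the claimed equivalence concretely and then match up ``effective = essentially surjective'' and ``universal = fully faithful'' for the pullback comparison functor.

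First, I would make explicit what the pseudo-limit looks like. An object of $\lim_{d \in D}(\cat{E} \slice Y_d)$ is precisely a \cartesian natural transformation $X \to Y$ of diagrams $D \to \cat{E}$, and a morphism is a family of maps $X_d \to X'_d$ in each slice $\cat{E} \slice Y_d$ compatible with the restriction/pullback functors along the arrows of $D$. The candidate equivalence is the functor
\[
F \from \cat{E} \slice Y_\star \longrightarrow \lim_{d \in D}(\cat{E} \slice Y_d)
\]
sending $X_\star \to Y_\star$ to the family $(X_\star \pull_{Y_\star} Y_d \to Y_d)_{d \in D}$; this family is automatically \cartesian by the pasting lemma for pullbacks.

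For the ``van Kampen $\Rightarrow$ equivalence'' direction: essential surjectivity of $F$ is exactly effectivity, which guarantees that every \cartesian natural transformation $X \to Y$ arises, up to the canonical isomorphism, as the pullback family of some $X_\star = \colim_{d} X_d \to Y_\star$. For full faithfulness, given $X_\star \to Y_\star$ and $X'_\star \to Y_\star$, universality applied to $X_\star \to Y_\star$ (and the fact that the forgetful $\cat{E} \slice Y_\star \to \cat{E}$ creates colimits) gives $X_\star = \colim_d X_d$ in $\cat{E} \slice Y_\star$, so maps $X_\star \to X'_\star$ over $Y_\star$ biject with compatible families $X_d \to X'_\star$ over $Y_\star$, which in turn biject (by the universal property of pullback) with compatible families $X_d \to X'_d$ over $Y_d$.

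For the converse ``equivalence $\Rightarrow$ van Kampen'': universality follows by the standard trick of testing maps into an arbitrary $Z \in \cat{E}$ via the projection $Z \times Y_\star \to Y_\star$. Concretely, given $X_\star \to Y_\star$ and $X_d \defeq X_\star \pull_{Y_\star} Y_d$, maps $X_\star \to Z$ correspond to maps $X_\star \to Z \times Y_\star$ over $Y_\star$; applying the equivalence $F$, these correspond to compatible families $X_d \to Z \times Y_d$ over $Y_d$, hence to compatible families $X_d \to Z$. This identifies $X_\star$ with $\colim_d X_d$. Once universality is in hand, effectivity follows by combining essential surjectivity (to obtain some $X_\star \to Y_\star$ realising a given \cartesian transformation) with universality (to identify this $X_\star$ as $\colim_d X_d$ and to see that the extended transformation over $D^\rhd$ is \cartesian by construction).

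The only mildly delicate step is bookkeeping the 2-categorical subtleties of the pseudo-limit (pullback functors compose only up to coherent isomorphism), but since each $\cat{E} \slice Y_d$ is a 1-category and every natural transformation in sight is between pullback diagrams, these coherences reduce to the universal properties of iterated pullbacks and cause no real trouble.
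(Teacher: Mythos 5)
Your proposal is correct and takes essentially the same approach as the paper: both construct the comparison functor $\cat{E} \slice Y_\star \to \lim_{d}(\cat{E} \slice Y_d)$, identify objects of the pseudo-limit with \cartesian transformations, and use the $Z \times Y_\star$ test-object trick for the converse. The only difference is packaging — the paper exhibits $\colim$ as a left adjoint and matches universality/effectivity with the counit/unit being isomorphisms, whereas you verify full faithfulness and essential surjectivity directly — and you correctly note that effectivity needs universality in addition to essential surjectivity in the converse direction.
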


\begin{proof}
Pullback along the structure morphisms of $Y_\star$ induces a functor $P \from \cat{E} \slice Y_\star \to \lim_d (\cat{E} \slice Y_d)$. We need to show that this functor is an equivalence if and only if the colimit of $Y_\bullet$ is a van Kampen colimit.

An object of $\lim_d (\cat{E} \slice Y_d)$ can be identified with a \cartesian transformation $X \to Y$.
If colimits of diagrams \cartesian over $Y_\bullet$ exist, then taking the colimit yields a left adjoint to the functor above:
\[
\colim \from \lim_d (\cat{E} \slice Y_d)  \leftrightarrows \cat{E} \slice Y_\star \from P
\text{.}\]
Conversely, we claim that if $P$ has a left adjoint, then the left adjoint computes the colimits of diagrams that are \cartesian over $Y_\bullet$.
Indeed, assume that the pullback functor $P \from \cat{E} \slice Y_\star \to \lim_d (\cat{E} \slice Y_d)$ has a left adjoint $X_\bullet \mapsto X_\star$, and let $Z$ be an arbitrary object of $\cat{E}$.
A map $X_\star \to Z$ in $\cat{E}$ is the same as a map $X_\star \to Z \times Y_\star$ in $\cat{E} \slice Y_\star$, which by the adjunction formula is the same as a natural transformation $X_d \to Z \times Y_d$ over $Y_\bullet$, but this is exactly the same as a natural transformation $X_d \to Z$ in $\cat{E}$, and hence this shows that $X_\star$ is the colimit of $X_d$.

Now, $Y_\star$ is universal \iff{} the counit of this adjunction is an isomorphism and it is effective \iff{} the unit is an isomorphism.
Hence, the colimit $Y_\star$ of $Y_\bullet$ is van Kampen if and only if the pullback functor described above has a left adjoint such that the unit and counit of the adjunction are isomorphisms, \ie, if and only if it is an equivalence.
\end{proof}

For example, an initial object $0$ is always vacuously effective and it is universal \iff{} it is \emph{strict}, i.e., if there is a morphism $X \to 0$,
then $X$ is initial itself. Instead, a coproduct $Y_\star = \bigcoprod_d Y_d$ is van Kampen \iff{} it is universal and \emph{disjoint},
\ie, $Y_d \pull_{Y_\star} Y_{d'}$ is initial for $d \neq d'$. This can be seen inspecting the proof of~\cite{CLW}*{Proposition~2.14}.

\begin{lemma}\label{lem:levelwise_VK_colimit} Let  $D$ be a small category.
 Let $Y_\bullet\from C \to \cat{E}^D$ be a diagram such that $Y_\bullet(d)$ admits a van Kampen colimit in $\cat{E}$ for all $d \in D$.
 Then $Y_\bullet$ has a van Kampen colimit in $\cat{E}^D$.
\end{lemma}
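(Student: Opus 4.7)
The plan is to exploit the standard fact that both colimits along $C$ and finite limits in a functor category $\cat{E}^D$ are computed pointwise whenever they exist pointwise. Consequently, both van Kampen conditions for the colimit of $Y_\bullet$ in $\cat{E}^D$ will reduce to their pointwise-in-$d$ counterparts in $\cat{E}$, which hold by hypothesis.

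Concretely, I would first define $Y_\star \in \cat{E}^D$ by $Y_\star(d) \defeq \colim_{c \in C} Y_c(d)$, which exists by assumption. The transition maps along $d \to d'$ in $D$ are induced via the universal property of each colimit applied to the family of maps $Y_c(d) \to Y_c(d')$, natural in $c$; functoriality is automatic from the usual uniqueness argument. The standard pointwise-colimit argument then shows that $Y_\star$ is the colimit of $Y_\bullet$ in $\cat{E}^D$.

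For universality, given any $X_\star \to Y_\star$ in $\cat{E}^D$, I would set $X_c \defeq X_\star \pull_{Y_\star} Y_c$ in $\cat{E}^D$. Since finite limits in $\cat{E}^D$ are pointwise, we have $X_c(d) = X_\star(d) \pull_{Y_\star(d)} Y_c(d)$ in $\cat{E}$, and universality of each $Y_\star(d)$ yields $X_\star(d) = \colim_c X_c(d)$; reassembling pointwise, $X_\star = \colim_c X_c$ in $\cat{E}^D$. For effectiveness, note that a cartesian natural transformation $X \to Y$ over $C$ in $\cat{E}^D$ is, again by pointwiseness of pullbacks, exactly a diagram that is pointwise in $d$ a cartesian transformation over $C$ in $\cat{E}$. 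Effectiveness of each $Y_\star(d)$ then supplies colimits $X_\star(d) \defeq \colim_c X_c(d)$ together with the required pullback squares; these assemble into $X_\star \in \cat{E}^D$, and the extended squares over $D^\rhd$ are pullbacks pointwise, hence pullbacks in $\cat{E}^D$.

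I do not expect any real obstacle: the argument is essentially the observation that the van Kampen property is a pointwise condition in a functor category, once one has noted that both sides of that condition (colimits along $C$ and finite limits for the pullbacks) are themselves pointwise. The only minor care needed is in checking that a transformation is cartesian in $\cat{E}^D$ if and only if it is pointwise cartesian in $\cat{E}$, which follows immediately from the pointwiseness of pullbacks.
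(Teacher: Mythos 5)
Your proof is correct and takes essentially the same route as the paper: both arguments reduce the van Kampen property of $\colim_C Y_\bullet$ in $\cat{E}^D$ to the pointwise hypothesis, using that colimits along $C$ and the relevant pullbacks in $\cat{E}^D$ are computed levelwise in $d$. The paper merely packages this via the slice-category characterisation of \cref{van-Kampen-pseudo-limit} (an object over $\colim_C Y_c$ is levelwise a \cartesian $C$-diagram over $Y_\bullet(d)$, hence a \cartesian $C$-diagram in $\cat{E}^D$) instead of checking universality and effectiveness separately; the only cosmetic slip in your write-up is that the cone category should be $C^\rhd$ rather than $D^\rhd$.
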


\begin{proof}
  If each $d \in D$, $\colim_{c \in C} Y_c(d)$ exists in $\cat{E}$, then it is functorial in $d$ and it is a colimit in~$\cat{E}^D$.
  In particular, an object over $\colim_{c} Y_c$ is a $D$-indexed diagram $X(d) \to \colim_C Y_c(d)$,
  which as these colimits are all van Kampen is the same as a $(C \times D)$-indexed diagram $X_c(d) \to Y_c(d)$
  which is \cartesian in the $C$-direction, which in turn is the same as a $C$-indexed diagram $X_\bullet \in \cat{E}^D$
  which is \cartesian over $Y_\bullet$, hence proving the lemma.
\end{proof}

\medskip

We now recall the definition of various kinds of lextensive categories~\cite{CLW}.

\begin{definition} \label{thm:lextensive} Let $\cat{E}$ be a category with finite limits. For a regular cardinal $\alpha$, we say that~$\cat{E}$ is \emph{$\alpha$-lextensive}
%\footnote{
%    \cite{CLW} define lextensive in a slightly more general way in that they do not require existence of all finite limits, but
%    only of pulbacks along complemented inclusions.
%    We will always assume existence of finite limits since they are essential for many of our results.
%    \cite{CLW} reserve the name \emph{llextensive category} for this more restrictive notion.
%    \fxnote{This footnote was written when we wrote ``extensive'' instead of ``lextensive''. Update or delete footnote. Perhaps just add a reference to~\cite{CLW} for the definition?}}
 if $\alpha$-coproducts exist and are van Kampen colimits. Furthermore, we say that~$\cat{E}$ is
  \begin{parts}
 \item  \emph{lextensive} if it is $\omega$-lextensive, i.e., finite coproducts exist and are van Kampen colimits,
 \item  \emph{countably lextensive} if it is $\omega_1$-lextensive, i.e., countable coproducts exist and are van Kampen colimits,
 \item  \emph{completely lextensive} if it is $\alpha$-lextensive for all $\alpha$, i.e., all small coproducts exist and are van Kampen colimits.
 \end{parts}
\end{definition}

\begin{example} \label{thm:example-of-lextensive}
  There are numerous examples of  lextensive categories.
  \begin{parts}
  \item Any presheaf category is completely lextensive.
    In particular, for any group $G$ the category of $G$-sets is countably lextensive.
  \item More generally, any Grothendieck topos is completely lextensive.
    In fact, Giraud's theorem characterises Grothendieck toposes as the locally presentable categories in which
    coproducts and (in an appropriate sense) quotients by equivalence relations are van Kampen colimits.
    % However, elementary toposes are only finite lextensive in general.
  \item The category of topological spaces is completely lextensive.
    The same is true for many of its subcategories such as categories of Hausdorff spaces, compactly generated spaces,
    weakly Hausdorff compactly generated spaces, etc.
  % \item The category of smooth manifolds is countably lextensive, if one relax the secound countability assumption it is even completely lextensive.
  % Actually no: It does not have finite limits. It is lextensive in the sense of the more general definition without all finite limits.
  \item The category of affine schemes is lextensive, the category of schemes is completely lextensive.
  \item The category of countable sets is countably lextensive.
  \item A category with finite limits $\cat{E}$ has the free coproduct completion which can be constructed as
    the category $\Fam \cat{E}$ of families of objects in $\cat{E}$.
    Explicitly, an object is pair $(S, (X_s)_{s \in S})$ where~$S$ is a set and~$(X_s)_{s \in S}$ is an $S$-indexed family of objects of $\cat{E}$.
    A morphism $(S, (X_s)) \to (S', (X'_{s'}))$ consists of a function $f \from S \to S'$ and morphisms $X_s \to X'_{f(s)}$ for all $s \in S$.
    $\Fam \cat{E}$ is completely lextensive.
    The $\alpha$-coproduct completion, $\Fam_\alpha \cat E$, obtained by restricting to $\alpha$-small families, is an $\alpha$-lextensive category.
  \end{parts}
\end{example}

For $S \in \Set$ and $X \in \cat{E}$, we write $S \tensorSetE X$ for the tensor of $X$ with $S$, when it
exists. If~$\cat{E}$ has countable coproducts, then this tensor exists for countable $S$ and can be
defined as
\begin{equation}
\label{equ:tensorSetE}
S \tensorSetE X = \bigcoprod_{s \in S} X \text{.}
\end{equation}
The global sections functor $\cat{E}(1, -) \from \cat{E} \to \Set$ has a partial left adjoint, defined by mapping a countable set $S$ to
\begin{equation}
\label{equ:fset}
\fset{S} \eqdef S \tensorSetE 1  = \bigcoprod_{s \in S} 1 \, .
\end{equation}
We extend this notation to diagram categories in a levelwise fashion: if $\cat{E}$ has countable coproducts and $D$ a small category, then the levelwise global sections functor $\cat{E}^D \to \Set^D$ has a partial left adjoint, sending a levelwise countable diagram $K \in \Set^D$ to $\fset{K} \in \cat{E}^D$, which is defined by levelwise application of $S \mapsto \fset{S}$. These functors will be used frequently in the paper. For example, we will use them in \cref{sec:cofibrations} to transfer the sets of boundary inclusions and horn inclusions in \eqref{equ:boundary-horns} from $\sSet$ to $\cats E$, so as to obtain generating sets for weak factorisation systems in
$\cats E$. We establish some of their basic properties in the next lemmas.

\begin{lemma} \label{product-with-discrete}
If $\cat{E}$ is countably lextensive, then for every countable set $S$ and $X \in \cat{E}$, we have $\fset{S} \times X \iso S \tensorSetE X$, naturally in $S$.
\end{lemma}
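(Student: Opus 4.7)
The plan is to exploit the universality of countable coproducts that comes from $\cat{E}$ being countably lextensive. By the definitions in \eqref{equ:tensorSetE} and \eqref{equ:fset}, we have $\fset{S} = \coprod_{s \in S} 1$ and $S \tensorSetE X = \coprod_{s \in S} X$, so the task reduces to identifying $\fset{S} \times X$ with $\coprod_{s \in S} X$.

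First I would observe that $\fset{S} \times X$ is, tautologically, the pullback of $\fset{S} \to 1$ along $X \to 1$. Since $\cat{E}$ is countably lextensive, the countable coproduct $\fset{S} = \coprod_{s \in S} 1$ is a van Kampen colimit and in particular universal; that is, pulling back the coproduct cone along $X \to 1$ again yields a colimit cone. Applying this to $X \to 1$ gives
\[
\fset{S} \times X \iso \coprod_{s \in S}(1 \times_1 X) \iso \coprod_{s \in S} X = S \tensorSetE X,
\]
where the second isomorphism uses $1 \times_1 X \iso X$ summand by summand.

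For naturality in $S$, given a function $f \from S \to S'$ the induced maps $\fset{f} \from \fset{S} \to \fset{S'}$ and $f \tensorSetE X \from S \tensorSetE X \to S' \tensorSetE X$ are both determined by the universal property of the coproduct, sending the $s$-th coproduct inclusion to the $f(s)$-th one. The isomorphism constructed above is assembled summand-wise from the canonical isomorphisms $1 \times_1 X \iso X$, which are visibly compatible with reindexing along $f$, giving the required commutative square.

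I do not expect any real obstacle: the content of the lemma is essentially the unpacking of the definition of universality of countable coproducts in the special case of pulling back a coproduct of copies of the terminal object, so the proof should be short and formal.
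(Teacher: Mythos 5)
Your proof is correct and follows essentially the same route as the paper, which invokes countable distributivity (product with $X$ preserves countable coproducts, a consequence of universality of van Kampen coproducts) and reduces to $1 \times X \iso X$; you simply unfold that distributivity directly from universality. The only cosmetic point is that universality is applied by pulling back along the projection $\fset{S}\times X \to \fset{S}$ rather than ``along $X \to 1$'', but the computation $\coprod_{s\in S}(1\times_1 X)$ you write down is exactly the right one.
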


\begin{proof}
Since $\cat{E}$ is countably lextensive, it is countably distributive.
Thus, product with $X$ preserves countable coproducts, in particular tensors with countable sets.
This reduces the claim to the natural isomorphism $1 \times X \iso X$.
\end{proof}

The next lemma will be used, sometimes implicitly, in \cref{sec:cofibrations}.

\begin{lemma}\label{L-finite-limits}
If $\cat{E}$ is countably lextensive, then the functor $S \mapsto \fset{S}$ from countable sets
to $\cat E$ preserves finite limits.
\end{lemma}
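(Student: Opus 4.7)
My plan is to verify preservation of the terminal object and of pullbacks, which together imply preservation of all finite limits. The terminal case is immediate since $\fset{\{*\}} = 1$. For a cospan $f \from S \to T$, $g \from U \to T$ between countable sets, I would decompose fiberwise over $T$: setting $S_t \defeq f^{-1}(t)$ and $U_t \defeq g^{-1}(t)$, we have $S = \bigcoprod_{t \in T} S_t$ and $U = \bigcoprod_{t \in T} U_t$ in countable sets, and hence $S \pull_T U = \bigcoprod_{t \in T} (S_t \times U_t)$. Since $\fset{-}$ is defined as an $S$-indexed coproduct of copies of $1$, it preserves countable coproducts by construction, so these decompositions transport to $\cat{E}$: $\fset{S} = \bigcoprod_{t \in T} \fset{S_t}$ over $\fset{T} = \bigcoprod_{t \in T} 1$, where each $\fset{S_t}$ maps into the $t$-th summand of $\fset{T}$, and similarly for $U$.

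Next I would compute $\fset{S} \pull_{\fset{T}} \fset{U}$ in $\cat{E}$ using that the coproduct $\fset{T}$ is van Kampen. By universality, the pullback decomposes as a coproduct over $t \in T$ of the fiberwise pullbacks along each inclusion $1 \hookrightarrow \fset{T}$; by disjointness, the fiber of $\fset{S} \to \fset{T}$ over the $t$-th copy of $1$ is exactly $\fset{S_t}$, and similarly for $U$. Therefore
\[
\fset{S} \pull_{\fset{T}} \fset{U} \iso \bigcoprod_{t \in T} \bigl( \fset{S_t} \times \fset{U_t} \bigr).
\]
Applying \cref{product-with-discrete} to each factor yields $\fset{S_t} \times \fset{U_t} \iso \fset{S_t \times U_t}$; reassembling the coproduct (using once more that $\fset{-}$ preserves coproducts) gives $\fset{\bigcoprod_t (S_t \times U_t)} = \fset{S \pull_T U}$, as required. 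The main ingredient is the van Kampen property of $\fset{T}$, which is precisely what is needed to decompose the pullback compatibly with the coproduct structure; beyond this, the argument is essentially bookkeeping.
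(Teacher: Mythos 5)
Your proof is correct and follows essentially the same route as the paper's: both decompose the pullback of countable sets as a coproduct over $T$ of product diagrams and then use countable lextensivity (via the distributivity underlying \cref{product-with-discrete}) to see that $\fset{(\uvar)}$ preserves these; you merely spell out the van Kampen bookkeeping that the paper leaves implicit.
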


\begin{proof}
  The functor $S \mapsto \fset{S}$ preserves terminal objects by definition.
  It also preserves pullbacks.
  Indeed, every pullback diagram of (countable) sets decomposes as a (countable) coproduct of
  product diagrams.
  These products are preserved since products preserve countable coproducts in each variable
  by lextensivity.
\end{proof}

The next lemma will be applied in \cref{sec:dependent-products}.

\begin{lemma}\label{lem:slice_presheaves_set}
  Let $\cat{E}$ be an $\alpha$-lextensive category.
  If $D$ is a small category and $S \from D \to \Set$ is a functor which takes values in $\alpha$-small sets, then there is an equivalence of categories
  \begin{equation*}
    \cat{E}^D \slice \fset{S} \,\simeq\, \cat{E}^{D \slice S}
  \end{equation*}
  where $D \slice S$ denotes the category of elements of $S$.
\end{lemma}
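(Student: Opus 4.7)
The plan is to reduce the equivalence to the van Kampen property of $\alpha$-small coproducts in $\cat{E}$, applied pointwise over $D$.

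First, I would establish the ``one object'' case: for any $\alpha$-small set $T$, there is an equivalence $\cat{E} \slice \fset{T} \simeq \cat{E}^T$, where $T$ is regarded as a discrete category. Since $\alpha$-small coproducts in $\cat{E}$ are van Kampen by hypothesis, \cref{van-Kampen-pseudo-limit} identifies $\cat{E} \slice \fset{T}$ with the pseudo-limit $\prod_{t \in T} (\cat{E} \slice 1) \simeq \cat{E}^T$ of slices. Concretely, an object $p \from X \to \fset{T}$ corresponds to its family of fibres $(X_t)_{t \in T}$ over the coproduct summands, and $X$ is recovered as $\bigcoprod_{t \in T} X_t$.

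Second, I would assemble these equivalences over $D$. An object of $\cat{E}^D \slice \fset{S}$ is a natural transformation $X \to \fset{S}$, that is, for each $d \in D$ a morphism $X(d) \to \fset{S(d)}$ in $\cat{E} \slice \fset{S(d)}$ together with, for each $f \from d \to d'$ in $D$, a commuting naturality square. Applying the first step objectwise, the component at $d$ corresponds to a family $(X_{d,s})_{s \in S(d)}$. Since $\fset{S(d')}$ is a disjoint coproduct, the naturality square forced by $f \from d \to d'$ decomposes uniquely as a family of maps $X_{d,s} \to X_{d', S(f)(s)}$ indexed by $s \in S(d)$, namely one for each summand of the domain.

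Third, I would recognise this as precisely the data of a functor $D \slice S \to \cat{E}$: a morphism $(d, s) \to (d', s')$ in $D \slice S$ is exactly a morphism $f \from d \to d'$ with $S(f)(s) = s'$, matching the maps $X_{d,s} \to X_{d', S(f)(s)}$ obtained above. The inverse functor would send $Y \from D \slice S \to \cat{E}$ to the diagram $d \mapsto \bigcoprod_{s \in S(d)} Y(d, s)$ equipped with its canonical map to $\fset{S(d)}$, with functoriality in $d$ provided by the maps $Y(d,s) \to Y(d', S(f)(s))$. I expect the main bookkeeping step to be verifying the naturality of the two constructions in $D$ and checking that the unit and counit are isomorphisms, but no real difficulty arises beyond the van Kampen condition itself, which is doing all the work.
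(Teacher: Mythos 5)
Your proposal is correct and follows essentially the same route as the paper: the paper constructs the same pair of functors explicitly (sending $F$ to $d \mapsto \bigcoprod_{s \in S(d)} F(d,s)$ one way, and taking fibres over the coproduct summands of $\fset{S(d)}$ the other way) and checks that the counit is an isomorphism by universality and the unit by effectivity of the coproducts. Your objectwise use of \cref{van-Kampen-pseudo-limit} together with the decomposition of the naturality squares is just a repackaging of that same argument.
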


\begin{proof}
  The proof is similar to that of \cref{van-Kampen-pseudo-limit}.
  There is a functor $\cat{E}^{D \slice S} \to \cat{E}^D \slice \fset{S}$ which sends a functor $F \from D \slice S \to \cat{E}$ to the functor $V \from D \to \cat{E}$ defined by:
  \[
  V(d) = \bigcoprod_{s \in S(d)} F(d,s)
  \text{.}\]
  It comes with an obvious map to $\fset{S}$, which was defined as $\fset{S}(d) = \bigcoprod_{s \in S(d)} 1$.
  This functor has a right adjoint $\cat{E}^D \slice \fset{S} \to \cat{E}^{D \slice S}$ sending a functor $V \from D \to \cat{E}$ with a natural transformation $ V \to \fset{S}$ to the functor $F \from D \slice S \to \cat{E}$ where $F(d,s)$ is defined as the following pullback:
\begin{tikzeq*}{fibration-extension-square}
\matrix[diagram,column sep={6em,between origins}]
{
  |(0)| F(d,s)  & |(1)| V(d)                  \\
  |(2)| 1       & |(3)| \fset{S}(d)  \text{.} \\
};

\draw[->]  (0) to (1);
\draw[->]  (1) to (3);
\draw[->]  (0) to (2);
\draw[->]  (2) to (3);

\pb{0}{3};
\end{tikzeq*}
These two adjoints functor are equivalences.
Indeed, the counit of this adjunction is an isomorphism by universality of coproducts and the unit is an isomorphism by effectivity of coproducts.
\end{proof}

We now turn our attention to the class of complemented inclusions.
These will be useful for construction of certain colimits whose existence is not immediately obvious in lextensive categories and,
especially, in their diagram categories.
First of all, recall that
  a morphism $i \from A \to B$ in $\cat{E}$  is a \emph{complemented inclusion} if it has a \emph{complement}, i.e.,
  a morphism $j \from C \to B$ \st{} $i$ and $j$ exhibit $B$ as a coproduct of $A$ and $C$ in $\cat{E}$.
  In other words, $i$ is isomorphic to the coproduct inclusion $A \to A \coprod C$.
We will often say simply that $C$ is a complement of $A$.
The notation $A \cto B$ will be sometimes used to indicate \di{}s.
Note that \di{}s are sometimes (e.g., in our previous work \cites{GSS,H}) called \emph{decidable inclusions} in reference to the notion of decidability
in constructive logic.

\begin{lemma}\label{coproduct-decidable-colimit}  \label{lem:pushout_decidable} \hfill
  \begin{parts}
   \item If $\cat{E}$ is lextensive, then the pushout of a complemented inclusion along any morphism exists and is again a complemented inclusion.
    Moreover, such pushouts are preserved by functors (and pseudo-functors) that preserve finite coproducts and thus are van Kampen colimits.
   \item \label{extensive-sequential-colimit}
    If $\cat{E}$ is countably lextensive, then the colimit of a sequence of complemented inclusions exists and is again a complemented inclusion.
    Moreover, such colimits are preserved by functors (and pseudo-functors) that preserve countable coproducts and thus are van Kampen colimits.
 \end{parts}
\end{lemma}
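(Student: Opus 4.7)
The plan is to construct both colimits explicitly as (finite or countable) coproducts, exploiting the complement data to recast them in a form to which (countable) lextensivity applies directly.

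For part (i), fix a complemented inclusion $i \from A \cto B$ with complement $j \from C \to B$, so that $(i, j)$ exhibits $B \iso A \sqcup C$. Given an arbitrary morphism $f \from A \to A'$, I set $B' \eqdef A' \sqcup C$, take $i' \from A' \to B'$ and $j' \from C \to B'$ to be the coproduct inclusions, and define $B \to B'$ to be $f \sqcup \id_C$ under the identification $B \iso A \sqcup C$. To check this is a pushout, I consider an arbitrary cocone $(g \from A' \to D,\, h \from B \to D)$ satisfying $h i = g f$: the coproduct decomposition of $B$ presents $h$ uniquely as a pair $(h i, h j)$, and the compatibility condition forces $h i = g f$, so $(g, h j) \from A' \sqcup C \to D$ is the unique induced map. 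By construction $i'$ has complement $j'$.

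For part (ii), given a sequence $A_0 \cto A_1 \cto A_2 \cto \dotsb$, I inductively choose a complement $C_n$ of $A_n$ in $A_{n+1}$, yielding isomorphisms
\[
A_n \iso A_0 \sqcup C_0 \sqcup \dotsb \sqcup C_{n-1}
\]
under which the transition maps become coproduct inclusions. By countable lextensivity, the countable coproduct $A_\infty \eqdef A_0 \sqcup \bigcoprod_{n \ge 0} C_n$ exists, and I claim it is the colimit, with each $A_n \to A_\infty$ a coproduct inclusion whose complement is $\bigcoprod_{m \ge n} C_m$. The universal property reduces to the observation that a cocone under the sequence is the same data as a map out of $A_0$ together with a compatible family of maps out of each $C_n$.

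Preservation by functors that preserve the relevant coproducts is then immediate, since both constructions are expressed entirely in terms of coproducts. For the van Kampen claim, I apply \cref{van-Kampen-pseudo-limit}: it suffices to verify that slicing $X \mapsto \cat{E} \slice X$ sends the cocones to pseudo-limit cones, which reduces, via the same coproduct descriptions, to universality (pullback preserves the relevant coproducts) and effectivity (a \cartesian transformation over the cocone assembles into a compatible coproduct decomposition upstairs); both are immediate from (countable) lextensivity. The main subtlety I anticipate is the effectivity clause in part (ii), where one must check that pulling back a sequence of complemented inclusions along a \cartesian transformation again yields a sequence of complemented inclusions with compatible complements, but this follows from the stability of the decomposition $A_{n+1} \iso A_n \sqcup C_n$ under pullback, applied one step at a time, together with the van Kampen property of the countable coproduct that assembles them into $A_\infty$.
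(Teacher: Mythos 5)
Your proposal is correct and follows essentially the same route as the paper: both identify the pushout as $A' \sqcup C$ and the sequential colimit as $A_0 \sqcup \bigcoprod_n C_n$, deduce functor-preservation from these coproduct presentations, and obtain the van Kampen property via \cref{van-Kampen-pseudo-limit} from the assumption that the relevant coproducts are van Kampen. The only difference is that you spell out the universal properties and the universality/effectivity checks in more detail than the paper does.
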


%\fxnote{Wouldn't it be more precise to say (for the first part): if $\cat{E}$ has finite coproducts, then pushouts along complemented inclusions exist, and if $\cat{E}$ is lextensive, these are van Kampen colimits (and similarly for the second part).}

\begin{proof}
  If $i \from A \to B$ is a complemented inclusion with complement $C$,
  then the pushout of~$i$ along~$A \to D$ is $C \coprod D$.
  Similarly, if $i_k \from A_k \to A_{k + 1}$ are complemented inclusions with complements~$C_{k + 1}$,
  then $\colim_k A_k$ is $\bigcoprod_k C_k$ (where $C_0 = A_0$).
  The claims on preservation by functors then follow immediately.

  These presentations of colimits as coproducts remain when we consider $\cat{E}$ as a bicategory.
  Recall from \Cref{van-Kampen-pseudo-limit} that a colimit is van Kampen exactly if it is preserved by a certain pseudo-functor.
  Since (finite or countable) coproducts are assumed van Kampen, so are the presented colimits.
\end{proof}

\begin{lemma}\label{decidable-closure}
Assume $\cat{E}$ is lextensive.
  \begin{parts}
  \item \label{decidable-union}
    complemented subobjects in $\cat{E}$ are closed under finite unions.
  \item \label{decidable-limit}
    complemented inclusions in $\cat{E}$ are closed under finite limits, i.e., if $X \to Y$ is
    a natural transformation between finite diagrams in $\cat{E}$ that is
    a levelwise complemented inclusion, then so is the induced morphism $\lim X \to \lim Y$.
  \end{parts}
\end{lemma}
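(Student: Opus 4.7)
For part (i), the plan is to reduce to the binary case by iteration. The empty union is the initial subobject $0 \cto B$, which is complemented with complement $B$ since in any lextensive category $0$ is strict and $B \iso 0 \sqcup B$. For the binary case, given $A_1, A_2 \cto B$ with complements $C_1, C_2$, I would pull back the decomposition $B = A_2 \sqcup C_2$ along both legs of $B = A_1 \sqcup C_1$, producing a four-piece decomposition
\[
B \iso (A_1 \cap A_2) \sqcup (A_1 \cap C_2) \sqcup (C_1 \cap A_2) \sqcup (C_1 \cap C_2)
\]
with intersections taken over $B$. Presenting $A_1 \cup A_2$ as the pushout of $A_1 \hookleftarrow A_1 \cap A_2 \hookrightarrow A_2$, whose existence and complementedness is guaranteed by \cref{coproduct-decidable-colimit}, and computing via the decompositions above of $A_1$ and $A_2$ gives $A_1 \cup A_2 \iso (A_1 \cap A_2) \sqcup (A_1 \cap C_2) \sqcup (C_1 \cap A_2)$, so that the complement in $B$ is $C_1 \cap C_2$.

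For part (ii), I would reduce finite limits to finite products and equalizers, and check that each transfers complementedness. For a binary product of $X_i \cto Y_i$ with complements $C_i$, distributivity of products over coproducts (a consequence of lextensivity) expands $Y_1 \times Y_2$ into the four pieces $X_1 \times X_2$, $X_1 \times C_2$, $C_1 \times X_2$, $C_1 \times C_2$, exhibiting the complement of $X_1 \times X_2$; the terminal case is trivial ($1 = 1 \sqcup 0$), and the general finite case follows by induction. For an equalizer, given parallel pairs $A \rightrightarrows B$ and $A' \rightrightarrows B'$ connected by a natural transformation whose legs $A \cto A'$ and $B \cto B'$ are complemented with complements $C_A, C_B$, the key step is to identify the source equalizer $E \cto A$ with the pullback $E' \times_{A'} A$ of the target equalizer $E' \cto A'$ along $A \cto A'$. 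This uses naturality of the square together with $B \cto B'$ being mono, so that the equalizer condition on $A$ is precisely the restriction of the equalizer condition on $A'$. Once this is in place, pulling $E' \cto A'$ back along the decomposition $A' = A \sqcup C_A$ displays $E$ as complemented in $E'$ with complement $E' \times_{A'} C_A$. The general finite limit case then follows from the standard presentation of $\lim F$ as the equalizer of two maps between finite products indexed by the objects and morphisms of the shape category, functorially in $F$.

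The most delicate step will be the identification $E \iso E' \times_{A'} A$ in the equalizer argument, where the mono-ness of $B \cto B'$ has to be used precisely; the remaining work is bookkeeping of coproduct decompositions via lextensivity, which is routine but needs care to track all four pieces simultaneously in the binary union and product cases.
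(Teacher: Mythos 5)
Your proof is correct. The paper itself gives no argument here, simply asserting that the proof of \cite{GSS}*{Lemma~1.1.4} applies verbatim; your write-up is a correct, self-contained rendering of exactly that standard argument: universality of coproducts gives the four-piece decomposition of $B$ (resp.\ of $Y_1\times Y_2$), which simultaneously shows that the pushout of $A_1 \hookleftarrow A_1\cap A_2 \hookrightarrow A_2$ maps monomorphically into $B$ (so it really is the union) and exhibits its complement $C_1\cap C_2$, while the equalizer case reduces, via monicity of $B\cto B'$, to pulling back the decomposition $A' = A\sqcup C_A$. The only step deserving the extra care you flag is indeed the identification $E \iso E'\times_{A'}A$, and your use of the mono $B\cto B'$ there is exactly right.
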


\begin{proof}
  The proof of~\cite{GSS}*{Lemma~1.1.4} applies verbatim.
\end{proof}

\begin{lemma} \label{finite-limit-and-sequential-colimit}
Assume that $\cat{E}$ is countably lextensive.
Then the full subcategory of $[\omega, \cat{E}]$ consisting of sequences of \di{}s has finite limits which are preserved by the colimit functor
(sending each sequence to its colimit in $\cat{E}$).
\end{lemma}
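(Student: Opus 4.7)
The plan is to verify both claims using the van Kampen property of sequential colimits of complemented inclusions (\cref{lem:pushout_decidable}\ref{extensive-sequential-colimit}), together with closure of complemented inclusions under finite limits (\cref{decidable-closure}\ref{decidable-limit}). Let $\cat{C}$ denote the full subcategory of $[\omega, \cat{E}]$ consisting of sequences of complemented inclusions.

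First I would exhibit finite limits in $\cat{C}$ as the levelwise limits in $[\omega, \cat{E}]$. The terminal object is the constant sequence at $1$, each of whose transitions is the identity, trivially a complemented inclusion with empty complement; the colimit functor plainly sends this to $1 \in \cat{E}$. For pullbacks, given a cospan $A_\bullet \to C_\bullet \leftarrow B_\bullet$ in $\cat{C}$, form the levelwise pullbacks $P_n \defeq A_n \times_{C_n} B_n$; each transition $P_n \to P_{n+1}$ is induced by a natural transformation of cospans whose components are complemented inclusions, so \cref{decidable-closure}\ref{decidable-limit} makes $P_n \to P_{n+1}$ complemented as well. Hence $P_\bullet \in \cat{C}$ serves as the pullback there, and combined with the terminal object this furnishes all finite limits.

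For preservation of pullbacks by the colimit functor, write $A_\star, B_\star, C_\star$ for the sequential colimits and set $X \defeq A_\star \times_{C_\star} B_\star$; the goal is $X \iso \colim_n P_n$. By \cref{lem:pushout_decidable}\ref{extensive-sequential-colimit} each of these colimits is van Kampen, and the explicit presentation $C_\star \iso \bigsqcup_k D_k$ (with $D_k$ the new complement at stage $k$) shows each cocone leg $C_k \to C_\star$ is itself a complemented inclusion, hence a monomorphism. Applying universality of $A_\star$ to $X \to A_\star$ and then universality of $B_\star$ to each resulting $A_m \times_{C_\star} B_\star \to B_\star$ yields
\begin{equation*}
X \;=\; \colim_m (A_m \times_{C_\star} B_\star) \;=\; \colim_m \colim_n (A_m \times_{C_\star} B_n) \;=\; \colim_{(m,n) \in \omega \times \omega} (A_m \times_{C_\star} B_n).
\end{equation*}
Since the diagonal $\omega \to \omega \times \omega$ is cofinal, this reduces to $\colim_k (A_k \times_{C_\star} B_k)$; and since $A_k \to C_\star$ and $B_k \to C_\star$ both factor through the mono $C_k \to C_\star$, one has $A_k \times_{C_\star} B_k = A_k \times_{C_k} B_k = P_k$, so $X \iso \colim_k P_k$, as required.

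The main delicate step is this chain of identifications. It is tempting to apply universality of $C_\star$ alone and write $X = \colim_n (X \times_{C_\star} C_n)$, but the terms $X \times_{C_\star} C_n$ do not in general agree with $P_n$ (one can check they have the same colimit but differ levelwise), so it is essential to invoke universality of \emph{both} $A_\star$ and $B_\star$ and then exploit cofinality of the diagonal to align the two sequences.
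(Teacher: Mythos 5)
Your proposal is correct and follows essentially the same route as the paper: both arguments rest on the same three ingredients, namely that $C_k \to C_\star$ is a monomorphism (so $A_k \times_{C_k} B_k = A_k \times_{C_\star} B_k$), that the diagonal $\omega \to \omega \times \omega$ is final, and universality of the van Kampen colimits $A_\star$ and $B_\star$; the paper merely traverses the resulting chain of isomorphisms in the opposite direction, factoring the comparison map $\colim_k P_k \to A_\star \times_{C_\star} B_\star$ into three explicitly invertible pieces. Your closing remark about needing universality of both $A_\star$ and $B_\star$ rather than just $C_\star$ is a fair observation but does not change the substance.
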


\begin{proof}
First note that the category of sequences of \di{}s has finite limits by \cref{decidable-limit} of \cref{decidable-closure}.
Moreover, \cref{extensive-sequential-colimit} of \cref{coproduct-decidable-colimit} implies that colimits of such sequences exist.
It suffices to show that this colimit functor preserves terminal objects and pullbacks.
Terminal objects are preserved since $\omega$ is a connected category (it has an initial object).
For the case of pullbacks, we consider a span $A \to C \leftarrow B$ of sequences of complemented inclusions.
We need to show that the map
\[
\colim_{k \in \omega} A_k \times_{C_k} B_k \to \colim A \times_{\colim C} \colim B
\]
is invertible.
We decompose this map into three factors:
\begin{tikzeq*}
\matrix[diagram,column sep={15em,between origins}]
{
  |(S0)| \colim_{k \in \omega} A_k \times_{C_k} B_k & |(S3)| \colim A \times_{\colim C} \colim B  \rlap{\text{.}} \\
  |(S1)| \colim_{k \in \omega} A_k \times_{\colim C} B_k & |(S2)| \colim_{i, j \in \omega} A_i \times_{\colim C} B_j \\
};

\draw[->] (S0) to (S3);
\draw[->] (S0) to (S1);
\draw[->] (S1) to (S2);
\draw[->] (S2) to (S3);
\end{tikzeq*}
The left map is invertible even before taking colimits because $C_k \to \colim C$ is a monomorphism.
The bottom map is invertible because the diagonal functor $\omega \to \omega \times \omega$ is final (it has a left adjoint).
The right map is invertible by universality of the van Kampen colimits $\colim A$ and $\colim B$ (\cref{extensive-sequential-colimit} of \cref{coproduct-decidable-colimit}).
\begin{comment}
In detail, it decomposes into the chain of isomorphisms
\begin{align*}
\colim_{i, j \in \omega} A_i \times_{\colim C} B_j
&\iso
\colim_{i \in \omega} \colim_{j \in \omega} A_i \times_{\colim C} B_j
\\&\iso
\colim_{i \in \omega} A_i \times_{\colim C} \colim B
\\&\iso
\colim A \times_{\colim C} \colim B
\end{align*}
\end{comment}
\end{proof}

%\fxnote{Mention that the above result can also be deduced by embedding $\cat{E}$ into sheaves over $\cat{E}$ with respect to the extensive topology.}

% Given two diagrams $X,Y\from  I \to \cat{E}$, a natural transformation $\lambda\from X_\bullet \to Y_\bullet$ is said to be \emph{\cartesian} if for an arrow $f\from i \to j$ in $I$, the naturality square:

% \[
% \begin{tikzcd}
%   X_i \ar[r] \ar[d] & X_j \ar[d] \\
% Y_i \ar[r] & Y_j \\
% \end{tikzcd}
% \]

% is a pullback square.

Let $D$ be a small category. We say that a morphism $\phi \from F \rightarrow G$ in $\cat{E}^D$,
is a {\em levelwise complemented inclusion} if its components $\phi_d \colon F_d \to G_d$,
for $d \in D$, are complemented inclusions in $\cat{E}$.
Note that this is considerably less restrictive than asking for $\phi$ to be a complemented inclusion in
$\cat{E}^D$.

\begin{corollary}\label{colimits-diagram-category}
Let $D$ be a small category.
\begin{parts}
 \item \label{colimits-diagram-category:pushout}
 If $\cat{E}$ is lextensive, then pushouts along levelwise complemented inclusions exist, are computed levelwise and are van Kampen colimits in $\cat{E}^D$.
 \item \label{colimits-diagram-category:sequential}
 If $\cat{E}$ is countably lextensive, then colimits of sequences of levelwise complemented inclusions exist, are computed levelwise and are van Kampen colimits in $\cat{E}^D$.
 \end{parts}
\end{corollary}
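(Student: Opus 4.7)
The plan is to reduce the statement to a combination of \cref{lem:levelwise_VK_colimit} with \cref{coproduct-decidable-colimit}, applied objectwise in $\cat{E}$. The content of the corollary is precisely that the existence and van Kampen property of pushouts along \di{}s and of sequential colimits of \di{}s (which \cref{coproduct-decidable-colimit} supplies in $\cat{E}$) transfer to the diagram category $\cat{E}^D$ and are formed pointwise.

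For part \ref{colimits-diagram-category:pushout}, consider a span $A \leftarrow B \to C$ in $\cat{E}^D$ in which $B \to A$ is a levelwise \di. For each $d \in D$, evaluation at $d$ is a span $A_d \leftarrow B_d \to C_d$ in $\cat{E}$ whose left leg is a \di, so by \cref{coproduct-decidable-colimit}\ref{colimits-diagram-category:pushout} the pushout $A_d \sqcup_{B_d} C_d$ exists in $\cat{E}$ and is a van Kampen colimit. Since this holds for every $d \in D$, the hypotheses of \cref{lem:levelwise_VK_colimit} are met (with $C$ the pushout shape), producing a van Kampen colimit for the original span in $\cat{E}^D$. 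Inspection of the proof of \cref{lem:levelwise_VK_colimit} shows that this colimit is given by the pointwise assignment $d \mapsto A_d \sqcup_{B_d} C_d$, which is the claim that it is computed levelwise.

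For part \ref{colimits-diagram-category:sequential}, the argument is parallel. Given a sequence of levelwise \di{}s $X_0 \to X_1 \to X_2 \to \cdots$ in $\cat{E}^D$, evaluating at each $d$ yields a sequence of \di{}s in $\cat{E}$. By \cref{extensive-sequential-colimit} of \cref{coproduct-decidable-colimit}, each such sequence has a van Kampen colimit in $\cat{E}$, and \cref{lem:levelwise_VK_colimit} (applied with $C = \omega$) then assembles these into a van Kampen colimit in $\cat{E}^D$, necessarily computed pointwise.

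Since this corollary is essentially a bookkeeping argument gluing two earlier lemmas, no substantial obstacle is anticipated. The only point requiring care is the observation that \cref{lem:levelwise_VK_colimit} indeed yields a pointwise formula for the colimit, but this is immediate from its proof: whenever each $\colim_{c \in C} Y_c(d)$ exists in $\cat{E}$, the assignment $d \mapsto \colim_{c \in C} Y_c(d)$ is automatically functorial in $d$ and realises the colimit of $Y_\bullet$ in $\cat{E}^D$.
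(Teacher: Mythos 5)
Your proof is correct and is essentially identical to the paper's: the paper also derives this corollary by combining \cref{lem:levelwise_VK_colimit} with \cref{coproduct-decidable-colimit}, applying the latter objectwise to get van Kampen pushouts (resp.\ sequential colimits) of \di{}s in $\cat{E}$ and the former to assemble them into a levelwise-computed van Kampen colimit in $\cat{E}^D$. The only nit is a cross-reference slip where you cite the corollary's own part label instead of the corresponding part of \cref{coproduct-decidable-colimit}.
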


\begin{proof}
  This follows immediately from \cref{lem:levelwise_VK_colimit,lem:pushout_decidable}.
\end{proof}

\begin{lemma}\label{ldi-pushout-product} Let $D$ be a small category.
  If $\cat{E}$ is lextensive, then the pushout products of \ldi{}s in $\cat{E}^D$
  with arbitrary morphisms exist.
  Moreover, the pushouts involved are van Kampen.
\end{lemma}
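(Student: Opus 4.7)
The plan is to reduce the statement directly to Corollary~\ref{colimits-diagram-category}\ref{colimits-diagram-category:pushout}. Given an \ldi{} $f \from A \to B$ and an arbitrary morphism $g \from C \to D$ in $\cat{E}^D$, the pushout product $f \pprod g$ is by definition the map induced into $B \times D$ from the pushout
\[
P \defeq (B \times C) \sqcup_{A \times C} (A \times D).
\]
So it suffices to produce this pushout and confirm that it is van Kampen.

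The key observation is that the leg $f \times \id_C \from A \times C \to B \times C$ of the relevant span is itself a levelwise complemented inclusion. For each $d \in D$, the component $f_d \from A_d \to B_d$ admits a complement $A'_d$, so that $B_d \iso A_d \sqcup A'_d$. Because $\cat{E}$ is lextensive, finite coproducts are universal and in particular products distribute over them; hence
\[
B_d \times C_d \iso (A_d \times C_d) \sqcup (A'_d \times C_d).
\]
This exhibits $f_d \times \id_{C_d}$ as a complemented inclusion with complement $A'_d \times C_d$, so that $f \times \id_C$ is indeed an \ldi{}.

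With this in hand, Corollary~\ref{colimits-diagram-category}\ref{colimits-diagram-category:pushout} applies verbatim to the span $A \times D \leftarrow A \times C \to B \times C$, whose relevant leg is now an \ldi{}: the pushout $P$ exists, is computed levelwise, and is van Kampen. The induced comparison $P \to B \times D$ then furnishes the pushout product. I do not anticipate any real obstacle: the argument is essentially a one-line reduction to the earlier corollary, and the only substantive ingredient is the distributivity step, which is immediate from lextensivity.
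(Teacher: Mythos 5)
Your proof is correct and follows essentially the same route as the paper: the paper's own argument notes that \ldi{}s are closed under pullback (by universality of coproducts), so that $A \times C \to B \times C$ is an \ldi{} and the pushout is covered by \cref{colimits-diagram-category}; your distributivity computation is just this closure property made explicit for the pullback along a projection.
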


\begin{proof}
  By universality of coproducts, \ldi{}s are closed under pullbacks.
  Thus a pushout computing a pushout product with a \ldi{} is a pushout along a \ldi{}.
  They are van Kampen by \cref{colimits-diagram-category}.
\end{proof}

The following statement will be needed in \cref{sec:cofibrations} to prove \cref{thm:reedy-colims}.

\begin{lemma} \label{thm:subobject-colimit}
Let $\cat C$ be a category, $P$ a poset with binary meets, $X \in \cat C$ an object and
\begin{equation*}
  A  = (A_p \ito X \ | \ p \in P)
\end{equation*}
a diagram of subobjects of $X$ closed under intersection, \ie, such that $A_p \cap A_q = A_{p \cap q}$.
Then if $A$ has a van Kampen colimit, the colimit is also a subobject of $X$.
\end{lemma}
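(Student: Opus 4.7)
Write $A_\star \defeq \colim_{p \in P} A_p$ and $m \from A_\star \to X$ for the canonical map. The plan is to show $m$ is a monomorphism by proving that the projection $\pi \from A_\star \pull_X A_\star \to A_\star$ is an isomorphism (equivalently, that the diagonal is an iso), identifying $A_\star \pull_X A_\star$ with $A_\star$ by invoking the van Kampen hypothesis twice.

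First I would apply universality of the van Kampen colimit $A_\star = \colim_p A_p$ to the map $\pi$, yielding $A_\star \pull_X A_\star \iso \colim_p (A_p \pull_X A_\star)$. For each $p$, applying universality a second time to the projection $A_p \pull_X A_\star \to A_\star$ gives $A_p \pull_X A_\star \iso \colim_q (A_p \pull_X A_q)$, which by the closure-under-intersection hypothesis is $\colim_q A_{p \wedge q}$. A cofinality argument then finishes the computation: the meet functor $(-) \wedge p \from P \to P \slice p$ is right adjoint to the full inclusion $P \slice p \ito P$ and hence cofinal, so $\colim_q A_{p \wedge q} \iso \colim_{r \in P \slice p} A_r$, and the latter colimit equals $A_p$ because $p$ is terminal in $P \slice p$. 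Chaining the identifications gives $A_\star \pull_X A_\star \iso \colim_p A_p \iso A_\star$, and tracing cocone structure maps confirms that this isomorphism realises $\pi$ as the identity on $A_\star$.

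The main obstacle is bookkeeping: each invocation of universality must be paired with a specific projection, and at the end one has to check that under the chain of isomorphisms $\pi$ becomes the identity $A_\star \to A_\star$. Concretely, the cofinality isomorphism $\colim_{r \in P \slice p} A_r \iso A_p$ is the map induced by the subobject inclusions $A_r \cto A_p$, with $A_p \to A_p$ the identity at the top, so the first projection $A_p \pull_X A_\star \to A_p$ transports to the identity on $A_p$; assembling these for varying $p$ yields the required global identification. An alternative, more conceptual rendering of the same argument uses \cref{van-Kampen-pseudo-limit}: one checks in each slice $\cat C \slice A_p$ that the two objects $\id_{A_p}$ and $A_p \pull_X A_\star \to A_p$ are canonically isomorphic, then descends the isomorphism to $\cat C \slice A_\star$.
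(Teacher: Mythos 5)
Your proposal is correct and follows essentially the same route as the paper's proof: two applications of universality of the van Kampen colimit to identify $A_\star \pull_X A_\star$ with $\colim_{p,q} A_{p\wedge q}$, followed by the observation that $(-)\wedge p$ is right adjoint to the inclusion of the down-set and hence final, so the inner colimit collapses to $A_p$. The paper phrases the conclusion in terms of the diagonal rather than the projection, but this is an immaterial difference.
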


\begin{proof}
We assume that $\colim_{p \in P} A_p$ exists and is a van Kampen colimit, and we show that the diagonal map $\colim_{p \in P} A_p  \to F = \left( \colim_{p \in P} A_p \right) \times_X \left( \colim_{p \in P} A_p \right)$ is an isomorphism.
First, we form pullbacks:
\begin{tikzeq*}
	\matrix(D)[diagram,column sep={between origins,6em}]{
          A_p \cap A_q & F_q & A_q \\
          F_p & F & \colim_q A_q \\
          A_p & \colim_p A_p & X \\
        };

	\draw[->] (D-1-2) to (D-1-3);
	\draw[->] (D-1-2) to (D-2-2);
	\draw[->] (D-2-1) to (D-2-2);
	\draw[->] (D-2-2) to (D-2-3);
	\draw[->] (D-1-3) to (D-2-3);
	\draw[->] (D-2-1) to (D-3-1);
	\draw[->] (D-3-1) to (D-3-2);
	\draw[->] (D-2-2) to (D-3-2);
	\draw[->] (D-1-1) to (D-2-1);
	\draw[->] (D-1-1) to (D-1-2);
	\draw[->] (D-2-3) to (D-3-3);
	\draw[->] (D-3-2) to (D-3-3);

	\pb{D-2-2}{D-3-3};
	\pb{D-2-1}{D-3-2};
        \pb{D-1-1}{D-2-2};
        \pb{D-1-2}{D-2-3};
\end{tikzeq*}
Using that the colimits are van Kampen, we have that $F = \colim_p F_p$ and $F_p = \colim_q A_q \cap A_p$ and hence $F = \colim_{p,q} A_p \cap A_q$
with the two maps  $F \to \colim_p A_p$ being induced by the maps $A_p \cap A_q \to A_p$ and $A_p \cap A_q \to A_q$.
We conclude by observing that $\colim_p \left( A_p \cap A_q \right) = A_q$.
Indeed the map $P \to (\downarrow q)$ that send $p \in P$ to $p\cap q$ is right adjoint to the inclusion of $(\downarrow q)$ to $P$, so it is a final functor.
It hence follows that
\begin{equation*}
\colim_{p \in P} A_{p \cap q} = \colim_{p \leqslant q} A_p = A_q
\end{equation*}
So this implies that $F = \colim_q A_q$, with the projection map $F \to \colim_q A_q$ being the identity, hence proving that $\colim_q A_q \to X$ is a monomorphism.
\end{proof}

We prove a statement relating van Kampen colimits and the pullback evaluation $\hat{\ev}$ functor, defined in~\eqref{equ:pullback-evaluation}.
This statement will be needed in \cref{sec:equivalence-extension}.

%\begin{lemma} \label{van-Kampen-pullback-weighted-limit}
%Let  $J$ be a small category.
%Let $Y \from D \to [J^\op, \cat{E}]$ be a diagram with levelwise van Kampen colimit $\colim Y$.
%Let $p \from X \to Y$ be a \cartesian transformation.
%We regard $p$ as a $D$-indexed diagram of arrows in $[J^\op, \cat{E}]$.
%
%Let $j \from A \to B$ be a map in $[J^\op, \Set]$ with $B$ representable such that $[J^\op, \cat{E}]$ supports evaluation at $A$.
%Then $\hat{\ev}_j$ (valued in arrows of $\cat{E}$) preserves the colimit of $p$, the resulting colimit is computed separately on source and target, and all maps of the colimiting cocone are pullback squares.
%\end{lemma}

% \fxnote{$D$ or $D^\op$?}

\begin{lemma} \label{van-Kampen-pullback-weighted-limit}
Let $D$ be a small category.
Let $Y \from C \to [D^\op, \cat{E}]$ be a diagram with levelwise van Kampen colimit $\colim Y$.
Let $p \from X \to Y$ be a \cartesian transformation, which we regard as a $C$-indexed diagram of arrows in $[D^\op, \cat{E}]$.

Let $q \from A \to B$ be a map in $[D^\op, \Set]$ with $B$ representable such that $[D^\op, \cat{E}]$ supports evaluation at $A$.
Then $\hat{\ev}_q$ (valued in arrows of $\cat{E}$) preserves the colimit of $p$, the resulting colimit is computed separately on source and target, and all maps of the colimit cocone are pullback squares.
\end{lemma}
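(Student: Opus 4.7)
The plan is to reduce the claim to the van Kampen property of the colimit $Y_\star \eqdef \colim_c Y_c$ by combining Cartesianness of $p$ with the fact that $\ev_A$, being a weighted limit, preserves pullbacks. Since $p$ is Cartesian, at each $d \in D$ the transformation $X_{\bullet, d} \to Y_{\bullet, d}$ of $C$-indexed diagrams in $\cat{E}$ is Cartesian, and $Y_{\bullet, d}$ has van Kampen colimit by hypothesis. Effectivity (applied pointwise) then implies that $X$ has a levelwise colimit $X_\star$, all squares $(X_{c, d} \to X_{\star, d}, Y_{c, d} \to Y_{\star, d})$ are pullbacks in $\cat{E}$, and consequently $X_c \iso Y_c \times_{Y_\star} X_\star$ componentwise in $[D^\op, \cat{E}]$.

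Next, applying the limit-preserving functor $\ev_A$ to the above pullback gives $\ev_A(X_c) \iso \ev_A(Y_c) \times_{\ev_A(Y_\star)} \ev_A(X_\star)$. Writing $B = \yon(d_0)$, so that $\ev_B$ is evaluation at $d_0$, the pullback $\ev_A(X_c) \times_{\ev_A(Y_c)} Y_{c, d_0}$ appearing as the target of $\hat{\ev}_q(p_c)$ can be rewritten as
\[
\ev_A(X_c) \times_{\ev_A(Y_c)} Y_{c, d_0} \iso \ev_A(X_\star) \times_{\ev_A(Y_\star)} Y_{c, d_0},
\]
so that all $c$-dependence of the target is concentrated in the factor $Y_{c, d_0}$. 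The cocone map $\hat{\ev}_q(p_c) \to \hat{\ev}_q(p_\star)$ in $\cat{E}^{\to}$ is then the square with top edge $X_{c, d_0} \to X_{\star, d_0}$, right edge $\hat{\ev}_q(p_\star)$, and bottom edge obtained by base change of $Y_{c, d_0} \to Y_{\star, d_0}$ along the projection onto $Y_{\star, d_0}$. Pulling the right edge back along the bottom returns $X_{\star, d_0} \times_{Y_{\star, d_0}} Y_{c, d_0} \iso X_{c, d_0}$ by Cartesianness of $p$ at $d_0$, so the square is a pullback.

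Finally, the colimit is computed separately on source and target. The source colimit equals $X_{\star, d_0}$, as levelwise colimits in $[D^\op, \cat{E}]$ are pointwise. For the target, van Kampen-ness of $Y_{\star, d_0} = \colim_c Y_{c, d_0}$ implies that pullback along the projection $\ev_A(X_\star) \times_{\ev_A(Y_\star)} Y_{\star, d_0} \to Y_{\star, d_0}$ preserves this colimit, yielding $\colim_c \bigl( \ev_A(X_\star) \times_{\ev_A(Y_\star)} Y_{c, d_0} \bigr) \iso \ev_A(X_\star) \times_{\ev_A(Y_\star)} Y_{\star, d_0}$, which is the target of $\hat{\ev}_q(p_\star)$. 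The main obstacle is that $\ev_A$ is a potentially large limit and does not commute with colimits in general: what saves the argument is that the Cartesian structure of $p$ concentrates the $c$-variation of the target into the single factor $Y_{c, d_0}$, where the van Kampen property of $Y_\star$ applies directly.
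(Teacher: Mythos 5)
Your proof is correct and follows essentially the same route as the paper's: levelwise effectivity produces $\colim X$ with a Cartesian colimit cocone, limit-preservation of $\ev_A$ transfers the pullbacks to the (co)domains of the pullback evaluations, and universality of the van Kampen colimit $Y_\star(B)=\colim_c Y_c(B)$ at the representable $B$ yields the objectwise colimits. The paper packages the pullback-pasting step via an auxiliary functor $M\to N\;\mapsto\;\bigl(M(B)\to M(A)\times_{N(A)}N(B)\to N(B)\bigr)$, whereas you carry out the same rewriting explicitly by concentrating the $c$-dependence of the target in the factor $Y_{c,d_0}$; the content is identical.
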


\begin{proof}
First note that by levelwise effectivity of $\colim Y$, we obtain $\colim X$ (and hence $\colim p$).
The square $p_c \to \colim p$ is a pullback for all $c \in C$.

Consider the functor $F$ sending an arrow $M \to N$ in $[D^\op, \cat{E}]$ to the sequence of arrows
\begin{tikzeq*}{fibration-extension-square}
\matrix[diagram,column sep={12em,between origins}]
{
  |(0)| M(B) & |(1)| M(A) \times_{N(A)} N(B) & |(2)| N(B) \rlap{\text{.}} \\
};

\draw[->]  (0) to (1);
\draw[->]  (1) to (2);
\end{tikzeq*}
The first arrow is the pullback evaluation at $q$ of $M \to N$.
Evaluation preserves limits, in particular pullbacks.
By pullback pasting, the action of $F$ on a map of arrows that is a pullback is a pasting of pullback squares.

Let us inspect the action of $F$ on the colimit cocone of $p$.
It will suffice to show that it results in objectwise colimit cocones.
Since the maps of the colimit cocone of $p$ are pullback squares, we obtain pastings of pullback squares upon applying $F$.
Recall that $\ev_B$ is computed by evaluation at the object representing $B$.
So by assumption, $(\colim Y)(B) = \ev_B(\colim Y)$ is colimit of $\ev_B \circ Y$ and van Kampen.
The claim follows by universality of this van Kampen colimit.
\end{proof}

  \section{An enriched small object argument}
\label{sec:enrwfs}

%If $\cat{E}$ is an extensive category, then (the argument of)~\cite{GSS}*{Proposition~1.1.3} implies that
%\di{}s and split epimorphisms form a \wfs{}.
%In this section, we develop a theory of $\cat{E}$-\ewfs{}s, i.e., \wfs{}s on the category of diagrams $\cat{E}^J$
%for some small category $J$, where existence of lifts is interpreted in terms of
%the \wfs{} of \di{}s and split epimorphisms.
%However, we do not want to asssume that $\cat{E}$ is \cartesian closed and hence $\cat{E}^J$ is not necessarily $\cat{E}$-enriched.
%Instead, we will view $\cat{E}^J$ as enriched over the category of $\cat{E}$-presheaves $\psh \cat{E}$.

The goal of this section is to develop a version of the small object argument that allows us
to construct weak factorisation systems on the category of simplicial objects~$\cats E$,
where~$\cat E$ is  a countably lextensive category. In view of our application to
both simplicial objects in \cref{sec:cofibrations} and semisimplicial objects in \cref{sec:Semi-simplicial},
we develop our small object argument for diagram categories~$\cat{E}^D$ in general.
Importantly, our weak
factorisation systems are {\em enriched}, in the sense of~\cite{RiehlE:catht}.
We will be constructing $\psh \cat{E}$-enriched weak factorisation systems
on $\cat{E}^D$, where where $\psh \cat{E}$ denotes the category of presheaves over $\cat{E}$.
This is because the category of diagrams $\cat{E}^D$ is not necessarily $\cat{E}$-enriched,
but it is $\psh \cat{E}$-enriched, as we now recall.

%The reader will notice that the basic definitions only require existence of finite limits and
%many intermediate results do not use the full power of countable lextensivity.
%However, we make this blanket assumption for sake of exposition.
%In fact, essentially the same arguments work for categories more general than $\cat{E}^J$ which will be explained later in \fxnote{ref}.

For~$E \in \cat E$ and~$X \in \cat{E}^D$, we define~$E \tensorEsE X \in  \cat{E}^D$ by letting
\begin{equation}
  \label{equ:psh-tensor}
  (E \tensorEsE X)_d \eqdef E \times  X_d  \, .
\end{equation}
Given $X, Y \in \cat{E}^D$, we then define the hom-object $\Hom_{\psh \cat{E}}(X, Y) \in \psh \cat{E}$
by letting:
\[
  \begin{array}{rccl}
    \Hom_{\psh \cat{E}}(X, Y)  \from & \cat{E}^{\op} & \to     & \Set                         \\
                                     & E             & \mapsto & \Hom_\Set(E \tensorEsE X, Y)
  \end{array}
\]
This makes
$\cat{E}^D$ into a $\psh \cat{E}$-enriched category, so that the formula in~\eqref{equ:psh-tensor}
provides the tensor of $E \in \psh{\cat E}$ and $X \in \cat{E}^D$ with respect to this enrichment.
When the presheaf is representable, the representing object is denoted by $\Hom_{\cat{E}}(X, Y)$.

Using the enrichment, we can define an internal version of the familiar lifting problems involved
in the definition of a weak factorisation systems. For morphisms $i \from A \to B$ and $p \from X \to Y$ in $\cat{E}^D$, we define  the \emph{presheaf of lifting problems} of $i$ against $p$ by letting
\[
  \Prob_{\psh \cat{E}}(i, p) \eqdef  \Hom_{\psh \cat{E}}(A, X) \pull_{\Hom_{\psh \cat{E}}(A, Y)} \Hom_{\psh \cat{E}}(B, Y) \, .
\]
When the relevant hom-objects are representable, then so is $ \Prob_{\psh \cat{E}}(i, p)$. In this case,
we write $\Prob_{\cat{E}}(i, p)$ for its representing object and call it  the \emph{object of lifting problems} of $i$ against $p$. Note that the induced pullback hom of $i$ and $p$ (\cf \cref{thm:leibniz-terminology}) has the form
\begin{equation}
  \label{equ:pullback-hom}
  \hat{\Hom}_{\psh \cat{E}}(i, p) \from \Hom_{\psh \cat{E}}(B, X) \to \Prob_{\psh \cat{E}}(i, p)
\end{equation}
Again, if the objects are representables, we have also an induced pullback hom in $\cat{E}$,
which has the form
\begin{equation}
  \label{equ:pullback-hom-in-E}
  \hat{\Hom}_{\cat{E}}(i, p) \from \Hom_{\cat{E}}(B, X) \to \Prob_{\cat{E}}(i, p)  \, .
\end{equation}
We are ready to define the $\psh{ \cat E}$-enriched counterparts of the standard lifting
properties.

\begin{definition}\label{elp}
  Let $i \from A \to B$ and $p \from X \to Y$ be morphisms of $\cat{E}^D$.
  \begin{itemize}
    \item We say that $i$ has the \emph{$\psh \cat{E}$-\ellp{}} \wrt{} $p$ and that $p$ has the \emph{$\psh \cat{E}$-\erlp{}} \wrt{} $i$
          if the induced pullback hom in~\eqref{equ:pullback-hom} is a split epimorphism in $\psh \cat{E}$.
    \item We say that $i$ has the \emph{$\cat{E}$-\ellp{}} \wrt{} $p$ and that $p$ has the \emph{$\cat{E}$-\erlp{}} \wrt{} $i$
          if the induced pullback hom in~\eqref{equ:pullback-hom-in-E}
          exists and is a split epimorphism in $\cat E$.
  \end{itemize}
\end{definition}

Since the Yoneda embedding is fully faithful and preserves pullbacks, as soon as
all relevant $\cat{E}$-valued hom-objects exist, the \emph{$\psh \cat{E}$-\ellp{}} and
\emph{$\cat{E}$-\ellp{}} are equivalent and $\Prob_{\psh \cat{E}}(i, p)$ is
represented by $\Prob_{\cat{E}}(i, p)$.

In both $\psh \cat{E}$ and $\cat{E}$, the class of split epimorphisms is the right class of a weak factorization system, with left class given by complemented inclusions.
As such, it enjoys a number of standard closure properties.
Our notions of enriched lifting property are defined from this class via the pullback hom.
Because of this, the classes of maps defined below by an $\psh \cat{E}$-enriched lifting property will inherit corresponding closure properties.
For example, split epimorphisms are closed under retracts.
Thus, classes of maps defined by an $\psh \cat{E}$-enriched lifting property are closed under retracts.

As is usual, we extend the terminology of enriched lifting properties from maps to classes of maps on either side by universal quantification.

\begin{definition}\label{I-cofibration}
  Let $I = \{ i \from A_i \to B_i \}$ be a set of morphisms of $\cat{E}^D$.
  \begin{itemize}
    \item An \emph{(enriched) $I$-fibration} is a morphism with the \erlp{} \wrt{} $I$.
    \item An \emph{(enriched) $I$-cofibration} is a morphism with the \ellp{} \wrt{} $I$-fibrations.
  \end{itemize}
\end{definition}

\medskip

When the left map of a $\psh{\cat E}$-enriched lifting problem comes from $\Set^D$ via
levelwise application of the operation in~\eqref{equ:fset},
we may simplify the lifting problem (assuming some technical conditions hold). Indeed, the pullback hom~\eqref{equ:pullback-hom-in-E} reduces to a pullback evaluation.
We record this in the next couple of statements, which are phrased using $D^{\op}$ instead of $D$ in order to exploit the language of representable functors.
We make use of the evaluation functor $\ev_K \from [D^\op, \cat{E}] \to \cat{E}$ defined for finite colimits $K$ of representables by letting:
\[
  \ev_K(X) = \coend_{d \in D^{\op}} X_d^{K_d}
  .\]
This generalises the evaluation functor defined in \cref{equ:evaluation}, which is the case~$D = \Delta$.
As in \cref{evaluation}, we may equivalently view $\ev_K(X)$ as the $K$-weighted limit of $X$,
which implies that $\ev$ is a (partial) two-variable functor.

\begin{lemma} \label{nicolas-grouping}
  Let $K \in [D^\op, \Set]$ be levelwise countable.
  \begin{parts}
    \item \label{simplifying-E-tensor} There is an isomorphism $(E \tensorEsE \fset{K})_d \iso K_d \tensorSetE E$ natural in $K$, $E \in \cat{E}$, and $d \in D$.
    \item \label{exponential-via-evaluation} Assume that $K$ is a finite colimit of representables. Then the hom-presheaf $\Hom_{\psh{\cat{E}}}(\fset{K}, X)$ is representable for $X \in [D^\op, \cat{E}]$ and we have an isomorphism $\Hom_{\cat{E}}(\fset{K}, X) \iso \ev_K(X)$, natural in $K$ and $X \in [D^\op, \cat{E}]$.
  \end{parts}
\end{lemma}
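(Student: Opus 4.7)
The plan is to prove the two parts separately, with part (i) following from a direct unfolding of definitions and part (ii) proceeding by first dispatching the representable case via (co)Yoneda and then extending to finite colimits of representables by a continuity argument.

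For part (i), the definition of the tensor in~\eqref{equ:psh-tensor} gives $(E \tensorEsE \fset{K})_d = E \times \fset{K}_d = E \times \fset{K_d}$. Since $K_d$ is countable by assumption, \cref{product-with-discrete} (using countable lextensivity of $\cat E$) provides a natural isomorphism $E \times \fset{K_d} \iso K_d \tensorSetE E$. All three naturalities (in $K$, in $E$, and in $d \in D$) are inherited directly from the naturality statement in \cref{product-with-discrete}, with naturality in $d$ coming from the functoriality of $\fset{K}$ on $D$.

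For part (ii), I first treat the case where $K = \yon(d_0)$ is representable. Using part (i) and the co-Yoneda lemma (rewriting $\bigsqcup_{\Hom_D(d', d_0)} E$ and using that natural transformations out of this copower correspond to elements of $X_{d_0}(E)$), I obtain
\begin{equation*}
\Hom_{\psh \cat E}(\fset{\yon(d_0)}, X)(E)
\iso \Hom_{\cat E^D}(E \tensorEsE \fset{\yon(d_0)}, X)
\iso \Hom_\cat E(E, X_{d_0}),
\end{equation*}
so the hom-presheaf is represented by $X_{d_0} = \ev_{\yon(d_0)}(X)$, matching the evaluation functor (for which $\ev_{\yon(d_0)}(X) = X_{d_0}$ by the standard formula for weighted limits).

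To extend to a general $K = \colim_i K_i$ a finite colimit of representables, I show that both sides of the claimed isomorphism convert colimits in $K$ to limits. For $\ev_K(X)$, this is immediate from the coend formula, since $\ev$ is a weighted limit, contravariantly continuous in the weight. For $\Hom_{\psh \cat E}(\fset{K}, X)$, I observe that by part (i), $K \mapsto E \tensorEsE \fset{K}$ is the levelwise application of $S \mapsto S \tensorSetE E$, which is a left adjoint (to $\cat E(E, \uvar)$ restricted to countable sets); hence it preserves colimits, and so $\Hom_{\cat E^D}(E \tensorEsE \fset{K}, X)$ is contravariantly continuous in $K$, and the resulting limit in $\Set$ assembles pointwise in $E$ to a limit in $\psh \cat E$. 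Combining these with the representable case and using that representable presheaves on $\cat E$ are closed under finite limits (a finite limit of representables $\Hom_\cat E(\uvar, X_{d_i})$ is represented by the finite limit $\lim_i X_{d_i}$ in $\cat E$), I conclude that $\Hom_{\psh \cat E}(\fset{K}, X)$ is represented by $\lim_i X_{d_i} \iso \ev_K(X)$, naturally in $K$ and $X$.

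The only delicate step is bookkeeping of naturality and ensuring the colimit-preservation statement is being applied correctly; there is no conceptual obstacle, since all the required finite limits exist in $\cat E$ by hypothesis and the countable lextensivity needed for part (i) is already in force throughout the section.
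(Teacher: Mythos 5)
Your proof is correct and follows essentially the same route as the paper: part (i) is exactly the appeal to \cref{product-with-discrete}, and for part (ii) the paper likewise identifies $\Hom_{\psh{\cat{E}}}(\fset{K}, X)$ with the presheaf represented by the $K$-weighted limit $\ev_K(X)$, whose existence for $K$ a finite colimit of representables is precisely the point you unpack via the representable case plus continuity in the weight. The only phrasing worth tightening is ``$S \mapsto S \tensorSetE E$ is a left adjoint, hence preserves colimits'': the colimit $\colim_i \bigl(E \tensorEsE \fset{\yon(d_i)}\bigr)$ need not exist in $[D^\op,\cat{E}]$ (coequalisers are not assumed), so one should instead argue directly that the copower bijection $\Hom_{\cat{E}}(S \tensorSetE E, Y) \iso \Hom_\Set(S, \Hom_{\cat{E}}(E, Y))$ converts colimits of sets into limits of sets, which is exactly the contravariant continuity of $\Hom_{\psh{\cat{E}}}(\fset{(\uvar)}, X)$ that your argument actually uses.
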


\begin{proof}
  \Cref{simplifying-E-tensor} follows from \cref{product-with-discrete}.
  For \cref{exponential-via-evaluation}, \cref{simplifying-E-tensor}
  implies that $\Hom_{\psh \cat{E}}(\fset{K}, X)$ is naturally isomorphic to the $\cat{E}$-presheaf $E \mapsto
    \Hom_\Set(d \mapsto K_d \tensorSetE E, X)$.
  A representing object for it is by definition the $K$-weighted limit of $X$, \ie, $\ev_K(X)$.
  This exists in our setting for $K$ a finite colimit of representables.
\end{proof}

\begin{proposition} \label{enriched-lifting-as-pullback-evaluation}
  Let $i \from A \to B$ be a map in $[D^\op, \Set]$ between objects that are levelwise countable and finite colimits of representables and let $p \from X \to Y$ be a map in $[D^\op, \cat{E}]$. Then the following are
  equivalent:
  \begin{parts}
    \item $\fset{i} \from \fset{A} \to \fset{B}$ has the $\cat{E}$-enriched left lifting property with respect to $p$,
    \item the pullback evaluation $\hat{\ev}_i(p)$ is a split epimorphism in $\cat{E}$.
  \end{parts}
\end{proposition}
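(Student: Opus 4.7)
The plan is to observe that the two conditions express the same morphism being a split epimorphism in $\cat{E}$, once we identify the relevant $\cat{E}$-valued hom-objects with evaluation objects via \cref{nicolas-grouping}\cref{exponential-via-evaluation}.

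First I would note that the hypotheses on $A$ and $B$ (levelwise countable, finite colimits of representables) together with \cref{nicolas-grouping}\cref{exponential-via-evaluation} imply that all four hom-presheaves $\Hom_{\psh\cat{E}}(\fset{A}, X)$, $\Hom_{\psh\cat{E}}(\fset{A}, Y)$, $\Hom_{\psh\cat{E}}(\fset{B}, X)$, $\Hom_{\psh\cat{E}}(\fset{B}, Y)$ are representable, with representing objects $\ev_A(X)$, $\ev_A(Y)$, $\ev_B(X)$, $\ev_B(Y)$ respectively, and these isomorphisms are natural in both the shape and target variable. In particular, the object $\Prob_{\cat{E}}(\fset{i}, p)$ exists and the induced pullback hom in $\cat{E}$ from \eqref{equ:pullback-hom-in-E} is defined.

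Second, using naturality of the isomorphism $\Hom_{\cat{E}}(\fset{K}, -) \iso \ev_K(-)$ in both $K$ and the second argument, the commutative square defining $\Prob_{\cat{E}}(\fset{i}, p)$ is identified with the pullback square
\begin{equation*}
\ev_A(X) \pull_{\ev_A(Y)} \ev_B(Y) \to \ev_A(X), \quad \ev_A(X) \pull_{\ev_A(Y)} \ev_B(Y) \to \ev_B(Y),
\end{equation*}
so $\Prob_{\cat{E}}(\fset{i}, p) \iso \ev_A(X) \pull_{\ev_A(Y)} \ev_B(Y) = X(A) \pull_{Y(A)} Y(B)$. Under this identification, the map $\hat{\Hom}_{\cat{E}}(\fset{i}, p) \from \Hom_{\cat{E}}(\fset{B}, X) \to \Prob_{\cat{E}}(\fset{i}, p)$ becomes exactly the map $\hat{\ev}_i(p) \from X(B) \to X(A) \pull_{Y(A)} Y(B)$ defined in \eqref{equ:pullback-evaluation}.

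Finally, by \cref{elp} the condition that $\fset{i}$ has the $\cat{E}$-\ellp{} against $p$ is precisely that $\hat{\Hom}_{\cat{E}}(\fset{i}, p)$ is a split epimorphism in $\cat{E}$, which under the identification above is the condition that $\hat{\ev}_i(p)$ is a split epimorphism. Hence the two conditions are equivalent. There is no real obstacle here: the whole content is the naturality of the identification in \cref{nicolas-grouping}\cref{exponential-via-evaluation}, which ensures that pullback hom is sent to pullback evaluation.
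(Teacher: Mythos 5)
Your proposal is correct and is exactly the paper's argument: the paper's proof is the single sentence that the statement is an immediate consequence of \cref{exponential-via-evaluation} of \cref{nicolas-grouping}, and what you have written is precisely the unpacking of that reduction (identifying the hom-objects with evaluations naturally in both variables, hence the pullback hom with the pullback evaluation, and invoking \cref{elp}). No differences of substance.
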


\begin{proof}
  This is an immediate consequence of \cref{exponential-via-evaluation} of \cref{nicolas-grouping}.
\end{proof}

\cref{enriched-lifting-as-pullback-evaluation} will be used in \cref{sec:cofibrations} to relate (trivial) Kan fibrations in $\cats{E}$ in the sense of \cref{def:fibration_algebraic} with fibrations in the sense of \cref{I-cofibration} with respect to the images in $\cats{E}$ of horn inclusions (boundary inclusions, respectively) under the operation $\fset{(\uvar)} \from \Set \to \cat{E}$.

\medskip

We now turn our attention to $\psh \cat{E}$-enriched weak factorisation systems.

\begin{definition} \label{enriched-wfs}
  A \emph{$\psh \cat{E}$-\ewfs{}} on $\cat{E}^D$ is
  a pair $(\fml{L}, \fml{R})$ of classes of morphisms of $\cat{E}^D$ \st{}:
  \begin{itemize}
    \item a morphism belongs to $\fml{L}$ \iff{} it has the $\psh \cat{E}$-\ellp{} \wrt{} $\fml{R}$;
    \item a morphism belongs to $\fml{R}$ \iff{} it has the $\psh \cat{E}$-\erlp{} \wrt{} $\fml{L}$;
    \item every morphism of $\cat{E}^D$ factors as an $\fml{L}$-morphism followed by an $\fml{R}$-morphism.
  \end{itemize}
\end{definition}

The classes $\fml{L}$ and $\fml{R}$ in the above definition are closed under retract as they are characterized by $\psh \cat{E}$-enriched lifting properties.

We will abbreviate ``$\psh \cat{E}$-\elp{}'' to ``\elp{}'',
but we will be explicit about cases where it coincides with the $\cat{E}$-\elp{}.

\begin{lemma}\label{ordinary-wfs}
  Let $(\fml{L}, \fml{R})$ be an \ewfs{}.
  \begin{parts}
    \item A morphism is in $\fml{L}$ \iff{} it has the ordinary \llp{} \wrt{} $\fml{R}$.
    \item A morphism is in $\fml{R}$ \iff{} it has the ordinary \rlp{} \wrt{} $\fml{L}$.
  \end{parts}
  In particular, $(\fml{L}, \fml{R})$ is also an ordinary \wfs{}.
\end{lemma}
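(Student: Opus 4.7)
The plan is to run the standard retract argument, adapted to the enriched setting, using the closure under retracts of classes defined by $\psh\cat{E}$-enriched lifting properties (as noted explicitly in the paragraph preceding \cref{I-cofibration}).

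First I would observe the easy direction: the $\psh\cat{E}$-\ellp{} implies the ordinary \llp{}. Indeed, if $i$ has the $\psh\cat{E}$-\ellp{} with respect to $p$, then the pullback hom $\hat{\Hom}_{\psh\cat{E}}(i,p)$ is a split epimorphism in $\psh\cat{E}$, so in particular its global sections component is a surjection of sets; evaluating at the terminal presheaf $1$ shows that every ordinary lifting problem of $i$ against $p$ admits a solution. Dually, the $\psh\cat{E}$-\erlp{} implies the ordinary \rlp{}. Consequently, every $\fml{L}$-map has the ordinary \llp{} with respect to every $\fml{R}$-map, and vice versa.

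For the converse direction of (i), suppose $f \from X \to Y$ has the ordinary \llp{} with respect to $\fml{R}$. Using the factorisation axiom of \cref{enriched-wfs}, I would factor $f = p i$ with $i \from X \to Z$ in $\fml{L}$ and $p \from Z \to Y$ in $\fml{R}$. The ordinary lifting problem
\begin{equation*}
\begin{tikzcd}
X \arrow[r, "i"] \arrow[d, "f"'] & Z \arrow[d, "p"] \\
Y \arrow[r, equal] \arrow[ur, dashed, "r"] & Y
\end{tikzcd}
\end{equation*}
admits a solution $r \from Y \to Z$ by hypothesis. The pair $(\id_X, r)$ and $(\id_X, p)$ then exhibit $f$ as a retract of $i$ in the arrow category. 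Since $\fml{L}$ is defined by an enriched lifting property, it is closed under retracts, so $f \in \fml{L}$. The argument for (ii) is entirely symmetric: factor $f$ as an $\fml{L}$-map followed by an $\fml{R}$-map and use the ordinary lifting property to exhibit $f$ as a retract of the $\fml{R}$-part.

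The final clause that $(\fml{L}, \fml{R})$ is an ordinary \wfs{} is then immediate: factorisations are provided by those of the \ewfs{}, the two ordinary lifting characterisations are exactly (i) and (ii), and both classes are closed under retracts as above. There is no real obstacle; the only subtlety is to remember that the closure of $\fml{L}$ and $\fml{R}$ under retracts is inherited from closure of split epimorphisms under retracts via the pullback-hom construction, which is what licenses the retract argument in this enriched setting.
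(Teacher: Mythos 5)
Your proof is correct and follows essentially the same route as the paper's: the forward direction by evaluating the hom-presheaves at the terminal object $1 \in \cat{E}$ to recover the ordinary lifting problem, and the converse by the retract argument applied to the $(\fml{L},\fml{R})$-factorisation, using closure of the enriched-lifting-defined classes under retracts. No gaps.
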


\begin{proof}
  For~(i), a morphism of $\fml{L}$ has the ordinary \llp{} \wrt{} $\fml{R}$ by
  evaluating the hom-presheaves at $1 \in \cat{E}$.
  Conversely, a morphism with the ordinary lifting property admits a lift against
  the second factor of its $(\fml{L}, \fml{R})$-factorisation, thus making it into a
  retract of the first factor (\cf also the proof of \cref{cofibration-as-retract-of-cell-complex}).
  The conclusion follows since $\fml{L}$ is closed under retracts.
  Part~(ii) follows by duality.
\end{proof}

We will fix a set $I$ and study a version of the \smo{} that produces an \ewfs{} of
$I$-cofibrations and $I$-fibrations under suitable assumptions.

\begin{definition}
  Let $i \from A \to B$ and $p \from X \to Y$ be morphisms of $\cat{E}^D$. Assume
  that we have  a factorisation
  \begin{tikzeq*}
    \matrix[diagram]
    {
      |(X)| X &           & |(Y)| Y \text{.} \\
              & |(Xp)| X' &                                 \\
    };

    \draw[->] (X)  to node[above]       {$p$}  (Y);
    \draw[->] (Xp) to node[below right] {$p'$} (Y);

    \draw[->] (X) to (Xp);
  \end{tikzeq*}
  We say that $p$ satisfies the \emph{$X'$-partial \erlp{}} \wrt{} $i$ if
  there is a lift in the diagram
  \begin{tikzeq*}
    \matrix[diagram,column sep={between origins,10em}]
    {
                                    & |(BcX)| \Hom_{\psh \cat{E}}(B, X')                         \\
      |(P)| \Prob_{\psh \cat{E}}(i, p) & |(Pc)|  \Prob_{\psh \cat{E}}(i, p') \text{.} \\
    };

    \draw[->,dashed] (P) to (BcX);

    \draw[->] (P)   to (Pc);
    \draw[->] (BcX) to (Pc);
  \end{tikzeq*}
\end{definition}

Such partial lifting properties are a crucial ingredient of the \smo{}, but they are only tractable when
$i$ is a \ldi{}. This is thanks to the next two lemmas, where we use the tensor defined in~\eqref{equ:psh-tensor}.

\begin{lemma}\label{ldi-saturation}
  Levelwise complemented inclusions in $\cat{E}^D$ are closed under:
  \begin{parts}
    \item $E \tensorEsE \uvar$ for all $E \in \cat{E}$;
    \item countable coproducts;
    \item pushouts along arbitrary morphisms;
    \item sequential colimits;
    \item retracts.
  \end{parts}
  Moreover, the colimits of parts (ii), (iii) and (iv) are preserved by $E \tensorEsE \uvar$ for all $E \in \cat{E}$.
\end{lemma}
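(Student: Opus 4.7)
The strategy is to reduce each claim to a pointwise statement about complemented inclusions in $\cat{E}$. For (i), the functor $E \tensorEsE \uvar$ acts levelwise as $E \times \uvar$, and distributivity in the countably lextensive $\cat{E}$ yields $E \times (F_d \sqcup K_d) \iso (E \times F_d) \sqcup (E \times K_d)$, so a levelwise complement $K$ of $\phi$ produces the levelwise complement $E \tensorEsE K$ of $E \tensorEsE \phi$.

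For (ii), (iii), and (iv), the colimits in question are computed levelwise in $\cat{E}^D$: for pushouts along \ldi{}s and sequential colimits of \ldi{}s this is \cref{colimits-diagram-category}, while for countable coproducts of \ldi{}s it follows from \cref{lem:levelwise_VK_colimit} together with the existence of countable coproducts in $\cat{E}$. In each case the colimit of the domains appears as a coproduct factor of the colimit of the codomains, the other factor being a coproduct of complements (as in the proof of \cref{coproduct-decidable-colimit}); for coproducts this uses associativity in the form $\bigcoprod_i (F_i \sqcup K_i) \iso \bigl(\bigcoprod_i F_i\bigr) \sqcup \bigl(\bigcoprod_i K_i\bigr)$. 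For (v), I appeal to the observation made earlier in this section that complemented inclusions form the left class of a weak factorisation system in $\cat{E}$, hence are closed under retract; the claim transfers to $\cat{E}^D$ by inspecting retracts levelwise.

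For the preservation claim, each colimit in (ii), (iii), (iv) is van Kampen in $\cat{E}^D$ by the references just cited. Moreover, $E \tensorEsE \uvar$ acts levelwise as $E \times \uvar \from \cat{E} \to \cat{E}$, which factors as pullback along $E \to 1$ followed by the forgetful $\cat{E} \slice E \to \cat{E}$; the first preserves van Kampen colimits by universality, and the second is a left adjoint and so preserves all colimits. Hence $E \tensorEsE \uvar$ preserves the van Kampen colimits in question. There is no genuine obstacle here: once the lemmas supplying the levelwise computation and the van Kampen property of these colimits are identified, the remaining arguments are routine instances of lextensivity and standard weak-factorisation-system bookkeeping.
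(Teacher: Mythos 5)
Your proof is correct and follows essentially the same route as the paper's: everything is computed levelwise, so each part reduces to the corresponding stability property of complemented inclusions in $\cat{E}$, which follows from lextensivity (distributivity for (i), explicit coproduct decompositions for (ii)--(iv)), and the preservation claim reduces to $E \times \uvar$ preserving the relevant coproduct-shaped colimits. The only local difference is part (v): the paper deduces closure under retracts from closure of complemented inclusions under finite limits (\cref{decidable-closure}), viewing a retract as a limit, whereas you invoke the identification of complemented inclusions with the left class of the (complemented inclusion, split epimorphism) weak factorisation system; this is acceptable since the paper asserts that identification, but be aware that proving the identification itself (a left map is a codomain retract of the complemented first factor of its factorisation) already presupposes retract-closure, so the limit-based argument is the more self-contained one.
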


\begin{proof} The functor $E \tensorEsE \uvar$ and all the colimits mentioned are computed levelwise in $\cat{E}$, so the results boil down to the fact that complemented inclusions in $\cat{E}$ are stable under all these constructions. Stability under $E \tensorEsE \uvar $ follows from distributivity of product over coproduct in complemented categories: if $A \to A \coprod B$ is a complemented inclusion, then its image under $E \tensorEsE \uvar$ is~$E \tensorEsE A \to (E \tensorEsE A) \coprod (E \tensorEsE B)$ and is a complemented inclusion. The case of a countable coproduct is also clear: if $A_k \to A_k \coprod B_k$ is a family of complemented inclusions, then their coproduct can be written as $\bigcoprod A_k \to \left(\bigcoprod A_k \right) \coprod \left(\bigcoprod B_k \right)$. Stability under pushout and sequential composition follows from \cref{lem:pushout_decidable}. The fact that they are preserved by $E \tensorEsE \uvar$ follows from \cref{lem:pushout_decidable}.  The case of retracts can be deduced from the stability under limits proved in \cref{decidable-closure} as retracts can be seen as limits.
\end{proof}

\begin{lemma} \label{saturation}
  Let $p \from X \to Y$ be a map in $\cat{E}^D$ and $\fml{L}$ a class of levelwise complemented inclusions in $\cat{E}^D$ that have the \ellp{} \wrt{} $p$.
  Then $\fml{L}$ is closed under the following operations:
  \begin{parts}
    \item tensors by objects of $\cat{E}$,
    \item countable coproducts,
    \item pushouts,
    \item colimits of sequences,
    \item retracts.
  \end{parts}
\end{lemma}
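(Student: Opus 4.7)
Once one observes that the output of each of the five operations, applied to levelwise complemented inclusions, exists and is again a levelwise complemented inclusion by \cref{ldi-saturation}, the only remaining task is to verify that the enriched left lifting property against $p$ is preserved. The strategy is to express the pullback hom $\hat{\Hom}_{\psh \cat{E}}(\uvar, p)$ of the constructed morphism in terms of the pullback homs of the inputs via operations that preserve split epimorphisms in $\psh \cat{E}$.

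For (i), the natural isomorphism $\Hom_{\psh \cat{E}}(E \tensorEsE A, X) \iso \Hom_{\psh \cat{E}}(A, X)^{\yon E}$ yields $\hat{\Hom}_{\psh \cat{E}}(E \tensorEsE i, p) \iso \hat{\Hom}_{\psh \cat{E}}(i, p)^{\yon E}$, and any functor preserves split epimorphisms. For (ii), the enriched hom sends coproducts in the first variable to products, so $\hat{\Hom}_{\psh \cat{E}}(\coprod_k i_k, p) \iso \prod_k \hat{\Hom}_{\psh \cat{E}}(i_k, p)$ is a product of split epis. For (iii), a direct calculation using the universal property of the pushout shows that, when $j \from C \to B \cup_A C$ is the pushout of $i \from A \to B$ along a map $A \to C$, the pullback hom $\hat{\Hom}_{\psh \cat{E}}(j, p)$ is itself a pullback of $\hat{\Hom}_{\psh \cat{E}}(i, p)$, hence a split epimorphism by stability of split epis under pullback. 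Case (v) is immediate since classes defined by an enriched lifting property are retract-closed, as noted just before \cref{I-cofibration}.

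The hard case is (iv). Given a sequence $(i_k \from X_k \to X_{k+1})_{k \in \omega}$ with colimit $X_\infty$ and composite $i_\infty \from X_0 \to X_\infty$, the functor $\Hom_{\psh \cat{E}}(\uvar, Z)$ converts the colimit into a cofiltered limit, so that $\hat{\Hom}_{\psh \cat{E}}(i_\infty, p)$ identifies with the limit of the tower of pullback homs $\hat{\Hom}_{\psh \cat{E}}(c_{0, k}, p)$ for the finite composites $c_{0, k} \from X_0 \to X_k$. Since limits of split epimorphisms need not be split, one exploits extra structure: by pullback pasting, the comparison map
\[
\Hom_{\psh \cat{E}}(X_{k+1}, X) \to \Hom_{\psh \cat{E}}(X_k, X) \pull_{\Prob_{\psh \cat{E}}(c_{0, k}, p)} \Prob_{\psh \cat{E}}(c_{0, k+1}, p)
\]
is naturally isomorphic to $\hat{\Hom}_{\psh \cat{E}}(i_k, p)$, a split epi by hypothesis. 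Starting from the trivial section at level $0$ (where $c_{0, 0} = \id_{X_0}$) and inductively pairing the level-$k$ section with the identity on $\Prob_{\psh \cat{E}}(c_{0, k+1}, p)$ before applying a chosen section of this comparison, one obtains a coherent family of sections and hence a section of the limit $\hat{\Hom}_{\psh \cat{E}}(i_\infty, p)$.
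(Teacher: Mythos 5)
Your proof is correct and follows the same overall strategy as the paper's: everything reduces to the fact that $\Hom_{\psh \cat{E}}(\uvar, X)$ carries the five colimit operations (which exist and are preserved by the tensors $E \tensorEsE \uvar$ by \cref{ldi-saturation}) to the dual limit operations, under which split epimorphisms in $\psh \cat{E}$ are stable. The paper compresses all five cases into a single sentence to this effect; the one place where your write-up genuinely adds content is case (iv), and rightly so. A limit of a tower of split epimorphisms taken in the arrow category is not split in general, so the paper's phrase ``limits dual to the colimits listed above'' must be read, for sequential colimits, as cotransfinite composition --- the projection $\lim_k A_k \to A_0$ of a tower whose \emph{transition} maps are split epimorphisms. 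Your identification, via pullback pasting, of the comparison map with $\hat{\Hom}_{\psh \cat{E}}(i_k, p)$, followed by the inductive construction of a compatible family of sections, is exactly the unwinding needed to make that reading rigorous; it is the same mechanism the paper deploys explicitly in \cref{partial-lifts-colimit}. The remaining cases (exponential by $\yon E$, product, pullback, retract) match the paper's intent directly, with the implicit use of the preservation statements of \cref{ldi-saturation} that you flag at the outset.
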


\begin{proof}
  For $X \in \cat{E}^D$, the functor $\Hom_{\psh \cat{E}}(\uvar, X)$ is not necessarily an adjoint.
  However, since split epimorphisms are closed under limits dual to the colimits listed above, it is sufficient to verify that
  it carries these colimits to limits.
  (In the case of tensors this means that
  $\Hom_{\psh \cat{E}}(F \tensorEsE A, X) \iso \Hom_{\psh \cat{E}}(A, X)^{\cat{E}(\uvar, F)}$
  for all $F \in \cat{E}$.)
  This follows directly from these colimits being preserved by the tensors as recorded in \cref{ldi-saturation}.
\end{proof}

%  \fxnote{Is `finite' below the best name?}

\begin{definition} \label{E-finite} Let $A \in \cat{E}^D$. We say that $A$ is \emph{finite} if
  the following hold:
  \begin{parts}
    \item \label{E-finite:hom-exists} $\Hom_{\cat{E}}(A, X)$ exists for every $X \in \cat{E}^D$;
    \item \label{E-finite:sequential-colim} $\Hom_{\cat{E}}(A, \uvar)$ preserves colimits of sequences of \ldi{}s;
    \item \label{E-finite:ldi-to-di} $\Hom_{\cat{E}}(A, \uvar)$ sends \ldi{}s to \di{}s.
  \end{parts}
\end{definition}

The next lemma provides a supply of finite objects. For its statement, recall the functor $S \mapsto \fset{S}$ from \cref{extensive-categories}. As \cref{nicolas-grouping},
it is formulated using $D^{\op}$ instead of $D$ for convenience.
%
%\fxnote{Christian: The below lemma uses $[D^\op, \cat{E}]$ (needed to speak about representables), while the rest of the section uses $[D, \cat{E}]$. Harmonize?}

\begin{lemma}\label{finite-colimit-finite}
  Let $D$ be a locally countable category and assume that presheaf $A \in \psh D$ is
  a finite colimit of representables.
  Then $\fset{A} \in [D^\op, \cat{E}]$ is finite.
\end{lemma}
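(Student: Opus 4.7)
The plan is to reduce everything to the case of a representable and then use closure properties of complemented inclusions under finite limits. First, since $D$ is locally countable, each representable $D(-,d)$ is levelwise countable, and since the class of levelwise countable presheaves is closed under finite colimits in $\psh{D}$ (finite colimits of countable sets being countable), $A$ itself is levelwise countable. Hence \cref{exponential-via-evaluation} of \cref{nicolas-grouping} applies: $\Hom_{\cat{E}}(\fset{A}, X)$ exists for every $X \in [D^\op,\cat{E}]$ and is naturally isomorphic to the weighted limit $\ev_A(X)$. This settles part \ref{E-finite:hom-exists} of \cref{E-finite}.

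Next, I would write $\ev_A$ as a finite limit of evaluations at objects of $D$. Since $\ev_{(-)}(X)$ turns colimits of weights into limits and since $A$ is by hypothesis a finite colimit $A = \colim_i A_i$ of representables $A_i = D(-, d_i)$, we obtain a natural isomorphism
\[
  \ev_A(X) \;\iso\; \wlim_i \ev_{d_i}(X) \;=\; \wlim_i X_{d_i}
\]
with the indexing diagram finite. Thus $\Hom_\cat{E}(\fset{A}, -)$ factors through a finite limit of componentwise evaluations.

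For part \ref{E-finite:ldi-to-di}, let $f \from Y \to Z$ be a levelwise complemented inclusion in $\cat{E}^D$. Each componentwise evaluation $\ev_{d_i}(f) = f_{d_i}$ is by definition a complemented inclusion in $\cat{E}$, and by \cref{decidable-limit} of \cref{decidable-closure} the class of complemented inclusions in $\cat{E}$ is closed under finite limits. Applying this to the finite diagram above shows that $\ev_A(f)$ is a complemented inclusion.

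Finally, for part \ref{E-finite:sequential-colim}, consider a sequence of levelwise complemented inclusions in $\cat{E}^D$. By \cref{colimits-diagram-category:sequential} of \cref{colimits-diagram-category} its colimit is computed levelwise, so each componentwise evaluation $\ev_{d_i}$ automatically preserves it; the resulting colimiting sequences in $\cat{E}$ are sequences of complemented inclusions. The final ingredient is \cref{finite-limit-and-sequential-colimit}: finite limits of sequences of complemented inclusions are preserved by the colimit functor. Applied to the finite limit presentation of $\ev_A$ above, this shows that $\ev_A$ preserves the colimit. The main substance of the argument lies precisely in this last step, where one needs to know that finite limits and sequential colimits of \di{}s commute; everything else is a formal consequence of reducing $A$ to a finite colimit of representables.
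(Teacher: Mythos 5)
Your proof is correct and follows essentially the same route as the paper's: reduce to representables via \cref{exponential-via-evaluation} of \cref{nicolas-grouping}, then use that $\ev$ turns finite colimits of weights into finite limits together with \cref{decidable-limit} of \cref{decidable-closure} and \cref{finite-limit-and-sequential-colimit}. The only difference is presentational — the paper phrases the reduction as closure of ``$\cat{E}$-finite'' presheaves under finite colimits, whereas you write out the finite limit $\ev_A(X) \iso \wlim_i X_{d_i}$ directly — but the key lemmas and the substance of the argument are identical.
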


\begin{proof}
  First, note that since $D$ is locally countable, $A$ is levelwise countable and thus $\fset{A}$ exists.
  By \cref{exponential-via-evaluation} of \cref{nicolas-grouping}, $\Hom_{\cat{E}}(\fset{A}, \uvar)$ exists and is given by $\ev_A$ (evaluation at $A$).
  Call $X \in \psh D$ \emph{$\cat{E}$-finite} if it satisfies the conditions of \cref{E-finite} with $\Hom_{\cat{E}}(X, \uvar)$ replaced by $\ev_X$.
  Our goal then is to show that $A$ is $\cat{E}$-finite.
  This follows from the following observations:
  \begin{itemize}
    \item
          Representables are $\cat{E}$-finite.
          For this, recall that evaluation at a representable is given by evaluation at the representing object.
          \Cref{E-finite:sequential-colim} uses \cref{colimits-diagram-category:sequential} of \cref{colimits-diagram-category} to see that the colimit is computed levelwise.
    \item
          $\cat{E}$-finite presheaves are closed under finite colimits.
          For this, we use that the partial two-variable functor $\ev$ sends colimits in its first argument to limits.
          \Cref{E-finite:hom-exists} holds since $\cat{E}$ has finite limits.
          \Cref{E-finite:sequential-colim} holds since finite limits preserve colimits of sequences of complemented inclusions in $\cat{E}$ (\cref{finite-limit-and-sequential-colimit}).
          \Cref{E-finite:ldi-to-di} holds since complemented inclusions in $\cat{E}$ are closed under finite limits (\cref{decidable-limit} of \cref{decidable-closure}).
          \qedhere
  \end{itemize}
\end{proof}

The hypothesis of finiteness is used in the next result, where we use the notion of an $I$-fibration in the sense of \cref{I-cofibration}.

\begin{lemma}\label{partial-lifts-colimit}
  Assume that the domains and codomains of morphisms of $I$ are finite.
  Let $Y \in \cat{E}^D$ and $(X_k \to X_{k + 1} \ | \ k \in \mathbb{N})$ be
  a sequence of morphisms in $\cat{E}^D \slice Y$.
  If every $X_k \to X_{k + 1}$ is
  a \ldi{} and each $p_k \from X_k \to Y$ has $X_{k + 1}$-partial \erlp{} \wrt{} $I$,
  then $\colim_k X_k \to Y$ is an $I$-fibration.
\end{lemma}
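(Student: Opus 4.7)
The plan is to reduce the enriched right lifting property in $\psh \cat{E}$ to an ordinary lifting statement in $\cat{E}$ using the finiteness hypothesis, and then to build a section stage by stage by decomposing $\Prob_{\cat{E}}(i, p_\infty)$ as a countable coproduct of ``new lifting problems at stage $k$''. Fix $i \from A \to B$ in $I$; we need the pullback hom $\hat{\Hom}_{\psh \cat{E}}(i, p_\infty)$ to be a split epimorphism, writing $X_\infty \eqdef \colim_k X_k$. Since $A$ and $B$ are finite, all the hom-presheaves are representable, so by the remark following \cref{elp} it is enough to produce a section in $\cat{E}$ of $\hat{\Hom}_{\cat{E}}(i, p_\infty) \from \Hom_{\cat{E}}(B, X_\infty) \to \Prob_{\cat{E}}(i, p_\infty)$.

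Next, I would analyse the structure of $\Prob_{\cat{E}}(i, p_\infty)$. By \cref{E-finite:sequential-colim,E-finite:ldi-to-di} of \cref{E-finite} applied to $A$, the canonical map $\colim_k \Hom_{\cat{E}}(A, X_k) \to \Hom_{\cat{E}}(A, X_\infty)$ is an isomorphism and each transition map is a \di{}; the same holds for $B$. The transitions $\Prob_{\cat{E}}(i, p_k) \to \Prob_{\cat{E}}(i, p_{k+1})$ are pullbacks of the former along $\Hom_{\cat{E}}(B, Y) \to \Hom_{\cat{E}}(A, Y)$, hence are \di{}s as well. Because sequential colimits of \di{}s commute with finite limits in a countably lextensive category (\cref{finite-limit-and-sequential-colimit}), we obtain a natural isomorphism $\colim_k \Prob_{\cat{E}}(i, p_k) \iso \Prob_{\cat{E}}(i, p_\infty)$, and by \cref{extensive-sequential-colimit} of \cref{coproduct-decidable-colimit} this colimit decomposes as a countable coproduct $\bigcoprod_k N_k$, where $N_k$ is a choice of complement of $\Prob_{\cat{E}}(i, p_{k-1})$ in $\Prob_{\cat{E}}(i, p_k)$ (with the convention $\Prob_{\cat{E}}(i, p_{-1}) \eqdef \emptyset$).

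With the decomposition in hand, I would define $\tau \from \Prob_{\cat{E}}(i, p_\infty) \to \Hom_{\cat{E}}(B, X_\infty)$ out of the coproduct by letting its restriction to $N_k$ be the composite $N_k \hookrightarrow \Prob_{\cat{E}}(i, p_k) \xrightarrow{\sigma_k} \Hom_{\cat{E}}(B, X_{k+1}) \to \Hom_{\cat{E}}(B, X_\infty)$. To verify $\hat{\Hom}_{\cat{E}}(i, p_\infty) \circ \tau = \id$, it suffices to check the identity on each summand $N_k$: by naturality, $\hat{\Hom}_{\cat{E}}(i, p_\infty)$ restricted to the image of $\Hom_{\cat{E}}(B, X_{k+1})$ factors as $\hat{\Hom}_{\cat{E}}(i, p_{k+1})$ followed by $\Prob_{\cat{E}}(i, p_{k+1}) \to \Prob_{\cat{E}}(i, p_\infty)$, and the defining property of $\sigma_k$ says $\hat{\Hom}_{\cat{E}}(i, p_{k+1}) \circ \sigma_k$ equals the transition $\Prob_{\cat{E}}(i, p_k) \to \Prob_{\cat{E}}(i, p_{k+1})$. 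Restricting to $N_k$ and composing further with $\Prob_{\cat{E}}(i, p_{k+1}) \to \Prob_{\cat{E}}(i, p_\infty)$ recovers exactly the coproduct inclusion $N_k \hookrightarrow \Prob_{\cat{E}}(i, p_\infty)$, giving the required identity.

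The main technical point I anticipate is the interchange of the sequential colimit with the finite limit defining $\Prob$; this is precisely what countable lextensivity buys us via \cref{finite-limit-and-sequential-colimit}, combined with the fact that all the sequences in sight are sequences of \di{}s, which is guaranteed by finiteness of $A$ and $B$. Everything else is a formal manipulation of the resulting coproduct decomposition and the partial lifting data.
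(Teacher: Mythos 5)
Your proposal is correct and follows essentially the same route as the paper: both reduce to an $\cat{E}$-enriched section via finiteness of $A$ and $B$, observe that the transition maps $\Prob_{\cat{E}}(i,p_k) \to \Prob_{\cat{E}}(i,p_{k+1})$ are complemented inclusions, identify $\Prob_{\cat{E}}(i,\colim_k p_k)$ with $\colim_k \Prob_{\cat{E}}(i,p_k)$, and assemble the section from the given partial lifts using the complements. The only cosmetic difference is that the paper first constructs a family of lifts natural in $k$ by induction (gluing each previous lift with the given one on the complement) and then passes to the colimit, whereas you define the section directly on the coproduct decomposition $\bigcoprod_k N_k$ of $\Prob_{\cat{E}}(i,p_\infty)$ --- the resulting map is the same.
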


\begin{proof}
  Fix a morphism $i \from A \to B$ of $I$.
  Since $A$ and $B$ are finite, the given partial enriched lifting properties are $\cat{E}$-enriched.
  Moreover, since $X_k \to X_{k + 1}$ is a \ldi{}, \cref{decidable-closure} implies that
  $\Prob_{\cat{E}}(i, p_k) \to \Prob_{\cat{E}}(i, p_{k + 1})$ is a \di{}.

  Proceeding by induction \wrt{} $k$, we can pick lifts
  \begin{tikzeq*}
    \matrix[diagram,column sep={between origins,10em}]
    {
                                    & |(BcX)| \Hom_{\cat{E}}(B, X_{k + 1}) \\
      |(P)| \Prob_{\cat{E}}(i, p_k) & |(Pc)|  \Prob_{\cat{E}}(i, p_{k + 1}) \\
    };

    \draw[->,dashed] (P) to (BcX);

    \draw[->] (P)   to (Pc);
    \draw[->] (BcX) to (Pc);
  \end{tikzeq*}
  that are natural in $k$.
  Indeed, since $\Prob_{\cat{E}}(i, p_{k - 1}) \to \Prob_{\cat{E}}(i, p_k)$ is a \di{},
  we can construct a compatible lift by assembling a previously constructed lift on
  $\Prob_{\cat{E}}(i, p_{k - 1})$ with a given lift on its complement.
  Since $A$ and $B$ are finite, we have
  \begin{align*}
    \colim_k \Hom_{\cat{E}}(B, X_k) & = \Hom_{\cat{E}}(B, \colim_k X_k)                                                                  \\
    \intertext{and}
    \colim_k \Prob_{\cat{E}}(i, p_k)
                                    & = \colim_k \left( \Hom_{\cat{E}}(A, X_k) \pull_{\Hom_{\cat{E}}(A, Y)} \Hom_{\cat{E}}(B, Y) \right) \\
                                    & = \left( \colim_k \Hom_{\cat{E}}(A, X_k) \right) \pull_{\Hom_{\cat{E}}(A, Y)} \Hom_{\cat{E}}(B, Y) \\
                                    & = \Hom_{\cat{E}}(A, \colim_k X_k) \pull_{\Hom_{\cat{E}}(A, Y)} \Hom_{\cat{E}}(B, Y)                \\
                                    & = \Prob_{\cat{E}}(i, \colim_k p_k) \text{,}
  \end{align*}
  the latter by universality of sequential colimits of \di{}s in $\cat{E}$ (\cref{lem:pushout_decidable}).
  Thus we obtain a diagram
  \begin{tikzeq*}
    \matrix[diagram,column sep={between origins,14em}]
    {
                                     & |(BcX)| \Hom_{\cat{E}}(B_i, \colim_k X_k) \\
      |(P)|  \Prob_{\cat{E}}(i, \colim_k p_k) &
      |(Pc)| \Prob_{\cat{E}}(i, \colim_k p_k) \text{.}           \\
    };

    \draw[->,dashed] (P) to (BcX);

    \draw[->] (P)   to (Pc);
    \draw[->] (BcX) to (Pc);
  \end{tikzeq*}
  where the bottom map is an identity, \ie,
  these lifts form a section that exhibits~$\colim_k X_k \to~Y$ as an $I$-fibration.
\end{proof}

The following lemma isolates a simpler version of the inductive step in the construction of lifts in \cref{partial-lifts-colimit}.
It is needed in \cref{sec:equivalence-extension}.

\begin{lemma} \label{structured-fibration-extension}
  Let
  \begin{tikzeq*}{structured-fibration-extension:square}
    \matrix[diagram]
    {
      |(X)| X & |(Y)| Y                         \\
      |(A)| A & |(B)| B \rlap{\text{.}} \\
    };

    \draw[->] (X) to (Y);
    \draw[->] (A) to (B);
    \draw[->] (X) to node[left] {$p$} (A);
    \draw[->] (Y) to node[right] {$q$} (B);
    \pb{X}{B};
  \end{tikzeq*}
  be a pullback square in $\cat{E}^D$ with $A \to B$ a levelwise complemented inclusion.
  Let $i \from U \to V$ be a map in~$\cat{E}^D$ between finite objects such that $\hat{\Hom}_{\cat{E}}(i, p)$ and $\hat{\Hom}_{\cat{E}}(i, q)$ have sections.
  Then, for any section $s$ of $\hat{\Hom}_{\cat{E}}(i, p)$, there is a section $t$ of $\hat{\Hom}_{\cat{E}}(i, q)$ such that the diagram
  \begin{tikzeq*}
    \matrix[diagram,column sep={10em,between origins}]
    {
      |(ps)| \Hom_{\cat{E}}(V, X) & |(qs)| \Hom_{\cat{E}}(V, Y)                         \\
      |(pp)| \Prob_{\cat{E}}(i, p) & |(qp)| \Prob_{\cat{E}}(i, q) \rlap{\text{.}} \\
    };

    \draw[->] (ps) to (qs);
    \draw[->] (pp) to (qp);
    \draw[->] (ps) to node[left] {$\hat{\Hom}_{\cat{E}}(i, p)$} (pp);
    \draw[->] (qs) to node[right] {$\hat{\Hom}_{\cat{E}}(i, p)$} (qp);
    \draw[->,bend right,dashed] (pp) to (ps);
    \draw[->,bend left,dashed] (qp) to (qs);
    \pb{ps}{qp};
  \end{tikzeq*}
  forms a morphism of retracts.
\end{lemma}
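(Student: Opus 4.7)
The plan is to reduce the construction of $t$ to a case analysis in $\cat{E}$ along the complement of $A$ in $B$, exploiting the finiteness of $U$ and $V$. First, I would apply $\Hom_{\cat{E}}(V, \uvar)$ and $\Hom_{\cat{E}}(U, \uvar)$ to the given pullback square. Both functors preserve pullbacks, and by \cref{E-finite:ldi-to-di} of \cref{E-finite} they both send the levelwise complemented inclusion $A \to B$ to a complemented inclusion in $\cat{E}$. Pulling back, the horizontal maps $\Hom_{\cat{E}}(V, X) \to \Hom_{\cat{E}}(V, Y)$ and $\Hom_{\cat{E}}(U, X) \to \Hom_{\cat{E}}(U, Y)$ are then also complemented inclusions.

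Second, I would check that the square with vertical arrows $\hat{\Hom}_{\cat{E}}(i, p)$ and $\hat{\Hom}_{\cat{E}}(i, q)$ and horizontal arrows induced by the given square is itself a pullback in $\cat{E}$. Unfolding definitions and using pullback pasting, one obtains the identification
\[
\Prob_{\cat{E}}(i, p) \iso \Prob_{\cat{E}}(i, q) \times_{\Hom_{\cat{E}}(V, B)} \Hom_{\cat{E}}(V, A),
\]
from which the pullback claim follows. In particular, the bottom map $\Prob_{\cat{E}}(i, p) \to \Prob_{\cat{E}}(i, q)$ is a complemented inclusion; write $Q$ for its complement, so that $\Prob_{\cat{E}}(i, q) = \Prob_{\cat{E}}(i, p) \sqcup Q$.

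Finally, I would define $t$ using the universal property of this coproduct: on the summand $\Prob_{\cat{E}}(i, p)$, let $t$ be the composite of $s$ with the inclusion $\Hom_{\cat{E}}(V, X) \to \Hom_{\cat{E}}(V, Y)$; on $Q$, let $t$ be the restriction of the hypothesised section of $\hat{\Hom}_{\cat{E}}(i, q)$. The retract-morphism compatibility is then immediate from the first clause of this case analysis. That $t$ is a section of $\hat{\Hom}_{\cat{E}}(i, q)$ is verified on each summand separately: on $Q$ it holds by construction, and on $\Prob_{\cat{E}}(i, p)$ it follows from commutativity of the pullback square of hom-objects together with the fact that $s$ is a section of $\hat{\Hom}_{\cat{E}}(i, p)$. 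The main obstacle is the bookkeeping of the second step, identifying the square of pullback-homs as a pullback; once this is in place, both the construction of $t$ and its verification are essentially forced.
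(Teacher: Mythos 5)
Your proposal is correct and follows essentially the same route as the paper: the paper likewise observes that $\Prob_{\cat{E}}(i,p) \to \Prob_{\cat{E}}(i,q)$ is a complemented inclusion (citing closure of complemented inclusions under finite limits, where you instead exhibit it as a pullback of $\Hom_{\cat{E}}(V,A) \to \Hom_{\cat{E}}(V,B)$ --- both valid) and then defines $t$ by using $s$ on that summand and the given section on its complement. Your write-up simply makes explicit the bookkeeping that the paper leaves implicit.
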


%\fxnote{%`
%Note that in the application, $j$ is a map in $\Set^J$ and $\Hom_{\cat{E}}$ is replaced by pullback evaluation.
%It seems simpler to state and prove this lemma in terms of evaluation instead of $\Hom_{\cat{E}}$.
%Then it could go into \cref{extensive-categories}.
%But then the connection to \cref{partial-lifts-colimit} is lost.
%}

\begin{proof}
  The map $\Prob_{\cat{E}}(i, p) \to \Prob_{\cat{E}}(i, q)$ is a complemented inclusion by \cref{decidable-closure}.
  We construct $t$ by using $s$ on $\Prob_{\cat{E}}(i, p)$ and a given section on its complement.
\end{proof}

\begin{theorem}[Enriched \smo{}]\label{esmo}
  Let $I = (i \from A_i \to B_i \ | \ i \in I)$ be a countable set of \ldi{}s between finite objects of $\cat{E}^D$. Then
  $I$-cofibrations and $I$-fibrations form an \ewfs{} in $\cat{E}^D$.
\end{theorem}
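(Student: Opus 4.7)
The plan is to carry out an enriched version of Quillen's small object argument, where at each stage we glue in copies of the generating cofibrations indexed by the object of lifting problems, iterate countably many times, and exploit the complemented inclusion hypothesis throughout to ensure all required colimits exist and are preserved appropriately.

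Given a morphism $f \from X \to Y$ in $\cat{E}^D$, I would construct a sequence $(X_k, p_k \from X_k \to Y \mid k \in \nat)$ by setting $X_0 \eqdef X$, $p_0 \eqdef f$, and defining $X_{k+1}$ via the pushout
\begin{tikzeq*}
\matrix[diagram,column sep={14em,between origins}]
{
  |(A)| \bigcoprod_{i \in I} \Prob_{\cat{E}}(i,p_k) \tensorEsE A_i & |(X)| X_k     \\
  |(B)| \bigcoprod_{i \in I} \Prob_{\cat{E}}(i,p_k) \tensorEsE B_i & |(X')| X_{k+1}\rlap{\text{,}} \\
};
\draw[->] (A) to (X);
\draw[->] (A) to (B);
\draw[->] (X) to (X');
\draw[->] (B) to (X');
\end{tikzeq*}
where the top map arises from the universal lifting problem (using that $\Prob_{\cat{E}}(i,p_k)$ exists by finiteness of $A_i$ and $B_i$, and representability of the tensor by finite objects). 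Here $p_{k+1} \from X_{k+1} \to Y$ is induced by $p_k$ and the natural map $\bigcoprod_i \Prob_{\cat{E}}(i,p_k)\tensorEsE B_i \to Y$. Finally, set $X_\infty \eqdef \colim_k X_k$ and let $p_\infty \from X_\infty \to Y$ be the induced map; the desired factorisation is $X \to X_\infty \to Y$.

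The existence of these colimits is the first technical point. The left-hand vertical map is a levelwise complemented inclusion by \cref{ldi-saturation} (tensoring a \ldi{} with an object of $\cat{E}$ and forming countable coproducts of \ldi{}s), so the pushout exists in $\cat{E}^D$ by \cref{colimits-diagram-category:pushout} of \cref{colimits-diagram-category}, and its right-hand leg $X_k \to X_{k+1}$ is again a \ldi{}. The sequential colimit $X_\infty$ then exists by \cref{colimits-diagram-category:sequential} of the same corollary. Next, I would verify that $p_\infty$ is an $I$-fibration. By construction, the pushout square exhibits a canonical lift that witnesses $p_k$ having the $X_{k+1}$-partial \erlp{} with respect to $I$; concretely, the universal map $\Prob_{\cat{E}}(i, p_k) \to \Hom_{\cat{E}}(B_i, X_{k+1})$ is an $X_{k+1}$-partial lift. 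Since the transition maps are \ldi{}s and the $A_i, B_i$ are finite, \cref{partial-lifts-colimit} applies to conclude that $p_\infty$ is an $I$-fibration.

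To see that $X \to X_\infty$ is an $I$-cofibration, I invoke \cref{saturation}: each generating map $i \from A_i \to B_i$ is trivially an $I$-cofibration, hence so is any tensor $E \tensorEsE i$, any countable coproduct of such, any pushout along such, and any sequential colimit built from them. Since each $X_k \to X_{k+1}$ fits into the latter two operations and $X \to X_\infty$ is the sequential colimit of these, it is an $I$-cofibration. The remaining axioms of \cref{enriched-wfs} are then essentially formal: the class of $I$-cofibrations equals the class of maps with the $\ellp$ against $I$-fibrations by definition, and a map with $\erlp$ against all $I$-cofibrations is in particular an $I$-fibration (since $I \subseteq \{I\text{-cofibrations}\}$ tautologically, using that $A_i \to B_i$ has the $\ellp$ against $I$-fibrations by definition).

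The main obstacle I would expect is verifying that the canonical natural map assembled from the universal partial lifts really is an $X_{k+1}$-partial lift in the sense required by \cref{partial-lifts-colimit}, and that the entire bookkeeping is compatible with the enrichment. This is largely a matter of carefully unwinding the $\psh \cat{E}$-enriched adjunction between tensor and hom, combined with the fact that the tensor with a levelwise complemented inclusion preserves the relevant van Kampen colimits (\cref{ldi-saturation}), so that the partial lifts on lifting-problem-objects assemble coherently along the sequence. Everything else is direct application of the lemmas already established in the section.
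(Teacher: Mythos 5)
Your proposal is correct and follows essentially the same route as the paper: the identical one-step pushout construction indexed by the objects of lifting problems, iterated countably, with \cref{ldi-saturation} guaranteeing the colimits exist, \cref{partial-lifts-colimit} giving the $I$-fibration on the right factor, and \cref{saturation} giving the $I$-cofibration on the left. The only detail worth making explicit is that an $I$-fibration has the enriched right lifting property against \emph{all} $I$-cofibrations (not just the generators), which follows from \cref{saturation} together with the retract argument you sketch at the end.
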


\begin{proof}
  For a morphism $p_0 \from X_0 \to Y$ we form a sequence $X_0 \to X_1 \to X_2 \to \ldots$ in
  $\cat{E} \slice Y$ by iteratively taking pushouts
  \begin{tikzeq*}
    \matrix[diagram,column sep={between origins,10em}]
    {
      |(A)| \bigcoprod_{i \in I} \Prob_{\cat{E}}(i, p_k) \tensorEsE A_i & |(k)|  X_k                           & [-4em] \\
      |(B)| \bigcoprod_{i \in I} \Prob_{\cat{E}}(i, p_k) \tensorEsE B_i & |(k1)| X_{k + 1} &                                        \\[-2ex]
                                                                           &                                      & |(Y)| Y \text{.}        \\
    };

    \draw[->] (A) to (B);
    \draw[->] (k) to (k1);
    \draw[->] (A) to (k);
    \draw[->] (B) to (k1);

    \draw[->] (B) to[bend right=20]                           (Y);
    \draw[->] (k) to[bend left=40]  node[above right] {$p_k$} (Y);

    \draw[->,dashed] (k1) to node[above right] {$p_{k + 1}$} (Y);
    \pbdr{k1}{A};
  \end{tikzeq*}
  The adjoint transpose of $\Prob(i, p_k) \tensorEsE B_i \to X_{k + 1}$ witnesses
  the $X_{k + 1}$-partial \erlp{} of $p_k$ \wrt{} $i$.
  Moreover, by \cref{ldi-saturation}, $X_k \to X_{k + 1}$ is a \ldi{}.
  Thus \cref{partial-lifts-colimit} applies and shows that $\colim_k X_k \to Y$ is
  an $I$-fibration.
  Using \cref{saturation}, we show that $X_0 \to \colim_k X_k$ is an $I$-cofibration.
\end{proof}

\begin{remark}
  Essentially the the same argument used to prove \cref{esmo} can be used to  prove a more general statement.
  Namely, instead of $\cat{E}^D$ we consider an \emph{$\cat{E}$-module} $\cat{C}$, i.e., a category equipped with
  a \emph{tensor} functor $\uvar \tensorEsE \Uvar \from \cat{E} \times \cat{C} \to \cat{C}$ that is associative
  in the sense that the functor $\cat{E} \to \End \cat{C}$, given by $E \mapsto (E \tensorEsE \uvar)$,
  is monoidal (\wrt{} the \cartesian product on $\cat{E}$ and functor composition on $\End \cat{C}$).
  Then $\cat{C}$ carries a $\psh \cat{E}$-enrichment defined in the same way as the one on $\cat{E}^J$ which yields
  notions of an \elp{} and an \ewfs{}.
  The complication lies in the fact that the definition of \ldi{}s is not available in $\cat{C}$.
  However, if we assume that $\cat{C}$ is equipped with a class of morphisms $\fml{D}$ satisfying
  the conclusion of \cref{ldi-saturation}, then the proof of \cref{esmo} applies without changes.
  (Note that in this case the notion of finiteness in $\cat{C}$ depends on the choice of $\fml{D}$.)
  Examples of categories that can be endowed with such structure include the categories of internal categories in $\cat{E}$, internal groupoids in $\cat{E}$
  and marked simplicial objects in $\cat{E}$.
\end{remark}

We conclude this section by introducing the notion of a cell complex and establish a few results that will be useful later.

\begin{definition} \label{thm:cell-complex-def}
  For a family of maps $I = (i \from A_i \to B_i \ | \ i \in I)$, an \emph{$\cat{E}$-enriched $I$-cell complex} is a morphism of $\cat{E}^D$ that is a sequential colimit of maps $X \to Y$ arising as pushouts
  \begin{tikzeq*}
    \matrix[diagram,column sep={between origins,6em}]
    {
      |(A)| \bigcoprod_i E_i \tensorEsE A_i & |(X)| X \\
      |(B)| \bigcoprod_i E_i \tensorEsE B_i & |(Y)| Y \\
    };

    \draw[->] (A) to (B);
    \draw[->] (X) to (Y);
    \draw[->] (A) to (X);
    \draw[->] (B) to (Y);
    \pbdr{Y}{A};
  \end{tikzeq*}
  for some family $(E_i)_{i \in I}$ of objects of $E$.
\end{definition}

Below, we simply speak of an $I$-cell complex for brevity.

\begin{proposition} \label{cofibration-as-retract-of-cell-complex}
  Under the hypotheses of \cref{esmo}, a morphism of $\cat{E}^D$ is an $I$-cofibration \iff{} it is a codomain retract of an $I$-cell complex.
  In particular, every $I$-cofibration is a levelwise complemented inclusion.
\end{proposition}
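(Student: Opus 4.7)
The proof is a standard retract argument built on top of the small object argument from \cref{esmo}, combined with the fact that the relevant closure properties of levelwise complemented inclusions are compatible with the enriched lifting properties. I will prove both inclusions of the biconditional and then the ``in particular'' clause.

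For the easier direction, I aim to show that every $I$-cell complex is both an $I$-cofibration and a levelwise complemented inclusion, from which it follows that every codomain retract of an $I$-cell complex is an $I$-cofibration (since $I$-cofibrations, being defined by a lifting property, are closed under retracts). By hypothesis each generator $A_i \to B_i$ in $I$ is a levelwise complemented inclusion with the $\psh \cat{E}$-enriched LLP against $I$-fibrations. For a fixed $I$-fibration $p$, \cref{saturation} tells me that the class of levelwise complemented inclusions having the enriched LLP against $p$ is closed under tensors by objects of $\cat{E}$, countable coproducts, pushouts, sequential colimits, and retracts; intersecting over all $I$-fibrations $p$ shows the same closure for the class of $I$-cofibrations. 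Inspecting \cref{thm:cell-complex-def}, these are exactly the operations used to build cell complexes, so every $I$-cell complex is an $I$-cofibration. The same operations preserve levelwise complemented inclusions by \cref{ldi-saturation}, giving the ``in particular'' statement once the main equivalence is established.

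For the forward direction, suppose $f \colon X \to Y$ is an $I$-cofibration. Apply the factorisation produced in the proof of \cref{esmo} to obtain $X \to Z \to Y$, where $Z = \colim_k X_k$ is built by iterated pushouts of coproducts of the form $\bigcoprod_i \Prob_{\cat{E}}(i,p_k) \tensorEsE A_i \to \bigcoprod_i \Prob_{\cat{E}}(i,p_k) \tensorEsE B_i$ and $Z \to Y$ is an $I$-fibration. By construction $X \to Z$ is an $I$-cell complex in the sense of \cref{thm:cell-complex-def} (take $E_i = \Prob_{\cat{E}}(i, p_k)$ at stage $k$). Since $f$ is an $I$-cofibration, it has the ordinary LLP against $Z \to Y$ (this being entailed by the enriched LLP, as in \cref{ordinary-wfs}), so a diagonal lift $r \colon Y \to Z$ exists in the square
\begin{tikzeq*}
\matrix[diagram]
{
|(X)| X  & |(Z)| Z \\
|(Y)| Y  & |(Y')| Y \rlap{\text{.}} \\
};
\draw[->] (X) to (Z);
\draw[->] (X) to node[left]{$f$} (Y);
\draw[->] (Z) to (Y');
\draw[->] (Y) to node[below]{$\id$} (Y');
\draw[->,dashed] (Y) to node[above]{$r$} (Z);
\end{tikzeq*}
The pair $(\id_X, r)$ together with the given maps exhibits $f$ as a codomain retract of the $I$-cell complex $X \to Z$, completing the equivalence.

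The ``in particular'' part now follows by combining the two halves of the argument: every $I$-cofibration is a codomain retract of an $I$-cell complex, and $I$-cell complexes are levelwise complemented inclusions by the closure properties in \cref{ldi-saturation}; since levelwise complemented inclusions are themselves closed under retracts (\cref{ldi-saturation}), the retract is again a levelwise complemented inclusion. No step presents a real obstacle; the only point requiring attention is ensuring that the SOA output really matches the explicit shape in \cref{thm:cell-complex-def}, which a direct inspection of the proof of \cref{esmo} confirms.
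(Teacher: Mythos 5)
Your proposal is correct and follows essentially the same route as the paper's proof: the backward direction via the closure properties of \cref{saturation} and \cref{ldi-saturation}, and the forward direction via the retract argument applied to the factorisation from \cref{esmo}, using the ordinary lifting property obtained by evaluating at the terminal object. The only (harmless) elaboration beyond the paper is your explicit check that the stages of the small object argument match the shape of \cref{thm:cell-complex-def}, which the paper dispatches with ``by construction''.
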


\begin{proof}
  A retract of an $I$-cell complex is an $I$-cofibration by \cref{saturation}.
  It is furthermore a levelwise complemented inclusion by \cref{ldi-saturation}.
  Conversely, let $X \to Y$ be an $I$-cofibration and consider the factorisation $X \to X' \to Y$ defined in the proof of \cref{esmo}.
  Then $X \to X'$ is an $I$-cell complex by construction.
  Moreover, $X \to Y$ has the $\psh \cat{E}$-\ellp{} \wrt{} $X' \to Y$ and, in particular, it has the \ollp{}
  (by evaluating the hom-presheaves at the terminal object).
  Thus there is a lift in the diagram
  \begin{tikzeq*}
    \matrix[diagram,column sep={between origins,6em}]
    {
      |(X)| X & |(X')| X' \\
      |(Y)| Y & |(Yr)| Y  \\
    };

    \draw[->] (X)  to (Y);
    \draw[->] (X') to (Yr);
    \draw[->] (X)  to (X');
    \draw[->] (Y)  to (Yr);

    \draw[->,dashed] (Y) to (X');
  \end{tikzeq*}
  which exhibits $X \to Y$ as a codomain retract of $X \to X'$.
\end{proof}

\begin{lemma} \label{colimit-of-fibrations}
  In the setting of \cref{esmo}, the following hold.
  \begin{parts}
    \item \label{colimit-of-fibrations:coproduct}
    Consider a countable family of maps $f_k$ in the arrow category of $\cat{E}^D$.
    If $f_k$ is an $I$-fibration for all $k$, then so is the coproduct $\bigcoprod_k f$. When $\cat E$ is $\alpha$-lextensive, the same
    holds for $\alpha$-coproducts.
    \item \label{colimit-of-fibrations:pushout}
    Consider a span $f_0 \leftarrow f_{01} \to f_1$ in the arrow category of $\cat{E}^D$.
    Assume that both legs form pullback squares and that $f_{01} \to f_0$ is a levelwise complemented inclusion on codomains.
    If $f_k$ is an $I$-fibration for $k = 0, 1, 01$, then so is the pushout $\colim f$.
    \item \label{colimit-of-fibrations:sequential-composition}
    Consider a sequential diagram $f_0 \to f_1 \to \ldots$ in the arrow category of $\cat{E}^D$.
    Assume that the maps $f_k \to f_{k+1}$ form pullback squares and are levelwise complemented inclusions on codomains.
    If $f_k$ is an $I$-fibration for all $i$, then so is $\colim f$.
  \end{parts}
\end{lemma}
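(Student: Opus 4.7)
The plan is to establish all three parts by constructing sections of the pullback hom $\hat{\Hom}_{\cat{E}}(i, f)$ for each $i \in I$, where $f$ denotes the relevant colimit. The main technical tool is \cref{structured-fibration-extension}, which allows a section of such a pullback hom for one map to extend compatibly to a section for another whenever the two maps sit in a pullback square with a levelwise complemented inclusion on codomains and both already admit sections individually. I would prove (iii) first, then (ii), and finally deduce (i). For (iii), I would construct sections $s_k$ of $\hat{\Hom}_{\cat{E}}(i, f_k)$ inductively, starting from $s_0$ (which exists since $f_0$ is an $I$-fibration). The inductive step applies \cref{structured-fibration-extension} to the pullback square relating $f_k$ and $f_{k+1}$, using pullback stability of complemented inclusions (\cref{decidable-closure}) to upgrade the hypothesis to the domain map $X_k \to X_{k+1}$ being levelwise complemented as well. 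Passing to the colimit, finiteness of $A_i$ and $B_i$ (\cref{E-finite}) ensures that $\Hom_{\cat{E}}(A_i, -)$ and $\Hom_{\cat{E}}(B_i, -)$ preserve the sequential colimits in play, and universality of such colimits (\cref{lem:pushout_decidable}) then gives $\Prob_{\cat{E}}(i, \colim f) \iso \colim_k \Prob_{\cat{E}}(i, f_k)$, so that $\colim_k s_k$ assembles into the required section.

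For (ii), I would pick any section $s_{01}$ of $\hat{\Hom}_{\cat{E}}(i, f_{01})$ and apply \cref{structured-fibration-extension} separately along the two legs $f_{01} \to f_0$ and $f_{01} \to f_1$ to obtain sections $s_0$ and $s_1$ of $\hat{\Hom}_{\cat{E}}(i, f_0)$ and $\hat{\Hom}_{\cat{E}}(i, f_1)$ both extending $s_{01}$. By \cref{lem:pushout_decidable}, the pushout of $Y_{01} \to Y_0$ (complemented with complement $C$) along $Y_{01} \to Y_1$ is $Y = Y_1 \coprod C$, and pulling back along $f_0$ yields $X = X_1 \coprod D$ with $D$ the corresponding complement in $X_0$; consequently the pushout $f$ is a binary coproduct $f_1 \coprod g$, where $g = f_0|_D$ is an $I$-fibration as a pullback of $f_0$. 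The compatible sections $s_1$ and $s_0|_D$ then glue into the required section of $\hat{\Hom}_{\cat{E}}(i, f)$ via this decomposition.

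Finally, (i) follows from (ii) and (iii). The binary coproduct $f_0 \coprod f_1$ is the pushout in the arrow category of the span $f_0 \leftarrow (0 \to 0) \to f_1$, whose legs are pullback squares by strict initiality (which holds in any lextensive category) and have codomain maps $0 \to Y_k$ complemented (since $Y_k = 0 \coprod Y_k$); thus (ii) applies, and finite coproducts follow by induction. Countable coproducts arise as sequential colimits of finite coproducts, with the transition squares being pullbacks by van Kampen universality and having complemented codomain inclusions, so (iii) applies. The $\alpha$-lextensive case proceeds by an analogous transfinite induction, handling successor stages as binary coproducts and limit stages as transfinite colimits with the same finiteness and universality inputs. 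The main technical obstacle throughout is the gluing step in (ii), which ultimately reduces to the decomposition of pushouts along complemented inclusions recorded in \cref{lem:pushout_decidable}; once that is in hand, the rest is bookkeeping.
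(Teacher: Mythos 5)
Your overall strategy — build a family of sections of the pullback evaluations that is natural over the indexing diagram, then pass to the colimit using finiteness of the (co)domains of $I$ and the van Kampen property — is the same as the paper's, and your treatment of the sequential case~\ref{colimit-of-fibrations:sequential-composition} is essentially correct (the interchange of the finite limit defining $\Prob_{\cat{E}}(i,-)$ with the sequential colimit is exactly what \cref{van-Kampen-pullback-weighted-limit} packages; your citation of \cref{lem:pushout_decidable} should really point there or to \cref{finite-limit-and-sequential-colimit}).

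There is, however, a genuine gap in your argument for \cref{colimit-of-fibrations:pushout}. You start from an arbitrary section $s_{01}$ of $\hat{\Hom}_{\cat{E}}(i, f_{01})$ and claim to apply \cref{structured-fibration-extension} ``separately along the two legs'' to produce $s_0$ and $s_1$ both extending $s_{01}$. That lemma requires the codomain map of the pullback square to be a levelwise complemented inclusion, and the hypotheses of the present statement grant this only for the leg $f_{01} \to f_0$, not for $f_{01} \to f_1$. For the latter leg the map $\Prob_{\cat{E}}(i, f_{01}) \to \Prob_{\cat{E}}(i, f_1)$ is merely a pullback of $Y_{01,n} \to Y_{1,n}$, which can be an arbitrary (e.g.\ non-complemented) monomorphism, and a section prescribed on such a subobject need not extend. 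The fix is to reverse the order of operations, as the paper does: choose a section of $\hat{\ev}_i(f_1)$, \emph{restrict} it to a section of $\hat{\ev}_i(f_{01})$ by pulling back along the pullback square $f_{01} \to f_1$ (no complementedness needed in this direction), and only then invoke \cref{structured-fibration-extension} along the leg $f_{01} \to f_0$ to replace the given section of $\hat{\ev}_i(f_0)$ by one cohering with it. Your final gluing step, and your derivation of \cref{colimit-of-fibrations:coproduct} from the other two parts, can be made to work, but both amount to re-proving \cref{van-Kampen-pullback-weighted-limit} in special cases; note also that for $\alpha$-coproducts the paper's direct argument (a discrete family of sections is vacuously natural) avoids the transfinite induction you sketch, which would require preservation of transfinite colimits of \ldi{}s beyond the $\omega$-indexed case covered by \cref{E-finite}.
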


\begin{proof}
  In all three parts, the colimit $\colim f$ exists and is computed separately on sources and targets where they form van Kampen colimits by \cref{colimits-diagram-category}.
  Let $C$ denote the shape of the diagram (which varies over the parts).
  We check that $\colim f$ is an $I$-fibration using \cref{enriched-lifting-as-pullback-evaluation}.
  For each $i \in I$, given a section of $\hat{\ev}_i(f_c)$ for $c \in C$, we have to construct a section of $\hat{\ev}_i(\colim f)$.
  Using \cref{van-Kampen-pullback-weighted-limit} and functoriality of colimits, it suffices to construct a family of section of $\hat{\ev}_i(f_c)$ that is natural in $c \in C$.

  For \cref{colimit-of-fibrations:coproduct}, the naturality is vacuous.
  For \cref{colimit-of-fibrations:pushout}, we pull the section of $\hat{\ev}_i(f_1)$ back to a section of $\hat{\ev}_i(f_{01})$ and then use \cref{structured-fibration-extension} to replace the section of $\hat{\ev}_i(f_0)$ by one that coheres with the one of $\hat{\ev}_i(f_{01})$.
  For \cref{colimit-of-fibrations:sequential-composition}, we recurse on $k$ and use \cref{structured-fibration-extension} to replace the given section of $\hat{\ev}_i(f_{k+1})$ by one that coheres with the one of $\hat{\ev}_i(f_k)$.
  In all three cases, the sections form a $D$-shaped natural transformation as required.
\end{proof}

%\fxnote{Note that \cref{pushout-application} is also implicitly used in the construction of the Reedy weak factorization system.}

We consider the application functor $\app \from [\cat{C}, \cat{D}] \times \cat{C} \to \cat{D}$ and record some commonly used facts about
pushout applications in the following statement.
We regard the pushout application of a natural transformation $[\cat{C}, \cat{D}]$ to an arrow in $\cat{C}$ to be defined if the pushout in the evident commuting square exists.
Recall that the pushout application is the induced arrow from the pushout corner.

\begin{lemma} \label{pushout-application}
  Let $u \from X \to Y$ be a map in $[\cat{C}, \cat{D}]$.
  Then pushout application $\hat{\app}(u, -) \from \cat{C}^{[1]} \to \cat{D}^{[1]}$ forms a partial functor with the following properties.
  \begin{parts}
    \item \label{pushout-application:colimit}
    Let $c \from I \to \cat{C}^{^{[1]}}$ be a diagram of arrows with levelwise colimit (\ie, a colimit that is computed separately on sources and targets in $\cat{C}$).
    If $X$ and $Y$ preserve this levelwise colimit and $\hat{\app}(u, -)$ is defined on all values of $c$, then $\hat{\app}(u, -)$ preserves the levelwise colimit of $c$.
    \item \label{pushout-application:pushout}
    Let $f \to g$ be a morphism in $\cat{C}^{^{[1]}}$ that is a pushout square.
    If $X$ and $Y$ preserve this pushout and $\hat{\app}(u, -)$ is defined on $f$ and $g$, then $\hat{\app}(u, f) \to \hat{\app}(u, g)$ is a pushout square.
    \item \label{pushout-application:transfinite-composition}
    For an ordinal $\alpha$, let $A_0 \to A_1 \to \ldots \to A_\alpha$ be an $\alpha$-composition in $\cat{C}$.
    If this $\alpha$-composition is preserved by $X$ and $Y$ and $\hat{\app}(u, -)$ is defined on $A_\beta \to A_{\beta'}$ for $\beta \leq \beta' \leq \alpha$, then $\hat{\app}(u, -)$ preserves the given the $\alpha$-composition and the resulting step map at $\beta < \alpha$ is a pushout of $\hat{\app}(u, -)$ applied to $A_\beta \to A_{\beta+1}$.
  \end{parts}
\end{lemma}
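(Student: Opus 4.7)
The overall plan is to use the concrete description of $\hat{\app}(u, f)$ for an arrow $f \from A \to B$ in $\cat{C}$: it is the canonical map $Y(A) \push_{X(A)} X(B) \to Y(B)$ in $\cat{D}$, whose source is a pushout built from $X(f)$, $Y(f)$, and $u_A$, and whose target is simply $Y(B)$. Each of the three parts asserts that $\hat{\app}(u, -)$ preserves some colimit, and since colimits in $\cat{D}^{[1]}$ are computed separately on sources and targets when the latter exist in $\cat{D}$, it suffices to verify preservation on each level. On targets the result is immediate from the hypothesis that $Y$ preserves the given colimit. On sources the argument rests on the interchange of colimits, namely that pushouts commute with colimits, combined with the hypotheses on $X$ and $Y$.

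For part \ref{pushout-application:colimit}, write $A_\infty \eqdef \colim_i s(c_i)$ and $B_\infty \eqdef \colim_i t(c_i)$ in $\cat{C}$, so that $f_\infty \from A_\infty \to B_\infty$ is the levelwise colimit of $c$ in $\cat{C}^{[1]}$. By hypothesis, $X$ and $Y$ preserve these colimits. On the target side of $\hat{\app}(u, f_\infty)$ this gives $Y(B_\infty) \iso \colim_i Y(t(c_i))$. On the source side the key identity is
\[
  Y(A_\infty) \push_{X(A_\infty)} X(B_\infty) \iso \colim_i \bigl( Y(s(c_i)) \push_{X(s(c_i))} X(t(c_i)) \bigr),
\]
an instance of interchange of colimits. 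This simultaneously shows that the left-hand pushout exists (so $\hat{\app}(u, -)$ is defined on $f_\infty$) and that the colimit cocone coincides with the one obtained by applying $\hat{\app}(u, -)$ to $c$. Naturality identifies the induced map with $\hat{\app}(u, f_\infty)$.

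For part \ref{pushout-application:pushout}, denote the given pushout square with corners $A, A', B, B'$, so that $f \from A \to B$, $g \from A' \to B'$, and $B' \iso B \push_A A'$. Applying $X$ and $Y$ yields pushouts in $\cat{D}$, namely $X(B') \iso X(B) \push_{X(A)} X(A')$ and $Y(B') \iso Y(B) \push_{Y(A)} Y(A')$. Pushout pasting identifies the source of $\hat{\app}(u, g)$ as $Y(A') \push_{X(A')} X(B') \iso Y(A') \push_{X(A)} X(B)$; another pasting then exhibits the square $\hat{\app}(u, f) \to \hat{\app}(u, g)$ in $\cat{D}$ as a pushout. For part \ref{pushout-application:transfinite-composition}, the preservation of the $\alpha$-composition by $\hat{\app}(u, -)$ is an instance of \ref{pushout-application:colimit} applied to the sequential diagram $\beta \mapsto (A_0 \to A_\beta)$ in $\cat{C}^{[1]}$, whose levelwise colimit $A_0 \to A_\alpha$ is preserved by $X$ and $Y$. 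The claim about the step map at stage $\beta$ follows by applying \ref{pushout-application:pushout} to the pushout square in $\cat{C}^{[1]}$ with corners $(A_0 \to A_\beta)$, $(A_0 \to A_{\beta+1})$, $(A_\beta = A_\beta)$, and $(A_\beta \to A_{\beta+1})$: on sources and targets this square reduces to pushouts along identities, so is trivially preserved by $X$ and $Y$, and \ref{pushout-application:pushout} produces a pushout square in $\cat{D}^{[1]}$ in which the step map appears as one leg and $\hat{\app}(u, A_\beta \to A_{\beta+1})$ as an opposite corner.

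The argument is essentially book-keeping around interchange of colimits, with no substantive obstacle. The only subtle point is ensuring that the definedness hypothesis propagates: the identity displayed in part \ref{pushout-application:colimit} shows that if $\hat{\app}(u, -)$ is defined on every $c_i$, then the pushout at $f_\infty$ exists as well, so definedness at the colimit stage comes for free, and analogous remarks apply in parts \ref{pushout-application:pushout} and \ref{pushout-application:transfinite-composition}.
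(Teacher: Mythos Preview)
Your proposal is correct and follows exactly the standard colimit-interchange argument that the paper alludes to; the paper itself does not give an independent proof but simply labels the result ``folklore technique in abstract homotopy theory'' and cites \cite{Riehl-Verity}*{Sections~4 and~5} (specifically Lemma~4.8 for part~(i) and Lemma~5.7 for parts~(ii) and~(iii)). You have in fact spelled out more of the argument than the paper does, and your reductions of part~(iii) to parts~(i) and~(ii) are the expected ones.
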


\begin{proof}
  This is folklore technique in abstract homotopy theory.
  Similar proofs (in a slightly different context) can be found in~\cite{Riehl-Verity}*{Sections~4 and~5}, in particular~\cite{Riehl-Verity}*{Lemma~4.8} for \cref{pushout-application:colimit} and~\cite{Riehl-Verity}*{Lemma~5.7} for \cref{pushout-application:pushout,pushout-application:transfinite-composition}.
\end{proof}

\begin{lemma} \label{pushout-application-instance2}
  Let $F, G \from \cat{E}^{D} \to \cat{E}^{D'}$ be two functors that preserves levelwise complemented maps, their pushouts and their sequential compositions. We assume that $F$ and $G$ are equipped with isomorphisms
  \begin{align*}
    F(E \tensorEsE X) \iso E \tensorEsE F(X) &  & G(E \tensorEsE X) \iso E \tensorEsE G(X)
  \end{align*}
  natural in $E \in \cat{E}$ and $X \in \cat{E}^{D}$ (respectively, $X \in \cat{E}^{D'}$) and let $\lambda \from F \to G$ be a natural transformation compatible with these isomorphisms.
  Let $I_D \subseteq (\cat{E}^{D})^{[1]}$ and $I_{D'} \subseteq (\cat{E}^{D'})^{[1]}$ be countable sets of arrows satisfying the conditions of \cref{esmo}.
  If for each $i \in I_{D}$, the pushout application $\hat{\app}(\lambda, i)$ is an $I_{D'}$-cofibration, then for each $I_{D}$-cofibration $i$, the pushout application $\hat{\app}(\lambda, i)$ is an $I_{D'}$-cofibration.
\end{lemma}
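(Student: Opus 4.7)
The plan is to reduce, via \cref{cofibration-as-retract-of-cell-complex}, to the case of an $I_D$-cell complex, then to refine such a complex into a sequential colimit of single-generator pushouts, and finally to push the entire construction through $\hat{\app}(\lambda, -)$ using \cref{pushout-application}. Since $I_{D'}$-cofibrations are closed under retracts in the arrow category (they are defined by an enriched right lifting property) and pushout application with $\lambda$ sends retracts of arrows to retracts of arrows by functoriality, it suffices to treat the case where $i$ is an $I_D$-cell complex.

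So fix a cell complex $j \from A_0 \to A_\infty$, presented as a sequential colimit $A_0 \to A_1 \to \cdots$ whose $k$-th step $A_k \to A_{k+1}$ is a pushout of $\bigcoprod_{i \in I_D} E^k_i \tensorEsE (A_i \to B_i)$. Since $I_D$ is countable and, in a countably lextensive category, a countable coproduct may be presented as a sequential colimit of pushouts along complemented inclusions (\cref{coproduct-decidable-colimit}), we refine this to a countable sequential colimit whose individual step maps are each a pushout of a single-generator arrow $E_k \tensorEsE i_k$ with $E_k \in \cat{E}$ and $i_k \in I_D$. By \cref{ldi-saturation}, all maps in this refined diagram are levelwise complemented inclusions, so by hypothesis $F$ and $G$ preserve all the pushouts and sequential colimits appearing in it. Now \cref{pushout-application} applies: \cref{pushout-application:transfinite-composition} of \cref{pushout-application} says that $\hat{\app}(\lambda, j)$ is the sequential colimit of arrows whose $k$-th step is a pushout of $\hat{\app}(\lambda, A_k \to A_{k+1})$, which by \cref{pushout-application:pushout} of \cref{pushout-application} is in turn a pushout of $\hat{\app}(\lambda, E_k \tensorEsE i_k)$. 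The compatibility of $\lambda$ with tensors, together with the van Kampenness of the pushouts in play (\cref{colimits-diagram-category}), provides a natural isomorphism $\hat{\app}(\lambda, E_k \tensorEsE i_k) \iso E_k \tensorEsE \hat{\app}(\lambda, i_k)$. By hypothesis each $\hat{\app}(\lambda, i_k)$ is an $I_{D'}$-cofibration, and \cref{saturation} gives closure of this class under tensoring with objects of $\cat{E}$, pushouts, and sequential colimits, completing the argument.

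The main technical subtlety lies in verifying that \cref{pushout-application} applies throughout, which amounts to ensuring that the colimits in the cell complex presentation are preserved by both $F$ and $G$ and that $\hat{\app}(\lambda, -)$ is defined on every arrow in sight. Both points reduce to the fact that the arrows involved are levelwise complemented inclusions, so the required pushouts $F(B) \cup_{F(A)} G(A)$ exist and are preserved by $F$ and $G$ via their hypothesised behaviour, with the resulting image arrows in $\cat{E}^{D'}$ again being levelwise complemented inclusions by \cref{ldi-saturation}, keeping the inductive construction of $\hat{\app}(\lambda, -)$ along the refined cell presentation coherent.
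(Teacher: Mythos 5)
Your proof is correct and follows essentially the same route as the paper's: reduce to a cell complex via \cref{cofibration-as-retract-of-cell-complex} (handling retracts by functoriality of $\hat{\app}(\lambda,\uvar)$), use the tensor-compatibility of $\lambda$ to identify $\hat{\app}(\lambda, E \tensorEsE i)$ with $E \tensorEsE \hat{\app}(\lambda, i)$ on generators, push the cell structure through with \cref{pushout-application}, and conclude with the closure properties of $I_{D'}$-cofibrations from \cref{saturation}. The only cosmetic differences are that you serialize the countable coproducts of attaching maps into sequential composites of single-generator pushouts where the paper keeps them as coproducts, and one small slip worth fixing: $I_{D'}$-cofibrations are closed under retracts because they are defined by an enriched \emph{left} lifting property, not a right one.
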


\begin{proof}
  First, because of \cref{ldi-saturation}, all $I_{D}$-cofibrations are levelwise complemented inclusions, so their image under $F$ are again levelwise complemented inclusions and hence pushouts along them exist.
  This shows that $\hat{\app}(\lambda,i)$ always exists when $i$ is an $I_{D}$-cofibration.

  By \cref{cofibration-as-retract-of-cell-complex}, a general a $I_{D}$-cofibration is a retract of a sequential composite of pushouts of countable coproducts of the form $E \tensorEsE A \to E \tensorEsE B$ for a map $A \to B$ in $I_{D}$ and $E \in \cat{E}$. A map $E \tensorEsE i : E \tensorEsE A \to E \tensorEsE B$ is sent by $\hat{\app}(\lambda, \uvar)$ to the map $E \tensorEsE \hat{\app}(\lambda,i)$, so as we are assuming that for each $i \in I_D$ the map $\hat{\app}(\lambda,i)$ is an $I_{D'}$-cofibration, it follows that the map of the form $E \tensorEsE i$ are also sent to $I_{D'}$-cofibration.

  Using \cref{pushout-application} one concludes that any transfinite composition of pushouts of maps of the form $E \tensorEsE i$ for $i \in I_D$ is also sent by $\hat{\app}(\lambda, \uvar)$ to a $I_{D'}$-cofibration. Finally, as $\hat{\app}(\lambda, \uvar)$ is a functor it preserves retract, and so retracts of such maps are also sent to $I_{D'}$-cofibration, and this concludes the proof as any $I_D$-cofibration is a retract of such a transfinite composition of pushouts.
\end{proof}

\begin{proposition}\label{pushout-product-saturation}
  Let $j \from X \to Y$ be a morphism of $\cat{E}^D$.
  Under the hypothesis of \cref{esmo}, if $i \pprod j$ is an $I$-cofibration for all $i \in I$,
  then $f \pprod j$ is an $I$-cofibration for all $I$-cofibrations $f$.
\end{proposition}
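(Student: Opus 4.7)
The idea is to present pushout product with $j$ as a special instance of the pushout application construction analysed in \cref{pushout-application-instance2}, and then appeal directly to that lemma. With $D' = D$, I define endofunctors $F, G \from \cat{E}^D \to \cat{E}^D$ by $F(Z) \eqdef Z \times X$ and $G(Z) \eqdef Z \times Y$, and let $\lambda \from F \Rightarrow G$ be the natural transformation whose component at $Z$ is $Z \times j \from Z \times X \to Z \times Y$. For every morphism $f$ in $\cat{E}^D$ for which the relevant pushout exists, the pushout application $\hat{\app}(\lambda, f)$ and the pushout product $f \pprod j$ coincide by unfolding the definitions. Hence the hypothesis of the proposition is exactly that $\hat{\app}(\lambda, i)$ is an $I$-cofibration for each $i \in I$.

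Next, I would verify the remaining hypotheses of \cref{pushout-application-instance2}. The natural isomorphisms $F(E \tensorEsE Z) \iso E \tensorEsE F(Z)$ and $G(E \tensorEsE Z) \iso E \tensorEsE G(Z)$ reduce, after inspecting \eqref{equ:psh-tensor} levelwise, to associativity of products in $\cat{E}$, which also makes the compatibility with $\lambda$ immediate. For preservation of levelwise complemented inclusions: if $A \cto B$ has complement $C$ at every $d \in D$, then by distributivity in the lextensive category $\cat{E}$ one gets $B_d \times X_d \iso (A_d \sqcup C_d) \times X_d \iso (A_d \times X_d) \sqcup (C_d \times X_d)$, showing that $F$ (and similarly $G$) sends such inclusions to such inclusions. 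Preservation of pushouts along levelwise complemented inclusions and of sequential colimits of such follows because, by \cref{colimits-diagram-category}, these are levelwise van Kampen colimits in $\cat{E}$, and van Kampen colimits are stable under pullback; taking products with a fixed object is a base change along $X_d \to 1$, so $F$ and $G$ preserve them.

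With all hypotheses verified, \cref{pushout-application-instance2} yields that $\hat{\app}(\lambda, f) = f \pprod j$ is an $I$-cofibration for every $I$-cofibration $f$, which is the desired conclusion.

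I do not anticipate any serious obstacle: the argument is essentially bookkeeping to massage the pushout product operation into the form required by \cref{pushout-application-instance2}, the substantive input being countable lextensivity of $\cat{E}$, which guarantees that products distribute over the coproducts, pushouts, and sequential colimits used to build $I$-cell complexes.
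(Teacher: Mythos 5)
Your proof is correct and follows essentially the same route as the paper: both reduce the statement to \cref{pushout-application-instance2} applied to the natural transformation $\uvar \times j \from \uvar \times X \to \uvar \times Y$, identifying pushout application with pushout product and verifying the preservation hypotheses via lextensivity (distributivity for complemented inclusions, \cref{colimits-diagram-category} for the van Kampen colimits, and associativity of products for the tensor isomorphisms). Your observation that product with a fixed object is base change along $X_d \to 1$ is a slightly more explicit justification of the colimit preservation than the paper gives, but the substance is identical.
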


\begin{proof}
  We apply \cref{pushout-application-instance2} to the natural transformation $\uvar \times j \from \uvar \times X \to \uvar \times Y$ of endofunctors on $\cat{E}^D$.
  Let us check the needed preservation properties of the endofunctor $\uvar \times Z$ on $\cat{E}^D$ for $Z \in \cat{E}$.
  Preservation of levelwise complemented inclusions follows from preservation of complemented inclusions in $\cat{E}$ under product with a fixed object (a consequence of lextensivity).
  Preservation of the relevant colimits involving levelwise complemented inclusions is an instance of \cref{colimits-diagram-category}.
  Preservation of tensors with objects of $\cat{E}$ reduces to associativity and commutativity of products in $\cat{E}$; this is natural, so the map $\uvar \times j \from \uvar \times X \to \uvar \times Y$ respects the witnessing isomorphism as appropriate.
\end{proof}

  \section{The two weak factorisation systems} \label{sec:cofibrations}

%We now bring together the results in \cref{sec:fib_cat} and in \cref{extensive-categories,sec:enrwfs}. In \cref{sec:fib_cat}, assuming only that $\cat E$ has finite
%limits, we introduced the notion of a Kan fibration (\cref{def:fibration_algebraic}) and proved that the category of Kan complexes in $\cat E$ admits the structure
%of a fibration category (\cref{fibcat-Kan}).

%In \cref{extensive-categories,sec:enrwfs}, assuming that $\cat E$ has not only finite limits but is also lextensive, we constructed two enriched weak factorisation systems on $\cats E$: one of cofibrations and $I_{\cats{E}}$-fibrations and another of trivial fibrations and $J_{\cats{E}}$-fibrations
%(\cref{two-ewfss}), where $I_{\cats{E}}$ and $J_{\cats{E}}$ are the sets of boundary and horn inclusions in $\cats E$, respectively.

In this section we consider a countably lextensive category $\cat{E}$.
We construct two \wfs{}s on the category $\cats{E}$ of simplicial objects in $\cat{E}$ that will be proven to form a model structure in \Cref{sec:model-structure}.
Our main goal is to describe the resulting cofibrations in \cref{lem:cof_charac} which relies on identification of one of the factorisation systems as
a Reedy factorisation system (\cref{cof-as-reedy-di}).
In our setting, the category $\cats{E}$ has relatively few colimits and consequently much of this section is committed to
discussion of the Reedy theory under these weak hypotheses.

We will use the enriched small object argument of \cref{esmo} with the generating sets
obtained by applying the partial functor of \eqref{equ:fset} to the sets of boundary inclusions
and horn inclusions in~\eqref{equ:boundary-horns}, \ie,
\begin{equation*}
  I_{\cats{E}} = \{ \fset{\bdsimp{n}} \to \fset{\simp{n}} \mid n \ge 0 \} \text{ and }
  J_{\cats{E}} = \{ \fset{\horn{n,k}} \to \fset{\simp{n}} \mid n \ge k \ge 0, n > 0 \} \text{.}
\end{equation*}
We will refer to $\fset{\simp{m}}$ as a simplex in $\cats{E}$ and similarly for boundaries and horns.
We say that a map in $\cats E$ is a \emph{cofibration} if it is a $I_{\cats{E}}$-cofibration
and that it is a  \emph{trivial cofibration} if it is a~$J_{\cats{E}}$-cofibration.
Moreover, we note that notions of (Kan) fibrations and trivial (Kan) fibrations as introduced in \cref{def:fibration_algebraic} coincide
with the notions of $J_{\cats{E}}$-fibrations and $I_{\cats{E}}$-fibration.

\begin{proposition} \label{fibration-levelwise} Let $f \from X \to Y$ be a map in $\cats{E}$.
\begin{parts}
\item $f$ is a fibration if and only if it is a $J_{\cats{E}}$-fibration;
\item $f$ is a trivial fibration if and only if it is a $I_{\cats{E}}$-fibration.
\end{parts}
\end{proposition}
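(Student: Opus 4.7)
The plan is to obtain both parts as immediate consequences of \cref{enriched-lifting-as-pullback-evaluation}, applied with $D^{\op} = \Delta^{\op}$ and the map $i$ varying over the horn inclusions (for part~(i)) and the boundary inclusions (for part~(ii)) in $\sSet$. Indeed, the generating sets $J_{\cats{E}}$ and $I_{\cats{E}}$ are by construction the images under $\fset{(\uvar)}$ of $J_{\sSet}$ and $I_{\sSet}$, so the $J_{\cats{E}}$-fibration and $I_{\cats{E}}$-fibration conditions from \cref{I-cofibration} should unfold, via the identification of homs into $X$ with evaluations of $X$, directly into the pullback-evaluation conditions of \cref{def:fibration_algebraic}.

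First, I would verify that the hypotheses of \cref{enriched-lifting-as-pullback-evaluation} apply to our generators. Each $\simp{n}$ is representable in $\sSet$, hence trivially a finite colimit of representables; each $\bdsimp{n}$ is presented as a finite colimit of representables of dimension $<n$ through its standard decomposition into codimension-one faces; and each $\horn{n,k}$ is analogously a finite colimit of lower-dimensional simplices. All three are moreover levelwise finite, and so \emph{a fortiori} levelwise countable, as required.

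With the hypotheses in place, for $i$ any horn (resp.\ boundary) inclusion, \cref{enriched-lifting-as-pullback-evaluation} yields an equivalence between the $\cat{E}$-enriched left lifting property of $\fset{i}$ against $f$ and the existence of a section of $\hat{\ev}_i(f)$ in $\cat{E}$. The discussion following \cref{elp} identifies the $\cat{E}$-enriched and $\psh{\cat{E}}$-enriched lifting properties whenever the relevant hom-objects are representable, and this representability is exactly the content of part~\cref{exponential-via-evaluation} of \cref{nicolas-grouping} under our hypotheses. Quantifying over all horn (resp.\ boundary) inclusions, the left-hand side becomes the $J_{\cats{E}}$-fibration (resp.\ $I_{\cats{E}}$-fibration) condition from \cref{I-cofibration}, while the right-hand side is precisely the Kan fibration (resp.\ trivial Kan fibration) condition of \cref{def:fibration_algebraic}, establishing both parts.

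This argument is essentially a repackaging exercise, whose real content lies in \cref{enriched-lifting-as-pullback-evaluation}; consequently I do not anticipate any serious obstacle. The only two points requiring genuine care are checking that the horn and boundary inclusions satisfy the finiteness and ``finite colimit of representables'' hypotheses of that proposition, and keeping track of the (harmless) passage between the $\cat{E}$-enriched and $\psh{\cat{E}}$-enriched formulations of the lifting property.
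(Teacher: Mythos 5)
Your proposal is correct and follows essentially the same route as the paper, whose proof is a one-line appeal to \cref{enriched-lifting-as-pullback-evaluation}; your extra verifications (that horns and boundaries are finite colimits of representables and levelwise countable, and the passage between the $\cat{E}$- and $\psh\cat{E}$-enriched lifting properties) are exactly the points the paper leaves implicit.
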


\begin{proof}
By \cref{enriched-lifting-as-pullback-evaluation}, the condition of \cref{def:fibration_algebraic} for $f$ being a (trivial) Kan fibration is equivalent to the $\cat{E}$-enriched right lifting property of $f$ with respect to $J_{\cats{E}}$ (respectively, $I_{\cats{E}}$).
\end{proof}

The existence of \wfs{}s linking these classes is a direct consequence of the results of \cref{sec:enrwfs}.

\begin{theorem} \label{two-ewfss} Let $\cat E$ be a countably lextensive category. The
category $\cats E$ of simplicial objects in $\cat E$ admits two weak factorisation systems:
  \begin{itemize}
  \item cofibrations and trivial fibrations, cofibrantly generated by $I_{\cats{E}}$;
  \item trivial cofibrations and fibrations, cofibrantly generated by $J_{\cats{E}}$.
  \end{itemize}
\end{theorem}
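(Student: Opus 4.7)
The plan is to recognise the statement as a direct consequence of the Enriched Small Object Argument (\cref{esmo}), applied with $D = \Simp^\op$, combined with the identification of right classes in \cref{fibration-levelwise}. The only real work is verifying that $I_{\cats E}$ and $J_{\cats E}$ satisfy the hypotheses of \cref{esmo}, namely that they are countable sets of levelwise complemented inclusions between finite objects of $\cats E$.

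First, I would note that both $I_{\cats E}$ and $J_{\cats E}$ are indexed by countable sets (pairs of natural numbers). Next, I would check that each generating map is a levelwise complemented inclusion. For a fixed $n$, the boundary $\bdsimp{n}_m \subseteq \simp{n}_m$ and horn $\horn{n,k}_m \subseteq \simp{n}_m$ are subset inclusions in $\Set$ with explicit complements (those simplices whose images are surjective, resp.\ those that factor through the missing face). Writing such an inclusion as $A \hookrightarrow A \sqcup C$ in $\Set$, its image under $\fset{(-)}$ is $\fset{A} \to \fset{A} \sqcup \fset{C}$, which is a complemented inclusion in $\cat E$ since $\fset{(-)}$ sends coproducts of sets to coproducts in $\cat E$ by definition. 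Hence the generators are levelwise complemented inclusions in $\cats E$.

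For finiteness of the domains and codomains, I would invoke \cref{finite-colimit-finite}, applied to $D = \Simp^\op$, which is locally countable. The presheaves $\simp{n}$, $\bdsimp{n}$ and $\horn{n,k}$ are all finite colimits of representables in $\psh{\Simp}$, so their images under $\fset{(-)}$ are finite objects of $\cats E$ in the sense of \cref{E-finite}.

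With these hypotheses checked, \cref{esmo} immediately yields two $\psh{\cat E}$-\ewfs{}s on $\cats E$: one whose right class is the class of $I_{\cats E}$-fibrations and whose left class is the $I_{\cats E}$-cofibrations, and one with right class the $J_{\cats E}$-fibrations and left class the $J_{\cats E}$-cofibrations. By \cref{fibration-levelwise}, these right classes coincide with the trivial Kan fibrations and the Kan fibrations, respectively, as introduced in \cref{def:fibration_algebraic}. Finally, since every \ewfs{} is in particular an \owfs{} by \cref{ordinary-wfs}, these are also ordinary weak factorisation systems as required. I do not expect any genuine obstacle: the substantive content has been placed in \cref{esmo}, \cref{finite-colimit-finite} and \cref{fibration-levelwise}, and here one only needs to plug in the generating sets and verify the structural hypotheses.
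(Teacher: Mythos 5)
Your proposal is correct and follows essentially the same route as the paper: verify that the generators are levelwise complemented inclusions (since $\fset{(-)}$ preserves complemented inclusions) between finite objects (via \cref{finite-colimit-finite}), then invoke \cref{esmo}. The extra detail you give on explicit complements and on the identification of the right classes via \cref{fibration-levelwise} is consistent with the paper's treatment, which records that identification just before the theorem.
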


\begin{proof}
  All morphisms of $I_{\cats{E}}$ and $J_{\cats{E}}$ are \ldi{}s since $S \mapsto \fset{S}$ preserves \di{}s.
  Moreover, their domains and codomains are finite colimits of representables and thus
  \cref{finite-colimit-finite} implies that the assumptions of \cref{esmo} are satisfied.
\end{proof}

Recall that $\cat{E}$ admits a \wfs{} consisting of complemented inclusions as left maps and split epimorphisms as right maps.
We now wish to characterise our cofibrations and trivial fibrations in terms of the induced Reedy weak factorisation on $\cats{E}$.
Traditional treatments of Reedy theory such as~\cite{Riehl-Verity} tacitly assume that the underlying category is bicomplete; this is not the case here.
Separately, there is the treatment~\cite{rb} of Reedy theory in the context of a (co)fibration category, but it only considers Reedy left or right maps between Reedy left or right objects; in our setting, not all objects are Reedy cofibrant or fibrant.
Let us thus discuss some of the details of the Reedy \wfs{} on $\cats{E}$.

Let $m \geq 0$.
We write $\corepresentable{m}$ for $\Delta([m], -)$, \ie, the functor in $[\Delta, \Set]$ corepresented by $m$.
The \emph{coboundary} $\coboundary{m}$ of $\Delta$ at level $m$ is the subobject of $\corepresentable{m}$ consisting of those maps which are not face operators.
Equivalently, $\coboundary[k]{m} \subseteq \Delta([m], [k])$ consists of those maps $[m] \to [k]$ whose degeneracy-face factorisation has non-identity degeneracy operator.

Let $A \in \cats{E}$.
The \emph{latching object} $L_m A$, if it exists, is the colimit of $A$ weighted by $\coboundary{m}$.
We have a canonical map $L_m A \to A$.

Let $i \from A \to B$ be a map in $\cats{E}$ and $m \geq 0$.
We wish to consider the relative latching map of $i$.
Ordinarily, we would define it as the map $A_m \push_{L_m A} L_m B \to B_m$.
However, its domain depends on the existence of the latching objects $L_m A$ and $L_m B$ and a pushout.
We wish to avoid these assumptions.
Consider the functor $\cats{E} \to \Set$ sending $X$ to the set of pairs consisting of a map $u \from A_m \to X$ and a natural family $v_f \from B_k \to X$ for $f \from [m] \to [k]$ not a face operator such that $u \circ A f = v_f \circ i_k$.
If this functor has a corepresenting object, we denote it by $A_m \push_{L_m A} L_m B$ and obtain the \emph{relative latching map} $A_m \push_{L_m A} L_m B \to B_m$ of $i$ at level $m$.
If $L_m A$ and $L_m B$ exist, this agrees with the description in terms of the pushout suggested by our notation.
%We may also describe the domain of the relative latching map of $i$ at level $m$ as the colimit of $i$ seen as an object of $[(\Delta \times [1])^\op, \cat{E}]$ weighted by $\partial \coboundary{m} \to \coboundary{m}$ seen as an object of $[\Delta \times [1], \Set]$.

We desire a more abstract view on the relative latching map.
For this, we introduce the notion of pushout weighted colimit.
Consider the two-variable functor
\begin{equation} \label{abstract-weighted-colimit}
H \from [\Delta, \Set]^\op \times \cats{E}^\op \to [\cat{E}, \Set]
\end{equation}
sending $W$ and $X$ to $I \mapsto [\Delta, \Set](W, \cat{E}(X(-), I))$
% \fxnote{This was called the \emph{abstract weighted colimit} in the reedy document}.
Recall that a $W$-weighted colimit of $X$, denoted $\colim^W X$, is by definition a representing object of $H(W, X)$.
The \emph{pullback construction} of $H$
%\fxnote{This was called the \emph{pullback abstract weighted colimit} in the reedy document}
is the two-variable functor
\[
\widehat{H} \from ([\Delta, \Set]^\op)^{[1]} \times (\cats{E}^\op)^{[1]} \to [\cat{E}, \Set]^{[1]}
\]
sending $w \from U \to V$ in $[\Delta, \Set]$ and $i \from A \to B$ in $\cats{E}$ to the map
\begin{equation} \label{pullback-construction-for-pushout-weighted-colimit}
H(V, B) \to H(V, A) \times_{H(U, A)} H(U, B)
\end{equation}
in $[\cat{E}, \Set]$.
Assume that domain and codomain of~\eqref{pullback-construction-for-pushout-weighted-colimit} have representing objects $Y$ and $X$, respectively (in particular, $Y$ is the $V$-weighted colimit of $B$).
Then under the Yoneda embedding of $\cat{E}^\op$ into $[\cat{E}, \Set]$, \eqref{pullback-construction-for-pushout-weighted-colimit} corresponds to a map $X \to Y$ in $\cat{E}$.
We define this to be the \emph{pushout weighted colimit} with $w \from U \to V$ of $i \from A \to B$ and denote it by $\widehat{\colim}^w i$.
It forms a partial two-variable functor
\[
{\textstyle\widehat{\colim}^{(\uvar)} (\Uvar)} \from [\Delta, \Set]^{[1]} \times \cats{E}^{[1]} \to [\cat{E}, \Set]^{[1]}
.\]
Note that this is more general than a partially defined pushout construction of the two-variable weighted colimit functor because we do not require the individual colimits of $A$ with weight $V$ and $B$ with weights $U$ and $V$ to exist.

Unfolding the codomain of~\eqref{pullback-construction-for-pushout-weighted-colimit}, we see that the relative latching map of $i \from A \to B$ at level $m$ is precisely the pushout weighted colimit of $i$ with the coboundary inclusion $\coboundary{m} \to \corepresentable{m}$.
% \fxnote{Find better notation for corepresentable functor and its coboundary.}
Each side exist when the other does.
This point of view is useful because it enables us to obtain pushout weighted colimits of $i$ with certain inclusions as cell complexes of relative latching maps.

We call a map $i$ a \emph{Reedy complemented inclusion} if, for all $m$, the relative latching map of $i$ at level $m$ exists and is a complemented inclusion.
This condition for $m < k$ suffices to guarantee the existence of the relative latching map at level $m = k$.
Thus, in the inductive verification that a map is a Reedy complemented inclusion, the relevant latching maps always exist.
Given a map $X \to Y$ in $\cats{E}$, the \emph{relative matching map} at level $m$ is its weighted limit, \ie, pullback evaluation, at $\partial \simp{m} \to \simp{m}$, \ie, the map $X_m \to Y_m \times_{\ev_{\partial \simp{m}} Y} \ev_{\partial \simp{m}} X$.
We call $X \to Y$ a \emph{Reedy split epimorphism} if all its relative matching maps are split epimorphisms.

Following standard Reedy theory, Reedy complemented inclusions and Reedy split epimorphisms form a \wfs{}.
For this, we observe that instantiating the treatment of~\cite{Riehl-Verity} and making use of \cref{pushout-application}, the use of (co)limits in $\cats{E}$ may be reduced to pushouts along complemented inclusions and pullbacks along split epimorphisms. We now relate this weak factorisation system to that of cofibrations and trivial fibrations, given
in \cref{two-ewfss} (\cf also \cref{fibration-levelwise}).

\begin{proposition} \label{cof-as-reedy-di}
The \wfs{} of cofibrations and trivial fibrations of \cref{two-ewfss,fibration-levelwise} coincides with the \wfs{} of Reedy complemented inclusions and Reedy split epimorphisms.
\end{proposition}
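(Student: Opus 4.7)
The plan is to show that the two weak factorisation systems coincide by verifying that their right classes agree; the left classes must then automatically coincide, since in any weak factorisation system the left class is determined as the class of maps with the ordinary left lifting property against the right class. First I would note that both pairs are indeed weak factorisation systems: cofibrations and trivial fibrations by \cref{two-ewfss} (combined with \cref{fibration-levelwise} to translate between the algebraic and enriched formulations), and the Reedy pair by the Reedy-theoretic argument sketched just above, whose colimit and limit requirements reduce to pushouts along levelwise complemented inclusions and to finite limits, both of which are available in $\cats{E}$.

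The crux of the argument is the observation that, by construction, the pullback evaluation $\hat{\ev}_{\bdsimp{n} \to \simp{n}}(f)$ is literally the relative matching map of $f$ at level $n$. Given this, I would chain together the following equivalences: a map $f \from X \to Y$ in $\cats{E}$ is a trivial fibration iff $f$ is an $I_{\cats{E}}$-fibration (\cref{fibration-levelwise}); iff for every $n \geq 0$, the map $\fset{\bdsimp{n}} \to \fset{\simp{n}}$ has the $\cat{E}$-enriched left lifting property against $f$ (by definition of $I_{\cats{E}}$-fibration); iff the pullback evaluation $\hat{\ev}_{\bdsimp{n} \to \simp{n}}(f)$ is a split epimorphism in $\cat{E}$ for every $n \geq 0$ (by \cref{enriched-lifting-as-pullback-evaluation} applied to the boundary inclusions $\bdsimp{n} \to \simp{n}$); iff $f$ is a Reedy split epimorphism. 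This establishes the equality of the right classes, and hence of the two weak factorisation systems.

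I do not expect any significant obstacle here: the infrastructure developed in \cref{sec:enrwfs}, and in particular \cref{enriched-lifting-as-pullback-evaluation}, is precisely designed to bridge the enriched lifting formalism of the effective weak factorisation systems and the pullback evaluation language natural to Reedy theory. The only point requiring care is ensuring that the Reedy weak factorisation system is well-defined in the present setting where $\cats{E}$ lacks many colimits; this has already been handled in the preceding discussion by noting that the standard Reedy constructions can be carried out using only pushouts along complemented inclusions and pullbacks along split epimorphisms, both of which exist in $\cats{E}$.
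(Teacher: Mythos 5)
Your proposal is correct and takes essentially the same approach as the paper: both arguments reduce the claim to the coincidence of the right classes and then observe that the relative matching map at level $n$ is precisely the pullback evaluation at $\bdsimp{n} \to \simp{n}$, so that Reedy split epimorphisms are by definition the trivial Kan fibrations. The paper's proof is just a more compressed version of your chain of equivalences, reading the identification directly off \cref{def:fibration_algebraic} rather than routing through \cref{fibration-levelwise} and \cref{enriched-lifting-as-pullback-evaluation}.
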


\begin{proof}
Two \wfs{}s coincide as soon as their right classes do.
But, by inspecting the definition of a trivial fibration in \cref{def:fibration_algebraic}, a map in $\cats E$ is a Reedy split epimorphism if and only if it is a trivial Kan fibration.
\end{proof}

%% TODO: figure out why this fxnote causes an error in non-draft mode
%\ifoptiondraft{\fxnote{
%Add more statements if more Reedy theory is needed in paper, \eg for explicit cell complex presentation of cofibrations.
%}}{}

The next lemma will be useful to simplify some saturation arguments in \cref{sec:dependent-products}, as it allows us to avoid considering
retracts, \cf the notion of a cell complex in \cref{thm:cell-complex-def}.

\begin{lemma} \label{thm:cell-pres-cof}
  Every cofibration in $\cats{E}$ is an $I_{\cats{E}}$-cell complex.
\end{lemma}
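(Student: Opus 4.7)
The strategy is to use the characterisation of cofibrations as Reedy complemented inclusions provided by \cref{cof-as-reedy-di} and to realise any cofibration $i \from A \to B$ as its skeletal filtration. Concretely, for each $m \geq 0$, let $C_m \in \cat{E}$ be a complement of the relative latching inclusion $A_m \cup_{L_m A} L_m B \hookrightarrow B_m$, so that $B_m \iso (A_m \cup_{L_m A} L_m B) \sqcup C_m$; here the relevant latching objects exist inductively as part of the Reedy complemented inclusion data.

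Next, I would construct inductively a sequence $A = X_{-1} \to X_0 \to X_1 \to \cdots$ in $\cats{E}$ equipped with compatible maps $X_m \to B$ under $A$ such that $(X_m)_k \to B_k$ is an isomorphism for $k \leq m$, and such that each map $X_{m-1} \to X_m$ is a pushout
\begin{tikzeq*}
\matrix[diagram,column sep={10em,between origins}]
{
|(A)| C_m \tensorEsE \fset{\bdsimp{m}} & |(X1)| X_{m-1} \\
|(B)| C_m \tensorEsE \fset{\simp{m}}   & |(X2)| X_m \rlap{\text{.}}  \\
};
\draw[->] (A) to (B);
\draw[->] (A) to (X1);
\draw[->] (B) to (X2);
\draw[->] (X1) to (X2);
\pbdr{X2}{A};
\end{tikzeq*}
Since $\fset{\bdsimp{m}} \to \fset{\simp{m}}$ is a levelwise complemented inclusion and $\cat{E}$ is countably lextensive, the pushout exists by \cref{colimits-diagram-category}, and the induced map $X_{m-1} \to X_m$ is itself a levelwise complemented inclusion. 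The attaching map is specified by a morphism $C_m \to \ev_{\bdsimp{m}}(X_{m-1})$; the inductive hypothesis gives $\ev_{\bdsimp{m}}(X_{m-1}) \iso \ev_{\bdsimp{m}}(B)$, and I would take the attaching data to be the composite of the complement inclusion $C_m \hookrightarrow B_m$ with the matching map $B_m \to \ev_{\bdsimp{m}}(B)$.

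Having set up the filtration, I would pass to the sequential colimit $X_\infty = \colim_m X_m$, which exists by the sequential-colimit case of \cref{colimits-diagram-category}, and identify it with $B$. At each level $k$, the tower stabilises after step $k$: for $m > k$ we have $\simp{m}_k = \bdsimp{m}_k$ (as every order-preserving map $[k] \to [m]$ with $k < m$ fails to be surjective), so the pushout defining $X_m$ has no effect at level $k$. Hence $(X_\infty)_k \iso (X_k)_k \iso B_k$, and the induced map $X_\infty \to B$ is an isomorphism, exhibiting $A \to B$ as an $I_{\cats{E}}$-cell complex in the sense of \cref{thm:cell-complex-def}.

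The main obstacle will be the inductive step: verifying rigorously that after the pushout, $(X_m)_m \iso B_m$ on the nose. This reduces to a levelwise computation of the pushout at level $m$, where $\simp{m}_m \setminus \bdsimp{m}_m$ is a singleton and so $\fset{\simp{m}}_m \iso \fset{\bdsimp{m}}_m \sqcup 1$, giving $(X_m)_m \iso (X_{m-1})_m \sqcup C_m$ by distributivity of product over coproduct in a lextensive category. The crucial point is to identify $(X_{m-1})_m$ with $A_m \cup_{L_m A} L_m B$, which I expect to follow from an iterated application of universality of coproducts in $\cat{E}$ together with the explicit pushout formula from the previous steps, confirming that the construction assembles the relative latching object at level $m$ exactly as required.
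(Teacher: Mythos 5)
Your proof is correct and follows essentially the same route as the paper: the relative skeletal filtration, with each step attaching the generating cofibration tensored with a complement $C_m$ of the relative latching map (the paper presents the same attachment as a pushout-product square with the relative latching map $A_m \push_{L_m A} L_m B \to B_m$, which your complement decomposition turns into a literal cell in the sense of \cref{thm:cell-complex-def}). The one point you flag as remaining — identifying $(X_{m-1})_m$ with $A_m \push_{L_m A} L_m B$, i.e.\ that your tower agrees with the relative skeleta of $B$ — is precisely the step the paper also leaves implicit, deferring to \cite{GSS}*{Lemma~2.3.1, Corollary~2.3.3}.
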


\begin{proof}
  If $A \to B$ is a cofibration, then $B$ can be written as the colimit of its skeleta relative to $A$:
  \begin{tikzeq*}
  \matrix[diagram,column sep={5em,between origins}]
  {
    |(-1)| \Sk^{-1}_A B & |(0)| \Sk^0_A B & |(1)| \Sk^1_A B & |(D)| \ldots \\
  };

  \draw[->] (-1) to (0);
  \draw[->]  (0) to (1);
  \draw[->]  (1) to (D);
  \end{tikzeq*}
  where $\Sk^{-1}_A B = A$ and for $k \ge 0$ the square
  \begin{tikzeq*}
  \matrix[diagram,column sep={14em,between origins}]
  {
      |(b)| B_k \times \bdsimp{k} \union (A_m \push_{L_m A} L_m B) \times \simp{k} & |(k-1)| \Sk^{k - 1}_A B \\
      |(p)| B_k \times \simp{k}                                                    & |(k)|   \Sk^k_A B       \\
  };

  \draw[->] (b) to (k-1);
  \draw[->] (p) to (k);

  \draw[->] (b)   to (p);
  \draw[->] (k-1) to (k);
  \end{tikzeq*}
  is a pushout.
  These statements are justified analogously to the proofs of \cite{GSS}*{Lemma~2.3.1, Corollary~2.3.3}.
  The colimits used in the construction exist by \Cref{colimits-diagram-category} since they are colimits of sequences of \ldi{}s and pushouts along \ldi{}s
  which is ensured by the assumption that $A \to B$ is a cofibration.
\end{proof}

Our next goal is to provide a characterisation of cofibrations in terms of actions of degeneracy operators, stated in \cref{lem:cof_charac} below.
This is a generalisation of~\cite{Hwms}*{Proposition 5.1.4} or \cite{GSS}*{Proposition~1.4.4} to a setting without arbitrary colimits.
The proof is made significantly more complex by the fact that $\cat E$ is not assumed to be a Grothendieck topos.
Instead, the required exactness properties are substituted by \cref{thm:subobject-colimit}.
We also need the following statement.
For this, we observe that our discussion of Reedy theory and latching objects for the case of $\Delta$ applies just as well to arbitrary countable Reedy categories of countable height.
Note that the assumption of a Reedy cofibrant diagram includes the hypothesis that all latching objects exist.

\begin{lemma} \label{thm:reedy-colims}
Let $D$ be a finite direct category.
Let $F \from D \to \cats{E}$ be a Reedy cofibrant diagram.
Then the colimit of $F$ exists and is van Kampen.
\end{lemma}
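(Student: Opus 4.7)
I would argue by induction on the number of objects of $D$. The base case $D = \emptyset$ gives the initial object of $\cats{E}$, which exists and is van Kampen because initial objects are strict in any lextensive category (and $\cats E$ inherits lextensivity levelwise from $\cat E$).

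For the inductive step, pick a \emph{terminal} $d \in D$, meaning one with no non-identity morphisms out of it; such an object exists because $D$ is finite and direct. Let $D' = D \setminus \{d\}$, which is again finite and direct, and note that $F|_{D'}$ remains Reedy cofibrant (the latching object of $F$ at any $e \in D'$ is unaffected by removing $d$, as there are no non-identity morphisms $d \to e$). By the inductive hypothesis, $\colim F|_{D'}$ exists and is van Kampen in $\cats{E}$. Reedy cofibrancy of $F$ at $d$ guarantees that $L_d F$ exists, and the cocone structure of $\colim F|_{D'}$ under $F|_{D'}$ induces a canonical map $L_d F \to \colim F|_{D'}$ (using that the latching subcategory maps into $D'$). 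The latching map $L_d F \to F(d)$ is a cofibration in $\cats{E}$, hence a levelwise complemented inclusion by \cref{cofibration-as-retract-of-cell-complex}. Therefore, by \cref{colimits-diagram-category:pushout}, the pushout
\[
  C \defeq F(d) \sqcup_{L_d F} \colim F|_{D'}
\]
exists in $\cats{E}$ and is a van Kampen colimit there. A routine check of universal properties identifies $C$ with $\colim F$.

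To conclude that $C$ is van Kampen as a $D$-indexed colimit (and not merely as the pushout above), I would appeal to \cref{van-Kampen-pseudo-limit}. The van Kampen pushout yields
\[
  \cats{E} \slice C \simeq (\cats{E} \slice F(d)) \times_{\cats{E} \slice L_d F} (\cats{E} \slice \colim F|_{D'}),
\]
the inductive hypothesis gives $\cats{E} \slice \colim F|_{D'} \simeq \lim_{e \in D'} \cats{E} \slice F(e)$, and a parallel application of the same induction to $L_d F$ (whose indexing category of non-identity morphisms into $d$ is itself a finite direct category carrying a Reedy cofibrant inherited diagram) provides an analogous pseudo-limit description of $\cats{E} \slice L_d F$. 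Assembling these equivalences yields $\cats{E} \slice C \simeq \lim_{e \in D} \cats{E} \slice F(e)$, which is exactly the van Kampen property for the $D$-indexed colimit.

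The main obstacle I anticipate is this final assembly step: while each individual pushout is van Kampen by lextensivity, verifying globally that $C$ is van Kampen as the $D$-indexed colimit of $F$ requires careful bookkeeping of the latching subcategory, in particular showing that Reedy cofibrancy is inherited by the latching diagram so that the parallel induction applies cleanly.
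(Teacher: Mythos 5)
Your proof is correct and follows essentially the same strategy as the paper's: an induction that peels off maximal-degree objects, writes the colimit as a pushout along the latching map (a cofibration, hence a levelwise complemented inclusion, so the pushout exists and is van Kampen by lextensivity), and assembles the van Kampen property for the whole $D$-indexed colimit via the pseudo-limit characterisation of \cref{van-Kampen-pseudo-limit}. The only cosmetic difference is that the paper inducts on the height of $D$ and removes all objects of top degree simultaneously (so the pushout is along a finite coproduct of latching maps), whereas you remove one maximal object at a time; the subtlety you flag about Reedy cofibrancy of the latching diagram is present in both versions and is resolved by the same inductive appeal.
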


\begin{proof}
We proceed by induction on the height of $D$.
For height $0$, note that $D$ is the empty and the claim holds because initial objects are van Kampen since $\cats{E}$ is lextensive.

Now assume the claim for height $n$ and let $D$ have height $n + 1$.
Let $D'$ of height $n$ denote the restriction of $D$ to objects of degree below $n$.
Let $I$ be the collection of objects of $D$ of degree $n$.
As per usual Reedy theory, we may compute the colimit of $F$ as the following pushout:
\begin{tikzeq*}
\matrix[diagram,column sep={14em,between origins}] {
  |(L)| \bigcoprod_{i \in I} L_i F & |(D')| \colim_{D'} F|_{D'}          \\
  |(y)| \bigcoprod_{i \in I} F(i)  & |(D)| \colim_D F           \rlap{.} \\
};

\draw[->] (L)  to (y);
\draw[->] (D') to (D);
\draw[->] (L)  to (D');
\draw[->] (y)  to (D);
\pbdr{D}{L};
\end{tikzeq*}
Here, the left map is a cofibration because it is a finite coproduct of cofibrations, and hence the pushout exists and is van Kampen by \cref{coproduct-decidable-colimit}.
By the inductive hypothesis, the colimit computing the latching object $L_i F$ for $i \in I$ is van Kampen, and so is the colimit of $F|_{D'}$.
The finite coproducts are van Kampen since $\cats{E}$ is lextensive.
Using the characterisation of van Kampen colimits given by \cref{van-Kampen-pseudo-limit}, one sees that $\colim_D F$ is van Kampen.
\end{proof}

\begin{theorem}[Characterisation of cofibrations] \label{lem:cof_charac}
Let $i \from A \to B$ be a map in $\cats{E}$.
Then the following are equivalent:
\begin{parts}
\item \label{lem:cof_charac:cof}
the map $i$ is a cofibration;
\item
the map $i$ is a levelwise complemented inclusion and the map $A_m \push_{A_n} B_n \to B_m$ is a complemented inclusion for every degeneracy operator $[m] \sto [n]$.
\label{lem:cof_charac:local} % TODO: figure out why putting this after \item causes error
\end{parts}
\end{theorem}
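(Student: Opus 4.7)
By \cref{cof-as-reedy-di}, condition (i) is equivalent to $i$ being a Reedy complemented inclusion, \ie, to the requirement that for each $m \geq 0$ the relative latching map $A_m \push_{L_m A} L_m B \to B_m$ exists and is a complemented inclusion in $\cat{E}$. The plan is to establish the equivalence with (ii) by two rather different strategies for the two implications.

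For (i) $\Rightarrow$ (ii), the cleanest route is via the cell-complex characterisation of \cref{thm:cell-pres-cof}: every cofibration is an $I_{\cats{E}}$-cell complex. First I would verify (ii) directly for each generating cofibration $\fset{\bdsimp{k}} \to \fset{\simp{k}}$. For such a generator, the map $A_m \push_{A_n} B_n \to B_m$ in $\cat{E}$ reduces under $\fset{-}$ to a combinatorial assertion in $\Set$, namely that the subset of maps $[m] \to [k]$ which are either non-surjective or factor through the given $[m] \sto [n]$ is a (trivially complemented) subset of $\Delta([m], [k])$; applying $\fset{-}$ yields the desired complemented inclusion in $\cat{E}$. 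I would then show that (ii) is stable under the cell-complex operations: tensors by objects of $\cat{E}$, countable coproducts, pushouts along levelwise complemented inclusions, and sequential composites thereof. Each of these reduces, by pushout-pasting and a direct computation with the pushout $A_m \push_{A_n} B_n$, to an application of \cref{colimits-diagram-category,lem:pushout_decidable,decidable-closure}, using also that product with a fixed $F \in \cat{E}$ preserves complemented inclusions.

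For (ii) $\Rightarrow$ (i), I would proceed by induction on $m$, constructing $L_m A$ and $L_m B$ and verifying the Reedy condition. By the Eilenberg--Zilber lemma, the weighted colimit defining $L_m B$ may be computed as the colimit over the finite poset $P$ of proper surjections $s \from [m] \sto [n(s)]$ of the diagram $s \mapsto B_{n(s)}$, with transition maps induced by surjections between codomains (and analogously for $L_m A$). The poset $P$ has binary meets: the meet $s \wedge s'$ corresponds to the join of the equivalence relations on $[m]$ associated to $s$ and $s'$. Under hypothesis (ii), each $s \in P$ yields a complemented subobject $M_s \defeq A_m \push_{A_{n(s)}} B_{n(s)} \ito B_m$, and one verifies that $M_s \cap M_{s'} = M_{s \wedge s'}$, so $\{M_s\}_{s \in P}$ is a family of subobjects of $B_m$ closed under intersection. \Cref{thm:subobject-colimit} then shows that $\colim_{s \in P} M_s$ is a subobject of $B_m$, provided the colimit is van Kampen. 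Van Kampenness is ensured by stratifying $P$ (e.g., by $n(s)$) and constructing the colimit as iterated pushouts along complemented inclusions, appealing to \cref{colimits-diagram-category,lem:pushout_decidable} at each stage. Identifying $\colim_{s \in P} M_s$ with $A_m \push_{L_m A} L_m B$ closes the induction.

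The main obstacle is the (ii) $\Rightarrow$ (i) direction: since $P$ is not linearly ordered, the colimit cannot be assembled by a single sequential composition, and at each stratum we must handle several $s$ simultaneously, keeping track of their pairwise intersections via the meet structure. Establishing the identification $M_s \cap M_{s'} = M_{s \wedge s'}$ between complemented subobjects of $B_m$ is a delicate point that relies on the lextensive structure of $\cat{E}$, since in the absence of a subobject classifier it cannot be obtained directly by the image-factorisation argument used in \cite{Hwms}*{Proposition~5.1.4} and \cite{GSS}*{Proposition~1.4.4}; instead one reduces the equality to a set-theoretic computation using that products distribute over coproducts. Verifying the van Kampen hypothesis of \cref{thm:subobject-colimit} stratum by stratum, using (ii) to guarantee that all inclusions under composition are complemented, is the other technical linchpin of the argument.
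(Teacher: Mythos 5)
Your treatment of (ii) $\Rightarrow$ (i) --- the hard direction --- is essentially the paper's proof: induction on $m$; realisation of the relative latching object as the colimit, over the poset of proper degeneracy operators out of $[m]$ augmented by an initial object sent to $A_m$, of the complemented subobjects $M_s = A_m \push_{A_{n(s)}} B_{n(s)}$ of $B_m$; verification that this diagram preserves binary meets; van Kampen-ness via a Reedy-style stratification (which the paper packages once and for all as \cref{thm:reedy-colims}); and \cref{thm:subobject-colimit} to conclude that the colimit is a subobject, hence a finite union of complemented subobjects and so complemented by \cref{decidable-union} of \cref{decidable-closure}. Two points you should make explicit. First, the identity $M_s \cap M_{s'} = M_{s \wedge s'}$ rests on the elegance of $\Delta$: the meet is realised by an \emph{absolute} pushout in $\Delta$, which $B$ turns into a pullback, giving $B_{n_1} \cap B_{n_2} = B_k$ as subobjects of $B_m$; the join-of-partitions description of the meet alone does not give you this, and it is the essential input before the distributivity computation. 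Second, do not try to construct $L_m A$ and $L_m B$ separately: under hypothesis (ii) the maps $B_n \to B_m$ are only split monomorphisms, not complemented inclusions, so the van Kampen machinery does not apply to the diagram $s \mapsto B_{n(s)}$; work directly with the relative latching object defined by its universal property (as the paper is careful to do), realised as the colimit of the $M_s$ under $A_m$.

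For (i) $\Rightarrow$ (ii) you take a genuinely different route: cell-complex induction from the generators via \cref{thm:cell-pres-cof}, versus the paper's observation that $A_m \push_{A_n} B_n \to B_m$ is the pushout weighted colimit of $i|_{\Delta_-}$ with $\corepresentabledeg{n} \to \corepresentabledeg{m}$, which (degeneracy operators being split epimorphisms) is a finite cell complex of coboundary inclusions in $[\Delta_-, \Set]$, so that the Reedy condition applies at once. Your route works but is more laborious, and one step is missing from your list of closure operations: for stability under sequential composition, the maps $A_m \push_{A_n} (X_k)_n \to (X_k)_m$ whose colimit you must take are instances of condition (ii) for the \emph{composites} $X_0 \to X_k$, not for the individual steps, so you first need closure of (ii) under binary composition (a pushout-pasting computation using \cref{lem:pushout_decidable}). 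What the paper's argument buys is uniformity: $\emptyset \to \corepresentabledeg{m}$ and $\corepresentabledeg{n} \to \corepresentabledeg{m}$ are treated on the same footing, and no bespoke invariant has to be threaded through the cell-complex construction.
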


\begin{proof}
We use from \cref{cof-as-reedy-di} that cofibrations are the same as Reedy complemented inclusions.
As in~\cite{Riehl-Verity}, we work freely with pushout weighted colimits in $\cat{E}$, with index category both $\Delta$ and its wide subcategory $\Delta_-$ of degeneracy operators.
As explained above (in the case of $\Delta$), these are partial two-variable functors in our situation.
Mirroring our notation for $\Delta$, we write $\coboundarydeg{m}$ for the subobject of $\corepresentabledeg{m} = \Delta([m], -)$ in $[\Delta_-, \Set]$ consisting of the non-identity maps.
Recall that the coboundary inclusion $\coboundary{m} \to \corepresentable{m}$ arises as left Kan extension along $\Delta_- \to \Delta$ of the coboundary inclusion $\coboundarydeg{m} \to \corepresentabledeg{m}$.
For working with weighted colimits, we recall that left Kan extension on the side of the weight corresponds to restriction on the side of the diagram.

We start with the direction from~\ref{lem:cof_charac:cof} to~\ref{lem:cof_charac:local}.
Let $i$ be a Reedy complemented inclusion.
Then
%\fxnote{using something like \cref{pushout-application}, see Reedy document for precise argument}
the pushout weighted colimit of $i$ with any finite cell complex (finite composite of pushouts) of coboundary inclusions is a complemented inclusion.
In particular, the pushout weighted colimit of the restriction $i |_{\Delta_-}$ of $i$ to $\Delta_-$ with any finite cell complex of coboundary inclusions $\coboundarydeg{k} \to \corepresentable{k}$ of $\Delta_-$ is a complemented inclusion.
For $m \geq 0$, the map $A_m \to B_m$ is the pushout weighted colimit of $i |_{\Delta_-}$ with such a finite cell complex $\varnothing \to \corepresentabledeg{m}$, hence a complemented inclusion.
Every degeneracy operator $[m] \sto [n]$ is a split epimorphism.
It follows that $\corepresentabledeg{n} \to \corepresentabledeg{m}$ is an inclusion with levelwise finite complement, thus we can write it as a finite cell complex of coboundary inclusions of $\Delta_-$.
Therefore, the pushout weighted colimit of $i$ with $\corepresentabledeg{n} \to \corepresentabledeg{m}$ is a complemented inclusion.
But this is the map $A_m \push_{A_n} B_n \to B_m$.

We finish with the direction from~\ref{lem:cof_charac:local} to~\ref{lem:cof_charac:cof}.
We show that the relative latching map $A_m \push_{L_m A} L_m B \to B_m$ of $i$ is a complemented inclusion by induction on $m$.
Recall that this is the pushout weighted limit of $i|_{\Delta_-}$ with $\coboundarydeg{m} \to \corepresentabledeg{m}$.
Let $\partial(\Delta_-^\op \downarrow [m])$ denote the opposite of the poset of non-identity degeneracy operators with source $[m]$.
Consider the diagram $F \from \partial(\Delta_-^\op \downarrow [m]) \to \cat{E} \downarrow B_m$ sending a degeneracy operator $[m] \sto [n]$ to the object $A_m \push_{A_n} B_n$.
It lives canonically under the object $A_m$ over $B_m$.
By switching from the weighted colimit to the conical colimit point of view, the object $A_m \push_{L_m A} L_m B$ is the colimit of $F$ in the category of factorisations of $A_m \to B_m$.
Equivalently, in the slice over $B_m$, the object $A_m \push_{L_m A} L_m B$ is the colimit of the diagram $F_*$ that is $F$ with shape adjoined with an initial object sent to $A_m$.

Note that, using our assumptions, we can regard $F$ as a diagram of complemented subobjects of $B_m$ that are bounded from below by the complemented subobject $A_m$.
It remains to show that the colimit of $F_*$ in the slice over $B_m$ has a complemented inclusion as underlying map.
It will suffice to show that this colimit is subterminal.
For then, it is given by the non-empty finite union of the subobjects that constitute the values of $F_*$, and complemented subobjects are closed under finite unions by \cref{decidable-union} of \cref{decidable-closure}.

The indexing category of $F_*$ is a finite direct category.
The latching map of $F_*$ at the initial object is $0 \to A_m$, a complemented inclusion.
The latching map of $F_*$ at an object $[m] \sto [n]$ is a pushout of the relative latching map of $A \to B$ at $[m]$, a complemented inclusion by induction hypothesis.
Thus, the diagram $F_*$ is Reedy cofibrant.
By \cref{thm:reedy-colims}, the colimit of $F_*$ is van Kampen.
All of this holds both in $\cat{E}$ as well as its slice over $B_m$.

Given a complemented subobject $U \to B_m$ and an arbitrary subobject $V \to B_m$, the pushout corner map in the pullback of $U \to B_m$ and $V \to B_m$ exists.
If it is a monomorphism, it computes the union $U \cup V \to B_m$ of the given subobjects.
Since degeneracy operators are split epimorphisms, the natural transformation $i|_{\Delta_-}$ is \cartesian.
This makes the value of $F$ at an object $[m] \sto [n]$ the union of the subobjects $A_m \to B_m$ and $B_n \to B_m$.

Since $\Delta$ is elegant~\cite{Bergner-Rezk}, given non-identity degeneracy operators $[m] \sto [n_i]$ for $i = 1, 2$, we have an absolute pushout
\begin{tikzeq*}
\matrix[diagram,column sep={between origins,6em}]{
  |(I)| [m]    & |(J1)| [n_1] \\
  |(J2)| [n_2] & |(K)| [k]    \\
};

\draw[surj] (I) to (J1);
\draw[surj] (J2) to (K);
\draw[surj] (I) to (J2);
\draw[surj] (J1) to (K);
\pbdr{K}{I};
\end{tikzeq*}
in $\Delta$ with $[n_1] \sto [k]$ and $[n_2] \sto [k]$ degeneracy operators.
Note that $[m] \sto [k]$ is distinct from the identity.
By absoluteness, we obtain a pullback
\begin{tikzeq*}
\matrix(B)[diagram,column sep={between origins,6em}]{
  |(BK)| B_k      & |(BJ1)| B_{n_1}     \\
  |(BJ2)| B_{n_2} & |(BI)| B_m \rlap{.} \\
};

\draw[->] (BJ1) to (BI);
\draw[->] (BK) to (BJ2);
\draw[->] (BJ2) to (BI);
\draw[->] (BK) to (BJ1);
\pb{BK}{BI};
\end{tikzeq*}
We now work in subobjects of $B_m$.
From the above pullback, we have $B_k = B_{n_1} \cap B_{n_2}$.
Using from \cref{lem:pushout_decidable} twice that pushouts along complemented inclusions are stable under pullback, we compute
\begin{align*}
(A_m \cup B_{n_1}) \cap (A_m \cup B_{n_2})
&=
((A_m \cup B_{n_1}) \cap A_m) \cup ((A_m \cup B_{n_1}) \cap B_{n_2})
\\&=
A_m \cup ((A_m \cup B_{n_1}) \cap B_{n_2})
\\&=
A_m \cup (B_{n_1} \cap B_{n_2})
\\&=
A_m \cup B_k.
\end{align*}
We obtain, in subobjects of $B_m$, that $F$ at $[m] \sto [n]$ is the intersection (computed as pullback) of $F$ at $[m] \to [n_1]$ and $[m] \to [n_2]$.
Thus, in subobjects of $B_m$, the diagram $F$ (and then also $F_*$) preserves binary meets.
Recollecting from above that the colimit of $F_*$ in the slice over $B_m$ is van Kampen, \cref{thm:subobject-colimit} shows that it is subterminal.
\end{proof}

\section{Closure properties of cofibrations}
\label{sec:prop-of-cof}

This section is devoted to further study of weak factorisation systems constructed in \cref{sec:cofibrations}, in preparation
for the proof of the existence of the effective model structure. We begin with a simple verification.

%\fxnote{(Nicola) I added this lemma as \cref{triv-fib-is-fib} (or its proof) may be removed, but we still use that
%$S \mapsto \fset{S}$ preserves cofibrations later on.  Stated this way because
%it is only a partial functor. Is this what was intended? Check also proof of \cref{triv-fib-is-fib}.}
%
%\fxnote{(Christian) Replaced cell complex proof by simpler argument using adjointness. Added case for trivial cofibrations.}

\begin{lemma} \label{fset-pres-cof}
If $A \to B$ is a (trivial) cofibration between levelwise countable simplicial sets, then
$\fset{A} \to \fset{B}$ is a (trivial) cofibration in $\cats E$.
\end{lemma}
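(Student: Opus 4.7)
The plan is to handle the two cases separately, but with a unified toolkit. The main observation is that the partial functor $\fset{\uvar} \from \Set \to \cat{E}$, being left adjoint to the global sections functor on its domain (countable sets), preserves countable coproducts. Consequently, by \cref{lem:pushout_decidable}, it also preserves complemented inclusions of countable sets, pushouts along such, and sequential colimits of such. These properties transfer levelwise to the induced partial functor $\fset{\uvar} \from \sSet \to \cats{E}$.

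For the cofibration case, I would apply \cref{lem:cof_charac} directly. Specialised to $\cat{E} = \Set$ (which is countably lextensive), the lemma says that a cofibration $A \to B$ in $\sSet = \cats{\Set}$ is exactly a levelwise complemented inclusion such that, for every degeneracy operator $[m] \to [n]$, the map $A_m \push_{A_n} B_n \to B_m$ is a complemented inclusion in $\Set$. Both conditions transfer to $\fset{A} \to \fset{B}$ through $\fset{\uvar}$: complemented inclusions of countable sets are preserved, and the pushouts appearing in the Reedy condition (being pushouts along complemented inclusions) are preserved by the observations above. A second application of \cref{lem:cof_charac}, now in the countably lextensive category $\cat{E}$, then shows that $\fset{A} \to \fset{B}$ is a cofibration in $\cats{E}$.

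For the trivial cofibration case, I would factor $A \to B$ via Quillen's small object argument in $\sSet$ as $A \to C \to B$, where $A \to C$ is a $J_{\sSet}$-cell complex and $C \to B$ is a Kan fibration. The bookkeeping is elementary but needs care: since $A$ is levelwise countable and every horn $\horn{n,k}$ is finite, at each stage only countably many lifting problems arise, so $C$ can be arranged to remain levelwise countable after countably many stages. Since $A \to B$ is a trivial cofibration, it has the left lifting property against the Kan fibration $C \to B$, exhibiting $A \to B$ as a codomain retract of $A \to C$. Applying $\fset{\uvar}$ and invoking the preservation properties above, $\fset{A} \to \fset{C}$ becomes a $J_{\cats{E}}$-cell complex in the sense of \cref{thm:cell-complex-def}, hence a trivial cofibration in $\cats{E}$. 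Since $\fset{A} \to \fset{B}$ is a codomain retract of $\fset{A} \to \fset{C}$ by functoriality, and trivial cofibrations are closed under retracts by \cref{saturation}, we conclude.

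The hard part of this plan is entirely the levelwise-countability bookkeeping in the small object argument for the trivial cofibration case; once that is handled, the preservation machinery established in \cref{sec:enrwfs,sec:cofibrations} does all the work, and no further enriched-categorical input beyond the above is needed.
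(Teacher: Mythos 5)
Your argument is correct, but it takes a much longer road than the paper does. The paper's proof is a two-line adjointness argument: $\fset{(\uvar)}$ is a partial left adjoint to the levelwise global sections functor, which is $\Hom_\sSet(1,\uvar)$ for $1\in\cat E$; since $\Hom_\sSet(1,\uvar)$ sends (trivial) fibrations in $\cats E$ to (trivial) fibrations in $\sSet$ by \cref{prop:fibration_pointwise}, transposing lifting problems shows that $\fset{A}\to\fset{B}$ has the ordinary left lifting property against all (trivial) fibrations, which by \cref{ordinary-wfs} and \cref{two-ewfss} is exactly what it means to be a (trivial) cofibration. This handles both cases uniformly and needs nothing about the internal structure of cofibrations. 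By contrast, you invoke the full strength of \cref{lem:cof_charac} (one of the harder theorems of the paper, though it does appear earlier, so there is no circularity) for the cofibration case, and for the trivial cofibration case you re-run the small object argument in $\sSet$ with countability bookkeeping and then push the resulting cell complex through $\fset{(\uvar)}$. All the steps you use are available: $\fset{(\uvar)}$ preserves countable coproducts, complemented inclusions, and the relevant pushouts and sequential colimits by \cref{lem:pushout_decidable}; the countable coproducts of generating cells are identified with tensors $\fset{T}\tensorEsE\fset{j}$ via \cref{nicolas-grouping}, so the image really is a cell complex in the sense of \cref{thm:cell-complex-def}; and closure of trivial cofibrations under retracts is \cref{saturation}. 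What your route buys is an explicit cell-complex presentation of $\fset{A}\to\fset{C}$, but at the cost of substantially more machinery; the adjointness observation is the intended shortcut, and it is worth internalising, since the same transposition trick recurs throughout the paper (e.g.\ in \cref{triv-fib-is-fib} and \cref{pushout-tensor}).
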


\begin{proof}
Recall that the partial functor $X \mapsto \fset{X}$ is a partial left adjoint to the levelwise global sections functor.
This is equivalently the functor $\Hom_\sSet(1, -)$ with $1 \in \cats{E}$ from \cref{sec:fib_cat}.
By adjointness using the \wfs{}s of \cref{two-ewfss} and \cref{fibration-levelwise}, it suffices to show that $\Hom_\sSet(1, -)$ preserves (trivial) fibrations.
This holds by \cref{prop:fibration_pointwise}.
\end{proof}

\begin{proposition} \label{triv-fib-is-fib} \leavevmode
\begin{parts}
\item Trivial fibrations are fibrations.
\item Trivial cofibrations are cofibrations.
\end{parts}
\end{proposition}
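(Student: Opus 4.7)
The plan is to reduce both statements to their classical counterparts in $\sSet$, exploiting the pointwise characterisation of (trivial) fibrations established in \cref{prop:fibration_pointwise}.

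For part~(i), I would argue as follows. By \cref{prop:fibration_pointwise}, a map $f$ in $\cats{E}$ is a (trivial) fibration exactly if $\Hom_\sSet(E, f)$ is a (trivial) Kan fibration in $\sSet$ for every $E \in \cat{E}$. It is classical that in $\sSet$ every trivial Kan fibration is a Kan fibration: the horn inclusions $\horn{n, k} \to \simp{n}$ are built from boundary inclusions via pushouts and sequential composition, so any map with the \rlp{} against $I_\sSet$ automatically has the \rlp{} against $J_\sSet$. Applying this levelwise in $E$ transfers the statement to $\cats{E}$.

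For part~(ii), the cleanest route is to deduce it from~(i) together with the definitions in \cref{I-cofibration} and the identification of (trivial) fibrations with (trivial) $J_{\cats{E}}$- and $I_{\cats{E}}$-fibrations given by \cref{fibration-levelwise}. By definition, a trivial cofibration has the enriched \llp{} with respect to every fibration. Since, by~(i), every trivial fibration is a fibration, such a map has the enriched \llp{} with respect to every trivial fibration as well, and so it is a cofibration.

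An alternative, more concrete argument for~(ii) proceeds via the generators: each element $\fset{\horn{n, k}} \to \fset{\simp{n}}$ of $J_{\cats{E}}$ is the image under $\fset{(\uvar)}$ of a trivial cofibration in $\sSet$, so by \cref{fset-pres-cof} it is in particular a cofibration in $\cats{E}$. Since cofibrations form the left class of a weak factorisation system (\cref{two-ewfss}), they are closed under the enriched saturation operations used in the small object argument of \cref{esmo}, and thus every $J_{\cats{E}}$-cofibration is an $I_{\cats{E}}$-cofibration. I do not anticipate any genuine obstacle: once the simplicial-set analogue of~(i) is granted, both parts are formal consequences of the framework developed in \cref{sec:fib_cat,sec:enrwfs,sec:cofibrations}.
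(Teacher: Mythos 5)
Your proof is correct and follows essentially the same route as the paper: part (i) is transferred from the classical fact in $\sSet$ via \cref{prop:fibration_pointwise}, and part (ii) is deduced by the lifting-property adjointness between the two weak factorisation systems of \cref{two-ewfss}. The alternative generator-based argument you sketch for (ii) is also sound but unnecessary given the first argument.
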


\begin{proof}
The first part is immediate since trivial Kan fibrations are Kan fibrations in simplicial sets.
The second parts follows by adjointness using the \wfs{}s of \cref{two-ewfss}.
\end{proof}

 % In particular, for $\cat{E} = \Set$ the definitions above agree with the familiar ones in $\sSet$.

We now establish some formal properties of the two enriched weak factorisation systems, regarding the
pushout-product, pushout-tensor and pullback-cotensor functors (\cf \cref{thm:leibniz-terminology}).

%\begin{corollary}\label{pullback-cofibration} Let $i \from A \to B$ be a map in $\cats{E}$
% with $A$ cofibrant. Then the pullback functor $i^*\from \cats{E} \slice B \to \cats{E} \slice A$
% preserves cofibrations.
%\end{corollary}
%
%\begin{proof}
%  Let $C \to D$ be a cofibration over $B$.
%  Then the pullback of $C \to D$ along $A \to B$ is a cofibration by applying \cref{cofibration-pushout-product} to the cofibrations $\emptyset \to C$ and $A \to B$ in $\cats{E} \slice D$.
%\end{proof}

\begin{proposition}[Pushout-product properties] \label{cofibration-pushout-product}  \label{cofibration-trivial-cofibration-pushout-product} \leavevmode
\begin{parts}
\item \label{pushout-prod-part-i} In $\cats{E}$, cofibrations are closed under pushout product.
\item \label{pushout-prod-part-ii} In $\cats{E}$, the pushout product of a cofibration and a trivial cofibration is a trivial cofibration.
\end{parts}
\end{proposition}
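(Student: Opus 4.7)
The plan is to reduce both parts to the case where both factors of the pushout-product are generators, and then transfer the conclusion from the corresponding statement in $\sSet$ along the functor $\fset{(\uvar)}$ via \cref{fset-pres-cof}. The reductions rely on the saturation results \cref{pushout-product-saturation} and, for (ii), the slightly more general \cref{pushout-application-instance2}.

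For part (i), I saturate in each variable in turn. Fixing a cofibration $g$ and applying \cref{pushout-product-saturation} with $I = I_{\cats{E}}$ and the role of $j$ played by $g$, it suffices to show that $i \pprod g$ is a cofibration for every $i \in I_{\cats{E}}$. Fixing such an $i$, by symmetry of $\pprod$ this is equivalent to showing $g \pprod i$ is a cofibration for every cofibration $g$, and another application of \cref{pushout-product-saturation} (now with $j = i$) reduces the task to the generator--generator case $i' \pprod i$ for $i', i \in I_{\cats{E}}$.

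Part (ii) follows the same two-step pattern, but the first reduction needs \cref{pushout-application-instance2} instead of \cref{pushout-product-saturation}, because we must saturate over cofibrations while the output is required to be a trivial cofibration. For a fixed trivial cofibration $g$, the operation $f \mapsto f \pprod g$ is $\hat{\app}(\uvar \times g, \uvar)$ applied to the natural transformation $\uvar \times g \from \uvar \times X \to \uvar \times Y$ of endofunctors on $\cats{E}$; the preservation hypotheses on these endofunctors required by \cref{pushout-application-instance2} are precisely those already verified in the proof of \cref{pushout-product-saturation}. Instantiating with $I_D = I_{\cats{E}}$ and $I_{D'} = J_{\cats{E}}$ reduces (ii) to showing $i \pprod g$ is a trivial cofibration for every $i \in I_{\cats{E}}$ and every trivial cofibration $g$; a further application of \cref{pushout-product-saturation} with $I = J_{\cats{E}}$ and $j = i$, using symmetry of $\pprod$, reduces this to the generator--generator case $j \pprod i$ for $i \in I_{\cats{E}}$ and $j \in J_{\cats{E}}$.

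Both generator--generator cases are handled uniformly. The functor $\fset{(\uvar)}$ on levelwise countable simplicial sets preserves finite products by countable distributivity (\cref{product-with-discrete}) and preserves the pushouts that appear in pushout-products of levelwise complemented inclusions by \cref{colimits-diagram-category}; consequently $\fset{K} \pprod \fset{L} \iso \fset{K \pprod L}$ for any two monomorphisms between levelwise countable simplicial sets. Applied to pairs of generators, together with the standard pushout-product properties of $I_\sSet$ and $J_\sSet$ in $\sSet$, a final appeal to \cref{fset-pres-cof} delivers the two desired conclusions. The main subtle point, and the only non-routine step, is recognising that (ii) requires the extra flexibility of \cref{pushout-application-instance2} to cross between the two classes $I_{\cats{E}}$ and $J_{\cats{E}}$, beyond the symmetric form given by \cref{pushout-product-saturation}.
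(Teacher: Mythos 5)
Your proof is correct and follows essentially the same route as the paper: reduce to the generator--generator case by saturating in each variable, and transfer that case from $\sSet$ along $\fset{(\uvar)}$, which preserves the relevant products and pushouts. The paper's proof of part (ii) only says the result "carries over by the argument of (i)"; your observation that the second saturation step there genuinely needs the cross-class version \cref{pushout-application-instance2} (rather than \cref{pushout-product-saturation} alone, whose hypothesis and conclusion refer to the same generating set) is exactly the right reading of that remark and makes the argument fully precise.
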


\begin{proof} For \cref{pushout-prod-part-i}, recall that cofibrations in $\sSet$ are closed under pushout product.\footnote{See~\cite{Hwms}*{Proposition~5.1.5} or \cite{GSS}*{Proposition~1.3.1} for the constructive version of this fact.}
  Since $S \mapsto \fset{S}$ preserves pushouts and products, it follows that the pushout product of generating cofibrations in $\cats{E}$
  is a cofibration.
  The same follows for general cofibrations in~$\cats{E}$ by \cref{pushout-product-saturation}.
  These pushout products exist by \cref{ldi-pushout-product}.

  For \cref{pushout-prod-part-ii}, The result holds in~$\sSet$ by\footnote{See~\cite{Hwms}*{Corollary~5.2.3} or \cite{GSS}*{Proposition~1.3.1} for the constructive version of this fact.}~\cite{GZ}*{Proposition~IV.2.2} and thus it carries over to~$\cats{E}$ by
  the argument of \cref{pushout-prod-part-i}.
\end{proof}

\begin{lemma}\label{sSet-tensor}
  Let $X \in \cats{E}$. For every finite simplicial set $K$, the tensor $K \tensorsSetsE X$
  exists and is given by $\fset{K} \times X$.
\end{lemma}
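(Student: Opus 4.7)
I would verify that $\fset{K} \times X$ satisfies the universal property of the tensor in the $\sSet$-enrichment of $\cats{E}$, namely by constructing a natural isomorphism
\[
\Hom_{\cats{E}}(\fset{K} \times X, Y) \iso \Hom_{\cats{E}}(X, K \cotensorsSetsE Y)
\]
for every $Y \in \cats{E}$. By the Yoneda lemma this identifies $\fset{K} \times X$ as $K \tensorsSetsE X$.

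The strategy is to unpack both sides levelwise and exhibit the bijection explicitly. For the left hand side, \cref{product-with-discrete} gives $(\fset{K} \times X)_n \iso K_n \tensorSetE X_n = \coprod_{k \in K_n} X_n$ at each level $n$, so by the universal property of coproducts a morphism $\fset{K} \times X \to Y$ in $\cats{E}$ is the same data as a family $\{F_{n,k} \from X_n \to Y_n\}_{n \in \Simp,\, k \in K_n}$ of morphisms in $\cat{E}$, natural in $n$ in the sense that $Y_\alpha \circ F_{n,k} = F_{m, K_\alpha(k)} \circ X_\alpha$ for every $\alpha \from [m] \to [n]$ and $k \in K_n$. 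For the right hand side, the end formula~\eqref{equ:cotensor} gives $(K \cotensorsSetsE Y)_p = \coend_{[l] \in \Simp} Y_l^{(K \times \simp{p})_l}$, which is a finite limit in $\cat{E}$ thanks to the finiteness of $K$. Since each $(K \times \simp{p})_l$ is a finite set and $\cat{E}$ has finite products, a map $X_p \to (K \cotensorsSetsE Y)_p$ corresponds to a dinatural family $\{f_{p,l,k,\sigma} \from X_p \to Y_l\}$ indexed by $l \in \Simp$, $k \in K_l$, and $\sigma \from [l] \to [p]$; naturality of $X \to K \cotensorsSetsE Y$ in $p$ further imposes $f_{p,l,k,\beta\tau} = f_{p',l,k,\tau} \circ X_\beta$ for every $\beta \from [p'] \to [p]$.

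The bijection is then given explicitly by $f_{p,l,k,\sigma} \defeq F_{l,k} \circ X_\sigma$ in one direction and $F_{n,k} \defeq f_{n,n,k,\mathrm{id}_{[n]}}$ in the other. A direct calculation using only the functoriality of $X$ and $Y$ verifies that these assignments are mutually inverse and that naturality of $F$ corresponds precisely to the combined dinaturality-plus-naturality conditions on $f$. The main obstacle, if one can call it that, lies purely in the bookkeeping needed to track the various indexing sets and variance conventions of the end; since $\cat{E}$ is not assumed to be \cartesian closed, all arguments must proceed via universal properties rather than direct manipulation of internal exponentials. No inputs beyond \cref{product-with-discrete} and the end formula~\eqref{equ:cotensor} are required.
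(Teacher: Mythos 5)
Your proof is correct and follows essentially the same route as the paper's: both establish the bijection $\Hom_{\cats{E}}(\fset{K}\times X, Y)\iso\Hom_{\cats{E}}(X, K\cotensorsSetsE Y)$, natural in $Y$, by unwinding each side into families of maps in $\cat{E}$ indexed by the simplices of $K$. The only cosmetic difference is that the paper collapses the extra indexing over operators $[l]\to[p]$ by invoking the co-Yoneda formula $\coend^{[m]}\fset{\Hom([m],[n])}\times X_m = X_n$, whereas you carry out the equivalent computation by hand via evaluation at identities.
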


%\fxnote{Is this a correct proof? Does it use any properties of $\cats{E}$ that we haven't spelled out yet?}

\begin{proof}
  Given $Y \in \cats{E}$, a morphism $X \to K \cotensor Y$ consists of a family of morphisms
  $X_m \to Y_n^{(K \times \simp{m})_n}$, natural in $m$ and dinatural in $n$.
  This corresponds to a family of morphisms $\fset{K \times \simp{m}}_n \times X_m \to Y_n$,
  dinatural in $m$ and natural in $n$. Moreover:
  \[ \fset{K \times \simp{m}}_n \times X_m = \fset{K_n} \times \fset{\Hom([m],[n])} \times X_m \text{.} \]
Since $\coend^{[m]} \fset{\Hom([m],[n])} \times X_m = X_n$, such family of maps corresponds to a morphism $\fset{K}_n \times X_n \to Y_n$ natural in $n$, i.e., a morphism $\fset{K} \times X \to Y$ in $\cats E$.
\end{proof}

\begin{proposition}[Pushout tensor properties] \label{pushout-tensor}
  Let $A \to B$ be a cofibration between finite simplicial sets. Then,
  the pushout tensor with $A \to B$ exists. Furthermore,
  \begin{parts}
  \item \label{pushout-tensor:cof-triv-cof} it preserves trivial cofibrations,
  \item \label{pushout-tensor:cof-cof-cof} it preserves cofibrations,
  \item \label{pushout-tensor:triv-cof-cof} if $A \to B$ is a trivial cofibration, then
  it sends cofibrations to trivial cofibrations.
  \end{parts}
\end{proposition}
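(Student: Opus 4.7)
The plan is to reduce the pushout tensor to the pushout product already handled by \Cref{cofibration-pushout-product}, by using the identification from \Cref{sSet-tensor} that for any finite simplicial set $K$ and $X \in \cats{E}$, we have $K \tensorsSetsE X = \fset{K} \times X$. This identification is natural in $K$ and $X$, so applied to the pushout tensor of a map $i \from A \to B$ of finite simplicial sets with a map $j \from X \to Y$ in $\cats{E}$, it produces the map
\[
(\fset{B} \times X) \push_{\fset{A} \times X} (\fset{A} \times Y) \longrightarrow \fset{B} \times Y,
\]
which is exactly the pushout product $\fset{i} \pprod j$ in $\cats{E}$.

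Next I would verify existence. By \Cref{fset-pres-cof}, if $i$ is a (trivial) cofibration between finite, hence levelwise countable, simplicial sets, then $\fset{i} \from \fset{A} \to \fset{B}$ is a (trivial) cofibration in $\cats{E}$. In particular, by \Cref{lem:cof_charac} (or \Cref{cofibration-as-retract-of-cell-complex}), $\fset{i}$ is a levelwise complemented inclusion, so the required pushout exists by \Cref{ldi-pushout-product} (the pullback $\fset{A} \times X \to \fset{B} \times X$ along $j$'s source remains a levelwise complemented inclusion by universality of coproducts, and pushouts along such maps exist).

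With existence in hand, the three parts then follow mechanically from \Cref{cofibration-pushout-product}. For \cref{pushout-tensor:cof-triv-cof}, if $i$ is a cofibration and $j$ is a trivial cofibration, then $\fset{i}$ is a cofibration and $\fset{i} \pprod j$ is a trivial cofibration by part \cref{pushout-prod-part-ii} of \Cref{cofibration-pushout-product}. For \cref{pushout-tensor:cof-cof-cof}, $\fset{i}$ and $j$ are both cofibrations, and part \cref{pushout-prod-part-i} of \Cref{cofibration-pushout-product} yields a cofibration. For \cref{pushout-tensor:triv-cof-cof}, $\fset{i}$ is a trivial cofibration and $j$ is a cofibration, and another application of part \cref{pushout-prod-part-ii} of \Cref{cofibration-pushout-product} (with the roles of the two factors exchanged) gives a trivial cofibration.

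There is no real obstacle: the work was done when \Cref{sSet-tensor}, \Cref{fset-pres-cof}, and \Cref{cofibration-pushout-product} were established. The only point requiring a moment of care is ensuring that the pushout in the definition of the pushout tensor actually exists in $\cats{E}$, since $\cat{E}$ is not assumed to admit all colimits; this is guaranteed because $\fset{i}$ is a levelwise complemented inclusion, reducing the situation to the colimits supplied by countable lextensivity.
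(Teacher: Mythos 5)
Your argument is correct, but it takes a different route from the paper's. The paper obtains all three parts by duality: under the tensor--cotensor adjunction of \cref{sSet-tensor}, the pushout tensor statements are adjoint transposes of the pullback cotensor properties of \cref{item:pbcot} of \cref{lem:basic_pointwise_fibrations} (which were imported from $\sSet$ via \cref{prop:fibration_pointwise}), using only the underlying ordinary weak factorisation systems of \cref{ordinary-wfs}. You instead observe that, via the natural identification $K \tensorsSetsE X \iso \fset{K} \times X$ of \cref{sSet-tensor}, the pushout tensor with $i$ \emph{is} the pushout product with $\fset{i}$ in $\cats{E}$, and then feed \cref{fset-pres-cof} (to see that $\fset{i}$ is a (trivial) cofibration) into \cref{cofibration-pushout-product}; existence is handled correctly via \cref{ldi-pushout-product}. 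Both proofs are short and both lean on \cref{sSet-tensor}. The paper's adjointness argument needs no information about $\fset{i}$ itself and avoids invoking the saturation machinery behind \cref{cofibration-pushout-product}; yours makes the relationship between the pushout tensor and the internal pushout product explicit, which is a useful observation in its own right, at the cost of routing through \cref{fset-pres-cof} and the symmetry of the pushout product (needed to exchange the roles of the two factors in part (iii)). No gaps.
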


\begin{proof} The existence follows from \cref{colimits-diagram-category} and \cref{sSet-tensor}.
  These other statements  are dual to the ones of \cref{item:pbcot} of \cref{lem:basic_pointwise_fibrations} under the tensor-cotensor adjunction of \cref{sSet-tensor}.
  Note that for this conclusion it suffices to consider the underlying ordinary \wfs{} of \cref{ordinary-wfs}
  so that we do not need to verify that the adjunction is enriched over $\psh \cat{E}$.
\end{proof}

We now turn our attention to the cofibrations and the cofibrant objects in $\cats E$. From \cref{sec:enrwfs}
and \cref{fibration-levelwise} these are exactly the maps with the left lifting property with respect to Kan fibrations.
The next lemma provides us with a stock of cofibrant objects.

\begin{lemma} \label{cofibrant-properties} \leavevmode
\begin{parts}
\item \label{constant-cofibrant} Let $E \in \cat{E}$. The constant simplicial object $E \in \cats{E}$ is cofibrant.
\item  \label{generators-between-cofibrant} The domains and codomains of all morphisms of $I_{\cats{E}}$ and $J_{\cats{E}}$ are cofibrant.
\item  \label{cofibrant-cotensor} Let $X \in \cats{E}$ and  $K$ be a finite simplicial set. If $X$ is cofibrant, then so is $K \cotensor X$.
\end{parts}
\end{lemma}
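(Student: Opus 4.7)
Parts (i) and (ii) should be immediate consequences of earlier results. For (i), I would apply the characterisation of cofibrations in \cref{lem:cof_charac}: for a constant simplicial object $E \in \cats{E}$, every structure map $E_n \to E_m$ is the identity, so both $E_n \to E_m$ and $0 \push_0 E_n \to E_m$ are trivially complemented inclusions. For (ii), the domains and codomains of the generating sets $I_{\cats{E}}$ and $J_{\cats{E}}$ are of the form $\fset{A}$ for $A$ a finite (hence countable) simplicial set---namely $\bdsimp{n}$, $\simp{n}$ or $\horn{n,k}$. Since $\emptyset \to A$ is a cofibration in $\sSet$ between levelwise countable simplicial sets, \cref{fset-pres-cof} yields a cofibration $0 = \fset{\emptyset} \to \fset{A}$ in $\cats{E}$, witnessing that $\fset{A}$ is cofibrant.

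Part (iii) is the main point, and I would again approach it via \cref{lem:cof_charac}, reducing the task to showing that for every degeneracy operator $\sigma \from [m] \sto [n]$, the induced map $(K \cotensor X)_n \to (K \cotensor X)_m$ is a complemented inclusion in $\cat{E}$. Writing $(K \cotensor X)_k = X(K \times \simp{k})$ via the evaluation functor of \cref{sec:fib_cat}, this map is the image under $X$ (extended to finite simplicial sets as in \cref{evaluation}) of the surjection $K \times \sigma \from K \times \simp{m} \to K \times \simp{n}$ in $\sSet$. The crucial structural observation is that $\sigma$ is a split epimorphism in $\Delta$: any face $\delta \from [n] \to [m]$ satisfying $\sigma \delta = \id$ provides a section. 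Consequently, $K \times \sigma$ is a split epimorphism of simplicial sets whose section $K \times \simp{n} \hookrightarrow K \times \simp{m}$ is, in each simplicial degree, a complemented inclusion of sets.

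To conclude, the plan is to proceed by induction along the skeletal filtration of $K$. The base case $K = \emptyset$ gives $K \cotensor X = 1$, trivially cofibrant. For the inductive step, if $K$ is obtained from a simpler $K'$ by attaching a non-degenerate $n$-simplex via a pushout along $\bdsimp{n} \to \simp{n}$, then $K \cotensor X$ is the corresponding pullback involving $K' \cotensor X$, $\simp{n} \cotensor X$, and $\bdsimp{n} \cotensor X$. The hard part will be verifying that the induced degeneracy maps on such a pullback are complemented inclusions: this requires combining the Reedy decomposition of the cofibrant object $X$ (\cref{cof-as-reedy-di}) with the levelwise complementation of the section of $K \times \sigma$ exhibited above, and exploiting that complemented inclusions in the lextensive category $\cat{E}$ are closed under finite limits (\cref{decidable-closure}).
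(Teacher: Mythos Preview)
Your arguments for \cref{constant-cofibrant} and \cref{generators-between-cofibrant} are correct. For \cref{constant-cofibrant} you take a slightly different route from the paper (direct verification via \cref{lem:cof_charac} rather than recognising $\varnothing \to E$ as the tensor of a generating cofibration with $E$), but both are equally easy.

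For \cref{cofibrant-cotensor}, your reduction via \cref{lem:cof_charac} to the map $X(K \times \simp{n}) \to X(K \times \simp{m})$ is exactly right, and matches the paper. However, the skeletal induction you propose has a gap. In the inductive step you write $K \cotensor X$ as a pullback of $K' \cotensor X$, $\bdsimp{n} \cotensor X$ and $\simp{n} \cotensor X$, and plan to conclude using closure of \di{}s under finite limits. For that you need the degeneracy maps of $\simp{n} \cotensor X$ to already be \di{}s. But $\simp{n}$ itself arises in your induction only as a cell attachment to $\bdsimp{n}$, and in that case the pullback formula collapses to the tautology $\simp{n} \cotensor X = (\bdsimp{n} \cotensor X) \times_{(\bdsimp{n} \cotensor X)} (\simp{n} \cotensor X)$. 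So the induction never actually produces the case $K = \simp{n}$; it presupposes it. A related issue: your observation that $K \times \sigma$ is a split epimorphism with levelwise complemented section only tells you, after applying the contravariant $X(-)$, that the relevant map in $\cat{E}$ is a \emph{split monomorphism}. In a general lextensive category a split mono need not be a \di{}, so this does not by itself yield what you want.

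The paper instead appeals to a structural fact about simplicial sets (Proposition~3.1.11 of \cite{H}): for any finite $K$ and any degeneracy operator $\sigma \from [m] \sto [n]$, the map $K \times \simp{m} \to K \times \simp{n}$ is a finite composite of pushouts of elementary degeneracy maps $\simp{a} \to \simp{b}$. Applying $X(-)$ turns this into a finite composite of pullbacks of maps $X_b \to X_a$, each a \di{} because $X$ is cofibrant; closure of \di{}s under pullback and composition then finishes the argument. This handles all finite $K$, including $\simp{n}$, uniformly and without induction on $K$. Your skeletal approach could be salvaged, but only by first proving this same structural fact (or an equivalent) for $K = \simp{n}$; the vague appeal to ``the Reedy decomposition of $X$'' does not supply it.
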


\begin{proof} For \cref{constant-cofibrant},  by \cref{saturation}, the tensor of $\fset{\partial \simp{0}} \to \fset{\simp{0}}$ with $E$ is a cofibration.
By \cref{sSet-tensor}, this map is the tensor of $E \in \cats{E}$ with $\partial \simp{0} \to \simp{0}$, \ie, the map $\varnothing \to E$ in $\cats{E}$.
\Cref{generators-between-cofibrant} holds since $S \mapsto \fset{S}$ preserves cofibrations by \cref{fset-pres-cof}.\footnote{Constructively,  for \cref{generators-between-cofibrant} one needs to check also that the relevant objects are cofibrant in~$\sSet$. The simplices and their boundaries are cofibrant in $\sSet$ by~\cite{GSS}*{Lemma~1.3.5} and the horns by~\cite{GSS}*{Lemma~1.4.9}.}
Finally, for \cref{cofibrant-cotensor}, if $[m] \sto [n]$ is a degeneracy operator, then the map $(K \cotensor X)_n \to (K \cotensor X)_m$ can be identified with the map $X(K \times \simp{n}) \to X(K \times \simp{m})$. It follows from~\cite{H}*{Proposition 3.1.11} that when $K$ is a finite simplicial set, the map $K \times \simp{n} \to K \times \simp{m}$ is a finite composite of pushouts of degeneracy operators. This implies that the map $(K \cotensor X)_n \to (K \cotensor X)_m$ is a finite composite of pullbacks of degeneracy operator $X_a \to X_b$. As $X$ is cofibrant these maps are all \di{}s, hence as \di{}s are closed under pullback and composition, this implies that $(K \cotensor X)_n \to (K \cotensor X)_m$ is a \di{} as well.
\end{proof}

\begin{lemma}\label{cofibration-mono-pullback}
  Cofibrations are closed under pullback along a monomorphism.
\end{lemma}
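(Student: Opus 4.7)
My approach is to verify the two conditions of the characterisation \cref{lem:cof_charac} for the induced map $i' \colon A' \to B'$, where $i \colon A \to B$ denotes the cofibration, $f \colon B' \to B$ the monomorphism, and $A' = A \times_B B'$.

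For condition~(i), that $i'$ is a levelwise complemented inclusion: pullbacks in $\cats E$ are computed levelwise, and in the lextensive category $\cat E$ a pullback of a complemented inclusion is again a complemented inclusion (the complement pulls back to a complement).

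For condition~(ii), I fix a degeneracy $\sigma \colon [m] \sto [n]$ and aim to show that the square
\[
\begin{array}{ccc}
A'_m \push_{A'_n} B'_n & \to & B'_m \\
\downarrow & & \downarrow \\
A_m \push_{A_n} B_n & \to & B_m
\end{array}
\]
is a pullback in $\cat E$. Combined with the fact that the bottom map is a complemented inclusion by the cofibration hypothesis on $i$ (via \cref{lem:cof_charac}), and the pullback stability of complemented inclusions already used above, this yields~(ii). Since $A_n \to B_n$ is a complemented inclusion, the pushout $A_m \push_{A_n} B_n$ is van Kampen in $\cat E$ by \cref{lem:pushout_decidable}. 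Applying universality of this van Kampen pushout to the map $B'_m \to B_m$ presents $B'_m \times_{B_m} (A_m \push_{A_n} B_n)$ as the pushout of the three pullbacks along $B'_m \to B_m$ of $A_m$, $A_n$, and $B_n$ (over their respective composite maps to $B_m$). It then remains to identify these three pullbacks with $A'_m$, $A'_n$, and $B'_n$ respectively.

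The identification $B'_m \times_{B_m} A_m = A'_m$ is immediate from the definition of $A'$. The main obstacle is the following sublemma, which is precisely where the monomorphism hypothesis on $f$ enters essentially: for every degeneracy $\sigma \colon [m] \sto [n]$, the naturality square
\[
\begin{array}{ccc}
B'_n & \to & B_n \\
\downarrow & & \downarrow \\
B'_m & \to & B_m
\end{array}
\]
is a pullback. For arbitrary simplicial maps such squares need not be cartesian. I would prove it by exploiting that $\sigma$ admits a section by a face operator $\delta$, so that the actions $B(\sigma)$ and $B'(\sigma)$ are split monomorphisms with retractions $B(\delta)$ and $B'(\delta)$; given a test cone $(g, h) \colon T \to B'_m \times_{B_m} B_n$, the required lift is $B'(\delta) \circ g$, where one projection identity follows from $B(\delta) B(\sigma) = \id$ and the other from passing through the monomorphism $f_m$. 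Once this sublemma is established, pullback pasting along $B_n \to B_m$ gives $B'_m \times_{B_m} B_n = B'_n$, and a second pasting using the compatibility $i_m \circ A(\sigma) = B(\sigma) \circ i_n$ gives $B'_m \times_{B_m} A_n = B'_n \times_{B_n} A_n = A'_n$, completing the identification.
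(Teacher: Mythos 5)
Your proof is correct and follows essentially the same route as the paper's: verify the two conditions of \cref{lem:cof_charac}, using that the naturality square of the monomorphism along a degeneracy operator is a pullback (via a face-operator section and the mono hypothesis) and that the pushout $A_m \push_{A_n} B_n$ is van Kampen, so that the relative map for the pulled-back cofibration is itself a pullback of a complemented inclusion. The only difference is that you spell out the split-epi/mono argument for the cartesianness of the naturality square, which the paper asserts without detail.
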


\begin{proof} Consider a pullback square of simplicial objects:
  \begin{tikzeq*}
  \matrix[diagram]
  {
    |(S')| S' & |(S)| S \\
    |(A)| A & |(B)| B \\
  };

  \draw[->] (S') to (S);
  \draw[inj] (S)  to (B);
  \draw[cof] (A) to (B);
  \draw[inj] (S') to (A);
  \end{tikzeq*}
We check that $S' \to S$ is a cofibration using characterisation \ref{lem:cof_charac:local} of \cref{lem:cof_charac}. In an lextensive category, a pullback of a \di{} is a \di{}, hence the map $S' \to S'$ is a \ldi{}. Given any degeneracy operator $[m] \sto [n]$, as it is a split epimorphism and $S \to B$ is a monomorphism, the naturality square:

  \begin{tikzeq*}
  \matrix[diagram]
  {
    |(Sn)| S_n & |(Sm)| S_m \\
    |(Bn)| B_n & |(Bm)| B_m \\
  };

  \draw[inj] (Sn) to (Sm);
  \draw[inj] (Bn)  to (Bm);
  \draw[inj] (Sm) to (Bm);
  \draw[inj] (Sn) to (Bn);

  \end{tikzeq*}
is a pullback. The pushout $B_m \push_{A_m} A_n$ is a van Kampen colimit because the map $A_m \to B_m$ is a \di{}, it hence follows that we have a pullback square:

  \begin{tikzeq*}
  \matrix[diagram,column sep={between origins, 7em}]
  {
    |(DS)| S_n \push_{S'_n} S'_m & |(Sm)| S_m \\
    |(DB)| B_n \push_{A_n} A_m & |(Bm)| B_m \\
  };

  \draw[->] (DS) to (Sm);
  \draw[->] (DB)  to (Bm);
  \draw[inj] (DS) to (DB);
  \draw[inj] (Sm) to (Bm);

  \end{tikzeq*}
and hence as the bottom map is a \di{} by assumption, the top map is also a \di{}. This shows that $S' \to S$ is a cofibration.
\end{proof}

As discussed just before \cref{pullback-cotensor-slice}, the slice $\cats{E} \slice X$ is enriched over simplicial sets
and has cotensors by finite simplicial sets.
Under the present hypotheses, it also has tensors by finite (and even countable) simplicial sets, which are simply tensors in the underlying category $\cats{E}$.

\Cref{pullback-cotensor-slice-sec-4} of the next Proposition extends the pullback cotensor properties of \cref{item:pbcot} of \cref{lem:basic_pointwise_fibrations} to slice categories.

\begin{proposition} Let $X \in \cats{E}$.
\begin{parts}
\item \label{cofibration-pushout-product-slice}
Pushout products of cofibrations in $\cats{E} \slice X$ exist. Moreover, cofibrations in $\cats{E} \slice X$ are closed under pushout product.
\item \label{pushout-tensor-slice}  The pushout tensor properties of \cref{pushout-tensor} hold also in $\cats{E} \slice X$.
\item \label{pullback-cotensor-slice-sec-4}
The pullback cotensor in $\cats{E} \slice X$ of  a cofibration between finite simplicial sets and a fibration is a fibration.
  If the given cofibration or fibration  is trivial, then the result is a trivial fibration.
\end{parts}
\end{proposition}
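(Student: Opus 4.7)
The plan is to interpret (trivial) cofibrations and fibrations in $\cats{E} \slice X$ as those maps whose underlying morphism in $\cats{E}$ is a (trivial) cofibration or fibration, and then to reduce each of the three parts to the corresponding property already established in $\cats{E}$. \Cref{pullback-cotensor-slice-sec-4} will be essentially a restatement of \Cref{pullback-cotensor-slice}, which already handled this fiberwise case.

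For \cref{cofibration-pushout-product-slice}, given cofibrations $f \from (A, a) \to (B, b)$ and $g \from (C, c) \to (D, d)$ in $\cats{E} \slice X$, I would first note that the Cartesian product in the slice is $\times_X$, so the pushout product of $f$ and $g$ in the slice has codomain $B \times_X D$ and domain the pushout $(A \times_X D) \push_{A \times_X C} (B \times_X C)$. The key claim is that this entire map is the pullback along the diagonal $X \to X \times X$ of the pushout product of $f$ and $g$ computed in $\cats{E}$. On codomains this is the defining pullback $B \times_X D = (B \times D) \times_{X \times X} X$, and similarly for the three pieces of the domain. To see that the pushout in $\cats{E}$ is preserved by this pullback, I would use that $A \times C \to B \times C$ is a \ldi{}, being the pullback of the cofibration $A \to B$ along $B \times C \to B$ in the lextensive category $\cat{E}$. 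By \Cref{colimits-diagram-category}, the pushout $(A \times D) \push_{A \times C} (B \times C)$ in $\cats{E}$ is therefore van Kampen, so pulling back along $X \to X \times X$ preserves it and yields $(A \times_X D) \push_{A \times_X C} (B \times_X C)$. The pushout product of $f$ and $g$ in $\cats{E}$ is a cofibration by \Cref{cofibration-pushout-product}. Since the diagonal $X \to X \times X$ is a monomorphism, so is its pullback $B \times_X D \to B \times D$, and therefore the pushout product in the slice is a cofibration by \Cref{cofibration-mono-pullback}.

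For \cref{pushout-tensor-slice}, from the discussion preceding the proposition, the tensor of $(A, a)$ by a finite simplicial set $K$ in the slice is the object $K \tensorsSetsE A$ in $\cats{E}$ equipped with the structure map $K \tensorsSetsE A \to A \to X$. Hence the pushout tensor in the slice of a map $f$ in the slice with a cofibration $K \to L$ between finite simplicial sets uses only pushouts that are preserved by the forgetful functor $\cats{E} \slice X \to \cats{E}$, so its underlying map in $\cats{E}$ coincides with the pushout tensor of the underlying map of $f$ with $K \to L$ in $\cats{E}$. All three clauses of \Cref{pushout-tensor} then transfer verbatim to the slice.

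The hard part is the bookkeeping in \cref{cofibration-pushout-product-slice}: one must verify that the pushout defining the domain of the pushout product in $\cats{E}$ is along a \ldi{}, and hence is van Kampen and preserved by the pullback along the diagonal, thereby identifying the pushout product in the slice with the pullback of a known cofibration along a monomorphism. Once this identification is in place, \Cref{cofibration-mono-pullback} completes the argument.
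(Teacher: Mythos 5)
Your proposal is correct and follows essentially the same route as the paper: identify the slice pushout product as the pullback of the ambient pushout product along the diagonal $X \to X \times X$ and apply \cref{cofibration-pushout-product} together with \cref{cofibration-mono-pullback}, reduce the pushout tensor properties to \cref{pushout-tensor} via the forgetful functor, and note that the third part is already \cref{pullback-cotensor-slice}. The extra detail you supply on why the pullback along the diagonal preserves the defining pushout (via van Kampen-ness of pushouts along \ldi{}s) is a point the paper leaves implicit, and it is argued correctly.
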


\begin{proof} For \cref{cofibration-pushout-product-slice}, recall that pushout products in $\cats{E} \slice X$ are computed from pushout products in $\cats{E}$ by
  pulling back along the diagonal $X \to X \times X$.
  Since the latter is a monomorphism, the conclusion follows from \cref{cofibration-pushout-product} and \cref{cofibration-mono-pullback}.
  For \cref{pushout-tensor-slice}, note that the forgetful functor~$\cats{E} \slice X \to \cats E$ preserves tensors and pushouts and thus the pushout tensor properties
  follow directly from \cref{pushout-tensor}. \Cref{pullback-cotensor-slice-sec-4}
  was already established as \cref{pullback-cotensor-slice}, but now it also
    follows by the tensor-cotensor adjunction.
\end{proof}

\begin{proposition}  \label{thm:pullback-combined} \leavevmode
\begin{parts}
\item \label{pullback-cofibration}
  Let $f \from X \to Y$ be a morphism in $\cats{E}$. If $X$ is cofibrant, then
 the pullback functor $f^* \from \cats{E} \slice Y \to \cats{E} \slice X$ preserves cofibrations.
\item \label{pullback-cofibrant}
Let $A \to X$ and $B \to X$ be morphisms in $\cats{E}$. If $A$ and $B$ are cofibrant, then
so is $A \times_X B$.
\item \label{cofibrant-limit}
  Cofibrant objects in $\cats{E}$ are closed under finite limits.
\end{parts}
\end{proposition}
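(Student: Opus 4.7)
The plan is to prove the three parts in order, with part~\ref{pullback-cofibration} carrying all the substance and parts \ref{pullback-cofibrant} and \ref{cofibrant-limit} being formal consequences.

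For part~\ref{pullback-cofibration}, the idea is to reduce to \cref{cofibration-mono-pullback}, which says that cofibrations are closed under pullback along a monomorphism. Given a cofibration $g \from A \to B$ in $\cats{E} \slice Y$ with structure maps $a \from A \to Y$ and $b \from B \to Y$, the map $f^*g$ fits into a pullback square whose opposite vertical is $\id_X \pull g \from X \pull A \to X \pull B$, with horizontal maps $X \pull_Y A \to X \pull A$ and $X \pull_Y B \to X \pull B$. Each horizontal is itself a pullback of the diagonal $Y \to Y \pull Y$ along $f \pull a$ and $f \pull b$ respectively, hence a monomorphism. The right-hand vertical $\id_X \pull g$ is the pushout product of $\emptyset \to X$ with $g$: since $X$ is cofibrant, $\emptyset \to X$ is a cofibration, and the pushout product property \cref{pushout-prod-part-i} of \cref{cofibration-pushout-product} makes $\id_X \pull g$ a cofibration. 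Applying \cref{cofibration-mono-pullback} to the pullback square gives that $f^*g$ is a cofibration.

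For part~\ref{pullback-cofibrant}, apply part~\ref{pullback-cofibration} with $f$ taken to be the structure map $B \to X$, noting that $B$ is cofibrant. The map $\emptyset \to A$ is a cofibration because $A$ is cofibrant, and pulling it back along $B \to X$ yields $\emptyset \to A \pull_X B$, using that initial objects are strict in the lextensive category $\cat{E}$. Hence $A \pull_X B$ is cofibrant. For part~\ref{cofibrant-limit}, combine part~\ref{pullback-cofibrant} with the cofibrancy of the terminal object $1 \in \cats{E}$, which follows from \cref{constant-cofibrant} of \cref{cofibrant-properties} as $1$ is constant on the terminal of $\cat{E}$. Since every finite limit in $\cats{E}$ can be constructed from the terminal object by iterated pullbacks (binary products as pullbacks over $1$, equalizers as pullbacks against the diagonal, and general finite limits from these), and part~\ref{pullback-cofibrant} preserves cofibrancy at each pullback step, any finite limit of cofibrant objects is cofibrant.

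I expect the main obstacle to be the identification used in part~\ref{pullback-cofibration}: recognising $f^*g$ as a pullback of the known cofibration $\id_X \pull g$ along a monomorphism coming from the diagonal of $Y$. Once this observation is in place, the remaining two parts follow with no further input beyond facts already established (pushout product properties, closure of cofibrancy under finite products and equalizers, strictness of the initial object).
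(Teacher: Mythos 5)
Your proof is correct and follows essentially the same route as the paper: the paper identifies $f^*g$ with the pushout product of $\emptyset \to X$ and $g$ in the slice over $Y$, and that slice pushout product is computed (as in \cref{cofibration-pushout-product-slice}) precisely by pulling back the ambient pushout product $\id_X \times g$ along the monomorphism obtained from the diagonal of $Y$ — which is exactly the square you write down, finished off by \cref{cofibration-mono-pullback}. Parts~\ref{pullback-cofibrant} and~\ref{cofibrant-limit} are handled the same way in the paper (the terminal object being cofibrant since $\emptyset \to 1$ is a generating cofibration, equivalently by \cref{constant-cofibrant} of \cref{cofibrant-properties}).
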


\begin{proof} For~\ref{pullback-cofibration},
  if $A \to B$ is a cofibration over $Y$, then its pullback along $f \from X \to Y$ coincides with the pushout product of
  $A \to B$ and $\emptyset \to X$ in $\cats{E} \slice Y$, which is a cofibration by \cref{pushout-prod-part-i} of \cref{cofibration-pushout-product}.
  \Cref{pullback-cofibrant} is a special case of \cref{pullback-cofibration}.
  Finally, for \cref{cofibrant-limit}, it suffices to check that cofibrant objects are closed under pullback and that the terminal object is cofibrant.
  The former follows from \cref{pullback-cofibrant}.
  The latter follows by definition since $0 \to 1$ is a generating cofibration.
\end{proof}
  
\section{Pushforward along cofibrations}
\label{sec:dependent-products}

% \fxnote{Dependent products along cofibrations exist and preserve cofibrant objects}

This section and \cref{sec:frobenius,sec:equivalence-extension,sec:model-structure}
constitute the third part of the paper, in which we show how the two weak factorisation systems
of \cref{sec:cofibrations} give rise to the effective model structure~(\cref{model}).
For this, we shall work with a fixed countably lextensive category $\cat{E}$.
We do not assume that the category $\cat{E}$ is (locally) \cartesian closed, but we establish the existence of certain exponentials and pushforwards
required by our argument.
We also provide a criterion for the cofibrancy of some of these constructions.
We begin with a few remarks on exponentiable maps.

\begin{proposition} \label{thm:char-exponentiable}
Let $f \from X \to Y$ in $\cat{E}$. Then, \tfae{}:
\begin{conditions}
\item the pullback functor $f^*\from \cat{E} \slice Y \to \cat{E} \slice X$ has a right adjoint $f_* \from \cat{E} \slice X \to \cat{E} \slice Y$,
\item $X$ is exponentiable as an object of $\cat{E} \slice Y$.
\end{conditions}
\end{proposition}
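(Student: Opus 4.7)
The plan is to reduce the statement to the classical fact that for any object $x$ in a finitely complete category $\cat{D}$, the functor $x^\ast \from \cat{D} \to \cat{D} \slice x$ sending $A$ to $(A \pull x, \pi_2)$ has a right adjoint if and only if $x$ is exponentiable in $\cat{D}$. I would instantiate this with $\cat{D} = \cat{E} \slice Y$ and $x = (X, f)$: under the standard equivalence $\cat{E} \slice X \simeq (\cat{E} \slice Y) \slice (X, f)$, the pullback functor $f^\ast$ is identified with $x^\ast$, and so the two statements on right adjoints become equivalent.

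For the implication \textup{(i)} $\Rightarrow$ \textup{(ii)}, assuming $f^\ast \adj f_\ast$, I would set $K^x \defeq f_\ast f^\ast K$ for each $K \in \cat{D}$ and derive the required adjunction $(\uvar) \pull x \adj (\uvar)^x$ from the chain of natural bijections
\[
\Hom_{\cat{D}}(A \pull x, K) \iso \Hom_{\cat{D} \slice x}(f^\ast A, f^\ast K) \iso \Hom_{\cat{D}}(A, f_\ast f^\ast K),
\]
whose first step uses that a morphism $(A \pull x, \pi_2) \to (K \pull x, \pi_2)$ over $x$ is determined by its first projection.

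For the converse \textup{(ii)} $\Rightarrow$ \textup{(i)}, assuming $x$ is exponentiable in $\cat{D}$, I would construct $f_\ast(B, h)$ for each $(B, h) \in \cat{D} \slice x$ as the pullback of $h^x \from B^x \to x^x$ along the name $\hat{\id} \from 1 \to x^x$ of $\id_x$. A direct unfolding of the universal property shows that morphisms $A \to f_\ast(B, h)$ correspond, under exponential transposition $\phi \mapsto \tilde\phi$, to morphisms $\tilde\phi \from A \pull x \to B$ satisfying $h \circ \tilde\phi = \pi_2$; and this condition is exactly what makes $\tilde\phi$ into a morphism $f^\ast A \to (B, h)$ in $\cat{D} \slice x$, which yields the adjunction $f^\ast \adj f_\ast$.

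The main obstacle, if there is one, is notational rather than mathematical: the argument is a classical formal manipulation of adjunctions, slice equivalences and the exponential transpose, with no substantive content beyond carefully identifying $f^\ast$ with $x^\ast$ on the nose. All ambient finite limits needed in $\cat{D}$ (the pullbacks, the terminal object used for $\hat{\id}$) are supplied by the hypothesis that $\cat{E}$ has finite limits.
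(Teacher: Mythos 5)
Your proposal is correct and follows essentially the same route as the paper, which simply cites \cite{Elephant}*{Lemma~A1.5.2~(i)} and Corollary~A1.5.3 for exactly this argument: the identification of $f^\ast$ with $x^\ast$ under $\cat{E} \slice X \simeq (\cat{E} \slice Y) \slice (X,f)$, the formula $K^x = f_\ast f^\ast K$ for one direction, and the construction of $f_\ast(B,h)$ as the pullback of $h^x$ along the name of $\id_x$ for the other. You have merely written out in full what the paper outsources to the reference.
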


\begin{proof}
This follows from~\cite{Elephant}*{Lemma~A1.5.2~(i)} and (the proof of)~\cite{Elephant}*{Corollary~A1.5.3}.
\end{proof}

When the equivalent conditions of \cref{thm:char-exponentiable} hold, we say that $f$ is \emph{exponentiable} and refer to the right adjoint $f_*$ as the
\emph{pushforward along $f$}.
(It is also known as the \emph{dependent product along $f$}.)

\begin{example}
Let $S$ be a finite set. Then, $\fset{S} \in \cat{E}$ defined in~\eqref{equ:fset} is exponentiable in~$\cat{E}$ and the exponential of $X$ by $\fset{S}$
is the product $X^S$. Indeed, as finite coproducts in $\cat{E}$ are universal, $\fset{S} \times X \iso \bigcoprod_{s \in S} X$.
Hence, a map $\fset{S} \times A \to X$ is the same as an $S$-indexed collection of maps $A \to X$, that is the same as a map $A \to X^S$.
\end{example}

\begin{proposition}\label{prop:pullback_exponentiable} Let
\begin{tikzeq*}
  \matrix[diagram]
  {
    |(W)| W & |(X)| X \\
    |(Y)| Y & |(Z)| Z \\
  };

  \draw[->] (W) to node[above] {$v$} (X);
  \draw[->] (Y) to node[below] {$u$} (Z);
  \draw[->] (W) to node[left]  {$g$} (Y);
  \draw[->] (X) to node[right] {$f$} (Z);
  \pb{W}{Z};
\end{tikzeq*}
be a pullback square in $\cat{E}$. If $f$ is exponentiable, then so is $g$
and the canonical natural transformation~$u^* f_* \to g_* v^*$
is an isomorphism.
\end{proposition}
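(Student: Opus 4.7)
The plan is to reformulate the problem in terms of slice categories and then invoke a general fact about adjunctions. First, I would use the standard equivalences $\cat{E} \slice Y \simeq (\cat{E} \slice Z) \slice u$ and $\cat{E} \slice W \simeq (\cat{E} \slice X) \slice v$, where $u$ and $v$ are viewed as objects of the respective slice categories. The hypothesis that the given square is a pullback translates exactly to $f^*(u) \iso v$, so the pullback functor $f^* \from \cat{E} \slice Z \to \cat{E} \slice X$ restricts to a functor $(\cat{E} \slice Z) \slice u \to (\cat{E} \slice X) \slice v$, and this restriction corresponds to $g^*$ under the equivalences.

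Next, I would use the following general lemma: if $F \adj G$ is an adjunction with $F \from \cat{A} \to \cat{B}$, then for any $a \in \cat{A}$ the induced functor $\cat{A} \slice a \to \cat{B} \slice Fa$ admits a right adjoint, sending $c \to Fa$ to the pullback of $Gc \to GFa$ along the unit $a \to GFa$ (the verification is a direct calculation with the universal property). Applying this to $f^* \adj f_*$ at the object $u$ produces the desired right adjoint $g_*$ explicitly: for $a \from A \to W$, first postcompose with $v$ to view $A$ as an object over $X$, apply $f_*$ to get $f_*(A) \to Z$, and then pull back along the unit $Y \to f_*(W)$. This already proves that $g$ is exponentiable.

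For the Beck--Chevalley isomorphism, let $p \from P \to X$ be an object of $\cat{E} \slice X$. Unfolding the construction of $g_*$ yields
\[
g_*(v^* p) \iso Y \times_{f_*(W)} f_*(W \times_X P).
\]
Since $W \times_X P$ is the binary product of $W$ and $P$ in $\cat{E} \slice X$, and $f_*$, being a right adjoint, preserves products between these slice categories, one has $f_*(W \times_X P) \iso f_*(W) \times_Z f_*(P)$. Substituting this and pasting pullbacks collapses the right-hand side to $Y \times_Z f_*(P) = u^* f_*(p)$. A routine diagram chase then confirms that this isomorphism is precisely the canonical mate of the evident commutation $v^* f^* \iso g^* u^*$.

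The main obstacle is the bookkeeping across the slice equivalences, and in particular ensuring that the adjoints, units, and Beck--Chevalley transformations match up correctly under these identifications; once those identifications are fixed, the argument is essentially formal and relies only on finite limits in $\cat{E}$ together with the given adjunction $f^* \adj f_*$.
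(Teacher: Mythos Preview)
Your proposal is correct and follows essentially the same approach as the paper. The paper simply cites \cite{Elephant}*{Lemma~A1.5.2~(ii)} applied in the slice over~$Z$ and records the resulting explicit pullback formula $g_* K \iso Y \times_{f_* W} f_* K$; your general lemma about adjunctions inducing adjunctions on slices is precisely the content of that citation, and your formula for $g_*$ matches the paper's. You additionally spell out the Beck--Chevalley verification via $f_*$ preserving products in the slice, which the paper leaves implicit in the reference.
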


\begin{proof}
This follows from~\cite{Elephant}*{Lemma~A1.5.2~(ii)} applied in the slice category over $Z$. If $K$ is an object over $W$, the pushforward $g_* K$ is constructed explicitly as the pullback:

  \begin{tikzeq*}
  \matrix[diagram]
  {
    |(gK)| g_* K & |(fK)| f_* K \\
    |(Y)| Y & |(fW)| f_* W  \\
  };

  \draw[->] (gK) to (fK);
  \draw[->] (Y) to (fW);
  \draw[->] (gK) to (Y);
  \draw[->] (fK) to (fW);
  \pb{gK}{fW};
\end{tikzeq*}
where the bottom arrow is the unit of adjunction $Y \to f_* f^* Y = f_* W$.
\end{proof}

\begin{proposition}\label{prop:VK_colim_of_exponentiable}
Let $D$ be a small category and $f_\bullet \from X_\bullet \to Y_\bullet$ a natural transformation between two $D$-diagrams in $\cat{E}$ such that $f_\bullet$ is \cartesian, $f_d$ is exponentiable for every $d \in D$,
and $Y_\bullet$ has a van Kampen colimit in $\cat{E}$. Then the colimit map
\[f
\from \colim_{d \in D} X_d \to \colim_{d \in D} Y_d
\]
is exponentiable, and up to the equivalences
 \[ \cat{E} \slice \colim_D X_d \simeq \lim_D \left( \cat{E} \slice X_d \right) \text{, }  \qquad \cat{E} \slice \colim_D Y_d \simeq \lim_D \left( \cat{E} \slice Y_d \right),\]
the functor $f_*$ coincides with the collection of functors $(f_d)_*$.
\end{proposition}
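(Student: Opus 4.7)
The plan is to reduce the statement to the pseudo-limit characterisation of van Kampen colimits provided by \cref{van-Kampen-pseudo-limit}. First, applying effectivity of $\colim_d Y_d$ to the cartesian transformation $f_\bullet$, the colimit $\colim_d X_d$ exists and every naturality square $X_d \to \colim_d X_d$, $X_d \to Y_d$ is a pullback. Consequently, under the equivalence $\cat{E} \slice \colim_d Y_d \simeq \lim_d (\cat{E} \slice Y_d)$ supplied by \cref{van-Kampen-pseudo-limit}, the map $f \from \colim_d X_d \to \colim_d Y_d$ corresponds to the cartesian family $(f_d)_{d \in D}$. Slicing this equivalence further over the object~$f$ then yields the companion equivalence $\cat{E} \slice \colim_d X_d \simeq \lim_d (\cat{E} \slice X_d)$ that appears in the statement.

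Next I would identify the pullback functor $f^*$ under these equivalences. Unwinding definitions, it sends a cartesian family $(Z_d \to Y_d)_{d \in D}$ to the levelwise pullback $(f_d^* Z_d \to X_d)_{d \in D}$, which is still cartesian because $f_\bullet$ is. I then build the candidate right adjoint by applying $(f_d)_*$ componentwise, each being available by hypothesis.

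The main technical step, and the one I expect to be the only real obstacle, is verifying that the family $((f_d)_* W_d)_{d \in D}$ remains cartesian whenever $(W_d \to X_d)_{d \in D}$ is. Given $d \to d'$ in $D$, the naturality square of $f_\bullet$ at this arrow is a pullback, so \cref{prop:pullback_exponentiable} provides a canonical Beck--Chevalley isomorphism
\[ (Y_d \to Y_{d'})^* (f_{d'})_* W_{d'} \iso (f_d)_* (X_d \to X_{d'})^* W_{d'} \iso (f_d)_* W_d, \]
where the last isomorphism uses cartesianness of $(W_d)$ over $X_\bullet$. These isomorphisms are natural and compose coherently along composable pairs in~$D$, so the collection $((f_d)_* W_d)_{d \in D}$ assembles into a well-defined object of $\lim_d (\cat{E} \slice Y_d)$, and this construction is functorial in~$W$.

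It remains to check the adjunction $f^* \adj f_*$ and this is automatic: since morphisms in the pseudo-limit categories $\lim_d (\cat{E} \slice X_d)$ and $\lim_d (\cat{E} \slice Y_d)$ are compatible families of morphisms, the hom-bijections assemble componentwise from the individual adjunctions $f_d^* \adj (f_d)_*$, and the unit and counit obtained levelwise are automatically compatible with the cartesian structure because of the Beck--Chevalley isomorphisms just established. This simultaneously shows that $f$ is exponentiable and exhibits $f_*$ in the form asserted by the proposition.
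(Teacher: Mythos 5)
Your proposal is correct and follows essentially the same route as the paper: the paper's proof invokes, in abstract form, exactly the general fact you verify concretely — that a pseudo-natural transformation with levelwise right adjoints satisfying Beck--Chevalley (here supplied by \cref{prop:pullback_exponentiable} applied to the cartesian naturality squares) induces a pseudo-natural transformation of right adjoints whose pseudo-limit is right adjoint to the pseudo-limit of the left adjoints, all transported through the equivalences of \cref{van-Kampen-pseudo-limit}. Your unfolding of the cartesianness-preservation step and the componentwise unit/counit is a faithful elaboration of the paper's one-line appeal to that fact.
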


\begin{proof}
 The claim follows from a general  fact. If $F \from \cat A \to \cat B$ is a pseudo-natural transformation between two diagrams $\cat A \, , \cat B \from D \to \ncat{Cat}$ of categories such that each $F_d$ has a right adjoint $R_d$ and for each naturality square of $F_d$ the Beck--Chevalley conditions are satisfied, then the isomorphisms given by the Beck--Chevalley condition exhibit $R_d \from \cat B_d \to \cat A_d$ as a pseudo-natural transformation, and $\lim R_d$ is a right adjoint to $\lim F_d$, with the unit and counit of this adjunction being levelwise the unit and counit of the adjunction $F_d \dashv R_d$.
\end{proof}

We now move on to discuss how exponentiability interacts with cofibrancy. In particular, the aim of
the rest of the section is to prove the following result.

\begin{theorem}\label{th:Dependent_prod_along_cof}
Let $i \from A \to B$ be a cofibration between cofibrant object in $\cats E$. Then:
  \begin{parts}
  \item $i$ is exponentiable,
  \item $i_*$ sends cofibrant objects to cofibrant objects.
  \end{parts}
\end{theorem}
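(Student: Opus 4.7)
The plan is to prove both parts simultaneously by induction on the cell complex presentation of $i$. By \cref{thm:cell-pres-cof}, $i$ is an $I_{\cats{E}}$-cell complex, so write $B = \colim_k X_k$ with $X_0 = A$ and each $X_k \to X_{k+1}$ a pushout of a coproduct of maps of the form $E \times (\fset{\bdsimp{n}} \to \fset{\simp{n}})$ for some $E \in \cat{E}$. Because $A$ is cofibrant and the domains and codomains of the generating cofibrations are cofibrant (\cref{cofibrant-properties}), each intermediate $X_k$ and $B$ are cofibrant as well. The principal tools are \cref{prop:pullback_exponentiable}, which transports exponentiability across pullback squares with a Beck--Chevalley isomorphism, and \cref{prop:VK_colim_of_exponentiable}, which transports it along van Kampen colimits of cartesian families of arrows and supplies an explicit pseudo-limit description of the resulting pushforward.

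I first treat the base case of an untensored generating cofibration $\fset{\bdsimp{n}} \to \fset{\simp{n}}$. By \cref{lem:slice_presheaves_set}, the slices $\cats{E} \slice \fset{\simp{n}}$ and $\cats{E} \slice \fset{\bdsimp{n}}$ identify with diagram categories $[(\Delta \slice [n])^\op, \cat{E}]$ and $[K_n^\op, \cat{E}]$, where $K_n \subset \Delta \slice [n]$ is the full subcategory of non-surjective maps to $[n]$; under this identification $i^*$ is restriction along $K_n \hookrightarrow \Delta \slice [n]$. Its right adjoint is the pointwise right Kan extension, computable using only finite limits since the relevant comma categories are finite, and it therefore exists in $\cat{E}$ and defines $i_*$. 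The tensored version $E \times \fset{\bdsimp{n}} \to E \times \fset{\simp{n}}$ is a pullback of the untensored one along the projection, so exponentiability follows from \cref{prop:pullback_exponentiable}; countable coproducts of exponentiable maps are again exponentiable by countable lextensivity.

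For the inductive step, each attaching pushout producing $X_{k+1}$ from $X_k$ is van Kampen by \cref{colimits-diagram-category}, since the attaching map is a levelwise complemented inclusion. I realise $X_k \to X_{k+1}$ as the colimit of a cartesian natural transformation of span-shaped diagrams whose source span is $X_k \leftarrow \bigsqcup_j E_j \times \fset{\bdsimp{n_j}} \xrightarrow{\id} \bigsqcup_j E_j \times \fset{\bdsimp{n_j}}$ (colimit $X_k$) and whose target is the attaching span (colimit $X_{k+1}$); cartesianness uses that $\bigsqcup_j E_j \times \fset{\bdsimp{n_j}} \to \bigsqcup_j E_j \times \fset{\simp{n_j}}$ is a monomorphism, and the components are exponentiable by the base case. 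Composing with the inductively exponentiable $A \to X_k$ yields exponentiability of $A \to X_{k+1}$. A further application of \cref{prop:VK_colim_of_exponentiable} to the cartesian family $(A \to X_k)_k$ over the sequence $(X_k)_k$ (whose colimit $B$ is van Kampen by \cref{colimits-diagram-category}) then gives exponentiability of $i$ itself.

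For part~(ii), the pseudo-limit description from \cref{prop:VK_colim_of_exponentiable} expresses each pushforward concretely. At a pushout $D = B \sqcup_A C$ with $j \from C \to D$, the object $j_* Y$ is the compatible pair given by $Y$ over $C$ and $i_*(f^* Y)$ over $B$, with agreement over $A$ ensured by the Beck--Chevalley isomorphism $i^* i_* \iso \id$ coming from \cref{prop:pullback_exponentiable}; at the sequential stage, the pushforward along $A \to B$ is the compatible family of the pushforwards along each $A \to X_k$. Cofibrancy of $j_* Y \to D$ is then verified levelwise via \cref{lem:cof_charac}: at each simplicial level one has a pushout of complemented inclusions along a complemented inclusion, hence a complemented inclusion by \cref{lem:pushout_decidable}, and the degeneracy compatibility is inherited from the two components. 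The principal obstacle is the cofibrancy part of the base case: one must analyse the pointwise right Kan extension along $K_n \hookrightarrow \Delta \slice [n]$ of a cofibration over $\fset{\bdsimp{n}}$ and verify that it again satisfies both clauses of \cref{lem:cof_charac}, using the explicit finite-limit formula together with the closure of complemented inclusions under finite limits (\cref{decidable-closure}).
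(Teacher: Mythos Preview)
Your approach is essentially the paper's: a saturation argument on the cell-complex presentation, with the base case handled via \cref{lem:slice_presheaves_set} and right Kan extension, and closure under the cell operations via \cref{prop:VK_colim_of_exponentiable}. The paper packages this by introducing the class $\mathcal{G}$ of cofibrations between cofibrant objects satisfying (i) and (ii) and proving separate closure lemmas for tensors, pushouts along maps with cofibrant target, and sequential compositions, but the skeleton is identical to yours.

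That said, three points in your sketch are either wrong as stated or miss a needed idea. First, the comma categories arising in the pointwise right Kan extension along $K_n \hookrightarrow \Delta \slice [n]$ are \emph{not} finite: for a surjection $[k] \to [n]$ the relevant comma category contains all $[a] \to [k]$ with non-surjective composite to $[n]$, and $a$ is unbounded. The paper fixes this by observing that the full subcategory where $[a] \to [k]$ is injective is reflective (via image factorisation) and finite, so the limit reduces to a finite one. Second, for part~(ii) in the pushout step, your levelwise claim ``a pushout of complemented inclusions along a complemented inclusion'' is inaccurate: the map $f^* Y \to Y$ need not be levelwise complemented, and the degeneracy clause of \cref{lem:cof_charac} is not straightforwardly ``inherited'' across pushouts. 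The paper's clean route is to note (its \cref{rk:x->i_*x_cofibrant}) that for $i \in \mathcal{G}$ and $X$ cofibrant, the unit square exhibits $X \to i_* X$ as a pullback of $i$, hence a cofibration by \cref{thm:pullback-combined}; then $j_* Y$ is a pushout of cofibrant $Y$ along the cofibration $f^* Y \to i_* f^* Y$, so cofibrant. Third, for the base case of~(ii), closure of complemented inclusions under finite limits (\cref{decidable-closure}) is not enough on its own. After decomposing over $\alpha \from [k] \to [n]$, the degeneracy action on each fibre is induced by a map $P' \to P$ that is a pullback of a simplicial degeneracy; the paper invokes \cite{H}*{Proposition~3.1.11} to write such a pullback as a finite iterated pushout of degeneracy operators, so that cofibrancy of $\mathcal{F}$ makes $\mathcal{F}(P) \to \mathcal{F}(P')$ a composite of complemented inclusions.
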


We will prove this theorem by a saturation argument.
For this purpose, we introduce now the class $\mathcal{G}$ of cofibrations between cofibrant objects that satisfy properties (i) and (ii) of the theorem.

\medskip

Assume $i \from A \to B$ an exponentiable monomorphism in $\cat E$. Then, for any $X \in \cat{E} \slice A$, the unit of the adjunction $i^* \dashv i_*$ induces a pullback square
\begin{tikzeq}{mono-exp-pullback}
  \matrix[diagram]
  {
    |(X)| X & |(iX)| i_* X          \\
    |(A)| A & |(B)|  B     \rlap{.} \\
  };

  \draw[->] (X) to (iX);
  \draw[->] (A) to node[below] {$i$} (B);
  \draw[->] (X) to (A);
  \draw[->] (iX) to (B);
  \pb{X}{B};
\end{tikzeq}
Indeed, since $i$ is a monomorphism, the counit $i^* i_! \to \id$ of the adjunction $i_! \dashv i^*$ is invertible, and therefore so is the unit $\id \to i^* i_*$.

\begin{lemma}\label{lem:i^*i_*=Id} \label{rk:x->i_*x_cofibrant}
Let $i \from A \to B$ be a map in $\mathcal{G}$.
For cofibrant $X \in \cat{E} \slice A$, the map $X \rightarrow i_* X$ is a cofibration.
\end{lemma}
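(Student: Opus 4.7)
The plan is to exhibit the map $X \to i_* X$ as a pullback of the cofibration $i \from A \to B$ along a morphism whose source is cofibrant, and then invoke \cref{pullback-cofibration}.

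First, I would recall from the square~\eqref{mono-exp-pullback} that, because $i$ is a monomorphism (being a cofibration, hence levelwise a complemented inclusion by \cref{lem:cof_charac}), the unit map $X \to i_* X$ sits in a pullback square over $i$. In particular, $X \to i_* X$ arises as the pullback of $i \from A \to B$, viewed as an arrow in $\cats{E} \slice B$ from $(A, i)$ to the terminal object $(B, \id_B)$, along the structure map $i_* X \to B$.

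Next, I would use the two defining properties of the class $\mathcal{G}$. Being cofibrant in $\cats{E} \slice A$ means that $\emptyset \to X$ is a cofibration in $\cats{E} \slice A$, and since cofibrations in a slice coincide with cofibrations in the total category (they are detected by the same lifting property against trivial fibrations), this simply says that $X$ is cofibrant in $\cats{E}$. Therefore the hypothesis $i \in \mathcal{G}$ gives that $i_* X$ is cofibrant in $\cats{E}$.

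Finally, I would apply \cref{pullback-cofibration} to the morphism $i_* X \to B$: its source is cofibrant, so the pullback functor $\cats{E} \slice B \to \cats{E} \slice i_* X$ preserves cofibrations. Applied to the cofibration $i$ in $\cats{E} \slice B$, this yields that $X \to i_* X$ is a cofibration in $\cats{E} \slice i_* X$, and hence in $\cats{E}$. I do not foresee any serious obstacle here: the argument is a clean assembly of the pullback square~\eqref{mono-exp-pullback}, the defining properties of $\mathcal{G}$, and the pullback stability of cofibrations along maps with cofibrant source recorded in \cref{pullback-cofibration}.
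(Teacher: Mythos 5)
Your proposal is correct and follows essentially the same route as the paper: both express $X \to i_* X$ as the pullback of the cofibration $i$ along $i_* X \to B$ via the square~\eqref{mono-exp-pullback}, use that $i \in \mathcal{G}$ makes $i_* X$ cofibrant, and conclude by \cref{pullback-cofibration} of \cref{thm:pullback-combined}. The extra detail you supply (why $i$ is a monomorphism and why $i_* X$ is cofibrant) is exactly what the paper's terser proof leaves implicit.
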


\begin{proof} The claim follows from \cref{pullback-cofibration} of \cref{thm:pullback-combined}, since the map $X \rightarrow i_* X$ is a pullback of
  a cofibration between cofibrant objects by (\ref{mono-exp-pullback}) above.
\end{proof}

%\begin{remark}\label{} As a consequence of \cref{lem:i^*i_*=Id}, if $i$ is a cofibration between cofibrant objects satisfying the conclusion of \cref{th:Dependent_prod_along_cof} then for each cofibrant object $X$, the map $X \rightarrow i_* X$ is a cofibration, as a pullback of a cofibration between cofibrant objects.
%\end{remark}

\begin{proposition}\label{prop:Pushout_in_G}
The class $\mathcal{G}$ is closed under pushouts along maps with cofibrant target.
\end{proposition}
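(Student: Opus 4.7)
The plan is to verify the two defining properties of $\mathcal{G}$ for the pushout map $j \from C \to D$ of $i \in \mathcal{G}$ along a map $f \from A \to C$ with $C$ cofibrant. The cofibration-between-cofibrants part is immediate: $j$ is a pushout of the cofibration $i$, and $D$ is cofibrant as the target of the composed cofibration $\emptyset \to C \to D$. The substantive work lies in the exponentiability of $j$ and in the preservation of cofibrant objects by $j_*$.

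To handle exponentiability, I will apply \cref{prop:VK_colim_of_exponentiable} to the cartesian natural transformation $(i, \id_A, \id_C) \from X_\bullet \to Y_\bullet$, where $Y_\bullet = (B \xleftarrow{i} A \xrightarrow{f} C)$ and $X_\bullet = (A \xleftarrow{\id_A} A \xrightarrow{f} C)$ are spans of shape $0 \leftarrow 1 \to 2$. This transformation is cartesian: the naturality square at $1 \to 0$ is a pullback because $i$ is a monomorphism (so the pullback of $i$ along itself is the diagonal of $A$), and the one at $1 \to 2$ has identity verticals. Each component is exponentiable ($i$ by hypothesis, identities trivially). The colimit of $Y_\bullet$ is $D$ and is van Kampen by \cref{colimits-diagram-category:pushout} of \cref{colimits-diagram-category}, since $i$ is a levelwise complemented inclusion. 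The colimit of $X_\bullet$ is $C$ and the induced map on colimits is $j$, so $j$ is exponentiable, and under the resulting equivalences of slice categories $j_*$ is computed pointwise from $(i_*, \id, \id)$.

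To verify that $j_*$ preserves cofibrant objects, I will convert this pointwise description into an explicit pushout formula. A cofibrant $Y$ over $C$ unwinds, via the equivalence $\cats{E} \slice C \simeq \lim_d (\cats{E} \slice X_\bullet(d))$, to the triple $(f^* Y, f^* Y, Y)$; applying $(i_*, \id, \id)$ and reassembling via the inverse equivalence for $Y_\bullet$ realises
\[ j_* Y \;=\; i_* f^* Y \push_{f^* Y} Y, \]
where $f^* Y \to i_* f^* Y$ is the unit map from diagram~\eqref{mono-exp-pullback} and $f^* Y \to Y$ is the pullback projection. Cofibrancy of $j_* Y$ then follows from a short chain of invocations of \cref{pullback-cofibration} of \cref{thm:pullback-combined}: since $A$ is cofibrant, $f^* Y$ is cofibrant; since $i \in \mathcal{G}$, $i_* f^* Y$ is cofibrant; and since $i_* f^* Y$ is cofibrant, the pullback $f^* Y \to i_* f^* Y$ of the cofibration $i$ is a cofibration. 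Therefore $Y \to j_* Y$ is a pushout of a cofibration with cofibrant source, so $j_* Y$ is cofibrant. The main obstacle I anticipate is the careful identification of the triple representing $Y$ and of its pointwise pushforward, which depends essentially on $i$ being a monomorphism and on the van Kampen character of the pushout.
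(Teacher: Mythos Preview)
Your proof is correct and follows essentially the same route as the paper: both apply \cref{prop:VK_colim_of_exponentiable} to the cartesian transformation of spans determined by $(i,\id,\id)$, obtain the explicit pushout description $j_*Y \iso i_* f^*Y \push_{f^*Y} Y$, and conclude cofibrancy from the fact that $f^*Y \to i_* f^*Y$ is a cofibration (which you deduce inline via \cref{thm:pullback-combined}\ref{pullback-cofibration}, whereas the paper packages it as \cref{rk:x->i_*x_cofibrant}).
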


\begin{proof}

If $i \from A \to B$ is in $\mathcal{G}$ and $f \from A \to X$ is an arbitrary arrow in $\cats E$ with $X$ cofibrant, we consider the diagram
\begin{tikzeq*}
\matrix[diagram]
{
  |(ul)| X & |(um)| A & |(ur)| A \\
  |(ll)| X & |(lm)| A & |(lr)| B \rlap{\text{.}} \\
};

\draw[->] (ul) to (ll);
\draw[->] (um) to (lm);
\draw[cof] (ur) to node[right] {$i$} (lr);

\draw[->] (um) to (ul);
\draw[->] (um) to (ur);

\draw[->] (lm) to node[below] {$f$} (ll);
\draw[cof] (lm) to node[below] {$i$} (lr);

\pb{um}{lr};
\pbur{um}{ll};
\end{tikzeq*}
Then the two squares are pullbacks (because $i$ is a monomorphism for the one on the right) the vertical maps are all exponentiable by assumption, so by \cref{prop:VK_colim_of_exponentiable}, the map between the colimit of the first row to the colimit of the second row, that is the map
\[
j: X \rightarrow X \coprod_A B \text{,}
\]
is indeed exponentiable. Moreover, still by \cref{prop:VK_colim_of_exponentiable}, if $K$ is a cofibrant object over $X$, it corresponds with respect to the van Kampen pushout of the first row to the \cartesian natural transformation
\begin{tikzeq*}
\matrix[diagram]
{
  |(ul)| K & |(um)| f^* K & |(ur)| f^*K \\
  |(ll)| X & |(lm)| A     & |(lr)| A \rlap{\text{.}} \\
};

\draw[->] (ul) to (ll);
\draw[->] (um) to (lm);
\draw[->] (ur) to (lr);

\draw[->] (um) to (ul);
\draw[->] (um) to (ur);

\draw[->] (lm) to node[below] {$f$} (ll);
\draw[->] (lm) to (lr);

\pb{um}{lr};
\pbur{um}{ll};
\end{tikzeq*}
Hence its image by $j_*$ corresponds to the \cartesian natural transformation
\begin{tikzeq*}
  \matrix[diagram]
  {
    |(ul)| K & |(um)| f^* K & |(ur)| i^* f^*K \\
    |(ll)| X & |(lm)| A     & |(lr)| B \rlap{\text{.}} \\
  };

  \draw[->] (ul) to (ll);
  \draw[->] (um) to (lm);
  \draw[->] (ur) to (lr);

  \draw[->] (um) to (ul);
  \draw[->] (um) to (ur);

  \draw[->] (lm) to node[below] {$f$} (ll);
  \draw[cof] (lm) to node[below] {$i$} (lr);

  \pb{um}{lr};
  \pbur{um}{ll};
  \end{tikzeq*}
So, by gluing along the bottom van Kampen colimit, we have a pushout square
\begin{tikzeq*}
  \matrix[diagram]
  {
    |(fK)| f^* K & |(ifK)| i_* f^* K \\
    |(K)|  K     & |(jK)|  j_*K      \\
  };

  \draw[cof] (fK) to (ifK);
  \draw[->] (K) to (jK);
  \draw[->] (fK) to (K);
  \draw[->] (ifK) to (jK);
  \pbdr{jK}{fK};
\end{tikzeq*}
where the top arrow is a cofibration by \cref{rk:x->i_*x_cofibrant} and the assumption that $i \in \mathcal{G}$ applied to the cofibrant object $f^*K$. It follows that $j_* K$ is cofibrant.\end{proof}

\begin{proposition}\label{prop:trans_comp_in_G}
The class $\mathcal{G}$ is closed under sequential composition.
\end{proposition}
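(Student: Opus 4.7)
The plan is to apply \cref{prop:VK_colim_of_exponentiable} to the sequential composite, viewed as a van Kampen colimit of exponentiable maps. Let $(i_k \from A_k \to A_{k+1})_{k \geq 0}$ be a sequence in $\mathcal{G}$, and write $A_\infty \defeq \colim_k A_k$ with induced composite $i \from A_0 \to A_\infty$. Since cofibrations in $\cats{E}$ are closed under sequential composition, $i$ is a cofibration and $A_\infty$ is cofibrant; moreover, by \cref{colimits-diagram-category:sequential} of \cref{colimits-diagram-category}, this colimit is van Kampen in $\cats{E}$. A preliminary observation is that $\mathcal{G}$ is closed under binary (hence finite) composition: if $u, v \in \mathcal{G}$ are composable then $vu$ is a cofibration between cofibrant objects with right adjoint $v_* u_*$, and iterating the cofibrancy-preservation hypothesis twice shows that $(vu)_* = v_* u_*$ preserves cofibrant objects. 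By induction, every partial composite $f_k \from A_0 \to A_k$ lies in $\mathcal{G}$, and is in particular exponentiable.

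Next, I would apply \cref{prop:VK_colim_of_exponentiable} to the natural transformation $f_\bullet \from \mathrm{const}_{A_0} \to A_\bullet$ in $[\omega, \cats{E}]$ whose $k$-th component is $f_k$. This transformation is \cartesian: each naturality square has an identity on top, and $i_k$ is a monomorphism (being a cofibration, hence a levelwise complemented inclusion by \cref{cofibration-as-retract-of-cell-complex}) through which $f_{k+1}$ factors. The proposition then gives that $i = \colim_k f_k$ is exponentiable and that, under the equivalence $\cats{E} \slice A_\infty \simeq \lim_k (\cats{E} \slice A_k)$ afforded by the van Kampen colimit, $i_*$ corresponds to the collection $((f_k)_*)_k$.

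To conclude, let $K \to A_0$ be a cofibrant object and set $K_k \defeq (f_k)_* K$ for $k \geq 0$. Functoriality of pushforward gives $K_{k+1} \iso (i_k)_* K_k$, and since $i_k$ is a monomorphism, the pullback square \eqref{mono-exp-pullback} shows that the naturality squares of $K_\bullet \to A_\bullet$ are pullbacks, so $K_\bullet \to A_\bullet$ is \cartesian. Under the equivalence above, $i_* K$ is then identified with $\colim_k K_k$. Each $K_k$ is cofibrant because $f_k \in \mathcal{G}$ and $K$ is cofibrant, and each transition map $K_k \to (i_k)_* K_k = K_{k+1}$ is a cofibration by \cref{rk:x->i_*x_cofibrant} applied to $i_k \in \mathcal{G}$ and the cofibrant object $K_k$ over $A_k$. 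Hence $i_* K = \colim_k K_k$ is the colimit of a sequence of cofibrations out of the cofibrant object $K_0 = K$, which exhibits $K \to i_* K$ as a cofibration and thus $i_* K$ as cofibrant. The only real subtlety lies in tracking the equivalence from \cref{prop:VK_colim_of_exponentiable} and verifying that the transition maps in the sequence $K_\bullet$ are indeed the relevant unit maps; once this identification is in place, closure follows formally.
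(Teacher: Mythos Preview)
Your proposal is correct and follows essentially the same approach as the paper: both use \cref{prop:VK_colim_of_exponentiable} applied to the \cartesian transformation $\mathrm{const}_{A_0} \to A_\bullet$ to obtain exponentiability of the composite, and then identify $i_* K$ with the sequential colimit of the objects $K_{k+1} = (i_k)_* K_k$, invoking \cref{rk:x->i_*x_cofibrant} for cofibrancy of the transition maps. Your treatment is in fact slightly more explicit than the paper's in tracking the identification $(f_{k+1})_* K \iso (i_k)_* (f_k)_* K$ and the \cartesian nature of $K_\bullet \to A_\bullet$, but the argument is the same.
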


\begin{proof}
The class $\mathcal{G}$ is clearly closed under finite composition.  Given an $\omega$-chain $A_0 \overset{i_0}{\cto} A_1 \overset{i_1}{\cto} A_2 \overset{i_2}{\cto} \dots$ of arrows in $\mathcal{G}$, we consider the diagram:

% \[\begin{tikzcd}
% A_0 \ar[r,equal] \ar[dr,phantom,"\lrcorner"very near start] \ar[d,equal] & A_0 \ar[r,equal] \ar[d] \ar[dr,phantom,"\lrcorner"very near start] & A_0 \ar[r,equal] \ar[d] \ar[dr,phantom,"\lrcorner"very near start] & \dots \\
% A_0 \ar[r,>->,"i_0"] & A_1 \ar[r,>->,"i_1"] & A_2 \ar[r,>->,"i_2"] & \dots
% \end{tikzcd}
% \]
\begin{tikzeq*}
\matrix[diagram]
{
  |(u0)| A_0 & |(u1)| A_0 & |(u2)| A_0 & |(u3)| \ldots          \\
  |(l0)| A_0 & |(l1)| A_1 & |(l2)| A_2 & |(l3)| \ldots \rlap{.} \\
};

\draw[->] (u0) to (u1);
\draw[->] (u1) to (u2);
\draw[->] (u2) to (u3);

\draw[cof] (l0) to (l1);
\draw[cof] (l1) to (l2);
\draw[cof] (l2) to (l3);

\draw[->] (u0) to (l0);
\draw[->] (u1) to (l1);
\draw[->] (u2) to (l2);

\pb{u0}{l1};
\pb{u1}{l2};
\pb{u2}{l3};
\end{tikzeq*}

Each vertical map is in $\mathcal{G}$ as a composite of maps in $\mathcal{G}$; each square is a pullback as all these maps are monomorphisms, so by \cref{prop:VK_colim_of_exponentiable}, the comparison map $j \from A_0 \to \colim A_i$ between the two colimit is exponentiable. If $K$ is a cofibrant object over $A_0$, then again by \cref{prop:VK_colim_of_exponentiable} its image by $j_*$ corresponds to the \cartesian natural transformation:

% \[\begin{tikzcd}
% K_0 \ar[r,>->] \ar[dr,phantom,"\lrcorner"very near start] \ar[d] & K_1 \ar[r,>->] \ar[d] \ar[dr,phantom,"\lrcorner"very near start] & K_2 \ar[r,>->] \ar[d] \ar[dr,phantom,"\lrcorner"very near start] & \dots \\
% A_0 \ar[r,>->] & A_1 \ar[r,>->] & A_2 \ar[r,>->] & \dots
% \end{tikzcd}
% \]
\begin{tikzeq*}
\matrix[diagram]
{
  |(u0)| K_0 & |(u1)| K_1 & |(u2)| K_2 & |(u3)| \ldots \\
  |(l0)| A_0 & |(l1)| A_1 & |(l2)| A_2 & |(l3)| \ldots \\
};

\draw[cof] (u0) to (u1);
\draw[cof] (u1) to (u2);
\draw[cof] (u2) to (u3);

\draw[cof] (l0) to (l1);
\draw[cof] (l1) to (l2);
\draw[cof] (l2) to (l3);

\draw[->] (u0) to (l0);
\draw[->] (u1) to (l1);
\draw[->] (u2) to (l2);

\pb{u0}{l1};
\pb{u1}{l2};
\pb{u2}{l3};
\end{tikzeq*}
where $K_0 = K$ and $K_{n+1} = (i_n)_* K_n$, hence all the maps in the top row are cofibrations, and so $j_* K = \colim K_i$ is cofibrant.
\end{proof}

\begin{proposition}\label{prop:tensor_in_G}
The class $\mathcal{G}$ is closed under tensors by objects of $\cat E$.
\end{proposition}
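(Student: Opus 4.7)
The plan is to verify the three conditions for $E \tensorEsE i \colon E \times A \to E \times B$ to lie in $\mathcal{G}$. First, $E \times A$ and $E \times B$ are cofibrant, since the constant simplicial object $E$ is cofibrant by part \cref{constant-cofibrant} of \cref{cofibrant-properties} and finite products of cofibrant objects are cofibrant by part \cref{cofibrant-limit} of \cref{thm:pullback-combined}. Second, $E \tensorEsE i$ is a cofibration by \cref{saturation}, which gives closure of the left class of an enriched weak factorisation system under tensors by objects of $\cat E$. Third, consider the pullback square
\begin{tikzeq*}
\matrix[diagram]
{
|(EA)| E \times A & |(A)| A \\
|(EB)| E \times B & |(B)| B \rlap{\text{,}} \\
};
\draw[->] (EA) to node[above] {$v$} (A);
\draw[->] (EB) to node[below] {$u$} (B);
\draw[->] (EA) to node[left] {$E \times i$} (EB);
\draw[->] (A) to node[right] {$i$} (B);
\pb{EA}{B};
\end{tikzeq*}
By \cref{prop:pullback_exponentiable}, since $i$ is exponentiable, so is $E \times i$, and moreover the Beck--Chevalley isomorphism $(E \times i)_* v^* \cong u^* i_*$ holds.

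The main step, and the hard part, is showing that $(E \times i)_*$ sends cofibrant objects to cofibrant objects. The strategy parallels the proofs of \cref{prop:Pushout_in_G} and \cref{prop:trans_comp_in_G}. On a cofibrant object of the special form $v^* L$ with $L$ cofibrant over $A$, the Beck--Chevalley isomorphism yields
\[
(E \times i)_* v^* L \iso u^* i_* L \iso E \times i_* L,
\]
which is cofibrant because $i_* L$ is cofibrant (as $i \in \mathcal{G}$) and products with the cofibrant constant $E$ preserve cofibrancy by \cref{cofibrant-limit} of \cref{thm:pullback-combined}. Note also that $v^*$ itself preserves cofibrancy, since for $L$ over $A$ we have $v^* L = E \times L$.

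The obstacle is that a general cofibrant $K$ over $E \times A$ need not be of the form $v^* L$, and $(E \times i)_*$ does not preserve the colimits used to build $K$ cellularly. I plan to address this by combining the cell decomposition of $K$ afforded by \cref{thm:cell-pres-cof} with the pseudo-natural equivalence $\cats E \slice X \simeq \lim_d \cats E \slice X_d$ for van Kampen colimits (\cref{van-Kampen-pseudo-limit}), applied via \cref{prop:VK_colim_of_exponentiable}. The generating cells $E' \tensorEsE \fset{\partial \simp{n}} \to E' \tensorEsE \fset{\simp{n}}$ are built compatibly with the $E$-tensor, so their pushforward under $E \times i$ can be controlled using the Beck--Chevalley formula above. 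The closure properties of $\mathcal{G}$ already established in \cref{prop:Pushout_in_G} and \cref{prop:trans_comp_in_G} then allow us to aggregate cofibrancy along pushouts and sequential composites in a cell-by-cell fashion, yielding the cofibrancy of $(E \times i)_* K$ in the general case.
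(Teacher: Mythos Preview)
Your setup is correct through exponentiability of $E \times i$, but the handling of cofibrancy of $(E\times i)_* K$ has a genuine gap. You correctly observe that Beck--Chevalley only gives you control over objects of the form $v^* L$, and then propose to bridge to arbitrary cofibrant $K$ via a cell decomposition. That last paragraph is not a proof: it is a plan with no concrete mechanism. The pushforward $(E\times i)_*$ does not preserve the colimits forming a cell complex, so ``aggregating cofibrancy along pushouts and sequential composites'' does not go through by simply citing \cref{prop:Pushout_in_G} and \cref{prop:trans_comp_in_G}; those results concern closure of $\mathcal{G}$ under operations on the map $i$, not closure of cofibrant objects under $(E\times i)_*$ applied to a cell complex.

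The fix, and what the paper does, is to use more than the abstract Beck--Chevalley isomorphism: \cref{prop:pullback_exponentiable} actually computes $g_*$ for a pulled-back exponentiable map by an explicit pullback formula. In your notation, for \emph{any} $K$ over $E\times A$ (viewed as an object over $A$ via $v$), one has a pullback square
\[
\begin{tikzcd}
(E\times i)_* K \ar[r] \ar[d] & i_* K \ar[d] \\
E\times B \ar[r] & i_*(E\times A)
\end{tikzcd}
\]
with bottom map the unit $E\times B \to i_* i^*(E\times B) = i_*(E\times A)$. Now $K$ is cofibrant, so $i_* K$ is cofibrant since $i\in\mathcal{G}$; and $E\times B$ is cofibrant. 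Hence $(E\times i)_* K$ is cofibrant by \cref{pullback-cofibrant} of \cref{thm:pullback-combined}. No cell decomposition of $K$ is needed, and the ``obstacle'' you identify dissolves.
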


\begin{proof} Let $i  \from A \cto B$ an arrow in $\mathcal{G}$, and let $X$ an object of $\cat E$. The square
% \[
% \begin{tikzcd}
%  A \times X \ar[d] \ar[r,>->,"j"] & B \times X \ar[d] \\
% A \ar[r,>->,"i"] & B
% \end{tikzcd}
% \]
\begin{tikzeq*}
  \matrix[diagram,column sep={6em,between origins}]
  {
    |(AX)| A \times X & |(BX)| B \times X \\
    |(A)|  A          & |(B)|  B          \\
  };

  \draw[cof] (AX) to node[above] {$j$} (BX);
  \draw[cof] (A)  to node[below] {$i$} (B);
  \draw[->] (AX) to (A);
  \draw[->] (BX) to (B);
\end{tikzeq*}
is a pullback, so $j$ is exponentiable by \cref{prop:pullback_exponentiable}. Moreover, the formula for $j_*$ given in the proof of \cref{prop:pullback_exponentiable} gives that $K$ over $A \times X$ we have a pullback square
% \[
% \begin{tikzcd}
% j_* K \ar[r] \ar[d] \ar[dr,phantom,"\lrcorner"very near start]& i_* K \ar[d] \\
% B \times X \ar[r] & i_*(A \times X) \text{.}
% \end{tikzcd}
% \]
\begin{tikzeq*}
  \matrix[diagram,column sep={6em,between origins}]
  {
    |(jK)| j_* K      & |(iK)| i_* K \\
    |(BX)| B \times X & |(AX)| i_*(A \times X) \\
  };

  \draw[->] (jK) to (iK);
  \draw[->] (BX) to (AX);
  \draw[->] (jK) to (BX);
  \draw[->] (iK) to (AX);
  \pb{jK}{AX};
\end{tikzeq*}
Since $i \in \mathcal{G}$ and $B \times X$ is cofibrant, $i_* K$ is cofibrant, and so $j_* K$ is cofibrant,
as required.
\end{proof}

In order to conclude the proof of \cref{th:Dependent_prod_along_cof}, it remains to show that the generating cofibrations $i \from \bdsimp n \cto \simp n$ are in $\mathcal{G}$.
This is based on an explicit description of $i_*$ using the characterisation of $\cats E \slice \bdsimp n$ and $\cats E \slice \simp n$ of \cref{lem:slice_presheaves_set}.

\begin{proposition}\label{lem:generating_cof_in_G}
The generating cofibrations $i \from \bdsimp n \cto \simp n$ are in $\mathcal{G}$.
\end{proposition}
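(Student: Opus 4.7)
I plan to apply \cref{lem:slice_presheaves_set} with $D = \Simp^\op$ and $S \in \{\simp{n}, \bdsimp{n}\}$ viewed as functors $\Simp^\op \to \Set$, which yields equivalences
\[
\cats{E} \slice \fset{\simp{n}} \simeq \cat{E}^{(\Simp \slice [n])^\op}, \qquad \cats{E} \slice \fset{\bdsimp{n}} \simeq \cat{E}^{\mathcal{C}^\op},
\]
where $\mathcal{C} \subseteq \Simp \slice [n]$ is the full subcategory spanned by those pairs $([k], f)$ with $f \from [k] \to [n]$ not surjective. Under these equivalences the pullback functor $i^*$ corresponds to restriction $u^*$ along the inclusion $u \from \mathcal{C} \hookrightarrow \Simp \slice [n]$. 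The key structural observation is that $\mathcal{C}$ is a sieve in $\Simp \slice [n]$: any factorisation of a non-surjective map has a non-surjective source.

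Because $u$ is a fully faithful sieve inclusion, the pointwise right Kan extension $\Ran_{u^\op}$ admits an explicit description: for $g$ not surjective, $(\Ran_{u^\op} F)([k], g) = F([k], g)$, while for $g$ surjective,
\[
(\Ran_{u^\op} F)([k], g) = \lim_{(q \from [l] \to [k] \,:\, g \circ q \text{ not surjective})} F([l], g \circ q).
\]
The indexing category is the category of elements of the sub-simplicial set $g^{-1}(\bdsimp{n}) \subseteq \simp{k}$, regarding $g$ as a map of simplicial sets $\simp{k} \to \simp{n}$. Since $g^{-1}(\bdsimp{n})$ is a finite simplicial set, the above limit reduces to a finite limit in $\cat{E}$ by the same reasoning as in \cref{evaluation}. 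Setting $i_* \eqdef \Ran_{u^\op}$ then establishes exponentiability.

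For cofibrancy preservation I would invoke \cref{lem:cof_charac}, which requires checking that $(i_* F)([k], f) \to (i_* F)([m], f\sigma)$ is a complemented inclusion for every degeneracy $\sigma \from [m] \to [k]$ and every $f \from [k] \to [n]$. When $f$ is not surjective (hence neither is $f\sigma$), the map equals $F(\sigma) \from F([k], f) \to F([m], f\sigma)$, which is a complemented inclusion by cofibrancy of $F$. When $f$ is surjective, the induced map of limits is analysed through the functor $q' \mapsto \sigma \circ q'$ between the indexing categories: any section $d$ of $\sigma$ in $\Simp$ lifts to a section of this functor and thus produces a retraction of $(i_* F)(\sigma)$, exhibiting it as a split monomorphism.

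The main obstacle is producing a complement in the surjective case. Since the preimages $(f\sigma)^{-1}(\bdsimp{n})$ and $\sigma^{-1}(f^{-1}(\bdsimp{n}))$ coincide as subsets of $\simp{m}$, the complement cannot be read off from "extra simplices" in the indexing category. Rather, one must exploit the non-injectivity of $\sigma$ and decompose the target limit into a coproduct in which the image of $(i_* F)(\sigma)$ is a complemented summand, using cofibrancy of $F$ at each face of $\bdsimp{n}$ to extract the requisite complements summand-by-summand.
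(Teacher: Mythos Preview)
Your setup matches the paper's: both identify $i^*$ with restriction along the sieve inclusion $u \from \mathcal{C} \hookrightarrow \Simp \slice [n]$ via \cref{lem:slice_presheaves_set}, compute $i_*$ as pointwise right Kan extension, and argue that the relevant limits are finite. The paper makes this last step explicit by passing to the finite reflective subcategory where $\simp{a} \to \simp{k}$ is injective; your appeal to the finiteness of $g^{-1}(\bdsimp{n})$ is the same observation in different packaging. The non-surjective case of cofibrancy is also handled the same way in both approaches.

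The genuine gap is in the surjective case. Showing that $(i_*F)(\sigma)$ is a split monomorphism is correct but insufficient: split monomorphisms need not be complemented inclusions. Your concluding paragraph names the goal (``decompose the target limit into a coproduct in which the image is a complemented summand'') but does not supply a mechanism to produce that decomposition. The key idea you are missing is structural: writing $P = f^{-1}(\bdsimp{n}) \subseteq \simp{k}$ and $P' = \sigma^{-1}(P) \subseteq \simp{m}$, the map $P' \to P$ is the pullback of the degeneracy $\simp{m} \to \simp{k}$ along $P \hookrightarrow \simp{k}$, and by~\cite{H}*{Proposition~3.1.11} any such pullback of a degeneracy is a \emph{finite iterated pushout of degeneracy operators}. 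Consequently the induced map $\mathcal{F}(P) \to \mathcal{F}(P')$ is a finite composite of pullbacks of the maps $\mathcal{F}_a \to \mathcal{F}_b$ induced by degeneracies $[b] \sto [a]$, each of which is a complemented inclusion by cofibrancy of $\mathcal{F}$. This is how the paper closes the argument, and without an analogue of this step your proof is incomplete.
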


\begin{proof}
Under the equivalence of \cref{lem:slice_presheaves_set}, the pullback functor $i^* \from \cats E \slice \simp n \to \cats E \slice \bdsimp n$ coincides with the functor
\[
\cats E ^{\Simp^{\op} \slice \simp n} \to \cats E ^{\Simp^{\op} \slice \bdsimp n}
\]
obtained by reindexing along the sieve inclusion: $\Simp^{\op} \slice \bdsimp n \to \Simp^\op \slice \simp n$, hence its right adjoint, if it exists, is the right Kan extension along this sieve inclusion.
So if we prove that the pointwise right Kan extension along this sieve inclusion exists, it will coincide with $i_*$.
If $\mathcal{F} \in \cats E \slice \bdsimp n$, then this pointwise right Kan extension evaluated at $\simp k \to \simp n \in \Simp \slice \simp n $ is given by the limit
\[(
 i_* \mathcal{F})([k]) = \lim_{p \in P} \mathcal{F}(p) \text{,} \quad \text{ where } P =\left\lbrace\begin{tikzcd}
\simp a \ar[r] \ar[dr,"p"description] & \simp k \ar[d] \\
& \simp n \rlap{,}
  \end{tikzcd} \text{$p$ not surjective}\right\rbrace \rlap{.}
  \]
This is a limit over an infinite category so it is not guaranteed to exists, but the category $P$ has a finite reflective category given by the objects such that the map $\simp a \rightarrow \simp k$ is injective, with the reflection given by the image factorisation of this map, and hence this limit coincides with
\[(
 i_* \mathcal{F})([k]) = \lim_{p \in P^+} \mathcal{F}(p) \text{,} \quad \text{ where }  P^+ =\left\lbrace\begin{tikzcd}
\simp a \ar[r,hook] \ar[dr,"p"description] & \simp k \ar[d] \\
& \simp n \rlap{,}
  \end{tikzcd} \text{$p$ not surjective}\right\rbrace \rlap{,}
  \]
which is a finite limit, hence exists, which proves the existence of $i_*$.

Next, we assume that $\mathcal{F}$ is cofibrant, and we will show that $i_* \mathcal{F}$ is cofibrant.
That is, given a degeneracy $[k] \sto [k']$ the action $i_* \mathcal{F} ([k']) \rightarrow i_* \mathcal{F}( [k])$ is a complemented inclusion (by \cref{lem:cof_charac}).
The map $i_* \mathcal{F}([k]) \rightarrow \simp n ([k])$ gives a decomposition of the map above into a coproduct indexed by all the map $\alpha \from [k] \to [n]$, so it is enough to show that the fiber above each such map is a complemented inclusion. The fiber over such a map $\alpha$ of $i_* \mathcal{F}( [k])$, is by definition of $i_*$ the object classifying maps $P \rightarrow \mathcal{F}$ over $\bdsimp n$ where $P$ is the pullback square
\begin{tikzeq*}
  \matrix[diagram,column sep={6em,between origins}]
  {
    |(P)|  P          & |(k)| \simp{k} \\
    |(bd)| \bdsimp{n} & |(n)| \simp{n} \rlap{\text{.}} \\
  };

  \draw[->] (P) to (k);
  \draw[->] (bd) to (n);
  \draw[->] (P) to (bd);
  \draw[->] (k) to node[right] {$\alpha$} (n);
  \pb{P}{n};
\end{tikzeq*}
The fiber of $i_* \mathcal{F} ([k'])$ over $\alpha$ is described similarly with $P'$ the pullback of $\simp{k'} \rightarrow \simp n$, and the map we are interested in is induced by the map $P' \rightarrow P$ obtained as the pullback of $\simp{k'} \rightarrow \simp k$. But it follows from~\cite{H}*{Proposition 3.1.11} that a pullback of a degeneracy operator is an iterated pushout of degeneracy operators, in this case a finite such iterated pushout as $P'$ is finite.
As $\mathcal{F}$ is cofibrant, this decomposes $\mathcal{F}(P) \rightarrow \mathcal{F}(P')$ as a composite of complemented inclusions, and hence concludes the proof.
\end{proof}

\begin{proof}[Proof of \cref{th:Dependent_prod_along_cof}]
We show that all cofibrations with cofibrant domain are in $\mathcal{G}$.
By \cref{thm:cell-pres-cof}, it suffices to show that the generating cofibrations are in $\mathcal{G}$ and that $\mathcal{G}$ is closed under operations appearing in a cell complex.
The case of generators is \cref{lem:generating_cof_in_G}.
Closure under tensoring by objects of $\cat{E}$ is \cref{prop:tensor_in_G}, closure under pushout (along maps with cofibrant target) is \cref{prop:Pushout_in_G}, and closure under sequential composition is \cref{prop:trans_comp_in_G}.
\end{proof}

An analysis of the proof of \cref{th:Dependent_prod_along_cof} shows that the assumption that $A$ is cofibrant is not needed for the exponentiability of $i$, as it is only used for the part of the argument regarding  preservation of cofibrant objects by $i_*$.

  \section{The Frobenius property} \label{sec:frobenius}

We adapt the notion of a strong homotopy equivalence and the associated concepts from~\cite{Gambino-Sattler}*{Section~3} to our setting.
Recall that a map $f \from A \to B$ is a 0-oriented (respectively, \emph{1-oriented}) homotopy equivalence if there is a map $g \from B \to A$ with homotopies $u \from g f \sim \id_A$ and $v \from f g \sim \id_B$ (respectively, $u \from \id_A \sim g f$ and $v \from id_B \sim f g$).
Such a homotopy equivalence is called \emph{strong} if the homotopies satisfy the coherence condition $f u = v f$.

We recall the abstract characterisation of strong homotopy equivalences.
The commuting square
\begin{tikzeq*}
\matrix[diagram]
{
  |(e)| \emptyset  & |(0)| \braces{0} \\
  |(1)| \braces{1} & |(s)| \simp{1}   \\
};

\draw[->] (e) to node[above] {$!$}        (0);
\draw[->] (e) to node[left]  {$!$}        (1);
\draw[->] (0) to node[right] {$\lambda^0_1$} (s);
\draw[->] (1) to node[below] {$\lambda^1_1$} (s);
\end{tikzeq*}
induces maps $\theta_0 \from ! \to \lambda^0_1$ and $\theta_1 \from ! \to \lambda^1_1$ in the arrow category of $\sSet$.
(We will use $\lambda^i_k$ to denote the horn inclusion $\horn{k,i} \to \simp{k}$.)
Note that $!$ is the unit of the pushout tensor and pullback cotensor of the enrichment of $\cats{E}$ in $\sSet$.
Recall that pushout tensors with levelwise complemented inclusions between finite simplicial sets such as $!, \lambda^0_1, \lambda^0_1$ exist by \cref{pushout-tensor}.

\begin{lemma} \label{she-as-retract} Let $f \from X \to Y$ be a map in $\cats{E}$.
For $k \in \braces{0, 1}$, the following are equivalent:
  \begin{conditions}
  \item \label{she-as-retract:original} $f$ is a $k$-oriented strong homotopy equivalence,
  \item \label{she-as-retract:split-mono} $\theta_k \hatop{\tensorsSetsE} f \from f \to \lambda^k_1 \hatop{\tensorsSetsE} f$ is a split monomorphism,
  \item \label{she-as-retract:split-epi} $\theta_k \hatop{\cotensorsSetsE} f \from \lambda^k_1 \hatop{\cotensorsSetsE} f \to f$ is a split epimorphism.
  \end{conditions}
\end{lemma}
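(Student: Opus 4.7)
The plan is to combine a two-variable adjointness argument for (ii)$\Leftrightarrow$(iii) with an explicit unpacking of definitions for (i)$\Leftrightarrow$(ii).

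For (ii)$\Leftrightarrow$(iii), I would invoke the Leibniz enhancement of the tensor-cotensor adjunction between $\sSet$ and $\cats{E}$ from \cref{sSet-tensor} (restricted to finite simplicial sets, where the relevant pushout-tensors exist by \cref{pushout-tensor}). This supplies a natural bijection
\[
\Hom_{\cats{E}^{[1]}}(\lambda^k_1 \hatop{\tensorsSetsE} f, f) \iso \Hom_{\cats{E}^{[1]}}(f, \lambda^k_1 \hatop{\cotensorsSetsE} f),
\]
and by naturality in the first variable along $\theta_k \from\, ! \to \lambda^k_1$, precomposition with $\theta_k \hatop{\tensorsSetsE} f$ on the left corresponds to postcomposition with $\theta_k \hatop{\cotensorsSetsE} f$ on the right. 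Both produce the map to $\Hom_{\cats{E}^{[1]}}(f, f)$ resulting from the identifications $!\hatop{\tensorsSetsE} f = f = \,! \hatop{\cotensorsSetsE} f$, and under the bijection $\id_f$ matches $\id_f$. Hence $\theta_k \hatop{\tensorsSetsE} f$ admits a retraction if and only if $\theta_k \hatop{\cotensorsSetsE} f$ admits a section.

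For (i)$\Leftrightarrow$(ii), I would directly unpack the retraction data in the case $k = 0$; the case $k = 1$ is symmetric. The map $\lambda^0_1 \hatop{\tensorsSetsE} f$ has the form
\[
Y \push_X (\simp{1} \tensorsSetsE X) \to \simp{1} \tensorsSetsE Y,
\]
with the pushout formed along the vertex-$0$ inclusion $X \to \simp{1} \tensorsSetsE X$, and the morphism $\theta_0 \hatop{\tensorsSetsE} f \from f \to \lambda^0_1 \hatop{\tensorsSetsE} f$ is the square made of the vertex-$1$ inclusions $X \to Y \push_X (\simp{1} \tensorsSetsE X)$ and $Y \to \simp{1} \tensorsSetsE Y$. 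A retraction $r = (r_0, r_1)$ in $\cats{E}^{[1]}$ then consists of a map $g \from Y \to X$ (the $Y$-component of $r_0$), a map $\simp{1} \tensorsSetsE X \to X$ (the other component of $r_0$), and a map $r_1 \from \simp{1} \tensorsSetsE Y \to Y$; transposing the latter two via \cref{sSet-tensor} yields homotopies $u \from X \to \simp{1} \cotensor X$ and $v \from Y \to \simp{1} \cotensor Y$. The pushout compatibility of $r_0$ forces $u|_{\{0\}} = g f$, the retraction equation $r \circ (\theta_0 \hatop{\tensorsSetsE} f) = \id_f$ forces $u|_{\{1\}} = \id_X$ and $v|_{\{1\}} = \id_Y$, and the commutativity of $r$ as a morphism of arrows forces $v|_{\{0\}} = f g$ on the $Y$-component and the coherence $f u = v f$ on the $\simp{1} \tensorsSetsE X$-component. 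These are precisely the data of a $0$-oriented strong homotopy equivalence structure on $f$, and the correspondence is manifestly bijective.

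The only obstacle is the bookkeeping: tracking which vertex appears in each pushout and inclusion and matching the resulting equations to the strong homotopy equivalence conditions. No new categorical input beyond \cref{sSet-tensor} and the standard Leibniz construction of a two-variable adjunction is required.
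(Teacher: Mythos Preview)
Your proposal is correct and is essentially the argument the paper defers to via the citations \cite{Gambino-Sattler}*{Lemma~4.3} and \cite{GSS}*{Lemma~3.1.1}: the paper's own proof consists solely of the sentence ``Identical to~\cite{Gambino-Sattler}*{Lemma~4.3} and~\cite{GSS}*{Lemma~3.1.1}.'' Your unpacking of the retraction data for (i)$\Leftrightarrow$(ii) and the Leibniz two-variable adjunction argument for (ii)$\Leftrightarrow$(iii) are exactly the standard content behind those references, and the bookkeeping you record (vertex-$0$ pushout leg, vertex-$1$ section/retraction identities, coherence on the $\simp{1}\tensorsSetsE X$ component) is accurate.
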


\begin{proof}
  Identical to~\cite{Gambino-Sattler}*{Lemma~4.3} and~\cite{GSS}*{Lemma~3.1.1}.
\end{proof}

\begin{corollary} \label{tensor-with-she}
Let $i$ be a levelwise complemented inclusion between finite simplicial sets that is a strong homotopy equivalence.
For any map $f$ in $\cats{E}$, the pushout tensor $i \hatop{\tensorsSetsE} f$ is a strong homotopy equivalence in $\cats{E}$.
\end{corollary}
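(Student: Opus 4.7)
The plan is to reduce the statement to \cref{she-as-retract} by a standard Leibniz-associativity argument. Fix an orientation $k \in \{0, 1\}$ for which $i$ is a $k$-oriented strong homotopy equivalence. Applied to $i$ in the special case $\cat{E} = \Set$, \cref{she-as-retract} yields that
\[
\theta_k \pprod i \from i \to \lambda^k_1 \pprod i
\]
is a split monomorphism in the arrow category of $\sSet$.

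The key technical step is the natural isomorphism
\[
\theta_k \hatop{\tensorsSetsE} (i \hatop{\tensorsSetsE} f) \;\cong\; (\theta_k \pprod i) \hatop{\tensorsSetsE} f
\]
in the iterated arrow category of $\cats{E}$. This expresses associativity of the partial two-variable functor $\tensorsSetsE \from \sSet \times \cats{E} \to \cats{E}$ under the Leibniz construction, together with the associativity of the \cartesian product on $\sSet$. All colimits required to form both sides exist in $\cats{E}$ by \cref{pushout-tensor}, since $i$ and $\lambda^k_1$ are cofibrations between finite simplicial sets, so the pushout tensor with either one is defined on arbitrary maps in $\cats{E}$.

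Given this identification, I would observe that the Leibniz tensor with $f$ defines a functor $(\uvar) \hatop{\tensorsSetsE} f \from \sSet^{[1]} \to \cats{E}^{[1]}$, and any functor preserves split monomorphisms. Hence $(\theta_k \pprod i) \hatop{\tensorsSetsE} f$, and therefore $\theta_k \hatop{\tensorsSetsE} (i \hatop{\tensorsSetsE} f)$, is a split monomorphism in the arrow category of $\cats{E}$. A second application of \cref{she-as-retract}, now in $\cats{E}$, then yields that $i \hatop{\tensorsSetsE} f$ is itself a $k$-oriented strong homotopy equivalence. The main obstacle will be verifying the associativity isomorphism cleanly in our partial-colimit setting, but this is purely formal once the existence of the requisite pushouts is established.
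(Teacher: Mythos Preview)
Your proposal is correct and follows essentially the same approach as the paper: both use the split-monomorphism characterisation of \cref{she-as-retract}, the Leibniz associativity isomorphism $\theta_k \hatop{\tensorsSetsE} (i \hatop{\tensorsSetsE} f) \cong (\theta_k \pprod i) \hatop{\tensorsSetsE} f$, and functoriality of $(-) \hatop{\tensorsSetsE} f$ to transport the retraction. Your version is slightly more explicit about existence of the relevant pushouts and the second invocation of \cref{she-as-retract}, but the argument is the same.
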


\begin{proof}
This is a formal consequence of the characterisation~\ref{she-as-retract:split-mono} of strong homotopy equivalences given by \cref{she-as-retract}.
We have $\theta_k \hatop{\tensorsSetsE} (i \hatop{\tensorsSetsE} f) \iso (\theta_k \hatop{\times} i) \hatop{\tensorsSetsE} f$, a formal consequence of the isomorphism $A \tensorsSetsE (B \tensorsSetsE X) \iso (A \times B) \tensorsSetsE X$ natural in $A, B \in \sSet$ and $X \in \cats{E}$.
By assumption, $\theta_k \hatop{\times} i$ has a retraction, hence also its image under $(-) \hatop{\tensorsSetsE} f$.
\end{proof}

Strong homotopy equivalences can be used to relate cofibrations and trivial cofibrations.

\begin{corollary} \label{she-mediating-between-cofibrations-and-trivial-cofibrations} \leavevmode
\begin{parts}
\item \label{she-mediating-between-cofibrations-and-trivial-cofibrations:generators}
For a horn inclusion $j \in J_\sSet$ and $E \in \cat{E}$, the map $j \tensorsSetsE E$ is a strong homotopy equivalence and cofibration between cofibrant objects.
\item \label{she-mediating-between-cofibrations-and-trivial-cofibrations:trick}
Any cofibration that is a strong homotopy equivalence is a trivial cofibration.
\end{parts}
\end{corollary}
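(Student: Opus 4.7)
My plan is to prove the two parts in order, using the characterisation of strong homotopy equivalences from \cref{she-as-retract} and the pushout-product properties from \cref{cofibration-pushout-product}. The essential observation, via \cref{sSet-tensor}, is that the tensor $K \tensorsSetsE X$ of a finite simplicial set with a simplicial object coincides with $\fset{K} \times X$, so that pushout tensors with simplicial set maps reduce to pushout products in $\cats{E}$ with maps of the form $\fset{i}$.

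For part~\ref{she-mediating-between-cofibrations-and-trivial-cofibrations:generators}, I will start from the classical fact that each horn inclusion $j = \lambda^i_k \from \horn{k,i} \to \simp{k}$ is a strong deformation retract in~$\sSet$ and hence a strong homotopy equivalence in the sense of \cref{she-as-retract}, and that it is a levelwise complemented inclusion between finite simplicial sets. Observing that $j \tensorsSetsE E$ is isomorphic to the pushout tensor $j \hatop{\tensorsSetsE} (\emptyset \to E)$ (since the summand involving $\emptyset \tensorsSetsE X$ vanishes), \cref{tensor-with-she} applies directly to yield that $j \tensorsSetsE E$ is a strong homotopy equivalence. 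To obtain the cofibration property, I will combine that $\fset{j}$ is a (trivial) cofibration in $\cats{E}$ by \cref{fset-pres-cof} with the fact that $\emptyset \to E$ is a cofibration since $E$ is cofibrant (\cref{constant-cofibrant} of \cref{cofibrant-properties}); the pushout product $\fset{j} \hatop{\times} (\emptyset \to E)$, which again under \cref{sSet-tensor} agrees with $j \tensorsSetsE E$, is a cofibration by \cref{pushout-prod-part-i} of \cref{cofibration-pushout-product}. Finally, cofibrancy of source and target reduces, via \cref{cofibrant-limit} of \cref{thm:pullback-combined}, to cofibrancy of $\fset{\horn{n,k}}$, $\fset{\simp{n}}$ (from \cref{generators-between-cofibrant} of \cref{cofibrant-properties}) and of $E$ (from \cref{constant-cofibrant} of \cref{cofibrant-properties}).

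For part~\ref{she-mediating-between-cofibrations-and-trivial-cofibrations:trick}, let $f$ be a cofibration that is, say, a $0$-oriented strong homotopy equivalence. By the implication \ref{she-as-retract:original} $\Rightarrow$ \ref{she-as-retract:split-mono} of \cref{she-as-retract}, the map $\theta_0 \hatop{\tensorsSetsE} f \from f \to \lambda^0_1 \hatop{\tensorsSetsE} f$ is a split monomorphism in the arrow category of $\cats{E}$, so $f$ is a retract of $\lambda^0_1 \hatop{\tensorsSetsE} f$ there. Using \cref{sSet-tensor} once more, I will identify $\lambda^0_1 \hatop{\tensorsSetsE} f$ with the pushout product $\fset{\lambda^0_1} \hatop{\times} f$ of the trivial cofibration $\fset{\lambda^0_1}$ (by \cref{fset-pres-cof}) and the cofibration $f$; by \cref{pushout-prod-part-ii} of \cref{cofibration-pushout-product}, this pushout product is a trivial cofibration. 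Since the trivial cofibrations form the left class of the enriched weak factorisation system of \cref{two-ewfss} and are therefore closed under retracts in the arrow category, we conclude that $f$ is a trivial cofibration. The $1$-oriented case is symmetric.

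I expect no serious obstacle: the only non-bookkeeping ingredient is the translation between the $\sSet$-tensor structure and the pushout product in $\cats{E}$, which is handled uniformly by \cref{sSet-tensor}, and everything else is an application of already-established results.
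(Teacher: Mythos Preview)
Your proposal is correct and follows essentially the same approach as the paper. The only cosmetic difference is that where the paper cites \cref{pushout-tensor} directly (both for the cofibration claim in \cref{she-mediating-between-cofibrations-and-trivial-cofibrations:generators} and for the trivial cofibration $\lambda^k_1 \hatop{\tensorsSetsE} f$ in \cref{she-mediating-between-cofibrations-and-trivial-cofibrations:trick}), you unpack this one level via \cref{sSet-tensor} to invoke the pushout product properties of \cref{cofibration-pushout-product} instead; and for cofibrancy of source and target you use closure of cofibrant objects under products (\cref{cofibrant-limit}) rather than another application of \cref{pushout-tensor}. Both routes are valid and amount to the same argument.
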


\begin{proof}
For \cref{she-mediating-between-cofibrations-and-trivial-cofibrations:generators}, recall from~\cite{GZ}*{Chapter~IV, Section~2, Paragraph~2.1.3} that the horn inclusion $j$ in $\sSet$ is a strong homotopy equivalence.
By \cref{tensor-with-she}, it follows that $j \tensorsSetsE E$ is a strong homotopy equivalence.
The object $E \in \cats{E}$ is cofibrant by \cref{constant-cofibrant} of \cref{thm:pullback-combined}.
By \cref{pushout-tensor}, it follows that $j \tensorsSetsE E$ is a cofibration between cofibrant objects.

\Cref{she-mediating-between-cofibrations-and-trivial-cofibrations:trick} follows from the characterisation of strong homotopy equivalences in \cref{she-as-retract:split-mono} of \cref{she-as-retract}, closure of trivial cofibrations under retracts (\cref{saturation}), and \cref{pushout-tensor} (using that $\lambda^0_1$ and $\lambda^1_1$ are trivial cofibrations).
\end{proof}

\begin{lemma} \label{she-pullback-square}
  Let
  \begin{tikzeq*}
  \matrix[diagram]
  {
    |(B)| B & |(A)| A \\
    |(X)| X & |(Y)| Y \\
  };

  \draw[->] (B) to node[left]  {$g$} (X);
  \draw[->] (A) to node[right] {$f$} (Y);

  \draw[->]  (B) to (A);
  \draw[fib] (X) to (Y);
  \pb{B}{Y};
  \end{tikzeq*}
  be a pullback square  with $X$ cofibrant. If, $f$ is a $k$-oriented strong homotopy equivalence, where $k \in \braces{ 0, 1}$, then so
  is~$g$.
\end{lemma}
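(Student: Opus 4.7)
The strategy is to use Lemma~\ref{she-as-retract}~\ref{she-as-retract:split-epi}, which asserts that $f$ is a $k$-oriented strong homotopy equivalence if and only if the natural map $\theta_k \hatop{\cotensorsSetsE} f \colon \lambda^k_1 \hatop{\cotensorsSetsE} f \to f$ admits a section in the arrow category $\cats{E}^{[1]}$. Given such a section $(s_A, s_Y)$ for $f$, we construct a corresponding section for $g$ and conclude by another application of the same lemma.

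Since cotensoring by $\simp{1}$ preserves limits, the identification $B = A \times_Y X$ yields $\simp{1} \cotensor B \iso (\simp{1} \cotensor A) \times_{\simp{1} \cotensor Y} (\simp{1} \cotensor X)$. A section of $\theta_k \hatop{\cotensorsSetsE} g$ in $\cats{E}^{[1]}$ amounts to morphisms $s_B \colon B \to \simp{1} \cotensor B$ and $s_X \colon X \to B \times_X \simp{1} \cotensor X$, subject to compatibility conditions with $g$ and with the appropriate endpoint evaluations. The components of $s_B$ and $s_X$ that take values in (or factor through) $A$ or $\simp{1} \cotensor A$ are defined by composing $s_A$ and $s_Y$ with the projections $B \to A$ and $X \to Y$, respectively.

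The essential step is to construct the remaining components, which land in $\simp{1} \cotensor X$, as lifts of certain maps determined by $(s_A, s_Y)$ and the pullback square along $\lambda^k_1 \hatop{\cotensorsSetsE} p \colon \simp{1} \cotensor X \to X \times_Y \simp{1} \cotensor Y$, where $p \colon X \to Y$ denotes the given fibration. The fibration hypothesis enters precisely here: since $\lambda^k_1$ is a trivial cofibration in $\sSet$ and $p$ is a fibration, Lemma~\ref{lem:basic_pointwise_fibrations}~\ref{item:pbcot} implies that $\lambda^k_1 \hatop{\cotensorsSetsE} p$ is a trivial fibration in $\cats{E}$, and in particular a split epimorphism; composing the prescribed maps with a section of it yields the required lifts.

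A direct verification using the compatibility conditions inherited from $(s_A, s_Y)$ and the pullback square then shows that the pair $(s_B, s_X)$ constructed in this way is indeed a section of $\theta_k \hatop{\cotensorsSetsE} g$, so that $g$ is a $k$-oriented strong homotopy equivalence. I expect the main obstacle in a careful write-up to be the bookkeeping required to track which endpoint (namely $k$ versus $1-k$) appears in each evaluation map and each compatibility condition, so that the lifts assemble into morphisms in $\cats{E}^{[1]}$ that jointly form a valid section.
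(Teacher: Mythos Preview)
Your overall strategy is the same as the paper's (which defers to \cite{GSS}*{Lemma~3.1.3}): use the characterisation of strong homotopy equivalences from \cref{she-as-retract} and construct the missing data for $g$ by lifting along a map built from the fibration $p \from X \to Y$. The paper phrases the lift on the tensor side (the pushout tensor of $\braces{1-k} \to \simp{1}$ with the cofibration $\varnothing \to X$ is a trivial cofibration, lifted against $p$), while you work on the cotensor side; these are adjoint and equally valid.

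There is, however, a genuine gap in your argument. You assert that the trivial fibration $\lambda^k_1 \hatop{\cotensorsSetsE} p$ is ``in particular a split epimorphism'' and then compose with a chosen section. In $\cats{E}$ this inference fails: a trivial fibration $W \to Z$ splits only when $Z$ is cofibrant (so that $\varnothing \to Z$ lifts against it), and the codomain $X \times_Y (\simp{1} \cotensor Y)$ of $\lambda^k_1 \hatop{\cotensorsSetsE} p$ has no reason to be cofibrant, since neither $Y$ nor $A$ is assumed cofibrant. This is precisely where the hypothesis that $X$ is cofibrant must enter, and in your write-up it never does.

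The fix is small but essential. You do not need a global section of $\lambda^k_1 \hatop{\cotensorsSetsE} p$; you only need to lift a single map with domain $X$ along it, and that lifting problem is solvable because $\varnothing \to X$ is a cofibration. Concretely, the prescribed map $X \to X \times_Y (\simp{1} \cotensor Y)$ lifts to some $\tilde v \from X \to \simp{1} \cotensor X$. This single lift provides the $\simp{1} \cotensor X$-component of $s_X$, and the corresponding component of $s_B$ is obtained by precomposing with $g$ (not by an independent lift, which would break the arrow-category compatibility). With this correction the verification goes through as you outline. In tensor language, this is exactly the paper's observation that the pushout tensor of $\braces{1-k} \to \simp{1}$ with $\varnothing \to X$ is a trivial cofibration by \cref{pushout-tensor}, to be lifted against the fibration $p$.
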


\begin{proof}
This is identical to~\cite{GSS}*{Lemma~3.1.3}, but played out in $\cats{E}_\cof$ instead of $\sSet_\cof$.
The pushout product with $\braces{1} \to \simp{1}$ (for $k = 0$) becomes a pushout tensor, which sends the cofibration $\varnothing \to X$ to a trivial cofibration by \cref{pushout-tensor}.
\end{proof}

\begin{corollary} \label{she-stable-under-pullback}
Let $f \from X \fto Y$ be a Kan fibration with $X$ cofibrant.
The pullback functor $f^* \from \cat{E} \slice Y \to \cat{E} \slice X$ preserves maps that in $\cats{E}$ are strong homotopy equivalences with cofibrant target.
\end{corollary}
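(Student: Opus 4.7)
The plan is to reduce the claim to \cref{she-pullback-square}. Let $h \from A \to B$ be a morphism in $\cats{E} \slice Y$ that, viewed in $\cats{E}$, is a $k$-oriented strong homotopy equivalence with $B$ cofibrant. Its image under $f^*$ is the canonical map $f^* h \from A \fibprod_Y X \to B \fibprod_Y X$, and a routine diagram chase using the pasting law exhibits $f^* h$ as the pullback of $h$ along the projection $f^*B = B \fibprod_Y X \to B$, sitting in a pullback square whose top-right edge is $h$ and whose bottom edge is this projection.

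To apply \cref{she-pullback-square} to this square, I would verify its two hypotheses. First, the bottom map $f^*B \to B$ is a Kan fibration: it is a pullback of the Kan fibration $f$, and fibrations are stable under pullback by \cref{item:retract_pb_com} of \cref{lem:basic_pointwise_fibrations}. Second, the object $f^*B = B \fibprod_Y X$ must be cofibrant; this follows directly from \cref{pullback-cofibrant} of \cref{thm:pullback-combined}, as both $B$ (by hypothesis on $h$) and $X$ (by hypothesis on $f$) are cofibrant.

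With both hypotheses in place, \cref{she-pullback-square} yields that $f^* h$ is a $k$-oriented strong homotopy equivalence, completing the proof. Note moreover that $f^* h$ itself has cofibrant target~$f^*B$, so $f^*$ restricts to an endofunctor on strong homotopy equivalences with cofibrant target, a fact which may be convenient in subsequent arguments. I do not expect any serious technical obstacle here: the proof is simply a matter of correctly setting up the pullback square and chaining together the two cited lemmas.
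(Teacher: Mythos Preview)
Your proposal is correct and follows essentially the same approach as the paper's proof: both invoke \cref{she-pullback-square} after verifying its hypotheses via \cref{item:retract_pb_com} of \cref{lem:basic_pointwise_fibrations} (the bottom map is a pullback of the fibration $f$) and \cref{pullback-cofibrant} of \cref{thm:pullback-combined} (the domain $f^*B$ is cofibrant since $B$ and $X$ are). Your version simply spells out the pasting-law argument for the pullback square more explicitly than the paper does.
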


\begin{proof}
This follows from \cref{she-pullback-square} using \cref{item:retract_pb_com} of \cref{lem:basic_pointwise_fibrations} and stability of cofibrant objects under pullback along maps with cofibrant source using \cref{pullback-cofibrant} of \cref{thm:pullback-combined}.
\end{proof}

\begin{proposition}[Frobenius property] \label{frobenius}
Let $f \from X \fto Y$ be a Kan fibration with $X$ cofibrant.
The pullback functor $f^* \from \cat{E} \slice Y \to \cat{E} \slice X$ preserves trivial cofibrations.
\end{proposition}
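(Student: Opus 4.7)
The plan is to argue by a saturation argument. The class of trivial cofibrations is generated, via the enriched small object argument (\cref{esmo}), by $J_{\cats{E}}$, and by \cref{cofibration-as-retract-of-cell-complex} every trivial cofibration is a retract of a $J_{\cats{E}}$-cell complex. It will suffice, then, to show that $f^*$ sends the generators to trivial cofibrations and preserves all the operations used to build the saturation.

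For the generators, each map $j \tensorsSetsE E$ with $j$ a horn inclusion and $E \in \cat{E}$ is a cofibration between cofibrant objects and a strong homotopy equivalence by \cref{she-mediating-between-cofibrations-and-trivial-cofibrations:generators}. Since $X$ is cofibrant, \cref{pullback-cofibration} of \cref{thm:pullback-combined} shows that $f^*(j \tensorsSetsE E)$ is again a cofibration; since its target is cofibrant, \cref{she-stable-under-pullback} shows that it is again a strong homotopy equivalence. Applying \cref{she-mediating-between-cofibrations-and-trivial-cofibrations:trick} will then give that $f^*(j \tensorsSetsE E)$ is a trivial cofibration.

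Next, I will verify that $f^*$ preserves the cell complex operations. As a pullback functor, $f^*$ preserves finite limits, hence also the slice tensors with objects of $\cat{E}$ (computed from products over $Y$), and it preserves retracts automatically. For the coproducts, pushouts, and sequential colimits arising in a cell complex, every cofibration is a levelwise complemented inclusion by \cref{cofibration-as-retract-of-cell-complex} and \cref{ldi-saturation}; hence \cref{colimits-diagram-category} ensures that these colimits are van Kampen, and van Kampen colimits are preserved by pullback along $f$.

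Combining these observations, for any $J_{\cats{E}}$-cell complex $i$ over $Y$, the map $f^* i$ will be built by the same diagram of operations, but with generators replaced by their $f^*$-images, which are trivial cofibrations. Since trivial cofibrations are closed under these operations, $f^* i$ is a trivial cofibration; the general case of an arbitrary trivial cofibration, being a retract, then follows. The main point to check carefully is that at each stage of the cell complex construction the constituent maps remain levelwise complemented inclusions, so that the colimits in question are van Kampen and thus preserved by $f^*$.
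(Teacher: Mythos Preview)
Your proof is correct and follows essentially the same approach as the paper's: reduce to the generators via the cell-complex description of trivial cofibrations, use that the relevant colimits are van Kampen (hence preserved by pullback), and handle the generators $j \tensorsSetsE E$ via the strong homotopy equivalence argument combining \cref{she-mediating-between-cofibrations-and-trivial-cofibrations}, \cref{she-stable-under-pullback}, and \cref{pullback-cofibration}. One small remark: your aside that $f^*$ preserves ``slice tensors with objects of $\cat{E}$ (computed from products over $Y$)'' is not quite right as stated---the tensors $E \tensorEsE (-)$ in the cell complex are levelwise products in $\cats{E}$, not fibre products over $Y$---but this is harmless since you have already absorbed the tensors into your generators $j \tensorsSetsE E$, exactly as the paper does (via \cref{sSet-tensor}).
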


\begin{proof}
Let $j$ be a trivial cofibration over $Y$.
By \cref{cofibration-as-retract-of-cell-complex}, its underlying map in $\cats{E}$ can be written as a retract of a $J_{\cats{E}}$-cell complex $j'$.
The retraction (including $j'$) lifts uniquely to the slice over $Y$.
Since functors preserve retracts, this makes $f^* j$ a retract of $f^* j'$.
By \cref{saturation}, it will thus suffice to show that $f^* j'$ is a trivial cofibration.

Recall that $J_{\cats{E}}$ consists of levelwise complemented inclusions.
By countable lextensivity, \cref{ldi-saturation}, and \cref{colimits-diagram-category}, the pullback functor $f^*$ preserves the colimits (countable coproducts, pushouts, sequential colimit) forming the cell complex $j'$.
By \cref{saturation}, it thus remains to show that $f^*$ sends to a trivial cofibration any map that in $\cats{E}$ is of the form $E \tensorEsE \fset{j''}$ where $E \in \cats{E}$ and $j'' \in J_\sSet$.
Using \cref{sSet-tensor}, this simplifies to $j'' \tensorsSetsE E$.
Here, we see $E$ as a constant simplicial object in $\cat{E}$.

By \cref{she-mediating-between-cofibrations-and-trivial-cofibrations:generators} of \cref{she-mediating-between-cofibrations-and-trivial-cofibrations}, $j'' \tensorsSetsE E$ is a strong homotopy equivalence and cofibration between cofibrant objects.
By \cref{she-stable-under-pullback}, $f^* (j'' \tensorsSetsE E)$ is a strong homotopy equivalence (using that $f$ is a Kan fibration).
By \cref{pullback-cofibration}, $f^* (j'' \tensorsSetsE E)$ is a cofibration between cofibrant objects.
By \cref{she-mediating-between-cofibrations-and-trivial-cofibrations:trick} of \cref{she-mediating-between-cofibrations-and-trivial-cofibrations}, we conclude that $f^* (j'' \tensorsSetsE E)$ is a trivial cofibration.
\end{proof}

  \section{Fibration extension properties} \label{sec:equivalence-extension}

In this section, we establish two important ingredients in the construction of the effective model structure:
the trivial fibration extension property (\cref{trivial-fibration-extension}) and the fibration extension property (\cref{fibration-extension}).
These arguments are based on the equivalence extension property (\cref{equivalence-extension}).
We work purely within the cofibrant fragment $\cats{E}_\cof$ of $\cats{E}$.
Our earlier preliminaries allow us to prove the equivalence extension property in $\cats{E}_\cof$ following~\cite{Sattler}*{Proposition~5.1} and~\cite{GSS}*{Proposition~3.2.1}.

We begin with some observations on homotopy equivalences, which we introduced in \cref{sec:fib_cat},
and an analysis of the restriction of the fibration category structure on $\cats{E} \fslice X$ established in \cref{fibcat-fiberwise} to cofibrant objects.
Since the tensor of $X \in \cats E$ with a finite simplicial set exists and is defined by the formula in \eqref{equ:tensorSetE},
we may equivalently write a homotopy $H$ between $f_0, f_1 \from X \to Y$ in $\cats{E}$ or one of its slices,
which was defined using cotensors in \eqref{equ:homotopy}, via a map
\begin{equation}
\label{equ:homotopy-with-tensor}
H \from \simp{1} \tensorsSetsE X \to Y \text{.}
\end{equation}
In $\cat{E}$ and its slices, the homotopy relation between maps with cofibrant source and fibrant target is an equivalence relation.
This is a formal consequence of \cref{item:pbcot} of \cref{lem:basic_pointwise_fibrations} and \cref{pullback-cotensor-slice}.
It follows that homotopy equivalences between cofibrant and fibrant objects compose as usual.

\begin{proposition} \label{triv-map-is-h-equiv} \leavevmode
\begin{parts}
\item \label{triv-map-is-h-equiv:triv-cof}
For every $X \in \cats{E}$, trivial cofibrations in $\cats{E} \fslice X$ are homotopy equivalences.
\item \label{triv-map-is-h-equiv:triv-fib}
Trivial fibrations $X \to Y$ in $\cats{E}_\cof$ are homotopy equivalences over $Y$.
\end{parts}
\end{proposition}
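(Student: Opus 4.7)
The plan for both parts is to follow the standard model-categorical recipe: produce a one-sided inverse using the lifting property of the \wfs{}s from \cref{two-ewfss}, then promote it to a two-sided one by lifting against a slice path object. The key technical input is \cref{pullback-cotensor-slice}: in $\cats{E} \slice X$, the pullback cotensor of $\bdsimp{1} \to \simp{1}$ with a (trivial) fibration is again a (trivial) fibration. In particular, for any fibration $B \to X$, the evaluation $\simp{1} \cotensor B \to B \times_X B$ of the slice path object (computed in $\cats{E} \slice X$) is a fibration, and it is a trivial fibration whenever $B \to X$ is.

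For \cref{triv-map-is-h-equiv:triv-cof}, let $j \from A \to B$ be a trivial cofibration in $\cats{E} \fslice X$. First I would lift $\id_A$ against the fibration $A \to X$ along $j$ to obtain a retraction $r \from B \to A$ over $X$ with $rj = \id_A$. For the companion homotopy $jr \sim \id_B$ over $X$, I would set up the lifting problem in $\cats{E} \slice X$ whose left map is $j$, whose right map is the slice path object evaluation $\simp{1} \cotensor B \to B \times_X B$, whose top map is the degenerate homotopy $s_0 j$ on $A$, and whose bottom map is $(jr, \id_B)$. The square commutes precisely because $rj = \id_A$, and a filler exists since $j$ is a trivial cofibration and the right map is a fibration by the remark above.

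For \cref{triv-map-is-h-equiv:triv-fib}, let $f \from X \to Y$ be a trivial fibration between cofibrant objects. Since $Y$ is cofibrant, $\emptyset \to Y$ is a cofibration and lifts against $f$ to produce a section $s \from Y \to X$ with $fs = \id_Y$. Since $X$ is also cofibrant, $\emptyset \to X$ is a cofibration, and I would lift it against the slice path object evaluation $\simp{1} \cotensor X \to X \times_Y X$ in $\cats{E} \slice Y$, which is now a \emph{trivial} fibration by \cref{pullback-cotensor-slice}, so as to factor $(sf, \id_X) \from X \to X \times_Y X$ through it as a homotopy $sf \sim \id_X$ over $Y$. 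Together with $fs = \id_Y$, this exhibits $f$ as a homotopy equivalence over $Y$.

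The only point that needs real care is bookkeeping: ensuring all constructions take place in the appropriate slice so that the sections and homotopies are compatible with the projections to $X$ (respectively $Y$). This is handled uniformly by working in the slice category from the outset and invoking \cref{pullback-cotensor-slice} to certify the fibrancy of the slice path objects, so no genuine obstacle arises beyond these standard manipulations.
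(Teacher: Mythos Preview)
Your proposal is correct and follows the same standard recipe as the paper: produce a one-sided inverse via a lift, then upgrade it using a path object. The organisational differences are minor. For \cref{triv-map-is-h-equiv:triv-cof}, the paper packages your two lifts into a single lift against the fibration $A \times_B (\simp{1} \cotensor B) \to B$ (in the slice over $X$), directly yielding a strong deformation retract; your two-step version recovers the same data. For \cref{triv-map-is-h-equiv:triv-fib}, the paper works dually with tensors rather than cotensors: it lifts the cofibration $X \to Y \push_X (\simp{1} \tensorsSetsE X)$ against the trivial fibration $X \to Y$ in one step, obtaining the section and the homotopy simultaneously. Your cotensor-only approach has the mild advantage that it does not invoke the existence of tensors (hence only finite limits, not coproducts), though in the countably lextensive setting of this section both are available. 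Either way, the arguments are interchangeable and your version is fine.
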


\begin{proof}
For \cref{triv-map-is-h-equiv:triv-cof}, in $\cat{E} \fslice X$, given a trivial cofibration $A \to B$, we take a lift
\begin{tikzeq*}
\matrix[diagram,column sep={6em,between origins}]
{
  |(A)| A & |(X)| A \times_B (\simp{1} \cotensorsSetsE B) \\
  |(B)| B & |(Y)| B \rlap{\text{.}}                       \\
};

\draw[ano] (A) to (B);
\draw[fib] (X) to (Y);
\draw[->] (A) to (X);
\draw[->] (B) to (Y);
\draw[->,dashed] (B) to (X);
\end{tikzeq*}
Here, the right map is a composition of the pullback cotensor with $\partial \simp{1} \to \simp{1}$ of $B \to 1$ and a pullback of $A \to 1$, hence a fibration by parts~\ref{item:pbcot} and~\ref{item:retract_pb_com} of \cref{lem:basic_pointwise_fibrations}.
The lift exhibits $A \to B$ as a strong deformation retract, in particular a homotopy equivalence.

For \cref{triv-map-is-h-equiv:triv-fib}, given a fibration $X \to Y$ in $\cat{E}_\cof$, we take a lift
\begin{tikzeq*}
\matrix[diagram,column sep={6em,between origins}]
{
  |(A)| X                                    & |(X)| X                 \\
  |(B)| Y \push_X (\simp{1} \tensorsSetsE X) & |(Y)| Y \rlap{\text{.}} \\
};

\draw[cof]  (A) to (B);
\draw[tfib] (X) to (Y);
\draw[->] (A) to (X);
\draw[->] (B) to (Y);
\draw[->,dashed] (B) to (X);
\end{tikzeq*}
Here, the left map is a composition of a pushout of $\varnothing \to Y$ and the pushout tensor with $\partial \simp{1} \to \simp{1}$ of $\varnothing \to X$, hence a cofibration by \cref{saturation,pushout-tensor}.
The lift exhibits $X \to Y$ as the dual of a strong deformation retract, in particular a homotopy equivalence over $Y$.
\end{proof}

\begin{proposition} \label{fibcat-cof-fiberwise}
Let $X \in \cats{E}_\cof$.
The fibration category structure on $\cats{E} \fslice X$ of \cref{fibcat-fiberwise} restricts to $\cats{E}_\cof \fslice X$.
%Homotopy equivalences, fibrations and trivial fibrations equip $\cats{E}_\cof \fslice X$ with the structure of a fibration category.
Path objects are given by cotensor with $\simp{1}$.
The weak equivalences coincide with homotopy equivalences over $X$.
\end{proposition}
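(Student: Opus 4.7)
The plan is to verify that the four fibration category axioms of \cref{fibcat-fiberwise} restrict to $\cats{E}_\cof \fslice X$, and then to identify the resulting weak equivalences. Axiom (F1) is immediate: $\id_X$ is terminal and $X$ is cofibrant by hypothesis. For (F2), the pullback in $\cats{E} \fslice X$ of a cofibrant object along a fibration between cofibrant objects remains cofibrant by \cref{pullback-cofibrant} of \cref{thm:pullback-combined}. Axiom (F4) is inherited verbatim since the class of weak equivalences is unchanged. For (F3), I follow the proof of \cref{fibcat-fiberwise} and check that Brown's factorisation via the cotensor path object stays inside the cofibrant fragment. The path object of $(B, g)$ in $\cats{E} \slice X$ is $(\simp{1} \cotensorsSetsE B) \times_{\simp{1} \cotensorsSetsE X} X$; by \cref{cofibrant-cotensor} of \cref{cofibrant-properties}, both $\simp{1} \cotensorsSetsE B$ and $\simp{1} \cotensorsSetsE X$ are cofibrant, and the pullback is cofibrant by \cref{pullback-cofibrant}. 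Brown's middle object $A \times_B P(B)$ is then cofibrant for the same reason.

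To identify weak equivalences with homotopy equivalences over $X$, one direction is provided by \cref{h-equiv-is-we}. For the converse, let $f \from A \to B$ be a weak equivalence in $\cats{E}_\cof \fslice X$. Brown's lemma writes $f = p \circ s$, where $s$ is a section of a trivial fibration $\pi \from A \times_B P(B) \to A$ and $p$ is a fibration; by 2-out-of-3, $p$ is also a trivial fibration. By \cref{triv-map-is-h-equiv:triv-fib} of \cref{triv-map-is-h-equiv}, both $\pi$ and $p$ are homotopy equivalences over their targets (and hence over $X$ by composing with the structure map). Picking any homotopy inverse $j$ of $\pi$, the identity $\pi s = \id$ yields $s \sim j \pi s = j$, so $s$ is itself a homotopy equivalence. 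Composing, $f$ is a homotopy equivalence over $X$.

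The only real obstacle is the cofibrancy check in (F3), which reduces to the already established stability of cofibrant objects under cotensors with finite simplicial sets and under pullback; everything else is standard fibration category theory.
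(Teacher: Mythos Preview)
Your proof is correct and follows essentially the same approach as the paper: both restrict the fibration category structure by checking that the cotensor path object and the relevant pullbacks stay cofibrant (via \cref{cofibrant-cotensor} of \cref{cofibrant-properties} and \cref{pullback-cofibrant} of \cref{thm:pullback-combined}), and both reduce the identification of weak equivalences with homotopy equivalences over $X$ to \cref{triv-map-is-h-equiv:triv-fib} of \cref{triv-map-is-h-equiv} via the mapping path space factorisation. The only cosmetic difference is that the paper invokes 2-out-of-3 for homotopy equivalences over $X$ directly (using that the homotopy relation is an equivalence and a congruence in $\cats{E}_\cof \fslice X$), whereas you unpack this step by hand when showing that the section $s$ of $\pi$ is a homotopy equivalence.
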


\begin{proof}
By \cref{cofibrant-limit} of \cref{thm:pullback-combined}, $\cats{E}_\cof \fslice X$ has finite limits and they are computed as in $\cats{E} \slice X$.
By \cref{cofibrant-cotensor} of \cref{cofibrant-properties}, cotensor with $\simp{1}$ over $X$ preserves cofibrant objects.
Thus, all aspects of the fibration category $\cats{E} \fslice X$ of \cref{fibcat-fiberwise} restrict to cofibrant objects.
This includes path objects, which are given by cotensor with $\simp{1}$.

It remains to show that pointwise weak equivalences in $\cats{E}_\cof \fslice X$ coincide with homotopy equivalences over $X$.
Every homotopy equivalence is a pointwise weak equivalence by \cref{h-equiv-is-we}.
For the reverse direction, we use the mapping path space factorisation in $\cats{E}_\cof \fslice X$, which has a homotopy equivalence over $X$ as first factor and fibration as second factor.
Since pointwise weak equivalences and homotopy equivalences over $X$ satisfy the 2-out-of-3 property, it suffices to show that every pointwise weak equivalence that is a fibration (hence a trivial fibration) is a homotopy equivalence over $X$.
This is \cref{triv-map-is-h-equiv:triv-fib} of \cref{triv-map-is-h-equiv}.
\end{proof}

\begin{proposition}[Equivalence extension property]\label{equivalence-extension}
  In $\cats{E}_\cof$, consider the solid part of the diagram
  \begin{tikzeq}{equivalence-extension:0}
  \matrix[diagram,column sep={between origins,4em},row sep={between origins,8ex}]
  {
    |(X0)| X_0 &            & |(Y0)| Y_0 &            \\[-2ex]
               & |(X1)| X_1 &            & |(Y1)| Y_1 \\
    |(A)|  A   &            & |(B)|  B   &            \\
  };

  \draw[->]        (X0) to node[above right] {$\sim$} (X1);
  \draw[->,dashed] (Y0) to node[above right] {$\sim$} (Y1);
  \draw[cof]       (A)  to node[below]       {$i$}    (B);

  \draw[fib] (X0) to (A);
  \draw[fib] (X1) to (A);

  \draw[fib,dashed] (Y0) to (B);
  \draw[fib]        (Y1) to (B);

  \draw[->,dashed] (X0) to (Y0);
  \draw[->,over]   (X1) to (Y1);

  \pbs{X1}{B};
  \end{tikzeq}
  where the lower square is a pullback and $X_0 \to X_1$ is a homotopy equivalence over $A$.
  Then there is $Y_0$ as indicated such that the back square is a pullback and $Y_0 \to Y_1$ is a homotopy equivalence over $B$.
\end{proposition}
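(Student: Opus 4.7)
The plan is to adapt the strategy of~\cite{GSS}*{Proposition~3.2.1} and~\cite{Sattler}*{Proposition~5.1} to our setting, making essential use of the pushforward functors along cofibrations provided by \cref{th:Dependent_prod_along_cof} together with the Frobenius property~\cref{frobenius}.

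First, working in the fibration category $\cats{E}_\cof \fslice A$ of \cref{fibcat-cof-fiberwise}, I would factor the homotopy equivalence $X_0 \to X_1$ as a trivial cofibration $j \from X_0 \cto P$ followed by a trivial fibration $q \from P \fto X_1$. It then suffices to extend $q$ and $j$ across $i$ separately and glue the pieces.

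For the trivial fibration, using that $X_1 = i^* Y_1$ and that $i$ is exponentiable by \cref{th:Dependent_prod_along_cof}, I define
\begin{equation*}
\tilde{P} \defeq Y_1 \times_{i_* X_1} i_* P,
\end{equation*}
with $Y_1 \to i_* X_1$ the unit of $i^* \dashv i_*$, whose pullback along $i$ is the identity by~\eqref{mono-exp-pullback}. Since \cref{frobenius} shows $i^*$ preserves trivial cofibrations, its right adjoint $i_*$ preserves trivial fibrations; so both $i_* q$ and its pullback $\tilde{P} \to Y_1$ are trivial fibrations, $\tilde{P}$ is cofibrant by \cref{th:Dependent_prod_along_cof} and \cref{thm:pullback-combined}, and a further use of~\eqref{mono-exp-pullback} yields $i^* \tilde{P} \iso P$. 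For the trivial cofibration, I then set
\begin{equation*}
Y_0 \defeq \tilde{P} \times_{i_* P} i_* X_0,
\end{equation*}
where $i_* X_0 \to i_* P$ is the image of $j$ under $i_*$; another application of~\eqref{mono-exp-pullback} gives $i^* Y_0 \iso X_0$, making the back face a pullback, and the candidate homotopy equivalence over $B$ is the composite $Y_0 \to \tilde{P} \to Y_1$.

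The main obstacle is to verify that $Y_0 \to B$ is a Kan fibration and $Y_0 \to Y_1$ is a homotopy equivalence. The strategy is to apply \cref{frobenius} to the pullback of $i_*(j)$ along $\tilde{P} \to i_* P$, which would exhibit $Y_0 \to \tilde{P}$ as a trivial cofibration and then yield both assertions via 2-out-of-3 combined with the trivial fibration $\tilde{P} \to Y_1$. This rests on two delicate points, neither following formally since $i_*$ is only a right adjoint: the projection $\tilde{P} \to i_* P$---equivalently, the unit $\eta_{Y_1} \from Y_1 \to i_* X_1$---must be a Kan fibration, and $i_*(j)$ must itself be a trivial cofibration in $\cats{E}_\cof \slice B$. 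Both reduce to an explicit analysis of $i_*$ via the equivalence of \cref{lem:slice_presheaves_set} combined with the degeneracy-action characterisation of cofibrations in \cref{lem:cof_charac}: since $i$ is levelwise complemented, $i_* X$ can be described over $B$ as $X$ supported on $A$ together with terminal data on the complement, from which both properties can be extracted.
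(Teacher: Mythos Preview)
Your overall shape (factor $X_0 \to X_1$ as a trivial cofibration $j$ followed by a trivial fibration $q$, then extend each piece along $i$ using $i_*$) is natural, and the extension of $q$ via $\tilde P = Y_1 \times_{i_* X_1} i_* P$ is essentially correct, though your justification is garbled: \cref{frobenius} concerns pullback along a \emph{fibration}, not the cofibration $i$, and the adjointness you invoke goes the wrong way. What you actually need is that $i^*$ preserves cofibrations (\cref{pullback-cofibration} of \cref{thm:pullback-combined}), which by adjointness gives that $i_*$ preserves trivial fibrations. This is exactly one of the ingredients the paper singles out.

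The genuine gap is in the extension of the trivial cofibration $j$. You need (a) that $i_*(j) \from i_* X_0 \to i_* P$ is a trivial cofibration and (b) that the unit $\eta_{Y_1} \from Y_1 \to i_* X_1$ is a fibration. Neither holds in general, and your proposed ``explicit description'' of $i_* X$ as ``$X$ on $A$, terminal on the complement'' is simply false for cofibrations in $\cats E$. Already for the generating cofibration $i \from \fset{\bdsimp{1}} \to \fset{\simp{1}}$, the computation in the proof of \cref{lem:generating_cof_in_G} shows that over the non-degenerate $1$-simplex of $\simp{1}$ the value of $i_* X$ is a finite limit of values of $X$ over the two vertices (here, a product), not the terminal object. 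The description you have in mind is valid for the inclusion of a complemented \emph{sieve}, but a cofibration between simplicial objects is not levelwise a sieve once degeneracies are present. Consequently there is no reason for $i_*$ to preserve (trivial) cofibrations, and your proposed application of \cref{frobenius} to the pullback of $i_*(j)$ does not get off the ground.

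The paper avoids this entirely: it never tries to push a trivial cofibration forward along $i$. Following \cite{GSS}*{Proposition~3.2.1}, the construction of $Y_0$ uses only that $i_*$ preserves cofibrant objects and trivial fibrations, together with the pullback-cotensor property in $\cats E \slice B$ (\cref{pullback-cotensor-slice}); the homotopy-equivalence datum for $X_0 \to X_1$ is transported via path objects (cotensor with $\simp{1}$) rather than via a trivial-cofibration/trivial-fibration factorisation. You should rework the second half along those lines.
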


\begin{proof}
The proof of~\cite{GSS}*{Proposition~3.2.1} applies, but played out in $\cats{E}_\cof$ instead of $\sSet_\cof$.
We limit ourselves to listing the key claims used in the proof and why they hold in our setting.
\begin{itemize}
\item
The slice categories $\cats{E}_\cof \fslice A$ and $\cats{E}_\cof \fslice B$ admit
fibration category structures, established in \cref{fibcat-cof-fiberwise}, in which
weak equivalences are given by fiberwise homotopy equivalences.
\item
The dependent product functor $i_*$ along $i$ exists and preserves cofibrant objects, as shown in \cref{th:Dependent_prod_along_cof}.
\item
The functor $i_*$ preserves trivial fibrations, which follows by adjointness since $i^*$ preserves cofibrations, as stated in \cref{pullback-cofibration} of \cref{thm:pullback-combined}.
\item
In the slice over $B$, pullback cotensor with a cofibration preserves trivial fibrations, which holds by  \cref{pullback-cotensor-slice}.
\qedhere
\end{itemize}
\end{proof}

In $\cats{E}_\cof$, we say that a (trivial) fibration $X \fto A$ \emph{extends} along a map $A \to B$ if there is a pullback square
\begin{tikzeq}{fibration-extension-square}
\matrix[diagram]
{
  |(X)| X & |(Y)| Y \\
  |(A)| A & |(B)| B \\
};

\draw[->,dashed]  (X) to (Y);
\draw[->]         (A) to (B);
\draw[fib]        (X) to (A);
\draw[fib,dashed] (Y) to (B);
\pb{X}{B};
\end{tikzeq}
 with the \emph{extension} $Y \to B$ of $X \to A$ again a (trivial) fibration.
If $A \to B$ has this property for all (trivial) fibrations $X \fto A$, we say that it has the \emph{(trivial) fibration extension property}.

\begin{lemma} \label{extension-cancellation}
Let $f$ and $g$ be composable maps in $\cats{E}_\cof$.
If $g \circ f$ has the (trivial) fibration extension property, then so does $f$.
\end{lemma}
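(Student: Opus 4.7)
The plan is to reduce extension along $f$ to extension along the composite $g \circ f$ by pulling back, using pullback pasting. Suppose $f \from A \to B$ and $g \from B \to C$, and assume that $g \circ f$ has the (trivial) fibration extension property. Given any (trivial) fibration $p \from X \fto A$ with $X$ cofibrant, I want to produce a pullback extension of $p$ along $f$ whose right leg is again a (trivial) fibration between cofibrant objects.

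First, I would apply the extension property of $g \circ f$ to $p$ to obtain a pullback square
\[
\begin{tikzcd}
X \ar[r] \ar[d,two heads,"p"'] \pullback & Z \ar[d,two heads,"r"] \\
A \ar[r,"gf"'] & C
\end{tikzcd}
\]
with $r \from Z \fto C$ a (trivial) fibration in $\cats{E}_\cof$. Next, I would form the pullback $Y \defeq Z \times_C B$ along $g$, obtaining a map $q \from Y \to B$. By stability of (trivial) fibrations under pullback (part~\ref{item:retract_pb_com} of \cref{lem:basic_pointwise_fibrations}), $q$ is a (trivial) fibration. By \cref{pullback-cofibrant} of \cref{thm:pullback-combined}, since both $Z$ and $B$ are cofibrant, so is $Y$; thus $q$ lives in $\cats{E}_\cof$.

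Finally, I need to produce the desired square and verify it is a pullback. The maps $X \to Z$ and $X \to A \xrightarrow{f} B$ agree after postcomposition with $r$ and $g$ respectively (by commutativity of the outer square above and $gf = g \circ f$), so they induce a unique map $X \to Y$ over $B$. The resulting square
\[
\begin{tikzcd}
X \ar[r] \ar[d,two heads,"p"'] & Y \ar[d,two heads,"q"] \\
A \ar[r,"f"'] & B
\end{tikzcd}
\]
composed with the pullback $(Y,B,Z,C)$ yields the outer pullback $(X,A,Z,C)$, so by pullback pasting the left square is a pullback as well. This exhibits the required extension of $p$ along $f$, completing the proof. There is no substantive obstacle here: the argument is purely formal and relies only on pullback pasting together with the closure properties of (trivial) fibrations and cofibrant objects under pullback already established.
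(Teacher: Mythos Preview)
Your proof is correct and follows exactly the same approach as the paper: extend along $g \circ f$ and then pull back along $g$, invoking \cref{item:retract_pb_com} of \cref{lem:basic_pointwise_fibrations} and \cref{pullback-cofibrant} of \cref{thm:pullback-combined} for the required closure properties. You have simply spelled out the pullback-pasting verification that the paper leaves implicit.
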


\begin{proof}
We extend along $f$ by extending along $g \circ f$ and pulling back along $g$ (using \cref{item:retract_pb_com} of \cref{lem:basic_pointwise_fibrations} and \cref{pullback-cofibrant} of \cref{thm:pullback-combined}).
\end{proof}

\begin{proposition}[Trivial fibration extension property] \label{trivial-fibration-extension}
Cofibrations in $\cats{E}$ have the trivial fibration extension property.
\end{proposition}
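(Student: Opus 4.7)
The plan is to define the extension of $p \from X \fto A$ along $i \from A \cto B$ directly via the pushforward: set $Y \eqdef i_* X$, which exists because $i$ is a cofibration between cofibrant objects and hence exponentiable by \cref{th:Dependent_prod_along_cof}. That same theorem yields that $Y$ is cofibrant, placing the construction inside $\cats{E}_\cof$. Since $i$ is a monomorphism, the unit $X \to i^* i_* X = A \times_B Y$ of the adjunction $i^* \adj i_*$ is an isomorphism (as recalled in the paragraph preceding \cref{lem:i^*i_*=Id}), so the canonical square
\begin{tikzeq*}
\matrix[diagram]
{
  |(X)| X & |(Y)| Y                         \\
  |(A)| A & |(B)| B \rlap{\text{.}} \\
};
\draw[->]  (X) to (Y);
\draw[->]  (A) to node[below] {$i$} (B);
\draw[fib] (X) to node[left]  {$p$} (A);
\draw[->]  (Y) to (B);
\pb{X}{B};
\end{tikzeq*}
is a pullback. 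It therefore remains to verify that $Y \to B$ is a trivial fibration.

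By \cref{ordinary-wfs}, it suffices to show that $Y \to B$ has the ordinary \rlp{} against the class of cofibrations in $\cats{E}$. Let $j \from K \to L$ be a cofibration and consider a lifting square over $Y \to B$. Transposing across $i^* \adj i_*$ yields the square
\begin{tikzeq*}
\matrix[diagram]
{
  |(K)| K \pull_B A & |(X)| X                         \\
  |(L)| L \pull_B A & |(A)| A \rlap{\text{.}} \\
};
\draw[->]  (K) to (X);
\draw[->]  (L) to (A);
\draw[cof] (K) to node[left] {$i^* j$} (L);
\draw[fib] (X) to node[right] {$p$} (A);
\end{tikzeq*}
The left vertical map is a pullback of the cofibration $j$ along the monomorphism $i$, hence a cofibration by \cref{cofibration-mono-pullback}. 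Since $p$ is a trivial fibration, the transposed square admits a diagonal filler, which transposes back across $i^* \adj i_*$ to the required lift in the original square.

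No genuine obstacle should arise: the only potentially delicate points are the existence of $i_*$, the fact that the unit is an isomorphism, and cofibrancy-preservation of $i_*$, all of which have been arranged by \cref{th:Dependent_prod_along_cof}. The translation between ordinary and enriched lifting needed at the last step is provided by \cref{ordinary-wfs}, so the argument reduces to a clean adjunction calculation.
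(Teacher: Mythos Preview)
Your proof is correct and follows a more direct route than the paper's. The paper deduces this proposition as the special case of the Equivalence Extension Property (\cref{equivalence-extension}) in which $X_1 \to A$ and $Y_1 \to B$ are identities, so that the homotopy equivalence to be extended is the trivial fibration $X_0 \to A$ itself. Your argument instead bypasses \cref{equivalence-extension} entirely: you observe that the pushforward $Y = i_* X$ already solves the extension problem, using that $i_*$ preserves trivial fibrations by adjointness since $i^*$ preserves cofibrations. This is precisely one of the ingredients that the paper lists in its proof of \cref{equivalence-extension}, so you have extracted exactly the portion needed for the trivial fibration case.

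Two minor remarks. First, the statement ``$i^* j$ is a pullback of the cofibration $j$ along the monomorphism $i$'' is slightly imprecise, since $j$ and $i$ do not share a codomain; what you use is that $i^* j$ is the pullback of $j$ along the monomorphism $L \times_B A \to L$ (itself a pullback of $i$). Alternatively and more directly, \cref{pullback-cofibration} of \cref{thm:pullback-combined} says that pullback along any map with cofibrant source preserves cofibrations, which is exactly what you need for $i^*$ since $A$ is cofibrant. Second, your approach gains simplicity for this proposition but does not subsume \cref{equivalence-extension}, which is still needed later for the fibration extension property (\cref{fibration-extension}); so the paper's organisation, proving the more general statement first, avoids duplication of effort.
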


\begin{proof}
This is the special case of \cref{equivalence-extension} where $X_1 \to A$ and $Y_1 \to B$ are the identities on $A$ and $B$, respectively.
We use \cref{fibcat-fiberwise,fibration-levelwise} to go between trivial fibrations and fibrations that are weak equivalences.
\end{proof}

\begin{lemma} \label{weak-equivalence-from-fibration-over-line}
Let $p \from X \fto \simp{1} \tensorsSetsE A$ be fibration in $\cats{E}$ with $A$ and $X$ cofibrant.
Then there is a homotopy equivalence between $X|_{\braces{0} \tensorsSetsE A}$ and $X|_{\braces{1} \tensorsSetsE A}$ over $A$.
\end{lemma}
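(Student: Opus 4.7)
The plan is to realise each $X|_{\braces{k} \tensorsSetsE A}$ for $k \in \braces{0,1}$ as a trivial cofibrant subobject of $X$ in $\cats{E}$, and then to compare the two via a common fibrant replacement of $X$ over $A$.

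First I would check that the inclusion $j_k \from A \iso \braces{k} \tensorsSetsE A \to \simp{1} \tensorsSetsE A$ is a trivial cofibration in $\cats{E}$: it is the pushout tensor of the trivial cofibration $\braces{k} \to \simp{1}$ in $\sSet$ with the cofibration $\varnothing \to A$, and hence a trivial cofibration by \cref{pushout-tensor:triv-cof-cof} of \cref{pushout-tensor}. Since $p$ is a Kan fibration with cofibrant domain, the Frobenius property (\cref{frobenius}) then ensures that the pullback $X|_{\braces{k} \tensorsSetsE A} \to X$ of $j_k$ along $p$ is also a trivial cofibration in $\cats{E}$. Note that $X|_{\braces{k} \tensorsSetsE A}$ is a pullback of $p$, hence fibrant over $A$, and cofibrant by \cref{pullback-cofibrant} of \cref{thm:pullback-combined}.

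Next I would use the weak factorisation system of \cref{two-ewfss} to factor the composite $X \to \simp{1} \tensorsSetsE A \to A$ as a trivial cofibration $X \to X'$ followed by a fibration $X' \to A$; since $X$ is cofibrant and $X \to X'$ is in particular a cofibration (\cref{triv-fib-is-fib}), $X'$ is cofibrant as well. The composites $X|_{\braces{k} \tensorsSetsE A} \to X'$ are then trivial cofibrations in $\cats{E}$ over $A$ between objects that are cofibrant and fibrant over $A$. By \cref{triv-map-is-h-equiv:triv-cof} of \cref{triv-map-is-h-equiv} they are homotopy equivalences over $A$; composing $X|_{\braces{0} \tensorsSetsE A} \to X'$ with a homotopy inverse over $A$ of $X|_{\braces{1} \tensorsSetsE A} \to X'$ (which exists because homotopy between cofibrant–fibrant objects over $A$ is an equivalence relation, by \cref{fibcat-cof-fiberwise}) yields the required homotopy equivalence.

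The main point to watch is that the projection $X \to A$ is in general not a fibration, so the two fibers cannot be compared directly inside $X$ itself; the fibrant replacement $X'$ over $A$ is introduced precisely to bypass this, after which the remainder follows formally from the Frobenius property and the homotopy theory of cofibrant–fibrant objects over $A$.
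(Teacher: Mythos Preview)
Your proof is correct, but it follows a different route than the paper's.

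The paper constructs an explicit intermediate object: it pulls back $\simp{1} \cotensorsSetsE X$ along the unit map $A \to \simp{1} \cotensorsSetsE (\simp{1} \tensorsSetsE A)$ to obtain an object $P$ fibrant over $A$, and then uses the pullback cotensor properties (\cref{lem:basic_pointwise_fibrations}) to produce \emph{trivial fibrations} from $P$ to each fiber $X|_{\braces{k} \tensorsSetsE A}$; these are homotopy equivalences over $A$ by \cref{triv-map-is-h-equiv:triv-fib} of \cref{triv-map-is-h-equiv}. You instead use the Frobenius property (\cref{frobenius}) to show that each fiber embeds into $X$ as a \emph{trivial cofibration}, then take an abstract fibrant replacement $X'$ of $X$ over $A$ and invoke \cref{triv-map-is-h-equiv:triv-cof} of \cref{triv-map-is-h-equiv}. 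In effect the two arguments are dual: the paper links the fibers by a span of trivial fibrations out of a path object, you link them by a cospan of trivial cofibrations into a fibrant replacement. Your approach is shorter once Frobenius is on the table; the paper's argument relies only on the pullback cotensor calculus and so is self-contained relative to \cref{sec:fib_cat,sec:prop-of-cof}, though by this point in the paper Frobenius is already available and your use of it is perfectly legitimate.
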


\begin{proof}
Take the pullback
\begin{tikzeq*}{fibration-extension-square}
\matrix[diagram,column sep={8em,between origins}]
{
  |(P)| P & |(X')| \simp{1} \cotensorsSetsE X                                          \\
  |(A)| A & |(A')| \simp{1} \cotensorsSetsE (\simp{1} \tensorsSetsE A) \rlap{\text{.}} \\
};

\draw[->]  (P) to (X');
\draw[->]  (A) to (A');
\draw[fib] (P) to (A);
\draw[fib] (X') to node[right] {$\simp{1} \cotensorsSetsEslice p$} (A');
\pb{P}{A'};
\end{tikzeq*}
Here, the bottom map is the unit of the tensor-cotensor adjunction.
The right map is a fibration by \cref{item:pbcot} of \cref{lem:basic_pointwise_fibrations}, hence the left map is a fibration by \cref{item:retract_pb_com} of \cref{lem:basic_pointwise_fibrations}.
The top right object is cofibrant is cofibrant by \cref{item:pbcot} of \cref{lem:basic_pointwise_fibrations} and \cref{pullback-cofibrant}, hence the top left object is cofibrant by \cref{pullback-cofibrant}.

We will argue that there are trivial fibrations from $P$ to $X|_{\braces{0} \tensorsSetsE A}$ and $X|_{\braces{1} \tensorsSetsE A}$ over $A$.
These trivial fibrations are homotopy equivalences over $A$ by \cref{triv-map-is-h-equiv:triv-fib} of \cref{triv-map-is-h-equiv}.
Inverting and composing them as needed gives the desired weak equivalence.

We only construct the trivial fibration from $P$ to $X|_{\braces{0} \tensorsSetsE A}$ (the other case is dual).
Consider the diagram
\begin{tikzeq*}{fibration-extension-square}
\matrix[diagram,column sep={12em,between origins}]
{
  |(P)| P                                & |(X')| \simp{1} \cotensorsSetsEslice X                                                                                                                  \\
  |(X0)| X|_{\braces{0} \tensorsSetsE A} & |(X0')| X \times_{\simp{1} \tensorsSetsE A} \simp{1} \cotensorsSetsEslice (\simp{1} \tensorsSetsE A) & |(X)| X                                          \\
  |(A)| A                                & |(A')| \simp{1} \cotensorsSetsEslice (\simp{1} \tensorsSetsE A)                                      & |(A'')| \simp{1} \tensorsSetsE A \rlap{\text{.}} \\
};

\draw[->]  (P) to (X');
\draw[->]  (X0) to (X0');
\draw[->]  (A) to (A');
\draw[->]  (X0') to (X);
\draw[->]  (A') to node[above] {$\lambda^0_1 \cotensorsSetsEslice (\simp{1} \tensorsSetsE A)$} (A'');
\draw[tfib] (P) to (X0);
\draw[fib] (X0) to (A);
\draw[tfib] (X') to node[right] {$\lambda^0_1 \hatop{\cotensorsSetsEslice} p$} (X0');
\draw[fib] (X0') to (A');
\draw[fib] (X) to node[right] {$p$} (A'');
\pb{X0'}{A''};
\end{tikzeq*}
The two composite squares and the bottom right square are pullbacks by construction.
Pullback pasting induces the top left map and makes the top left square a pullback.
The top middle map is a trivial fibration by \cref{item:pbcot} of \cref{lem:basic_pointwise_fibrations}, hence so is the top left map by \cref{item:retract_pb_com} of \cref{lem:basic_pointwise_fibrations}.
\end{proof}

Our aim now is to prove the fibration extension property for trivial cofibrations in $\cats{E}_\cof$.
For this purpose, we introduce the class $\mathcal{H}$ of cofibrations in $\cats{E}_\cof$ that have the fibration extension property.

\begin{lemma} \label{fibration-generating-extension}
The class $\mathcal{H}$ contains cofibrations in $\cats{E}_\cof$ that are strong homotopy equivalences.
\end{lemma}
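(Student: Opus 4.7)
The plan is to reduce the fibration extension property to the equivalence extension property \cref{equivalence-extension}, using \cref{weak-equivalence-from-fibration-over-line} to manufacture the required fiberwise homotopy equivalence from the homotopy witnessing the retraction.

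Let $i \from A \cto B$ be a cofibration in $\cats{E}_\cof$ which is a strong homotopy equivalence and fix a fibration $p \from X \fto A$ with $X$ cofibrant. The strong homotopy equivalence structure furnishes a map $r \from B \to A$ together with a homotopy $u \from ri \sim \id_A$; only this portion of the data will be used. The candidate extension is $r^* X \fto B$, which is a fibration with cofibrant domain by \cref{item:retract_pb_com} of \cref{lem:basic_pointwise_fibrations} and \cref{pullback-cofibrant} of \cref{thm:pullback-combined}. Its pullback along $i$ is $(ri)^* X$.

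To connect $(ri)^* X$ with $X = \id_A^* X$, view the homotopy $u$ as a map $H \from \simp{1} \tensorsSetsE A \to A$. Since $\simp{1} \tensorsSetsE A = \fset{\simp{1}} \times A$ is cofibrant by \cref{pullback-cofibrant} of \cref{thm:pullback-combined}, the pullback $H^* X \fto \simp{1} \tensorsSetsE A$ is a fibration between cofibrant objects. Its restrictions to $\braces{0} \tensorsSetsE A$ and $\braces{1} \tensorsSetsE A$ recover $X$ and $(ri)^* X$ (in some order), so \cref{weak-equivalence-from-fibration-over-line} produces a homotopy equivalence $X \htp (ri)^* X$ over $A$.

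We are now in a position to apply the equivalence extension property \cref{equivalence-extension} to the cofibration $i$ with $X_1 = (ri)^* X$, $Y_1 = r^* X$, and $X_0 = X$ together with the homotopy equivalence just constructed. This yields a fibration $Y_0 \fto B$ with $i^* Y_0 \iso X$ and a homotopy equivalence $Y_0 \htp r^* X$ over $B$. The resulting pullback square exhibits $p$ as extended along $i$, completing the argument. The only subtlety lies in bookkeeping the cofibrancy of the intermediate objects, all of which is covered by \cref{cofibrant-properties,thm:pullback-combined}; the hypothesis of \emph{strong} homotopy equivalence beyond the mere existence of $r$ and $u$ is not directly used here, but allows us to remain in $\cats{E}_\cof$ without further comment.
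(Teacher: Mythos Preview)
Your proof is correct and follows essentially the same approach as the paper's: pull back $X$ along the retraction $r$ to obtain the candidate extension $r^*X$, use \cref{weak-equivalence-from-fibration-over-line} on the pullback of $X$ along the homotopy $H \from \simp{1}\tensorsSetsE A \to A$ to identify $X$ with $(ri)^*X = i^*r^*X$ up to homotopy equivalence over $A$, and then invoke \cref{equivalence-extension} to correct the extension. The paper packages the same data more abstractly via the retract characterisation of \cref{she-as-retract}, obtaining a single map from the pushout $(\simp{1}\tensorsSetsE A)\push_{\{0\}\tensorsSetsE A}(\{0\}\tensorsSetsE B)$ to $A$ and pulling back along that; restricting this pullback to the three components recovers exactly your $H^*X$, $(ri)^*X$, and $r^*X$. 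Your direct unpacking makes it transparent that only the homotopy retraction $(r,u)$ is used, whereas the paper's formulation keeps the connection to \cref{she-as-retract} explicit, which is convenient for the orientation bookkeeping. Your closing remark that the ``strong'' hypothesis is not directly used is accurate; it plays no role in either argument beyond supplying $r$ and $u$.
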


\begin{proof}
Let $A \to B$ be a cofibration in $\cats{E}_\cof$ and $0$-oriented strong homotopy equivalence (the $1$-oriented case is dual).
We will solve the extension problem~\eqref{fibration-extension-square}.
By the characterisation of strong homotopy equivalences given by part~(3) of \cref{she-as-retract}, we have a retract diagram
\begin{tikzeq}{fibration-generating-extension:1}
\matrix[diagram,column sep={11em,between origins}]
{
  |(Al)| A & |(pp)| (\simp{1} \tensorsSetsE A) \push_{\braces{0} \tensorsSetsE A} (\braces{0} \tensorsSetsE B) & |(Ar)| A                 \\
  |(Bl)| B & |(p)|   \simp{1} \tensorsSetsE B                                                                  & |(Br)| B \rlap{\text{.}} \\
};

\draw[->] (Al) to (Bl);
\draw[->] (pp) to (p);
\draw[->] (Ar) to (Br);

\draw[->] (Al) to (pp);
\draw[->] (pp) to (Ar);
\draw[->] (Bl) to node[above] {$\lambda^1_1 \tensorsSetsE B$} (p);
\draw[->] (p)  to (Br);
\end{tikzeq}
Let $Z \to \simp{1} \tensorsSetsE A \push_{\braces{0} \tensorsSetsE A} \braces{0} \tensorsSetsE B$ denote the pullback of $X \to A$ along the top right map.
Pulling back $Z$ to $\simp{1} \tensorsSetsE A$, $\braces{0} \tensorsSetsE A$ and $\braces{0} \tensorsSetsE B$ (the components of its base pushout),
we obtain the solid part of the diagram
\begin{tikzeq*}
\matrix[diagram,column sep={between origins,4em},row sep={between origins,8ex}]
{
  |(X0)| Z|_{\braces{1} \tensorsSetsE A} &                                        & |(Y0)| Y                 &                                        \\[-2ex]
                                         & |(X1)| Z|_{\braces{0} \tensorsSetsE A} &                          & |(Y1)| Z|_{\braces{0} \tensorsSetsE B} \\
  |(A)|  A                               &                                        & |(B)|  B \rlap{\text{,}} &                                        \\
};

\draw[->]        (X0) to node[above right] {$\sim$} (X1);
\draw[->,dashed] (Y0) to node[above right] {$\sim$} (Y1);
\draw[->]        (A)  to                            (B);

\draw[fib] (X0) to (A);
\draw[fib] (X1) to (A);

\draw[fib,dashed] (Y0) to (B);
\draw[fib]        (Y1) to (B);

\draw[->,dashed] (X0) to (Y0);
\draw[->,over]   (X1) to (Y1);

\pbs{X1}{B};
\end{tikzeq*}
with lower square a pullback.
Here, the weak equivalences over $A$ is given by \cref{weak-equivalence-from-fibration-over-line}.
We then complete the diagram using \cref{equivalence-extension}, making the back square a pullback.
Note that $Z|_{\braces{1} \tensorsSetsE A}$ is isomorphic to $X$ over $A$ by the retract~\eqref{fibration-generating-extension:1}.
The extension in~\eqref{fibration-extension-square} is then given by $Y \fto B$.
\end{proof}

\begin{corollary} \label{fibration-extension-generators}
For a horn inclusion $j \in J_\sSet$ and $E \in \cat{E}$, we have $j \tensorsSetsE E \in \mathcal{H}$.
\end{corollary}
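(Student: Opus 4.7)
The plan is to combine two results that have just been established. By \cref{she-mediating-between-cofibrations-and-trivial-cofibrations:generators} of \cref{she-mediating-between-cofibrations-and-trivial-cofibrations}, the map $j \tensorsSetsE E$ is simultaneously a cofibration between cofibrant objects and a strong homotopy equivalence in $\cats{E}$. In particular, $j \tensorsSetsE E$ is a cofibration in $\cats{E}_\cof$ that is a strong homotopy equivalence. Applying \cref{fibration-generating-extension} then places $j \tensorsSetsE E$ into the class $\mathcal{H}$.

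In slightly more detail, the horn inclusion $j$ is a strong homotopy equivalence in $\sSet$ by the classical argument recalled in the proof of \cref{she-mediating-between-cofibrations-and-trivial-cofibrations:generators}, and \cref{tensor-with-she} transfers this property to $j \tensorsSetsE E$. Cofibrancy of the source and target, as well as the fact that $j \tensorsSetsE E$ is itself a cofibration, follow from the pushout tensor statement \cref{pushout-tensor} together with the cofibrancy of the constant object $E \in \cats{E}$ given by \cref{constant-cofibrant} of \cref{cofibrant-properties}. With these observations, $j \tensorsSetsE E$ satisfies exactly the hypotheses of \cref{fibration-generating-extension}, so it has the fibration extension property, which is the definition of belonging to $\mathcal{H}$.

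There is no real obstacle here; the corollary is essentially an immediate bookkeeping consequence of the preceding lemma, included so as to have a convenient reference when building up the class $\mathcal{H}$ in subsequent saturation arguments (closure of $\mathcal{H}$ under cell-complex operations, leading to the fibration extension property for arbitrary trivial cofibrations).
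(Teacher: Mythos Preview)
Your proof is correct and follows exactly the same approach as the paper: apply \cref{fibration-generating-extension} to the fact, recorded in \cref{she-mediating-between-cofibrations-and-trivial-cofibrations:generators} of \cref{she-mediating-between-cofibrations-and-trivial-cofibrations}, that $j \tensorsSetsE E$ is a cofibration in $\cats{E}_\cof$ and a strong homotopy equivalence. The additional unpacking you give of that ingredient is accurate but not needed here, since it is already contained in the proof of \cref{she-mediating-between-cofibrations-and-trivial-cofibrations}.
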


\begin{proof}
This is the application of \cref{fibration-generating-extension} to \cref{she-mediating-between-cofibrations-and-trivial-cofibrations:generators} of \cref{she-mediating-between-cofibrations-and-trivial-cofibrations}.
\end{proof}

\begin{lemma} \label{fibration-extension-coproduct}
The class $\mathcal{H}$ is closed under countable coproducts.
\end{lemma}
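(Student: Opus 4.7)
The plan is to leverage universality of countable coproducts in the countably lextensive category $\cat{E}$ (and hence in $\cats{E}$) to reduce an extension problem along a coproduct $\bigcoprod_k i_k$ to extension problems along each $i_k$ separately, and then reassemble by taking the coproduct of the local extensions.

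Fix a countable family $(i_k \from A_k \to B_k)_{k \in \nat}$ of maps in $\mathcal{H}$ and write $i \from A \to B$ for their coproduct. First I would check that $i$ is a cofibration between cofibrant objects: by \cref{saturation} applied to the cofibrations $\emptyset \to A_k$, $\emptyset \to B_k$, and $i_k$, both $A$ and $B$ are cofibrant and $i$ is a cofibration.

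Next, given a fibration $p \from X \fto A$ with $X$ cofibrant, I would decompose $p$ using the van Kampen property of $A = \bigcoprod_k A_k$: setting $X_k \eqdef X \times_A A_k$ and letting $p_k \from X_k \to A_k$ denote the pullback, effectivity of the coproduct yields $X \iso \bigcoprod_k X_k$ with each component square a pullback. Each $p_k$ is then a fibration by \cref{item:retract_pb_com} of \cref{lem:basic_pointwise_fibrations}, and each $X_k$ is cofibrant as the pullback over $A$ of the cofibrant objects $X$ and $A_k$, via \cref{pullback-cofibrant} of \cref{thm:pullback-combined}. Since $i_k \in \mathcal{H}$, each $p_k$ extends along $i_k$ to a fibration $q_k \from Y_k \fto B_k$ in $\cats{E}_\cof$ sitting in a pullback square.

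Taking $Y \eqdef \bigcoprod_k Y_k$ (cofibrant by the same coproduct-saturation argument used for $B$) and $q \eqdef \bigcoprod_k q_k$, the coproduct of the component pullback squares is itself a pullback square by universality of the van Kampen coproduct $B = \bigcoprod_k B_k$, so the resulting diagram is an extension in the sense of~\eqref{fibration-extension-square}. It remains to verify that $q$ is a fibration; this is precisely the countable case of part~\ref{colimit-of-fibrations:coproduct} of \cref{colimit-of-fibrations}, applicable since $\cat{E}$ is $\omega_1$-lextensive. The argument is essentially formal once the three ingredients — van Kampen decomposition, the $\mathcal{H}$-hypothesis, and closure of fibrations under countable coproducts — are in place; the only subtle point is tracking cofibrancy through the pullback $X_k$, which is handled by \cref{pullback-cofibrant} of \cref{thm:pullback-combined}.
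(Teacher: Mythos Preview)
Your proof is correct and follows essentially the same approach as the paper: decompose the given fibration over $\bigcoprod_k A_k$ via the van Kampen coproduct, extend each piece using $i_k \in \mathcal{H}$, reassemble via coproduct, and invoke \cref{colimit-of-fibrations:coproduct} of \cref{colimit-of-fibrations} together with \cref{saturation} and \cref{pullback-cofibrant} of \cref{thm:pullback-combined} for the required fibration and cofibrancy checks. One minor terminological slip: the identification $X \iso \bigcoprod_k X_k$ from $X_k = X \times_A A_k$ is \emph{universality} of the van Kampen coproduct, not effectivity; effectivity is what later ensures the cocone squares for $\bigcoprod_k Y_k$ over $\bigcoprod_k B_k$ are pullbacks.
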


\begin{proof}
Let $A_i \to B_i$ be a family of maps in $\mathcal{H}$ for $i \in I$ countable.
Note that $\bigcoprod_{i \in I} A_i \to \bigcoprod_{i \in I} B_i$ is a cofibration between cofibrant objects by \cref{saturation}.
Suppose we are given a fibration $X \to \bigcoprod_{i \in I} A_i$ in $\cats{E}_\cof$.
We aim to extend it along $\bigcoprod_{i \in I} A_i \to \bigcoprod_{i \in I} B_i$.
Note that $\bigcoprod_{i \in I} B_i$ is a van Kampen colimit since $\cats{E}$ is countably lextensive.

For each $i \in I$, we pull it back to a fibration $X_i \to A_i$ (with $X_i$ cofibrant by \cref{pullback-cofibrant}) and extend it to a fibration $Y_i \to B_i$.
We take their coproduct $\bigcoprod_{i \in I} Y_i \to \bigcoprod_{i \in I} B_i$.
This is a fibration by \cref{colimit-of-fibrations:coproduct} of \cref{colimit-of-fibrations}.
Its domain is cofibrant by \cref{saturation}.
By effectivity, it pulls back along $A_i \to \bigcoprod_{i \in I} B_i$ to the map $X_i \to A_i$ for $i \in I$.
By universality, it thus pulls back along $\bigcoprod_{i \in I} A_i \to \bigcoprod_{i \in I} B_i$ to the original fibration $X \to \bigcoprod_{i \in I} A_i$.
\end{proof}

\begin{lemma} \label{fibration-extension-pushout}
The class $\mathcal{H}$ is closed under pushouts in $\cats{E}$ along maps with cofibrant target.
\end{lemma}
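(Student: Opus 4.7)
The plan is to construct extensions pushout-wise, combining the extension given by the map in $\mathcal{H}$ on one side with the fibration to be extended on the other, and gluing them together using van Kampen colimits.

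Concretely, suppose $i \from A \to B$ is in $\mathcal{H}$ and we are given a pushout square
\begin{tikzeq*}
\matrix[diagram]
{
  |(A)| A & |(C)| C                 \\
  |(B)| B & |(D)| D \rlap{\text{,}} \\
};
\draw[cof] (A) to node[left]  {$i$} (B);
\draw[cof] (C) to node[right] {$j$} (D);
\draw[->]  (A) to (C);
\draw[->]  (B) to (D);
\pbdr{D}{A};
\end{tikzeq*}
with $C$ cofibrant. First I would observe that this pushout exists, is a van Kampen colimit, and that $j$ is again a cofibration with $D$ cofibrant: this follows from \cref{colimits-diagram-category} together with the fact that cofibrations are levelwise complemented inclusions, closed under pushout, and that $A, B$ are cofibrant since $i \in \mathcal{H}$.

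Now given any fibration $Y \fto C$ with $Y$ cofibrant, first pull back along $A \to C$ to obtain a fibration $X \fto A$ with $X$ cofibrant by \cref{pullback-cofibrant} of \cref{thm:pullback-combined}. Since $i \in \mathcal{H}$, we may extend $X \fto A$ along $i$ to a fibration $W \fto B$ with $W$ cofibrant, fitting into a pullback square whose top side is a map $X \to W$. Then form the pushout
\begin{tikzeq*}
\matrix[diagram]
{
  |(X)| X & |(Y)| Y                 \\
  |(W)| W & |(Z)| Z \rlap{\text{.}} \\
};
\draw[cof] (X) to (W);
\draw[->]  (Y) to (Z);
\draw[->]  (X) to (Y);
\draw[->]  (W) to (Z);
\pbdr{Z}{X};
\end{tikzeq*}
Here $X \to W$ is a cofibration since it is a pullback of the cofibration $i$ along the map $W \to B$ between cofibrant objects (\cref{pullback-cofibration} of \cref{thm:pullback-combined}); hence the pushout exists and $Z$ is cofibrant as the pushout of a cofibration between cofibrant objects along a map to the cofibrant object $Y$. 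The induced map $Z \to D$ arises from the universal property.

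It remains to check that $Z \fto D$ is a fibration and that its pullback along $C \to D$ recovers $Y \fto C$. The former follows directly from \cref{colimit-of-fibrations:pushout} of \cref{colimit-of-fibrations}: we apply it to the span $(W \fto B) \leftarrow (X \fto A) \to (Y \fto C)$ in the arrow category, whose legs are pullback squares (by construction of $W$ as an extension of $X$ and $X$ as the pullback of $Y$) and whose left leg is a levelwise complemented inclusion on codomains since $i$ is a cofibration. The latter follows from the van Kampen property of the pushout $D = B \push_A C$ via the equivalence $\cats{E} \slice D \simeq (\cats{E} \slice B) \times_{\cats{E} \slice A} (\cats{E} \slice C)$ of \cref{van-Kampen-pseudo-limit}, under which $Z$ corresponds to the compatible triple $(W, Y, X)$ and the pullback functor along $C \to D$ is the projection to the $C$-component. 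This exhibits $j \in \mathcal{H}$. I do not anticipate any serious obstacle; the main technical input is \cref{colimit-of-fibrations}, which has precisely the shape required.
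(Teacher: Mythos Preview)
Your proof is correct and follows essentially the same approach as the paper's: pull back the given fibration to $A$, extend along $i$ using the hypothesis $i \in \mathcal{H}$, form the pushout in the arrow category, and verify that the result is a fibration via \cref{colimit-of-fibrations:pushout} of \cref{colimit-of-fibrations} and that it restricts correctly via effectivity of the van Kampen pushout. The only cosmetic difference is that the paper additionally spells out the fibration check by hand (constructing compatible sections via \cref{structured-fibration-extension} and \cref{van-Kampen-pullback-weighted-limit}), which is precisely the content already packaged in \cref{colimit-of-fibrations}; your direct citation of that lemma is entirely adequate.
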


\begin{proof}
Consider a pushout square
\begin{tikzeq*}
\matrix[diagram]
{
  |(A)| A & |(A')| A'                 \\
  |(B)| B & |(B')| B' \rlap{\text{.}} \\
};

\draw[->] (A) to node[right] {$\in \mathcal{H}$} (B);
\draw[->] (A') to                                (B');

\draw[->] (A) to (A');
\draw[->] (B) to (B');
\pbdr{B'}{A};
\end{tikzeq*}
with $A'$ cofibrant.
Note that $A' \to B'$ is a cofibration between cofibrant objects by \cref{saturation}.
The pushout is van Kampen by part~(i) of \cref{colimits-diagram-category}.
Suppose we are given a fibration $X' \fto A'$ in $\cats{E}_\cof$.
We aim to extend it along $A' \to B'$.

We pull the given fibration back along $A \to A'$ to a fibration $X \fto A$ (here, $X$ is cofibrant by \cref{pullback-cofibrant}) and extend it to a fibration $Y \fto B$.
Let $Y' \to B'$ be the pushout in the arrow category of these three maps.
By effectivity, it pulls back to them.
It is a fibration by \cref{colimit-of-fibrations:pushout} of \cref{colimit-of-fibrations}.
By \cref{pullback-cofibration}, $X \to Y$ is a cofibration, hence so is $X' \to Y'$ by \cref{saturation}.
This makes $Y'$ cofibrant.

We check that $Y' \to B'$ is a fibration using \cref{enriched-lifting-as-pullback-evaluation}.
For each horn inclusion $j \in J_\sSet$, we construct a section of $\hat{\ev}_j(Y' \to B')$ given sections of $\hat{\ev}_j(X' \to A')$ and $\hat{\ev}_j(Y \to B)$.
We pull the section of $\hat{\ev}_j(X' \to A')$ back to a section of $\hat{\ev}_j(X \to A)$ and then extend it using \cref{structured-fibration-extension} to a section of $\hat{\ev}_j(Y \to B)$.
The goal follows by \cref{van-Kampen-pullback-weighted-limit} and functoriality of colimits.
\end{proof}

\begin{lemma} \label{fibration-extension-sequential-colimit}
The class $\mathcal{H}$ is closed under sequential colimits.
\end{lemma}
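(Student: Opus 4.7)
The plan is to follow the template of \cref{fibration-extension-coproduct} and \cref{fibration-extension-pushout}. Let $(f_i \from A_i \to A_{i+1} \mid i \in \mathbb{N})$ be a sequence of maps in $\mathcal{H}$ and write $A_\infty = \colim_i A_i$. Since each $f_i$ is a cofibration between cofibrant objects, it is a levelwise complemented inclusion by \cref{cofibration-as-retract-of-cell-complex}, and hence the sequential colimit $A_\infty$ exists and is van Kampen by \cref{colimits-diagram-category:sequential} of \cref{colimits-diagram-category}. By \cref{saturation}, the composite $A_0 \to A_\infty$ is itself a cofibration between cofibrant objects, so it is a legitimate candidate for membership in $\mathcal{H}$.

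Given a fibration $p_0 \from X_0 \fto A_0$ in $\cats{E}_\cof$, I would apply the fibration extension property of the $f_i \in \mathcal{H}$ inductively to build a sequence of fibrations $p_i \from X_i \fto A_i$ whose successive naturality squares over $f_i$ are pullbacks. Each $X_i$ is cofibrant by \cref{pullback-cofibrant} of \cref{thm:pullback-combined}, and each map $X_i \to X_{i+1}$ is a pullback of the levelwise complemented inclusion $f_i$, hence itself a levelwise complemented inclusion. I then set $Y \eqdef \colim_i X_i$; this colimit exists and is van Kampen by the same argument as for $A_\infty$, comes with a canonical map to $A_\infty$, and has cofibrant domain by \cref{saturation}.

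It remains to verify that $Y \to A_\infty$ is a fibration and that the square $X_0 \to Y$ over $A_0 \to A_\infty$ is a pullback. The first is a direct application of \cref{colimit-of-fibrations:sequential-composition} of \cref{colimit-of-fibrations} to the sequence $(p_i)$: the hypotheses are satisfied since the successive squares are pullbacks by construction and the codomain step maps $f_i$ are levelwise complemented inclusions. The second is an instance of effectivity of the van Kampen colimit $A_\infty$ applied to the \cartesian transformation $X_\bullet \to A_\bullet$, which forces every square in the resulting colimit cocone to be a pullback; in particular, this holds at index $0$, yielding the desired extension square. No genuine obstacle is expected, since all the necessary machinery, notably \cref{colimit-of-fibrations}, was developed in \cref{sec:enrwfs} precisely to support this sort of inductive argument.
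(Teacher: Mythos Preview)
Your proof is correct and follows essentially the same approach as the paper's: inductively extend along the $f_i$, take the sequential colimit, and invoke \cref{colimit-of-fibrations:sequential-composition} of \cref{colimit-of-fibrations} for the fibration property, effectivity of the van Kampen colimit for the pullback square, and \cref{saturation} for cofibrancy of $Y$. One minor point: your citation of \cref{pullback-cofibrant} for the cofibrancy of each $X_i$ is unnecessary, since the fibration extension property for $f_i \in \mathcal{H}$ is defined within $\cats{E}_\cof$ and so produces a cofibrant $X_{i+1}$ directly; the paper instead cites \cref{pullback-cofibration} to conclude that $X_i \to X_{i+1}$ is a cofibration (not just a levelwise complemented inclusion), but your weaker observation suffices for the colimit arguments.
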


\begin{proof}
Consider the colimit $B$ of a sequential diagram
\begin{tikzeq*}
\matrix[diagram]
{
  |(A0)| A_0 & |(A1)| A_1 & |(A2)| \ldots \rlap{\text{.}} \\
};

\draw[->] (A0) to node[above] {$\in \mathcal{H}$} (A1);
\draw[->] (A1) to node[above] {$\in \mathcal{H}$} (A2);
\end{tikzeq*}
Note that it is van Kampen by part~(ii) of \cref{colimits-diagram-category}.
Suppose we are given a fibration $X_0 \fto A_0$ in $\cats{E}_\cof$.
We aim to extend it along $A_0 \to B$.

By induction on $k$, we extend to a fibration $X_k \fto A_k$.
The maps $X_k \to X_{k+1}$ are cofibrations by \cref{pullback-cofibration}.
In the end, we take the colimit and obtain a map $Y \to B$.
By effectivity, it pulls back to the maps $X_k \fto A_k$.
It is a fibration by \cref{colimit-of-fibrations:sequential-composition} of \cref{colimit-of-fibrations}.
Note that $Y$ is cofibrant by \cref{saturation}.
\end{proof}

\begin{lemma} \label{fibration-extension-retract}
The class $\mathcal{H}$ is closed under codomain retracts.
\end{lemma}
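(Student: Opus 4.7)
My plan is to unpack the retract structure and invoke stability of fibrations and cofibrant objects under pullback. Suppose $i \from A \to B$ is a codomain retract of $i' \from A \to B' \in \mathcal{H}$, witnessed by a section $s \from B \to B'$ and retraction $r \from B' \to B$ with $r \circ s = \mathrm{id}_B$, $s \circ i = i'$ and $r \circ i' = i$. First I would observe that $i$ is itself a cofibration between cofibrant objects: cofibrations are closed under retracts by \cref{saturation}, and cofibrant objects are closed under finite limits (hence under retracts) by \cref{cofibrant-limit} of \cref{thm:pullback-combined}.

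To verify the fibration extension property for $i$, I would start with an arbitrary fibration $p \from X \fto A$ in $\cats{E}_\cof$ and apply the assumed extension property of $i'$ to produce a fibration $p' \from Y' \fto B'$ fitting into a pullback $X \iso A \times_{B'} Y'$ along $i'$. Using the section $s$, I would then form $Y \defeq B \times_{B'} Y'$ and check that the projection $Y \fto B$ is the desired extension of $p$ along $i$. It is a fibration by stability of fibrations under pullback (\cref{item:retract_pb_com} of \cref{lem:basic_pointwise_fibrations}), and $Y$ is cofibrant by \cref{pullback-cofibrant} of \cref{thm:pullback-combined}, since both $B$ and $Y'$ are cofibrant. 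Pullback pasting then yields
\[
Y \times_B A \;=\; (B \times_{B'} Y') \times_B A \;=\; A \times_{B'} Y' \;=\; X,
\]
where the middle equality uses the factorisation $i' = s \circ i \from A \to B'$, and the last equality is the extension of $p$ along $i'$.

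The argument is essentially formal, resting entirely on stability properties of fibrations and cofibrant objects under pullback that have already been established earlier in the paper, so I do not anticipate any real obstacle; the only choice to be made is in which order to pull back $Y'$ (first along $s$, then along $i$).
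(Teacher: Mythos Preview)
Your argument is correct and is essentially the same as the paper's, just unpacked. The paper's entire proof is ``This is an instance of \cref{extension-cancellation}'': given a codomain retract $i$ of $i' \in \mathcal{H}$ with section $s \from B \to B'$, one has $i' = s \circ i$, so by \cref{extension-cancellation} (with $f = i$, $g = s$) the fibration extension property of $i'$ descends to $i$; the content of that lemma is exactly your ``extend along $i'$, then pull back along $s$'' step, citing the same stability results. You additionally verify explicitly that $i$ is a cofibration between cofibrant objects, which the paper leaves implicit (in its application in \cref{fibration-extension} this is already guaranteed).
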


\begin{proof}
This is an instance of \cref{extension-cancellation}.
\end{proof}

\begin{proposition}[Fibration extension property] \label{fibration-extension}
Trivial cofibrations in $\cats{E}_\cof$ have the fibration extension property.
\end{proposition}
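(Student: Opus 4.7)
The plan is to prove that the class $\mathcal{H}$ contains all trivial cofibrations with cofibrant source by reducing to $J_{\cats{E}}$-cell complexes and invoking the closure properties already established.

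First, given a trivial cofibration $i \from A \to B$ in $\cats{E}_\cof$, I would apply \cref{cofibration-as-retract-of-cell-complex} (for the set $J_{\cats{E}}$) to present $i$ as a codomain retract of a $J_{\cats{E}}$-cell complex $i' \from A \to B'$. By \cref{fibration-extension-retract}, it then suffices to show that $i'$ lies in $\mathcal{H}$.

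Next, I would unpack the cell complex structure of $i'$. By construction (\cref{thm:cell-complex-def}), $i'$ is a sequential composite of pushouts of countable coproducts of maps of the form $E \tensorEsE \fset{j} \from E \tensorEsE \fset{\horn{n,k}} \to E \tensorEsE \fset{\simp{n}}$ for $E \in \cat{E}$ and $j \in J_\sSet$. By \cref{sSet-tensor}, each such map coincides with the tensor $j \tensorsSetsE E$ of the horn inclusion $j$ with the constant simplicial object $E$. Thus \cref{fibration-extension-generators} places each building block in $\mathcal{H}$, and \cref{fibration-extension-coproduct} places each countable coproduct of these generators in $\mathcal{H}$.

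To assemble the cell complex, I would perform an induction along the sequence, verifying at each stage that the pushout being taken falls under \cref{fibration-extension-pushout}, \ie, that the target of the attached coproduct of generators remains cofibrant. This is where some care is needed: starting from the cofibrant object $A$, each stage of the cell complex is obtained as a pushout of a trivial cofibration along a map with cofibrant target, and by \cref{saturation} together with \cref{cofibrant-limit,pullback-cofibrant} of \cref{thm:pullback-combined}, the new apex is again cofibrant. Hence \cref{fibration-extension-pushout} applies at every stage, and \cref{fibration-extension-sequential-colimit} concludes that the sequential colimit $i'$ lies in $\mathcal{H}$.

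There is no real obstacle here beyond the bookkeeping involved in tracking cofibrancy of the intermediate objects in the cell complex and ensuring that the closure lemmas apply uniformly; the genuinely hard content already sits in \cref{fibration-generating-extension} (which feeds \cref{fibration-extension-generators} through the Equivalence Extension Property) and in the closure lemmas for coproducts, pushouts and sequential colimits, all of which reduce the extension problem to the componentwise case by exploiting that the relevant colimits in $\cats{E}$ are van Kampen (\cref{colimits-diagram-category}) and preserved under the pullback evaluations computing fibrancy (\cref{van-Kampen-pullback-weighted-limit,colimit-of-fibrations}).
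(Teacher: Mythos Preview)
Your proposal is correct and follows essentially the same approach as the paper: write the trivial cofibration as a codomain retract of a $J_{\cats{E}}$-cell complex via \cref{cofibration-as-retract-of-cell-complex}, then climb through the cell complex using \cref{fibration-extension-generators,fibration-extension-coproduct,fibration-extension-pushout,fibration-extension-sequential-colimit,fibration-extension-retract}, tracking cofibrancy of the intermediate stages by induction. One small remark: your citation of \cref{cofibrant-limit,pullback-cofibrant} for the cofibrancy of the successive stages is off --- those results concern limits and pullbacks --- whereas what you actually need is that trivial cofibrations are cofibrations (\cref{triv-fib-is-fib}) and that cofibrations are closed under the cell-complex operations (\cref{saturation}), so that starting from the cofibrant object $A$ each stage remains cofibrant.
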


\begin{proof}
We have to show that $\mathcal{H}$ includes all trivial cofibrations between cofibrant objects.
By \cref{cofibration-as-retract-of-cell-complex}, any such trivial cofibration can be written as a codomain retract of a sequential colimit of pushouts of countable coproducts of tensors with objects of $E$ of maps in $J_{\cats{E}}$.
By induction, all the stages of the sequential colimit are cofibrant.
This means that the above pushout squares all consist of cofibrant objects.
The claim now follows starting from \cref{fibration-extension-generators} using the closure properties of $\mathcal{H}$ given by \cref{fibration-extension-coproduct,fibration-extension-pushout,fibration-extension-sequential-colimit,fibration-extension-retract}.
\end{proof}

   \section{The effective model structure} \label{sec:model-structure}

The main goal of this section is to establish the existence of the effective model structure. Since the categories with which we work have finite limits but do not
necessarily have finite colimits, it is appropriate to consider a slight generalisation of the usual notion of a model structure. For a category $\cat{E}$  with an initial object and a terminal object, a \emph{model structure} on $\cat E$ consists of three classes of maps $\mathbf{W}$, $\mathbf{C}$, $\mathbf{F}$ such that
\begin{itemize}
\item  $(\mathbf{C}, \mathbf{F} \cap \mathbf{W})$ and $(\mathbf{C} \cap \mathbf{W}, \mathbf{F})$ are \wfs{}s;
\item $\mathbf{W}$ satisfies the 2-out-of-3 property;
\item $\cat{E}$ has pushouts along maps in $\mathbf{C}$;
\item $\cat{E}$ has pullbacks along maps in $\mathbf{F}$.
\end{itemize}
It can then be shown that $\mathbf{W}$ is closed under retracts, as the known proof of this fact (see~\cite{Joyal-Tierney}*{Proposition~7.8} and~\cite{RiehlE:catht}*{Lemma~11.3.3}) applies also assuming only the restricted limits and colimits above. Thus, when $\cat E$ is finitely complete and cocomplete, this notion is equivalent to the usual one. Similarly,  a model structure is determined by two of its three classes of maps also in this setting.

Let us now fix a countably lextensive category $\cat E$.
The existence of the effective model structure on $\cats{E}$ will be a formal consequence of the Frobenius property of \cref{sec:frobenius}, the (trivial) fibration extension property of \cref{sec:equivalence-extension}, and elementary properties of the two \wfs{}s of \cref{two-ewfss}.
To this end, we encapsulate what is used from \cref{sec:equivalence-extension} as a collection of extension operations that all follow the same pattern.

\begin{lemma} \label{extension-squares}
The following hold in $\cats{E}_\cof$.
\begin{parts}
\item \label{extension-squares:cof-triv-fib}
Let $A \to B$ be a cofibration and $X \to A$ be a trivial fibration.
There is a pullback square
\begin{tikzeq*}
\matrix[diagram]
{
  |(X)| X & |(Y)| Y                 \\
  |(A)| A & |(B)| B \rlap{\text{.}} \\
};

\draw[cof,dashed]  (X) to (Y);
\draw[cof]         (A) to (B);
\draw[tfib]        (X) to (A);
\draw[tfib,dashed] (Y) to (B);
\pb{X}{B};
\end{tikzeq*}
with $X \to Y$ a cofibration and $Y \to B$ a trivial fibration.
\item \label{extension-squares:triv-cof-fib}
Let $A \to B$ be a trivial cofibration and $X \to A$ be a fibration.
There is a pullback square
\begin{tikzeq*}
\matrix[diagram]
{
  |(X)| X & |(Y)| Y                 \\
  |(A)| A & |(B)| B \rlap{\text{.}} \\
};

\draw[ano,strike] (X) to (Y);
\draw[ano]        (A) to (B);
\draw[fib]        (X) to (A);
\draw[fib,dashed] (Y) to (B);
\pb{X}{B};
\end{tikzeq*}
with $X \to Y$ a trivial cofibration and $Y \to B$ a fibration.
\item \label{extension-squares:triv-cof-triv-fib}
Let $A \to B$ be a trivial cofibration and $X \to A$ be a trivial fibration.
There is a pullback square
\begin{tikzeq*}
\matrix[diagram]
{
  |(X)| X & |(Y)| Y                 \\
  |(A)| A & |(B)| B \rlap{\text{.}} \\
};

\draw[ano,strike]  (X) to (Y);
\draw[ano]         (A) to (B);
\draw[tfib]        (X) to (A);
\draw[tfib,dashed] (Y) to (B);
\pb{X}{B};
\end{tikzeq*}
with $X \to Y$ a trivial cofibration and $Y \to B$ a trivial fibration.
\end{parts}
\end{lemma}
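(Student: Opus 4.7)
\emph{Plan.} All three parts share a common structure: I first extend the (trivial) fibration $X \to A$ along $A \to B$ using the appropriate extension property from \cref{sec:equivalence-extension} to produce the pullback square with $Y \to B$ having the desired right-map property; then I identify $X \to Y$ as the pullback of $A \to B$ along $Y \to B$ and deduce the desired left-map property using either stability of cofibrations under pullback (\cref{pullback-cofibration} of \cref{thm:pullback-combined}) or the Frobenius property (\cref{frobenius}). Since we are working in $\cats{E}_\cof$ and the proofs of \cref{trivial-fibration-extension} and \cref{fibration-extension} produce cofibrant extensions, the object $Y$ may be chosen cofibrant, which is precisely the hypothesis required for the closure properties to apply.

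For \cref{extension-squares:cof-triv-fib}, I apply \cref{trivial-fibration-extension} to the cofibration $A \to B$ and the trivial fibration $X \to A$, obtaining the desired pullback square with $Y \to B$ a trivial fibration and $Y$ cofibrant. The map $X \to Y$ is then the pullback of the cofibration $A \to B$ along the map $Y \to B$ whose source is cofibrant, so it is a cofibration.

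For \cref{extension-squares:triv-cof-fib}, I apply \cref{fibration-extension} to the trivial cofibration $A \to B$ and the fibration $X \to A$, producing a pullback square with $Y \to B$ a Kan fibration and $Y$ cofibrant. The Frobenius property then shows that pullback along $Y \to B$ preserves trivial cofibrations, so $X \to Y$, being the pullback of the trivial cofibration $A \to B$, is itself a trivial cofibration. For \cref{extension-squares:triv-cof-triv-fib}, since trivial cofibrations are cofibrations and trivial fibrations are fibrations (\cref{triv-fib-is-fib}), applying \cref{trivial-fibration-extension} again yields a pullback square with $Y \to B$ a trivial fibration (in particular a Kan fibration) and $Y$ cofibrant; Frobenius then gives that $X \to Y$, as the pullback of the trivial cofibration $A \to B$ along the Kan fibration $Y \to B$, is a trivial cofibration. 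I expect no substantive obstacle: the lemma is essentially a bookkeeping exercise packaging the extension properties together with the Frobenius and pullback-cofibration lemmas already established.
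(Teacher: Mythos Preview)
Your proposal is correct and matches the paper's proof essentially verbatim: \cref{extension-squares:cof-triv-fib} combines \cref{trivial-fibration-extension} with \cref{pullback-cofibration} of \cref{thm:pullback-combined}, \cref{extension-squares:triv-cof-fib} combines \cref{fibration-extension} with \cref{frobenius}, and \cref{extension-squares:triv-cof-triv-fib} follows from \cref{extension-squares:cof-triv-fib} using \cref{frobenius} together with \cref{triv-fib-is-fib}. Your explicit remark that the extension properties produce a cofibrant $Y$ (needed as the hypothesis for both \cref{pullback-cofibration} and \cref{frobenius}) is a useful clarification that the paper leaves implicit in the phrase ``in $\cats{E}_\cof$''.
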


%\fxnote{\Cref{extension-squares:cof-triv-fib} can be removed if space needs to be saved. It is included for symmetry.}

\begin{proof}
\Cref{extension-squares:cof-triv-fib} is the combination of \cref{trivial-fibration-extension} with \cref{pullback-cofibration} of \cref{thm:pullback-combined}.
\Cref{extension-squares:triv-cof-fib} is the combination of \cref{fibration-extension} with \cref{frobenius}.
\Cref{extension-squares:triv-cof-triv-fib} follows from \cref{extension-squares:cof-triv-fib} using \cref{frobenius} (with \cref{triv-fib-is-fib}).
\end{proof}

Recall from \cref{sec:fib_cat} that a map $X \to Y$ in $\cats{E}_\fib$ is a weak equivalence in the fibration category of \cref{fibcat-Kan} if and only if it is a
pointwise weak equivalence in the sense of \cref{pwe}, \ie, $\Hom_{\sSet}(E, X) \to \Hom_{\sSet}(E, Y)$ is a weak homotopy equivalence of simplicial sets for all $E \in \cat{E}$.
Restricting to cofibrant objects, we obtain a notion of weak equivalence in $\cats{E}_{\cof,\fib}$ that satisfies 2-out-of-3 and interacts as expected with cofibrations and fibrations, as recollected below.

\begin{lemma} \label{model-cof-fib}
In $\cats{E}_{\cof,\fib}$, we have:
\begin{parts}
\item \label{model-cof-fib:cof}
a cofibration is a trivial cofibration exactly if it is a weak equivalence,
\item \label{model-cof-fib:fib}
a fibration is a trivial fibration exactly if it is a weak equivalence.
\end{parts}
\end{lemma}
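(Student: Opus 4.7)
The plan is to dispatch (ii) directly from definitions and to reduce (i) to (ii) by the standard retract-of-factorisation argument, using the forward direction of (i) to feed 2-out-of-3. For (ii), \cref{prop:fibration_pointwise} says that a map $f$ in $\cats{E}$ is a (trivial) fibration exactly when $\Hom_\sSet(E, f)$ is a (trivial) Kan fibration in $\sSet$ for every $E \in \cat{E}$, while by \cref{pwe} it is a weak equivalence exactly when each $\Hom_\sSet(E, f)$ is a weak equivalence of simplicial sets. Since in $\sSet$ a Kan fibration is trivial iff it is a weak equivalence, (ii) follows in both directions.

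For the forward direction of (i), I would observe that since $A$ and $B$ are Kan complexes, a trivial cofibration $f \from A \to B$ in $\cats{E}_{\cof,\fib}$ is in particular a trivial cofibration in the fibration category $\cats{E} \fslice 1$. Hence \cref{triv-map-is-h-equiv:triv-cof} applied with $X = 1$ exhibits $f$ as a homotopy equivalence, and \cref{h-equiv-is-we} then promotes this to a pointwise weak equivalence.

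For the reverse direction, let $f \from A \to B$ be a cofibration and pointwise weak equivalence in $\cats{E}_{\cof,\fib}$. Factorise $f$ via the (trivial cofibration, fibration) weak factorisation system of \cref{two-ewfss} as $A \xrightarrow{i} Z \xrightarrow{p} B$. The object $Z$ is cofibrant, because $\emptyset \to A \xrightarrow{i} Z$ is a composite of cofibrations (using that the trivial cofibration $i$ is a cofibration by \cref{triv-fib-is-fib}), and fibrant, because $Z \xrightarrow{p} B \to 1$ is a composite of fibrations. The forward direction of (i), already proved, then applies to the trivial cofibration $i$ in $\cats{E}_{\cof,\fib}$ to show that $i$ is a pointwise weak equivalence; 2-out-of-3 for pointwise weak equivalences (which inherits from $\sSet$ by applying the functors $\Hom_\sSet(E, \uvar)$) forces $p$ to be a pointwise weak equivalence as well. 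Part (ii), now available, shows that $p$ is a trivial fibration. Lifting the cofibration $f$ against the trivial fibration $p$ in the square with top $i$, bottom $\id_B$, left $f$ and right $p$ produces $s \from B \to Z$ with $s \circ f = i$ and $p \circ s = \id_B$, which exhibits $f$ as a retract of $i$ in the arrow category; closure of trivial cofibrations under retracts then yields that $f$ is a trivial cofibration.

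The main technical point to watch is that the intermediate object $Z$ produced by the factorisation is simultaneously cofibrant and fibrant, since the forward direction of (i) must be re-applied to $i$; this is where cofibrancy of $A$, fibrancy of $B$, and closure of cofibrations and fibrations under composition combine. Beyond this bookkeeping, the argument is purely formal from the two weak factorisation systems of \cref{two-ewfss} together with \cref{prop:fibration_pointwise} and \cref{triv-map-is-h-equiv}.
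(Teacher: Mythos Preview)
Your proof is correct and follows the same approach as the paper: part~\ref{model-cof-fib:fib} is reduced to the corresponding fact in $\sSet$ via \cref{prop:fibration_pointwise}, the forward direction of~\ref{model-cof-fib:cof} is exactly \cref{triv-map-is-h-equiv:triv-cof} combined with \cref{h-equiv-is-we}, and the reverse direction is the standard retract argument. You have simply made explicit what the paper leaves implicit, in particular the verification that the intermediate object $Z$ in the factorisation is both cofibrant and fibrant so that the forward direction of~\ref{model-cof-fib:cof} and part~\ref{model-cof-fib:fib} can be re-applied.
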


\begin{proof}
\Cref{model-cof-fib:fib} is a corollary of \cref{fibration-levelwise}.
For \cref{model-cof-fib:cof}, the forward direction is the combination of \cref{triv-map-is-h-equiv:triv-cof} of \cref{triv-map-is-h-equiv} and \cref{h-equiv-is-we}.
With this, the reverse direction follows by the retract argument.
\end{proof}

In the following, we fix the following terminology regarding the \wfs{}s of \cref{two-ewfss}.
A \emph{fibrant replacement} of $X \in \cats{E}$ is a trivial cofibration $X \to X'$ with $X'$ fibrant.
By a fibrant replacement of a diagram, we mean a levelwise fibrant replacement: given a diagram $X \from \cat{S} \to \cats{E}$, this is a diagram $X' \from \cat{S} \to \cats{E}_\fib$ with a natural transformation $X \to X'$ that is levelwise a trivial cofibration.
If $\cat{S}$ is a finite Reedy category, we can always construct such a replacement using \cref{esmo} and the Reedy process.
In particular, for $[1]$ seen as a direct category, we obtain a fibrant replacement of any arrow that we call \emph{canonical}.
Note that the canonical fibrant replacement preserves trivial cofibrations.
We use dual terminology for \emph{cofibrant replacement}.

Let us write $\mathbf{W}_\cof$ for the class of maps in $\cats{E}_\cof$ whose canonical fibrant replacement is a weak equivalence in $\cats{E}_{\cof,\fib}$.
This will be the class of weak equivalences in the model structure on~$\cats{E}_\cof$ to be established in \cref{model-cof}.

\begin{lemma} \label{we-cof-invariant} Let $A \to B$ in $\cats{E}_{\cof}$. Then, the
the following are equivalent:
\begin{conditions}
\item
the map $A \to B$ is in $\mathbf{W}_\cof$,
\item
the map $A \to B$ has a fibrant replacement that is a weak equivalence in $\cats{E}_{\cof,\fib}$,
\item
all fibrant replacements of the map $A \to B$ are weak equivalences in $\cats{E}_{\cof,\fib}$.
\end{conditions}
\end{lemma}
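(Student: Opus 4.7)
The implications (i) $\Rightarrow$ (ii) and (iii) $\Rightarrow$ (i) are immediate, since the canonical fibrant replacement is itself a fibrant replacement. The substantive content is (ii) $\Rightarrow$ (iii): any two fibrant replacements of $f \from A \to B$ must agree on whether the replacement is a weak equivalence in $\cats{E}_{\cof,\fib}$.

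The plan for (ii) $\Rightarrow$ (iii) is to show, given two fibrant replacements $f_i \from A_i \to B_i$ of $f$ for $i = 1,2$, that one can construct a common refinement: a third fibrant replacement $f_3 \from A_3 \to B_3$ equipped with maps $f_i \to f_3$ in the arrow category of $\cats{E}$ whose component maps $A_i \to A_3$ and $B_i \to B_3$ are trivial cofibrations between cofibrant fibrant objects. Granting this, the conclusion is formal: the components are homotopy equivalences by \cref{triv-map-is-h-equiv:triv-cof} of \cref{triv-map-is-h-equiv}, hence pointwise weak equivalences by \cref{h-equiv-is-we}; applying 2-out-of-3 for pointwise weak equivalences, which holds in $\cats{E}$ because it holds in $\sSet$ and is detected by the functors $\Hom_\sSet(E, \uvar)$, to the squares witnessing $f_i \to f_3$ shows that $f_1$, $f_2$ and $f_3$ are simultaneously pointwise weak equivalences, which in $\cats{E}_{\cof,\fib}$ coincides with being a weak equivalence in the fibration category.

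To construct $f_3$, I would first form the componentwise pushout of the span $f_1 \leftarrow f \to f_2$ in the arrow category, concretely $A_p \eqdef A_1 \push_A A_2$ and $B_p \eqdef B_1 \push_B B_2$ with the induced arrow $f_p \from A_p \to B_p$. These pushouts exist because the maps $A \to A_i$ and $B \to B_i$ are trivial cofibrations, hence levelwise complemented inclusions by \cref{cofibration-as-retract-of-cell-complex}, so \cref{colimits-diagram-category} applies; the legs $A_i \to A_p$ and $B_i \to B_p$ are trivial cofibrations, and $A_p$, $B_p$ remain cofibrant by the pushout stability from \cref{pullback-cofibrant} of \cref{thm:pullback-combined} and \cref{saturation}. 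Next, applying the \wfs{} of trivial cofibrations and fibrations from \cref{two-ewfss} in Reedy fashion, I would factor $B_p \to 1$ as $B_p \to B_3 \to 1$ with the first map a trivial cofibration and $B_3$ fibrant, and then factor the composite $A_p \to B_3$ as $A_p \to A_3 \to B_3$ with the first a trivial cofibration and the second a fibration. The resulting $f_3 \from A_3 \to B_3$ has source and target both fibrant and cofibrant, and composing with the legs into $A_p$ and $B_p$ yields the required maps $f_i \to f_3$ with trivial cofibrations on each component. The argument is essentially formal; the only nontrivial checking is the cofibrancy bookkeeping at each step, which poses no genuine obstacle given the closure properties of cofibrations already established.
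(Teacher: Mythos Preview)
Your proposal is correct and follows essentially the same route as the paper: the paper also cites the existence of pushouts along \ldi{}s (\cref{colimits-diagram-category}), that trivial cofibrations are \ldi{}s (\cref{cofibration-as-retract-of-cell-complex}), closure properties of trivial cofibrations (\cref{saturation}), the forward direction of \cref{model-cof-fib:cof} of \cref{model-cof-fib} (which unfolds to exactly your use of \cref{triv-map-is-h-equiv} plus \cref{h-equiv-is-we}), and 2-out-of-3 in $\cats{E}_{\cof,\fib}$. One small slip: your appeal to \cref{pullback-cofibrant} of \cref{thm:pullback-combined} is misplaced, since that concerns pullbacks; cofibrancy of $A_p$ and $B_p$ follows simply because $A_i \to A_p$ and $B_i \to B_p$ are cofibrations with cofibrant domain, which is covered by \cref{saturation} alone.
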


\begin{proof}
This is a standard argument and goes exactly as in~\cite{GSS}*{Lemma~3.3.1}.
What is used is part~(i) of \cref{colimits-diagram-category} with the fact that trivial cofibrations are levelwise complemented inclusions (\cref{cofibration-as-retract-of-cell-complex}), and closure properties of trivial cofibrations (\cref{saturation}), the forward direction of \cref{model-cof-fib:cof} of \cref{model-cof-fib}, and 2-out-of-3 for weak equivalences in $\cats{E}_{\cof,\fib}$.
\end{proof}

\begin{corollary} \label{we-cof-2-out-of-3}
The class $\mathbf{W}_\cof$ satisfies the 2-out-of-3 property.
\end{corollary}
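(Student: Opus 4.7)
The plan is to reduce 2-out-of-3 in $\mathbf{W}_\cof$ to 2-out-of-3 for pointwise weak equivalences in $\cats{E}_{\cof,\fib}$, using the characterisation provided by \cref{we-cof-invariant}.

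Given a composable pair $f \from A \to B$ and $g \from B \to C$ in $\cats{E}_\cof$, I would first construct a \emph{joint fibrant replacement}, \ie, a commutative diagram
\begin{tikzeq*}
\matrix[diagram]
{
  |(A)|  A  & |(B)|  B  & |(C)|  C  \\
  |(A')| A' & |(B')| B' & |(C')| C' \\
};
\draw[->] (A) to node[above] {$f$} (B);
\draw[->] (B) to node[above] {$g$} (C);
\draw[->] (A') to node[below] {$f'$} (B');
\draw[->] (B') to node[below] {$g'$} (C');
\draw[ano] (A) to (A');
\draw[ano] (B) to (B');
\draw[ano] (C) to (C');
\end{tikzeq*}
in which $A', B', C'$ are fibrant and the vertical maps are trivial cofibrations. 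This is just the Reedy-style replacement for a diagram indexed by $[2]$ viewed as a finite direct category, built inductively using the trivial-cofibration/fibration \wfs{} of \cref{two-ewfss}: first factor $A \to 1$, then factor the composite $B \to A' \push_A B \to 1$, then similarly for $C$. All required pushouts exist by \cref{colimits-diagram-category}, since trivial cofibrations are cofibrations and hence \ldi{}s by \cref{cofibration-as-retract-of-cell-complex}. Note in particular that the pushouts preserve cofibrancy (by \cref{saturation}), so $A', B', C'$ lie in $\cats{E}_{\cof,\fib}$ and $f', g'$ lie in $\cats{E}_{\cof,\fib}$.

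By construction, $f'$, $g'$ and $g' \circ f'$ are fibrant replacements of $f$, $g$ and $g \circ f$, respectively. Applying \cref{we-cof-invariant}, a map among $f, g, g\circ f$ belongs to $\mathbf{W}_\cof$ \iff{} the corresponding map among $f', g', g'\circ f'$ is a pointwise weak equivalence in $\cats{E}_{\cof,\fib}$. So the problem reduces to showing 2-out-of-3 for pointwise weak equivalences between cofibrant-fibrant objects.

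This last statement is immediate: the pointwise weak equivalences between Kan complexes are precisely the weak equivalences in the fibration category of \cref{fibcat-Kan}, whose axiom (F4) gives 2-out-of-6 and in particular 2-out-of-3. (Alternatively, since pointwise weak equivalences are defined by applying the functors $\Hom_\sSet(E, -)$ and asking for a weak equivalence in $\sSet$, the property transfers directly from the corresponding 2-out-of-3 property in $\sSet$.) No real obstacle arises beyond verifying that the Reedy replacement can be carried out in our setting — this is essentially bookkeeping, as the key preservation properties (pushouts along trivial cofibrations, cofibrancy of the resulting objects) have already been established.
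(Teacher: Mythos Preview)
Your proposal is correct and follows essentially the same approach as the paper: construct a levelwise fibrant replacement of the $[2]$-diagram, invoke \cref{we-cof-invariant} to transfer membership in $\mathbf{W}_\cof$ to the replacement, and then appeal to 2-out-of-3 for pointwise weak equivalences between fibrant objects (\cref{fibcat-Kan}). The paper compresses all of this into two sentences, while you spell out the Reedy construction explicitly; the only minor imprecision is the phrase ``factor the composite $B \to A' \push_A B \to 1$'' --- what you mean is to form the pushout $A' \push_A B$ and then factor the map $A' \push_A B \to 1$, so that $B \to B'$ is the composite of two trivial cofibrations.
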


\begin{proof}
Using \cref{we-cof-invariant} with levelwise fibrant replacement of the given 2-out-of-3 diagram, this reduces to closure of weak equivalences in $\cats{E}_{\cof,\fib}$ under 2-out-of-3.
This is part of \cref{fibcat-Kan}.
\end{proof}

\begin{lemma} \label{model-cof-fibration}
In $\cats{E}_\cof$, a fibration is a trivial fibration if and only if it is in $\mathbf{W}_\cof$.
\end{lemma}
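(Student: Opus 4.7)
The plan is to reduce both directions of the statement to the analogous fact in $\cats{E}_{\cof,\fib}$, namely \cref{model-cof-fib:fib} of \cref{model-cof-fib}, by transporting the map $f \colon X \to Y$ to a suitable fibrant replacement using the extension operations of \cref{extension-squares}.

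For the forward direction, suppose $f$ is a trivial fibration. I would pick a fibrant replacement $Y \to Y'$ (a trivial cofibration with $Y'$ fibrant) and apply \cref{extension-squares:triv-cof-triv-fib} of \cref{extension-squares} to the trivial cofibration $Y \to Y'$ and the trivial fibration $f$. This yields a pullback square with a trivial cofibration $X \to X'$ and a trivial fibration $X' \to Y'$; since $Y'$ is fibrant, so is $X'$, and the square constitutes a fibrant replacement of $f$. By the forward direction of \cref{model-cof-fib:fib} of \cref{model-cof-fib}, the replacement $X' \to Y'$ is a weak equivalence in $\cats{E}_{\cof,\fib}$, and then \cref{we-cof-invariant} gives $f \in \mathbf{W}_\cof$.

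For the reverse direction, suppose $f$ is a fibration lying in $\mathbf{W}_\cof$. I would again start with a fibrant replacement $Y \to Y'$, and this time apply \cref{extension-squares:triv-cof-fib} of \cref{extension-squares} to the trivial cofibration $Y \to Y'$ and the fibration $f$. This yields a pullback square with a trivial cofibration $X \to X'$ and a fibration $X' \to Y'$, and again $X'$ is fibrant as a pullback of a fibration over a fibrant object. So $X' \to Y'$ is a fibrant replacement of $f$. By \cref{we-cof-invariant}, this replacement is a weak equivalence in $\cats{E}_{\cof,\fib}$, hence a trivial fibration by the reverse direction of \cref{model-cof-fib:fib} of \cref{model-cof-fib}. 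Finally, since trivial fibrations are stable under pullback by \cref{item:retract_pb_com} of \cref{lem:basic_pointwise_fibrations}, and $f$ is the pullback of $X' \to Y'$ along $Y \to Y'$, we conclude that $f$ is a trivial fibration.

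No step is really an obstacle here: the proof is essentially a packaging argument, and the substantive content sits in \cref{extension-squares} (and, through it, in the fibration extension property). The only thing to be slightly careful about is verifying that the extensions produced by \cref{extension-squares} really serve as fibrant replacements in the sense used by \cref{we-cof-invariant}, \ie{} that the squares with trivial cofibrations on the sides have fibrant corners at the top, which follows immediately since fibrations pull back fibrant objects to fibrant objects.
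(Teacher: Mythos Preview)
Your proof is correct and follows essentially the same approach as the paper: both directions proceed by taking a fibrant replacement of the codomain, applying the appropriate part of \cref{extension-squares} to obtain a fibrant replacement of the map as a (trivial) fibration between fibrant cofibrant objects, and then invoking \cref{model-cof-fib:fib} of \cref{model-cof-fib} together with \cref{we-cof-invariant} and, for the reverse direction, pullback stability of trivial fibrations.
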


\begin{proof}
Let $X \to Y$ be a fibration in $\cats{E}_\cof$.
Take a fibrant replacement $Y \to \overline{Y}$.

If $X \to Y$ is a trivial fibration, we extend it to a trivial fibration $\overline{X} \to \overline{Y}$ using \cref{extension-squares:triv-cof-triv-fib} of \cref{extension-squares}.
Then $\overline{X} \to \overline{Y}$ is a weak equivalence by \cref{model-cof-fib:fib} of \cref{model-cof-fib}, hence $X \to Y$ is in $\mathbf{W}_\cof$ by \cref{we-cof-invariant}.

In the reverse direction, we extend $X \to Y$ to a fibration $\overline{X} \to \overline{Y}$ using \cref{extension-squares:triv-cof-fib} of \cref{extension-squares}.
If $X \to Y$ is in $\mathbf{W}_\cof$, then $\overline{X} \to \overline{Y}$ is a weak equivalence by \cref{we-cof-invariant}, hence a trivial fibration by \cref{model-cof-fib:fib} of \cref{model-cof-fib}.
Then its pullback $X \to Y$ is a trivial fibration by \cref{item:retract_pb_com} of \cref{lem:basic_pointwise_fibrations}.
\end{proof}

\begin{proposition} \label{model-cof}
The category $\cats{E}_{\cof}$ admits a model structure with weak equivalences $\mathbf{W}_\cof$ and the two \wfs{}s of \cref{two-ewfss}.
%In $\cats{E}_{\cof}$, we have:
%\begin{enumerate}[label=(\arabic*)]
%\item \label{model-cof:cof}
%a cofibration is a trivial cofibration exactly if it is in $\mathbf{W}_\cof$,
%\item \label{model-cof:fib}
%a fibration is a trivial fibration exactly if it is in $\mathbf{W}_\cof$.
%\end{enumerate}
\end{proposition}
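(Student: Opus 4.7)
The plan is to verify the four conditions of the generalised notion of model structure recalled at the start of the section: the two weak factorisation systems on $\cats{E}_\cof$, the 2-out-of-3 property for $\mathbf{W}_\cof$, and the existence of pushouts along cofibrations and pullbacks along fibrations in $\cats{E}_\cof$.

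I would begin by addressing the restricted (co)completeness of $\cats{E}_\cof$ together with the restriction of the two weak factorisation systems of \cref{two-ewfss}. Pullbacks along fibrations exist in $\cats{E}_\cof$ because cofibrant objects are closed under finite limits by \cref{cofibrant-limit} of \cref{thm:pullback-combined}. Pushouts along cofibrations exist in $\cats{E}$ by \cref{colimits-diagram-category:pushout} of \cref{colimits-diagram-category}, since cofibrations are levelwise complemented inclusions by \cref{cofibration-as-retract-of-cell-complex}; such pushouts preserve cofibrancy of the opposite vertex, as cofibrations compose. For the restriction of the two weak factorisation systems, given $f \from X \to Y$ with $X$ cofibrant, the enriched small object argument of \cref{esmo} produces a factorisation $X \to X' \to Y$ in which $X \to X'$ is a cofibration (respectively, trivial cofibration) built as a cell complex from $X$, so $X'$ is cofibrant by \cref{saturation}; the lifting properties are inherited from those in $\cats{E}$.

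The remaining task is to identify the right class of the first weak factorisation system on $\cats{E}_\cof$ with fibrations intersected with $\mathbf{W}_\cof$, and the left class of the second with cofibrations intersected with $\mathbf{W}_\cof$. The former is exactly \cref{model-cof-fibration}. For the latter, the forward direction uses that the canonical fibrant replacement of a trivial cofibration remains a trivial cofibration, which in $\cats{E}_{\cof,\fib}$ is a homotopy equivalence by \cref{triv-map-is-h-equiv:triv-cof} of \cref{triv-map-is-h-equiv} and hence a weak equivalence by \cref{h-equiv-is-we}, so the original map lies in $\mathbf{W}_\cof$ by \cref{we-cof-invariant}. For the reverse direction, let $f \from A \to B$ be a cofibration in $\mathbf{W}_\cof$ with $A, B$ cofibrant, and factor $f = p \circ i$ with $i$ a trivial cofibration and $p$ a fibration; the intermediate object is cofibrant by the previous step. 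By 2-out-of-3 (\cref{we-cof-2-out-of-3}), $p$ lies in $\mathbf{W}_\cof$, hence is a trivial fibration by \cref{model-cof-fibration}. The standard retract argument, using the lift of $f$ against $p$, exhibits $f$ as a retract of $i$, so $f$ is a trivial cofibration by closure of trivial cofibrations under retracts (\cref{saturation}). Finally, the 2-out-of-3 property is \cref{we-cof-2-out-of-3}.

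The main obstacle is ensuring that all factorisations and lifts take place within $\cats{E}_\cof$, so that results such as \cref{model-cof-fibration} apply; once the restriction of the two weak factorisation systems to $\cats{E}_\cof$ has been established, the retract argument in the reverse direction of the third step proceeds in the usual way.
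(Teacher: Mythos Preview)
Your proposal is correct and follows essentially the same approach as the paper: verify the restricted (co)limits, cite \cref{we-cof-2-out-of-3} for 2-out-of-3, invoke \cref{model-cof-fibration} for fibrations, and use the retract argument for the reverse direction on cofibrations. The only cosmetic difference is in the forward direction for cofibrations: the paper observes that a trivial cofibration $A \to B$ admits as fibrant replacement the identity on a fibrant replacement of $B$ (since the composite $A \to B \to B'$ is again a trivial cofibration), whereas you appeal to the canonical Reedy fibrant replacement preserving trivial cofibrations and then to \cref{triv-map-is-h-equiv} and \cref{h-equiv-is-we}; both routes are valid and equally short.
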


\begin{proof}
First note that $\cats{E}_{\cof}$ has finite limits by \cref{cofibrant-limit} of \cref{thm:pullback-combined}, an initial object by lextensivity, and pushouts of cofibrations by \cref{colimits-diagram-category:pushout} of \cref{colimits-diagram-category} (since cofibrations are levelwise complemented inclusions by \cref{cofibration-as-retract-of-cell-complex}).
The class $\mathbf{W}_\cof$ satisfies 2-out-of-3 by \cref{we-cof-2-out-of-3}.

It remains to show that a (co)fibration is trivial exactly if it is a weak equivalence.
For fibrations, this is \cref{model-cof-fibration}.
For cofibrations, the forward direction is immediate using \cref{we-cof-invariant}:
% \cref{we-invariant}
a given trivial cofibration has as fibrant replacement the identity on a fibrant replacement of its codomain; but identities are weak equivalences in $\cats{E}_{\cof,\fib}$ by \cref{fibcat-Kan}.
The backward direction follows from this by the retract argument.
\end{proof}

We write $\mathbf{W}$ for the class of maps in $\cats{E}$ whose canonical cofibrant replacement is in $\mathbf{W}_\cof$. This is the
class of weak equivalences of the effective model structure, to be established in \cref{model}.

\begin{lemma} \label{we-invariant} Let $A \to B$ in $\cats{E}$. Then,
the following are equivalent:
\begin{conditions}
\item
the map $A \to B$ is in $\mathbf{W}$,
\item
the map $A \to B$ has a cofibrant replacement in $\mathbf{W}_\cof$,
\item
all cofibrant replacements of the map $A \to B$ are in $\mathbf{W}_\cof$.
\end{conditions}
\end{lemma}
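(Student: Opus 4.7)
The plan is dual to that of \cref{we-cof-invariant}: instead of pushing out fibrant replacements, I will pull back cofibrant replacements. The implications (iii) $\Rightarrow$ (i) $\Rightarrow$ (ii) are immediate, since the canonical cofibrant replacement is in particular a cofibrant replacement. Hence the entire content of the lemma is the implication (ii) $\Rightarrow$ (iii).

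For (ii) $\Rightarrow$ (iii), suppose $\widetilde f_1 \from \widetilde A_1 \to \widetilde B_1$ and $\widetilde f_2 \from \widetilde A_2 \to \widetilde B_2$ are two cofibrant replacements of $A \to B$, each equipped with levelwise trivial fibrations down to $A$ and $B$. I compare them via the ``pullback replacement''
\[
  \widetilde f \from \widetilde A_1 \pull_A \widetilde A_2 \to \widetilde B_1 \pull_B \widetilde B_2 \, ,
\]
which comes equipped with projections $p_k \from \widetilde A_1 \pull_A \widetilde A_2 \to \widetilde A_k$ and $q_k \from \widetilde B_1 \pull_B \widetilde B_2 \to \widetilde B_k$ for $k = 1, 2$, making the obvious squares commute. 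By \cref{pullback-cofibrant} of \cref{thm:pullback-combined}, the pullback objects are cofibrant, and by \cref{item:retract_pb_com} of \cref{lem:basic_pointwise_fibrations}, both the projections $p_k$ and $q_k$ and the composites down to $A \to B$ are trivial fibrations. In particular, $\widetilde f$ is itself a cofibrant replacement of $A \to B$, and refines both $\widetilde f_1$ and $\widetilde f_2$.

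To finish, I apply \cref{model-cof-fibration} to conclude that each $p_k$ and $q_k$ belongs to $\mathbf{W}_\cof$. Then, applying 2-out-of-3 for $\mathbf{W}_\cof$ (\cref{we-cof-2-out-of-3}) to the commuting relation $\widetilde f_k \circ p_k = q_k \circ \widetilde f$ gives $\widetilde f \in \mathbf{W}_\cof$ if and only if $\widetilde f_k \in \mathbf{W}_\cof$, so $\widetilde f_1 \in \mathbf{W}_\cof$ if and only if $\widetilde f_2 \in \mathbf{W}_\cof$, which delivers (iii). I do not anticipate any substantive obstacle: the argument is formally dual to \cref{we-cof-invariant} and relies only on tools already developed in the paper, namely stability of trivial fibrations and cofibrant objects under pullback, the identification of trivial fibrations in $\cats{E}_\cof$ with $\mathbf{W}_\cof$-maps provided by \cref{model-cof-fibration}, and the 2-out-of-3 property of $\mathbf{W}_\cof$.
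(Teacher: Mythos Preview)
The proposal is correct and takes essentially the same approach as the paper: the paper's proof merely states that the argument is dual to \cref{we-cof-invariant}, citing closure of trivial fibrations under pullback (\cref{item:retract_pb_com} of \cref{lem:basic_pointwise_fibrations}) and the model structure on $\cats{E}_\cof$ (\cref{model-cof}); you have spelled out exactly this dual argument, comparing two cofibrant replacements via their common pullback and invoking the same ingredients.
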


\begin{proof}
This is a standard argument, dual to the one of \cref{we-cof-invariant}.
What is used is closure properties of trivial fibrations (\cref{item:retract_pb_com} of \cref{lem:basic_pointwise_fibrations}) and the model structure on $\cat{E}_\cof$ of \cref{model-cof}.
\end{proof}

\begin{corollary} \label{we-2-out-of-3}
The class $\mathbf{W}$ satisfies the 2-out-of-3 property.
\end{corollary}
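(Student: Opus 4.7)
The plan is to mirror, at the cofibrant-replacement level, the argument used in \cref{we-cof-2-out-of-3} to prove 2-out-of-3 for $\mathbf{W}_\cof$. Given composable maps $f \colon X \to Y$ and $g \colon Y \to Z$ in $\cats{E}$ with two of $f$, $g$, $gf$ in $\mathbf{W}$, I would view the composable pair as a diagram indexed by $[2]$ seen as a finite direct category and construct a Reedy cofibrant replacement using the weak factorisation system of cofibrations and trivial fibrations from \cref{two-ewfss}.

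Explicitly: factor $\emptyset \to X$ as a cofibration followed by a trivial fibration, obtaining a cofibrant replacement $\tilde X \twoheadrightarrow X$; then factor the composite $\tilde X \to X \xrightarrow{f} Y$ as a cofibration $\tilde X \to \tilde Y$ followed by a trivial fibration $\tilde Y \twoheadrightarrow Y$; and finally factor $\tilde Y \to Y \xrightarrow{g} Z$ similarly, producing $\tilde Y \to \tilde Z \twoheadrightarrow Z$. Since cofibrations are closed under composition, each of $\tilde X$, $\tilde Y$, $\tilde Z$ is cofibrant, so we obtain a $[2]$-diagram $\tilde X \xrightarrow{\tilde f} \tilde Y \xrightarrow{\tilde g} \tilde Z$ in $\cats{E}_\cof$ equipped with a levelwise trivial fibration down to the original diagram.

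Restricting this replacement to each of the three arrows $f$, $g$, $g f$ gives a cofibrant replacement of that arrow in the sense of \cref{we-invariant}. Hence, by \cref{we-invariant}, $f \in \mathbf{W}$ iff $\tilde f \in \mathbf{W}_\cof$, $g \in \mathbf{W}$ iff $\tilde g \in \mathbf{W}_\cof$, and $g f \in \mathbf{W}$ iff $\tilde g \tilde f \in \mathbf{W}_\cof$. The desired 2-out-of-3 for $\mathbf{W}$ therefore reduces directly to 2-out-of-3 for $\mathbf{W}_\cof$, which is \cref{we-cof-2-out-of-3}.

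I anticipate no substantial obstacle; the argument is formally dual to the proof of \cref{we-cof-2-out-of-3}, substituting cofibrant replacement of $[2]$-shaped diagrams for levelwise fibrant replacement. The only point worth verifying is that the restriction of a Reedy cofibrant replacement of a $[2]$-diagram to any of its three $[1]$-shaped subdiagrams is itself a cofibrant replacement in the sense of \cref{we-invariant}, which is immediate from the construction above.
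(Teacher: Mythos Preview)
Your proposal is correct and takes essentially the same approach as the paper, which simply says the proof is analogous to that of \cref{we-cof-2-out-of-3}. You have spelled out in detail what that analogy amounts to: a levelwise cofibrant replacement of the $[2]$-diagram via the Reedy process, followed by an appeal to \cref{we-invariant} to reduce to 2-out-of-3 for $\mathbf{W}_\cof$.
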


\begin{proof}
This is analogous to the proof of \cref{we-cof-2-out-of-3}.
\end{proof}

We can finally establish the existence of the effective model structure on $\cats E$.

\begin{theorem}[The effective model structure] \label{model}
Let $\cat{E}$ be a countably lextensive category.
\begin{parts}
\item \label{model:two-wfss} The category $\cats{E}$ of simplicial objects in $\cat E$ admits a model structure determined by the two \wfs{}s of \cref{two-ewfss}.
\item \label{model:between-fibrant} A map between fibrant objects is a weak equivalence in this model structure  if and only if it is a pointwise weak equivalence in the sense of \cref{pwe}.
\item \label{model:between-fibrant-slice} More generally, for $X \in \cats{E}$, a map in $\cats{E} \fslice X$ is a weak equivalence exactly if and only if it is a pointwise weak equivalence in $\cats{E}$ in the sense of \cref{pwe}.
\end{parts}
\end{theorem}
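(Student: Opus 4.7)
The plan is to verify the remaining model structure axioms for $\cats E$, leveraging the model structure on $\cats E_\cof$ established in \cref{model-cof}. The category $\cats E$ has finite limits (inherited from $\cat E$) and pushouts along cofibrations: since cofibrations are levelwise complemented inclusions by \cref{cofibration-as-retract-of-cell-complex}, the conclusion follows from \cref{colimits-diagram-category}. The two weak factorisation systems come from \cref{two-ewfss}, and 2-out-of-3 for $\mathbf{W}$ is given by \cref{we-2-out-of-3}. The content of part (i) is thus the compatibility of the weak factorisation systems with $\mathbf{W}$: trivial cofibrations and trivial fibrations lie in $\mathbf{W}$, and conversely cofibrations (resp., fibrations) in $\mathbf{W}$ are trivial.

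My strategy for the forward directions is to construct cofibrant replacements that preserve the relevant WFS structure. Given a trivial fibration $f \from X \to Y$, I would first cofibrantly replace $Y$ via a trivial fibration $Y' \to Y$ with $Y'$ cofibrant, then factor $\emptyset \to X \pull_Y Y'$ through the (cofibration, trivial fibration) WFS to obtain a cofibrant $X'$ with trivial fibration $X' \to X \pull_Y Y'$. The composite $X' \to Y'$ is then a trivial fibration between cofibrants, hence in $\mathbf{W}_\cof$ by \cref{model-cof-fib}, yielding $f \in \mathbf{W}$ via \cref{we-invariant}. For a trivial cofibration $c \from A \to B$, I would apply \cref{frobenius} to pull $c$ back along a cofibrant replacement $B' \to B$, obtaining a trivial cofibration $A \pull_B B' \to B'$ in $\cats E$; combined with a cofibrant replacement of its source and appropriate factorisations, this produces a cofibrant replacement of $c$ that I would argue is a trivial cofibration in $\cats E_\cof$, hence in $\mathbf{W}_\cof$. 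The backward directions then follow by the standard retract argument using the two WFSs.

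For part (ii), let $f \from X \to Y$ be a map between fibrant objects and $f' \from X' \to Y'$ a cofibrant replacement. The horizontal maps $X' \to X$ and $Y' \to Y$ are trivial fibrations, hence pointwise weak equivalences, so $f$ is pointwise weakly equivalent iff $f'$ is by 2-out-of-3 in $\sSet$. To match $f' \in \mathbf{W}_\cof$ with $f'$ being pointwise weakly equivalent, I would use that the canonical fibrant replacement of $f'$ in $\cats E_\cof$ consists of trivial cofibrations, which become pointwise weak equivalences after reducing to the between-fibrants case where \cref{triv-map-is-h-equiv} and \cref{h-equiv-is-we} apply. Combined with the identification of weak equivalences in $\cats E_{\cof,\fib}$ as pointwise weak equivalences from \cref{fibcat-Kan}, this gives $f \in \mathbf{W}$ iff $f$ is pointwise WE. Part (iii) is established by the same argument in the slice $\cats E \fslice X$, using the fibration category structure of \cref{fibcat-fiberwise} instead of \cref{fibcat-Kan}.

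The main obstacle will be the forward direction for trivial cofibrations in part (i): showing that one can arrange the cofibrant replacement of a trivial cofibration to be itself a trivial cofibration in $\cats E_\cof$ requires a careful combination of \cref{frobenius}, the WFS factorisations, and \cref{model-cof-fibration} to transfer fibration-in-$\mathbf{W}_\cof$ status to trivial fibration status inside $\cats E_\cof$.
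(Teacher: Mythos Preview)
Your overall strategy matches the paper's, and your forward direction for trivial fibrations is essentially the paper's argument spelled out (your construction is exactly the canonical Reedy cofibrant replacement of an arrow, which preserves trivial fibrations). Two points need correction.

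First, the ``main obstacle'' you flag for trivial cofibrations is illusory. After pulling back a cofibrant replacement $\hat B \to B$ along the trivial cofibration $A \to B$, the object $\hat A = A \times_B \hat B$ is \emph{already cofibrant}: trivial cofibrations are cofibrations by \cref{triv-fib-is-fib}, hence levelwise complemented inclusions, hence monomorphisms, so \cref{cofibration-mono-pullback} applies and $\emptyset \to \hat A$ is a cofibration. No further replacement or factorisation is required: $\hat A \to \hat B$ is the desired cofibrant replacement of $A \to B$, it is a trivial cofibration by \cref{frobenius}, and hence lies in $\mathbf W_\cof$ by \cref{model-cof}.

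Second, the retract argument cannot establish both backward directions from only the forward ones: each retract reduction presupposes the backward direction for the other class. One direction must be done directly. The paper does this for fibrations: given a fibration $f \from X \to Y$ in $\mathbf W$, its canonical (Reedy) cofibrant replacement $\hat f \from \hat X \to \hat Y$ is again a fibration, hence a trivial fibration in $\cats E_\cof$ by \cref{model-cof}; then the composite $\hat X \to Y$ is a trivial fibration, and right cancellation for trivial fibrations (\cref{item:tfib_right_cancel} of \cref{lem:basic_pointwise_fibrations}) forces $f$ itself to be a trivial fibration. Only then does the retract argument yield the backward direction for cofibrations.

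For \cref{model:between-fibrant,model:between-fibrant-slice} your hands-on chase works, but the paper's argument is shorter: the model structure restricts to a fibration category on $\cats E_\fib$ (resp.\ $\cats E \fslice X$) with the same fibrations and trivial fibrations as in \cref{fibcat-Kan} (resp.\ \cref{fibcat-fiberwise}), and in any fibration category the weak equivalences are determined by these two classes, so they must coincide with the pointwise weak equivalences.
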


\begin{proof}
First note that $\cats{E}$ has finite limits by lextensivity and the required colimits of a model structure by the same reasoning used for \cref{model-cof}.
We define the class of weak equivalences to be $\mathbf{W}$.
It satisfies 2-out-of-3 by \cref{we-cof-2-out-of-3}.
It remains to show that a (co)fibration is trivial exactly if it is a weak equivalence.

Due do our definition of $\mathbf{W}$, we get for free that every trivial fibration is a weak equivalence, dually to the reasoning for trivial cofibrations in \cref{model-cof}.

For the reverse direction, let $X \to Y$ be a fibration and weak equivalence.
Let $\hat{X} \to \hat{Y}$ denote its canonical cofibrant replacement.
This is the Reedy cofibrant replacement over the inverse category~$[1]$, hence again a fibration.
Since $\hat{X} \to \hat{Y}$ is a fibration and weak equivalence in $\cat{E}_\cof$, it is a trivial fibration by \cref{model-cof}.
The composite $\hat{X} \to Y$ is a trivial fibration by \cref{item:retract_pb_com} of \cref{lem:basic_pointwise_fibrations}.
By \cref{item:tfib_right_cancel} of \cref{lem:basic_pointwise_fibrations}, we deduce that $X \to Y$ is a trivial fibration.

Let $A \to B$ be a trivial cofibration.
Take a cofibrant replacement $\hat{B} \to B$.
Let $\hat{A} \to A$ be its pullback along $A \to B$.
Then $\hat{A}$ is cofibrant by \cref{cofibration-mono-pullback} since trivial cofibrations are
monomorphisms by \cref{triv-fib-is-fib}, $\hat{A} \to A$ is a trivial fibration by \cref{item:retract_pb_com} of \cref{lem:basic_pointwise_fibrations}, and $\hat{A} \to \hat{B}$ is a trivial cofibration by \cref{frobenius}.
In particular, $\hat{A} \to \hat{B}$ is a cofibrant replacement of $A \to B$.
Since it is a trivial cofibration, it is a weak equivalence in $\cat{E}_\cof$ by \cref{model-cof}.
By \cref{we-invariant}, this makes $A \to B$ is a weak equivalence.

It remains to show that every cofibration that is a weak equivalence is a trivial cofibration.
As in \cref{model-cof}, this follows from what we have already established by the retract argument.

This finishes the verification of \cref{model:two-wfss}.
\Cref{model:between-fibrant,model:between-fibrant-slice} follow since every model structure induces a fibration category structure on its fibrant objects (and those of its slices) and the weak equivalences in a fibration category are determined by its fibrations and trivial fibrations.
In our case, we obtain the fibration categories of \cref{fibcat-Kan,fibcat-fiberwise}.
\end{proof}

By \cref{model:between-fibrant} of \cref{model}, a map is a weak equivalence in the effective model structure if and only if its fibrant replacement is a pointwise weak equivalence.
This gives us a description of weak equivalences independent from the class $\mathbf{W}$ used in the construction of the model structure.

The next remark compares the effective model structure \cref{model} to other model structures on categories of simplicial objects.

\begin{remark} \label{rmk:comparison}
When $\cat E$, and hence $\cats E$, is a locally presentable, then
one can use the enriched small object argument of \cite{RiehlE:catht}*{Chapter~13} to produce the two \wfs{}s on $\cats E$ whose fibrations and trivial fibrations are as in \cref{def:fibration_algebraic}. \cref{fibcat-Kan} then implies that $\cats E$ is a weak model category, for example using the dual of~\cite{Hwms}*{Proposition 2.3.3}. It then follows from~\cite{Hcwms}*{Theorem 3.7} that its left saturation (in the sense of~\cite{Hcwms}*{Theorem 4.1}) is a left semi-model category, and from~\cite{Hcwms}*{Theorem 3.8} that it is also is a right semi-model category. In general, this is not quite enough to conclude that it is a Quillen model category (it is what is called a two-sided model category in~\cite{Hcwms}*{Section~5}), but this is already  sufficient for many applications.

When $\cat E$ is an additive locally presentable category, then there is Quillen model structure on $\cats E$ whose fibrations and trivial fibrations are exactly as in \cref{def:fibration_algebraic}. The additional ingredient in this case is that for $A \in \cat E$ and $X \in \cats E$, the object $\Hom_\sSet(A,X)$ is a simplicial abelian group, hence is always a Kan complex. This shows that when $\cat E$ is additive, all objects of $\cats{E}$ are Kan complexes, hence in the discussion above it is immediate that $\cats E$ is left saturated (in the sense of~\cite{Hcwms}) and as it is a saturated right semi-model category where every object is fibrant it is a Quillen model category.
By the Dold--Kan correspondence, the category $\cats E$ is equivalent to the category of chain complexes
concentrated in non-negative degrees in $\cat E$ and under this equivalence the model structure is the so-called absolute (or Hurewicz) model structure on chain complexes (see, \eg,~\cite{christensen2002quillen}*{Corollary~6.4}).

A different model structure on $\cats E$ is established by Quillen in~\cite{Quillen}*{Section~II.4} assuming that
$\cat E$ has finite limits, enough projectives and is either cocomplete with a small set of generators (thus
permitting the small object argument) or such that every object in $\cats E$ is fibrant.
Quillen's weak equivalences and fibrations include the \emph{pointwise} weak equivalences and fibrations
defined here (as the former are defined using evaluation with respect to projective objects only) and the identity functor is a left Quillen functor
from Quillen's model structure to the effective one.  If effective epimorphisms split, then the two model structures coincide.

A class of model structures on $\cats E$ is also defined in~\cite{Goerss-Jardine}*{Chapter~II}.
The construction is parametrised by a functor~$G \from \cats E \to \cats \Set$ with a left adjoint from which weak equivalences and fibrations are created.
If $\cat E$ is complete and cocomplete and maps with the left lifting property with respect to fibrations are weak equivalences, one obtains a model structure.
This is quite different from the effective model structure and more in the spirit of generalizing~\cite{Quillen}*{Section~II.4}.

Finally, a model structure on $\cats E$ has been obtained also in \cite{Hormann}, which appeared shortly after the first version of the present paper and was developed independently. Theorem~6.1 therein is a special case of our \cref{model}, obtained under the additional assumption that every object
of $\cat E$ is a coproduct of $\mathbb{N}$-small objects (see~\cite{Hormann} for details).
\end{remark}

  \section{Descent and right properness} \label{sec:descent}

Having established the existence of the effective model structure on $\cats E$, we now
study some of its properties and those of its associated $\infty$-category $\Ho_\infty(\cats{E})$.
There are many (essentially equivalent) ways of associating an $\infty$-category to a model category,
and our result will make little use of a concrete details of how it is done beyond some very general results.
For the sake of completeness, when we say $\infty$-category we mean quasicategory, and for a general category $\cat{C}$ equipped with a class of weak equivalences,
we define $\Ho_\infty(\cat{C})$ as the $\infty$-category obtained by universally inverting the weak equivalences in $\cat{C}$.
We refer to \cite{cisinski2020higher}, especially its Chapter 7, for the general theory of such localisations.

We begin by studying the behaviour of colimits, using the notion of descent,
which was introduced in model categories by Rezk~\cite{Rezk} as a part of development of higher topos theory. We show that $\cats E$ and hence $\Ho_\infty(\cats{E})$ satisfies \emph{descent} whenever $\cats{E}$ is countably extensive.
This means that colimits in $\Ho_\infty(\cats{E})$ satisfy the higher categorical version of the van Kampen property. In the case of pushouts, this is spelled out in \cref{ms-descent-pushout} below. As in the ordinary categorical case, a colimit in an $\infty$-category $\cat{C}$ satisfies descent if and only if it is preserved by the functor from $\cat{C}^{op}$ to the $\infty$-category of $\infty$-category classified by the slice cartesian fibration. This is essentially proved in section 6.1.3 of \cite{Lurie}, see for example 6.1.3.9.

\begin{proposition}[Model structure descent for pushouts] \label{ms-descent-pushout}
Let $\cat{E}$ be a countably extensive category and let
\begin{tikzeq}{ms-descent-pushout:cube}
\matrix[diagram,column sep={between origins,4em},row sep={between origins,9ex}]
{
    |(X00)| X_{00} & & |(X01)| X_{01} & \\
  & |(X10)| X_{10} & & |(X11)| X_{11}   \\
    |(Y00)| Y_{00} & & |(Y01)| Y_{01} & \\
  & |(Y10)| Y_{10} & & |(Y11)| Y_{11}   \\
};

\draw[->] (X00) to (X01);
\draw[->] (Y00) to (Y01);

\draw[->] (X00) to (Y00);
\draw[->] (X01) to (Y01);

\draw[->,over] (X10) to (X11);
\draw[->]      (Y10) to (Y11);

\draw[->,over] (X10) to (Y10);
\draw[->]      (X11) to (Y11);

\draw[->] (X00) to (X10);
\draw[->] (X01) to (X11);
\draw[->] (Y00) to (Y10);
\draw[->] (Y01) to (Y11);
\end{tikzeq}
be a cube in $\cats{E}$.
Assume that the bottom face is a homotopy pushout and that the left and back faces are homotopy pullbacks.
Then the following are equivalent:
\begin{conditions}
\item \label{ms-descent-pushout:colim} The top face is a homotopy pushout,
\item \label{ms-descent-pushout:cart} the right and front faces are homotopy pullbacks.
\end{conditions}
\end{proposition}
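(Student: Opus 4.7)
The strategy is to reduce the homotopy-theoretic statement to a strict statement about van Kampen pushouts in $\cats{E}$, exploiting left and right properness of the effective model structure (\cref{thm:right-proper,sE-left-proper}) to translate between strict and homotopy (co)limits.

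First, I would strictify the cube. Using a Reedy-style replacement on the diagram consisting of the top and bottom spans together with the vertical maps, I replace the two horizontal spans $X_{10} \leftarrow X_{00} \to X_{01}$ and $Y_{10} \leftarrow Y_{00} \to Y_{01}$ by equivalent spans of cofibrations between cofibrant objects, with the vertical maps $X_{0 j} \to Y_{0 j}$ and $X_{10} \to Y_{10}$ replaced by fibrations. By left properness, the strict pushout of the replaced bottom span is weakly equivalent to $Y_{11}$; by right properness, the strict pullbacks of the relevant verticals along the cofibrations model the given homotopy pullback left and back faces. After this replacement, the hypotheses become: the bottom face is a strict pushout in $\cats{E}$ along a cofibration, and the left and back faces are strict pullbacks; and the conclusions to compare become the corresponding strict statements about the top and the right/front faces.

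Second, I apply the van Kampen property. A cofibration in $\cats{E}$ is in particular a levelwise complemented inclusion by \cref{cofibration-as-retract-of-cell-complex}, so by \cref{colimits-diagram-category} the strict pushout defining $Y_{11}$ is a van Kampen colimit in $\cats{E}$. By \cref{van-Kampen-pseudo-limit}, the slice $\cats{E} \slice Y_{11}$ is the pseudo-limit of the slices over $Y_{00}, Y_{01}, Y_{10}$ along the pullback functors. The \cartesian left and back faces, together with a candidate top, provide a compatible object in this pseudo-limit; its image under the equivalence is exactly an object over $Y_{11}$ equipped with cartesian squares to the three lower $X_{ij}$. Unwinding the equivalence, one sees that the top square is a strict pushout exactly when the right and front squares are strict pullbacks, yielding the equivalence of (i) and (ii) at the strict level. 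Translating strict back to homotopy via the properness replacements completes the argument.

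The main obstacle is the coherent strictification in the first step: the replacements of the two spans and of the vertical maps must be done simultaneously so that the resulting strict cube genuinely models the original homotopy cube. This is handled by viewing the cube as a Reedy diagram over $[1]^3$ and performing a single Reedy cofibrant replacement of the bottom half followed by a fibrant replacement of the verticals in the slice category, using \cref{pullback-cotensor-slice} to control the interaction between the two replacements.
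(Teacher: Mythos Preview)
Your high-level idea—strictify, apply the van Kampen property of the bottom pushout, then translate back—is the same as the paper's, but your execution has two genuine gaps that the paper's proof addresses with specific tools you do not invoke.

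First, the Reedy replacement you describe makes the horizontal maps cofibrations and the vertical maps fibrations, but it does \emph{not} make the left and back faces into strict pullbacks. You only know they are homotopy pullbacks. For the van Kampen argument to apply you need the transformation from the top span to the bottom span to be cartesian, i.e., the left and back faces must be actual pullbacks. In the direction \ref{ms-descent-pushout:colim}$\Rightarrow$\ref{ms-descent-pushout:cart} the paper achieves this by pulling back the entire bottom span along the fibration $X_{11}\to Y_{11}$; in the direction \ref{ms-descent-pushout:cart}$\Rightarrow$\ref{ms-descent-pushout:colim} it invokes the equivalence extension property (\cref{equivalence-extension}) to replace the left and back faces by genuine pullback squares without disturbing the fibration structure. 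Right properness alone does not give you this: it tells you that a strict pullback along a fibration is a homotopy pullback, not that a given homotopy pullback square is isomorphic to a strict one in a way compatible with the rest of the cube.

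Second, for the translation back on the right and front faces you need the induced map $X_{11}\to Y_{11}$ (where $X_{11}$ is the strict pushout of the top span) to be a fibration; otherwise ``strict pullback'' does not imply ``homotopy pullback'' for those faces. This is exactly \cref{colimit-of-fibrations}\ref{colimit-of-fibrations:pushout}, which the paper cites explicitly. Finally, note that you appeal to left properness (\cref{sE-left-proper}), but in the paper that is proved only in \cref{sec:Semi-simplicial}, \emph{after} this proposition; the paper's argument avoids it by working with cofibrant objects throughout, where pushouts along cofibrations are automatically homotopy pushouts.
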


\begin{proof}
Let us view $[1]$ as a Reedy category consisting only of face operators.
We consider the Reedy model structure $[D^\op, \cats{E}]$ of $\cats{E}$ over the Reedy category $D = [1] \times ([1] \times [1])^\op$.
The significance of taking opposites on the latter two factors is that the Reedy category structure is inverted; the face operators become degeneracy operators.
Recall from the beginning of \cref{sec:model-structure} that we regard only certain (co)limits to be part of a model structure; the theory of Reedy model structures makes sense in this setting as seen in~\cref{sec:cofibrations} for the case of the Reedy weak factorisation system over $\Delta$.

The given cube~\eqref{ms-descent-pushout:cube} forms an object of this category by sending $(0, a, b)$ to $Y_{ab}$ and $(1, a, b)$ to $X_{ab}$.
Recall that weak equivalences in the Reedy model structure are levelwise and homotopy pushouts and pullbacks are invariant under levelwise weak equivalences.
We replace the given cube by a cofibrant and fibrant object.
This reduces the claim to the case of~\eqref{ms-descent-pushout:cube} where all object are cofibrant and fibrant, all horizontal maps are cofibrations, and all vertical maps are fibrations.

Let us check the direction from \ref{ms-descent-pushout:colim} to \ref{ms-descent-pushout:cart}, \ie, universality.
Take the pullback of the bottom face along $X_{11} \fto Y_{11}$.
Since all vertical faces in~\eqref{ms-descent-pushout} are homotopy pullbacks, we obtain a square weakly equivalent to the top face.
This reduces the claim to the situation where in addition all vertical faces in~\eqref{ms-descent-pushout:cube} are pullbacks.
Note that the cofibrancy assumptions are preserved by \cref{pullback-cofibration} of \cref{thm:pullback-combined}.

Denote $Q$ the pushout in the bottom face.
Since $Y_{00} \to Y_{01}$ is a levelwise complemented inclusion (\cref{cofibration-as-retract-of-cell-complex}),
$P$ is a van Kampen pushout by \cref{lem:pushout_decidable}, in particular stable under pullback.
From universality, we obtain a pullback square
\begin{tikzeq}{ms-descent-pushout:0}
\matrix[diagram,column sep={between origins,5em}]
{
  |(P)| P      & |(Q)| Q      \\
  |(X)| X_{11} & |(Y)| Y_{11} \\
};

\draw[->]  (P) to (Q);
\draw[->]  (X) to (Y);
\draw[fib] (P) to (X);
\draw[fib] (Q) to (Y);
\pb{P}{Y};
\end{tikzeq}
where $P$ is the pushout in the top face.
Since $X_{00} \to X_{01}$ and $Y_{00} \to Y_{01}$ are cofibrations, the bottom and top faces are homotopy pushouts exactly if the maps $P \to X_{11}$ and $Q \to Y_{11}$ are weak equivalences, respectively.
The goal thus follows from right properness applied to~\eqref{ms-descent-pushout:0}.

Let us check the direction from \ref{ms-descent-pushout:cart} to \ref{ms-descent-pushout:colim}, \ie, effectivity.
Take the pushout in the horizontal faces.
Since all horizontal maps are cofibrations and the horizontal faces are homotopy pushouts, we obtain a cube weakly equivalent to the given cube.
This reduces the goal to the situation where all horizontal faces in~\eqref{ms-descent-pushout:cube} are pushouts, but note that we lose fibrancy properties involving $X_{11}$ and $Y_{11}$.
The cube is now determined (up to isomorphism) by just the left and back faces.
Weakly equivalent left and back faces give rise to weakly equivalent cubes.

Since the back face is a homotopy pullback and the vertical maps are fibrations, the map $X_{00} \to Y_{00} \times_{Y_{01}} X_{01}$ is a weak equivalence.
We apply the equivalence extension property of \cref{equivalence-extension} to this situation:
\begin{tikzeq*}
\matrix[diagram,column sep={between origins,4em},row sep={between origins,8ex}]
{
    |(X00)| X_{00}                       & & |(X'01)| X'_{01} & \\[-2ex]
  & |(P)|   Y_{00} \pull_{Y_{01}} X_{01} & & |(X01)|  X_{01}    \\
    |(Y00)| Y_{00}                       & & |(Y01)|  Y_{01} &  \\
};

\draw[cof] (Y00)  to node[below] {$i$} (Y01);

\draw[->,dashed]  (X00)  to (X'01);
\draw[fib]        (X00)  to (Y00);
\draw[fib,dashed] (X'01) to (Y01);

\draw[->]        (X00)  to node[above right] {$\weq$} (P);
\draw[->,dashed] (X'01) to node[above right] {$\weq$} (X01);

\draw[fib] (P)   to (Y00);
\draw[fib] (X01) to (Y01);

\draw[->,over] (P) to (X01);
\pbs{P}{Y01};
\end{tikzeq*}
We perform the same construction in the left face, obtaining $X_{10}'$.
Now, the squares
\begin{tikzeq*}
\matrix[diagram,column sep={between origins,5em}]
{
  |(X10)| X'_{10} & |(X00)| X_{00} & |(X01)| X'_{01} \\
  |(Y10)| Y_{10}  & |(Y00)| Y_{00} & |(Y01)| Y_{01}  \\
};

\draw[fib] (X00) to (Y00);
\draw[fib] (X10) to (Y10);
\draw[fib] (X01) to (Y01);

\draw[->] (X00) to (X10);
\draw[->] (X00) to (X01);
\draw[->] (Y00) to (Y10);
\draw[->] (Y00) to (Y01);

\pb{X00}{Y01};
\pbur{X00}{Y10};
\end{tikzeq*}
are weakly equivalent to the left and back faces, but are pullbacks.
We have thus reduced to the situation where additionally the left and back faces of~\eqref{ms-descent-pushout:cube} are pullbacks.

Having strictified the given homotopy pushouts and homotopy pullbacks, we proceed as follows.
The maps $X_{00} \to X_{01}$ and $X_{00} \to X_{10}$ are levelwise complemented inclusions by \cref{cofibration-as-retract-of-cell-complex}.
The bottom pushout is van Kampen by \cref{colimits-diagram-category:pushout} of \cref{colimits-diagram-category}.
In particular, the right and front faces are pullbacks.
For them to be homotopy pullbacks, it suffices for $X_{11} \to Y_{11}$ to be a fibration.
This holds by \cref{colimit-of-fibrations:pushout} of \cref{colimit-of-fibrations}.
\end{proof}

\begin{proposition}[Model structure descent for coproducts] \label{ms-descent-coproduct}
  Let $\cat{E}$ be an $\alpha$-extensive category, $X \to Y$ a morphism in $\cats{E}$ and $S$ an $\alpha$-small set.
  Given a square
  \begin{tikzeq*}
  \matrix[diagram]
  {
    |(Xs)| X_s & |(X)| X \\
    |(Ys)| Y_s & |(Y)| Y \\
  };

  \draw[->] (Xs) to (X);
  \draw[->] (Ys) to (Y);
  \draw[->] (Xs) to (Ys);
  \draw[->] (X)  to (Y);
  \end{tikzeq*}
  for each $s \in S$ \st{} the induced morphism $\bigcoprod_s Y_s \to Y$ is a weak equivalence, \tfae{}:
  \begin{conditions}
  \item the square above is a homotopy pullback for each $s \in S$,
  \item the induced morphism $\bigcoprod_s X_s \to X$ is a weak equivalence.
  \end{conditions}
\end{proposition}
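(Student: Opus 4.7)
The plan is to mimic the proof of \cref{ms-descent-pushout} closely. I view the given data as a diagram $F \from [1] \times S^\rhd \to \cats{E}$, where $S^\rhd$ is $S$ with an adjoined terminal object $\star$, with $F(0,s) = X_s$, $F(1,s) = Y_s$, $F(0,\star) = X$, $F(1,\star) = Y$. By an iterated application of the factorisations from \cref{two-ewfss} (analogous to the Reedy replacement used in the proof of \cref{ms-descent-pushout}), I replace $F$ by a weakly equivalent diagram $F'$ in which every object is cofibrant and fibrant and in which each of the maps $X \to Y$ and $X_s \to Y_s$ is a fibration; the hypothesis that $\bigcoprod_s Y_s \to Y$ is a weak equivalence is preserved. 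Since both conditions (i) and (ii) are invariant under levelwise weak equivalence, I may work with $F'$ throughout.

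For the direction (i) implies (ii), I further strictify each square into an honest pullback by replacing $X_s$ with the strict pullback $X \times_Y Y_s$; this object is cofibrant by \cref{pullback-cofibrant} of \cref{thm:pullback-combined} and is weakly equivalent to the original $X_s$ thanks to the homotopy-pullback hypothesis. Now $\alpha$-extensivity (\cref{thm:lextensive}) tells us that $\bigcoprod_s Y_s$ is a van Kampen colimit, so universality yields $\bigcoprod_s X_s \iso X \times_Y \bigcoprod_s Y_s$. Right properness of the effective model structure (\cref{thm:right-proper}) applied to the fibration $X \to Y$ and the weak equivalence $\bigcoprod_s Y_s \to Y$ then gives that $X \times_Y \bigcoprod_s Y_s \to X$ is a weak equivalence, hence so is $\bigcoprod_s X_s \to X$.

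For the converse, form the strict pullbacks $\tilde X_s \eqdef X \times_Y Y_s$, which are fibrations over $Y_s$ between cofibrant-fibrant objects, and let $f_s \from X_s \to \tilde X_s$ be the canonical maps over $Y_s$. By the argument of the previous paragraph, $\bigcoprod_s \tilde X_s \iso X \times_Y \bigcoprod_s Y_s \to X$ is a weak equivalence; combining this with the hypothesis that $\bigcoprod_s X_s \to X$ is a weak equivalence and applying 2-out-of-3 yields that the induced map $\bigcoprod_s f_s \from \bigcoprod_s X_s \to \bigcoprod_s \tilde X_s$ is a weak equivalence. It remains to conclude that each individual $f_s$ is a weak equivalence, which is exactly the homotopy-pullback condition.

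This last deduction is the main obstacle of the proof. I resolve it using the equivalence of slice categories $\cats{E} \slice \bigcoprod_s Y_s \simeq \prod_s \cats{E} \slice Y_s$ supplied by \cref{van-Kampen-pseudo-limit}. By \cref{colimit-of-fibrations:coproduct} of \cref{colimit-of-fibrations} applied separately to fibrations and trivial fibrations, together with $\alpha$-extensivity and pullback stability in the reverse direction, this slice equivalence preserves and reflects both fibrations and trivial fibrations. It therefore restricts to an equivalence between the fibration category structures of \cref{fibcat-fiberwise} on $\cats{E} \fslice \bigcoprod_s Y_s$ and $\prod_s \cats{E} \fslice Y_s$, and in particular it preserves and reflects weak equivalences between fibrant objects. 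Under this correspondence, $\bigcoprod_s f_s$ is matched with the family $(f_s)_{s \in S}$, and the weak equivalences in a product of fibration categories are exactly the componentwise weak equivalences; hence each $f_s$ is a weak equivalence, completing the proof.
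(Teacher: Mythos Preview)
Your proof is correct. For the direction (i) $\Rightarrow$ (ii) it follows the same line as the paper's argument for pushouts: strictify the homotopy pullback squares, use universality of the van Kampen coproduct $\bigcoprod_s Y_s$ to identify $\bigcoprod_s X_s$ with $X \times_Y \bigcoprod_s Y_s$, and conclude by right properness.

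For the direction (ii) $\Rightarrow$ (i) your argument is correct but takes a genuinely different route from the one suggested by the paper. The paper's approach (the ``simpler variant'' of the pushout effectivity argument) is to replace $X$ and $Y$ by the strict coproducts $\bigcoprod_s X_s$ and $\bigcoprod_s Y_s$ along the given weak equivalences; the resulting squares are then the coproduct-inclusion squares, which are strict pullbacks by effectivity of van Kampen coproducts, and \cref{colimit-of-fibrations:coproduct} of \cref{colimit-of-fibrations} shows that $\bigcoprod_s X_s \to \bigcoprod_s Y_s$ is a fibration, so these strict pullbacks are homotopy pullbacks. Your approach instead compares each $X_s$ to the strict pullback $\tilde X_s$ and reduces to showing that the coproduct functor \emph{reflects} weak equivalences between objects fibrant over the $Y_s$'s, which you achieve via the slice equivalence $\cats{E} \fslice \bigcoprod_s Y_s \simeq \prod_s \cats{E} \fslice Y_s$ and Ken Brown's lemma. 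Both arguments ultimately rest on \cref{colimit-of-fibrations:coproduct}; the paper's is shorter, while yours isolates a reusable fact (reflection of weak equivalences by coproducts over a van Kampen base) that does not otherwise appear in the paper.
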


\begin{proof}
This follows from a simpler variant of the previous argument, for $\alpha$-small coproducts instead of pushouts.
This uses \cref{colimit-of-fibrations:coproduct} instead of
\cref{colimit-of-fibrations:pushout} of \cref{colimit-of-fibrations}.
\end{proof}

\Cref{ms-descent-pushout,ms-descent-coproduct} have
an immediate counterpart at the $\infty$-categorical level.

\begin{theorem} \label{thm-descent-infty}
Let $\cat{E}$ be an $\alpha$-extensive category.
The $\infty$-category $\Ho_\infty(\cats{E})$ has all $\alpha$-small colimits.
These colimits satisfy descent.
\end{theorem}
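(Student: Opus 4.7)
The plan is to deduce this result from its model-categorical counterparts, namely \cref{ms-descent-pushout} (for pushouts) and \cref{ms-descent-coproduct} (for $\alpha$-small coproducts), together with the standard fact that any $\alpha$-small colimit can be built from pushouts and $\alpha$-small coproducts. Existence of colimits in the localisation is a general fact about model categories (or categories of fibrant objects, by the theory developed in~\cite{cisinski2020higher}*{Chapter~7}): pushouts in $\Ho_\infty(\cats{E})$ are computed by homotopy pushouts in $\cats{E}$, and $\alpha$-small coproducts in $\Ho_\infty(\cats{E})$ are computed by coproducts of cofibrant replacements in $\cats{E}$. Since coproducts in $\cats{E}$ along cofibrant objects exist (cofibrations are levelwise complemented inclusions and hence $\alpha$-small coproducts of cofibrant objects exist by \cref{colimits-diagram-category}), this part presents no difficulty. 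Combining pushouts with $\alpha$-small coproducts, one then obtains all $\alpha$-small colimits in the $\infty$-category.

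For descent, I would use the characterisation of descent via the pseudo-functor $Z \mapsto \Ho_\infty(\cats{E}) \slice Z$ preserving the relevant colimits (contravariantly), equivalently that colimits are van Kampen in the $\infty$-categorical sense recalled in the paragraph before \cref{ms-descent-pushout}. For pushouts, a cube as in~\eqref{ms-descent-pushout:cube} in $\Ho_\infty(\cats{E})$ can be represented up to equivalence by a cube of fibrant-cofibrant objects in $\cats{E}$ whose homotopy pushouts and pullbacks in the model structure compute the $\infty$-categorical ones; then \cref{ms-descent-pushout} gives exactly the required equivalence between the top face being a pushout and the vertical faces being pullbacks. The same reduction works for coproducts via \cref{ms-descent-coproduct}.

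Finally, I would invoke the general principle that if an $\infty$-category admits a class $\cat{K}$ of colimits all satisfying descent, and $\cat{K}$ generates under iterated colimits a larger class $\cat{K}'$, then the colimits in $\cat{K}'$ also satisfy descent. Concretely, pullbacks commute with all colimits they descend over; iterating this through the construction of an arbitrary $\alpha$-small colimit as an $\alpha$-small coproduct followed by coequalisers (which themselves are pushouts), one obtains descent for all $\alpha$-small colimits from descent for pushouts and $\alpha$-small coproducts. The main obstacle is the careful bookkeeping needed in this last step, in particular justifying the passage between model-categorical descent (phrased in terms of homotopy pullback squares) and its $\infty$-categorical formulation (preservation of colimits by the slice pseudo-functor); this is done using the comparison results of~\cite{cisinski2020higher}*{Chapter~7} ensuring that homotopy (co)limits in a model category compute $\infty$-categorical (co)limits in the underlying $\infty$-category.
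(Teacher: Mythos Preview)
Your proposal is correct and follows essentially the same strategy as the paper: reduce to pushouts and $\alpha$-small coproducts via \cref{ms-descent-pushout,ms-descent-coproduct}, translate to the $\infty$-categorical setting using the comparison results of \cite{cisinski2020higher}*{Chapter~7}, and then propagate to all $\alpha$-small colimits. The only notable difference is in the final step: where you invoke a ``general principle'' that descent for a generating class propagates to iterated colimits (and sketch this via coequalisers built from pushouts), the paper instead cites \cite{Lurie}*{Propositions~4.4.2.6 and~4.4.2.7} directly, using the characterisation of descent as preservation of the colimit by the slice pseudo-functor to the $\infty$-category of $\infty$-categories, so that the propagation becomes a statement about colimit-preservation of a single functor.
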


\begin{proof}
  It follows from \cite{cisinski2020higher}*{Proposition 7.5.18} that $\Ho_\infty(\cats{E})$ has finite limits and that finite homotopy limits in $\cats{E}$ are sent to limits in $\Ho_\infty(\cats{E})$, the dual also holds for finite (homotopy) colimits. Moreover, one can deduce the same for $\alpha$-coproducts using \cite{cisinski2020higher}*{Proposition 7.7.1 and Theorem 7.5.30}.
  This, together with \cref{ms-descent-pushout,ms-descent-coproduct} immediately implies that pushouts and $\alpha$-coproducts satisfy descent in $\Ho_\infty(\cats{E})$.
  From there, \cite{Lurie}*{Proposition~4.4.2.6} shows that the existence of finite colimits and $\alpha$-coproducts implies the existence of all $\alpha$-small colimits in $\Ho_\infty(\cats{E})$. And given that a certain colimit satisfies descent if and only if it is preserved by the contravariant functor from $\Ho_\infty(\cats{E})$ to the $\infty$-category of $\infty$-categories classified by the slice fibration, \cite{Lurie}*{Proposition~4.4.2.7} shows that this implies that all $\alpha$-small colimits satisfy descent.
\end{proof}

We now move on to consider right properness of the effective model structure,
which will be the key to transfer local \cartesian closure from $\cat E$ to $\Ho_\infty(\cats{E})$.

\begin{proposition} \label{thm:right-proper}
Let $\cat{E}$ be a countably lextensive category.  The effective model structure on $\cats E$
is right proper.
\end{proposition}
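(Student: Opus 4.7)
The plan is to reduce right properness to the Frobenius property of \cref{frobenius}, following a standard factorisation argument. The main obstacle is that \cref{frobenius} requires the source of the fibration to be cofibrant, which is not automatic for the objects arising in the argument; an additional reduction step via cofibrant replacement will handle this.

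Consider a pullback square in $\cats{E}$ with a fibration $f \from A \to B$ on the right and a weak equivalence $g \from B' \to B$ on the bottom; the goal is to show the top map $A' \to A$ is a weak equivalence. I first apply the (trivial cofibration, fibration) \wfs{} of \cref{two-ewfss} to factor $g$ as $g = p \circ j$, where $j \from B' \to B''$ is a trivial cofibration and $p \from B'' \to B$ is a fibration; by 2-out-of-3 and the definition of a model structure, $p$ is a trivial fibration. Setting $A'' \defeq B'' \pull_B A$, the map $A'' \to A$ is a pullback of $p$ along $f$, hence a trivial fibration by \cref{item:retract_pb_com} of \cref{lem:basic_pointwise_fibrations}, while $A'' \to B''$ is a pullback of $f$, hence a fibration. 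By pullback pasting, $A'$ is isomorphic over $A''$ to $B' \pull_{B''} A''$, so by 2-out-of-3 it suffices to show that $A' \to A''$, which is the pullback of $j$ along the fibration $A'' \to B''$, is a weak equivalence.

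This is where Frobenius enters, but it cannot be applied directly since $A''$ need not be cofibrant. To fix this, I take a cofibrant replacement $q \from \hat A'' \to A''$ by applying the (cofibration, trivial fibration) \wfs{} of \cref{two-ewfss} to $\emptyset \to A''$; then $\hat A''$ is cofibrant and $q$ is a trivial fibration. The composite $\hat A'' \to A'' \to B''$ is a fibration with cofibrant source, so \cref{frobenius} shows that the pullback $\hat A' \defeq B' \pull_{B''} \hat A'' \to \hat A''$ is a trivial cofibration, in particular a weak equivalence. Moreover, $\hat A' \to A'$ is a pullback of $q$ along $A' \to A''$, hence a trivial fibration. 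The two composites $\hat A' \to \hat A'' \to A''$ and $\hat A' \to A' \to A''$ coincide; the former is a composition of two weak equivalences, and the map $\hat A' \to A'$ is a weak equivalence, so 2-out-of-3 implies that $A' \to A''$ is a weak equivalence, completing the proof.
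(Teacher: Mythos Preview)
Your proof is correct and follows essentially the same approach as the paper. The paper's own proof simply cites \cref{frobenius} together with the argument of~\cite{GSS}*{Proposition~4.1, Second proof}; what you have written is precisely that argument spelled out in this setting, including the extra cofibrant replacement step needed to meet the cofibrancy hypothesis on the source of the fibration in \cref{frobenius}.
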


\begin{proof} This follows from \cref{frobenius} using the argument
in~\cite{GSS}*{Proposition 4.1, Second proof}.
\end{proof}

\begin{theorem} \label{thm-lccc-infty} Let $\cat E$ be a countably lextensive category. If $\cat E$ is locally \cartesian closed, then the
$\infty$-category $\Ho_\infty(\cats{E})$ is locally \cartesian closed.
\end{theorem}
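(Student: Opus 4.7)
The plan is to apply a standard model-categorical criterion for local \cartesian closure of the associated $\infty$-category of a right proper model category. Specifically, it suffices to show that for every fibration $p : X \fto Y$ between cofibrant-fibrant objects in $\cats{E}$, the pullback functor $p^* : \cats{E} \slice Y \to \cats{E} \slice X$ is a left Quillen functor on the induced slice model structures and admits a right adjoint $p_*$. Under these hypotheses, a general argument (in the spirit of \cite{cisinski2020higher}) yields right adjoints to pullback functors in $\Ho_\infty(\cats{E})$, first for such fibrations and then, via cofibrant-fibrant replacement together with the standard presentation of slice $\infty$-categories, for arbitrary morphisms.

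Right properness is \cref{thm:right-proper}, so the homotopy-theoretic input is already in place. To verify that $p^*$ is left Quillen on slices for a fibration $p : X \fto Y$ between cofibrant-fibrant objects, I would invoke two results established earlier. First, $p^*$ preserves cofibrations by \cref{pullback-cofibration} of \cref{thm:pullback-combined}, which is applicable since $X$ is cofibrant. Second, $p^*$ preserves trivial cofibrations by the Frobenius property, \cref{frobenius}, again exploiting cofibrancy of $X$. Together these show that $p^*$ is a left Quillen functor, modulo the existence of a right adjoint.

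The main obstacle is therefore establishing the existence of $p_*$, that is, the exponentiability of $p$ in $\cats{E}$, given only that $\cat{E}$ is locally \cartesian closed. Local \cartesian closure of $\cat{E}$ furnishes pointwise pushforwards $(p_n)_* : \cat{E} \slice X_n \to \cat{E} \slice Y_n$, and \cref{prop:pullback_exponentiable} gives the Beck--Chevalley isomorphisms needed to make these compatible with reindexing along face and degeneracy operators. I would assemble these components into a simplicial pushforward functor using a limit construction that exploits the finite completeness of $\cat{E}$ and the Beck--Chevalley compatibility. More conceptually, one isolates the general lemma that the functor category $[\Delta^\op, \cat{E}]$ is locally \cartesian closed whenever $\cat{E}$ is — a standard but nontrivial fact about functor categories valued in a finitely complete, locally \cartesian closed base — from which the desired adjoint $p_*$ is obtained directly.

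Combining the pieces, for each fibration $p$ between cofibrant-fibrant objects one obtains a Quillen adjunction $p^* \adj p_*$ on slices; together with right properness and the criterion mentioned above, this yields that $\Ho_\infty(\cats{E})$ is locally \cartesian closed. I expect the bulk of the technical work in a full proof to consist of carefully constructing the pushforward $p_*$ and verifying the adjunction; the homotopy-theoretic content — Frobenius, right properness, and preservation of cofibrations under pullback along a map with cofibrant source — is already fully captured by results proved earlier in the paper.
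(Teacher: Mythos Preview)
Your proposal is correct and matches the paper's approach: one shows that $\cats{E}$ inherits local \cartesian closure from $\cat{E}$, verifies that $p^*$ is left Quillen for a fibration $p$ with cofibrant source via \cref{pullback-cofibration} of \cref{thm:pullback-combined} and the Frobenius property (\cref{frobenius}), and concludes by the general criterion \cite{cisinski2020higher}*{Proposition~7.6.16}. For the local \cartesian closure of $\cats{E}$ the paper takes a cleaner route than your Beck--Chevalley sketch: it realises $\cats{E}$ as the category of internal presheaves on the internal category $\fset{\Simp}$ in $\cat{E}$ and invokes \cite{Elephant} (Theorem~A4.2.1, Proposition~B2.3.16, Lemma~B1.4.7) to transfer local \cartesian closure directly.
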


\begin{proof} We first observe that if $\cat E$ is countably lextensive and locally \cartesian closed, then $\cats E$ is also locally \cartesian closed. Indeed, if $\cat E$ is countably lextensive then $\cats E$ can be realised as the category of internal presheaves for the category object $\fset{\Simp} \in \cat E$. Such categories of internal presheaves over an internal category in a locally \cartesian closed categories are always locally \cartesian closed. Indeed, this follows from \cite{Elephant}*{Theorem A4.2.1 and Proposition B2.3.16}, using exactly the same argument as in the proof of \cite{Elephant}*{Corollary B2.3.17} (which deals with the similar statement for toposes instead of locally \cartesian closed categories). Note that we are applying these results taking the category $\mathbb{D}$ therein to be the canonical self-indexing of the base category $\cat E$, which satisfies the assumption of having $\cat E$-indexed products because of \cite{Elephant}*{Lemma B1.4.7, part~(iii)} since $\cat E$ is locally \cartesian closed.

An arbitrary map in $\Ho_\infty(\cats{E})$ can always be represented by a fibration $p \from X \to Y$ between fibrant objects in $\cats E$ with $X$ cofibrant.
The functor $p^*$ is a left adjoint functor since $\cats{E}$ is locally \cartesian closed, it preserves cofibrations by \cref{pullback-cofibration} of \cref{thm:pullback-combined} and it preserves trivial cofibrations by the Frobenius property of \cref{frobenius}.
It hence follows from \cite{cisinski2020higher}*{Proposition 7.6.16} that the pullback functor $\Ho_\infty(\cats{E})/Y \to \Ho_\infty(\cats{E})/X$ admits a right adjoint (given by the action of the right adjoint of $p^*$ on fibrant objects).
\end{proof}

We conclude this section by combining our results in the case $\cat E$ is a Grothendieck topos.

\begin{theorem} Let $\cat E$ be a Grothendieck topos. Then $\infty$-category $\Ho_\infty(\cats{E})$
is locally \cartesian closed and has all small colimits, which satisfy descent. \qed
\end{theorem}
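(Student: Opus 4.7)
The plan is to observe that this statement is essentially a direct combination of \cref{thm-descent-infty} and \cref{thm-lccc-infty}, once we verify that a Grothendieck topos $\cat{E}$ satisfies the hypotheses of both. By \cref{thm:example-of-lextensive}, any Grothendieck topos is completely lextensive, meaning that it is $\alpha$-lextensive for every regular cardinal $\alpha$. In particular, it is countably lextensive, which is the standing assumption needed to invoke the effective model structure on $\cats{E}$ (\cref{model}) and its consequences.

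First, I would apply \cref{thm-descent-infty} for each regular cardinal $\alpha$: since $\cat{E}$ is $\alpha$-lextensive, $\Ho_\infty(\cats{E})$ admits all $\alpha$-small colimits and these satisfy descent. Ranging over all $\alpha$, we conclude that $\Ho_\infty(\cats{E})$ has all small colimits, each of which satisfies descent. (No compatibility issue arises here because colimits of a fixed shape are unique when they exist, and the descent property is a property of the colimit itself.)

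Next, recall that every Grothendieck topos is locally \cartesian closed. Since $\cat{E}$ is also countably lextensive, \cref{thm-lccc-infty} applies and yields that $\Ho_\infty(\cats{E})$ is locally \cartesian closed.

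Combining the two conclusions gives the theorem. There is no real obstacle: the entire content is encapsulated in the two earlier theorems and in the observation from \cref{thm:example-of-lextensive} that a Grothendieck topos is completely lextensive and locally \cartesian closed. The only point to note is that complete lextensivity is what bridges \cref{thm-descent-infty}, stated for a fixed $\alpha$, to the unbounded statement about all small colimits.
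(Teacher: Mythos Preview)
Your proposal is correct and matches the paper's approach exactly: the theorem is stated with a \qed and no proof in the paper, precisely because it is an immediate combination of \cref{thm-descent-infty} (ranging over all regular $\alpha$, using complete lextensivity from \cref{thm:example-of-lextensive}) and \cref{thm-lccc-infty} (using that Grothendieck toposes are locally \cartesian closed). Your write-up simply makes explicit the two-line derivation the paper leaves implicit.
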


For a Grothendieck topos $\cat E$, the effective model structure on $\cats E$ is typically not a model topos in the sense of Rezk~\cite{Rezk} and $\Ho_\infty(\cats{E})$
is not a higher topos in the sense of Lurie~\cite{Lurie}. Indeed, as we will see in \cref{ex:arrow_sets2}, if $\cat E = \Set^{[1]}$, then the category of
0-truncated objects in $\Ho_\infty(\cats{E})$ is neither a Grothendieck topos nor an elementary topos, as it does not have a subobject classifier. The situation
is reminiscent of that of Grothendieck toposes whose exact completion is neither a Grothendieck topos nor an elementary topos~\cite{Menni}.
  \section{A generalised Elmendorf theorem}
\label{sec:Elmendorf}

Elmendorf's theorem~\cites{elmendorf,stephan-elmendorf} states that the genuine equivariant model structure on $G$-spaces is equivalent to the projective model structure on presheaves of spaces on the category of orbits of~$G$. In this section, we show as \cref{th:Elmendorf} that, under the assumption that the category~$\cat E$ is completely lextensive and locally connected (in the sense of \cref{def:locally_connected} below), then the
effective model category structure on $\cats E$ models the $\infty$-category of small presheaves of spaces
on the full subcategory $\cat E^\con$ of connected objects in $\cat E$. Note that extension of Elmendorf's theorem beyond the case of group action already appears in the literature
(\cf~\cites{Chorny2,dwyer1984singular,dror1987homotopy}). The work in~\cite{Chorny2} is especially close to what we prove in the present section.

%\fxnote{\cite{dror1987homotopy} technically do not contains an Elmendorf's type theorem. It only construct the appropriate model structures.}

\begin{definition}\label{def:locally_connected} Let $\cat{E}$ be a lextensive category.
\begin{itemize}
\item An object $X \in \cat{E}$ is said to be \emph{connected} if it is not the initial object and whenever $X= A \coprod B$ then $A = \emptyset$ or $B = \emptyset$.
\item A lextensive category is said to be \emph{locally connected} if every object is a van Kampen coproduct of connected objects.
\end{itemize}
\end{definition}

The terminology of \cref{def:locally_connected} is compatible with the notion of a locally connected Gro\-then\-dieck topos. For example, the category of sheaves of set over a locally connected topological space is locally connected. The category of presheaves over a category $\cat I$ is locally connected, its connected objects are called the ``orbit'' of $I$, i.e., the presheaves whose category of elements is connected, or equivalently whose colimits is a singleton. The coproduct completion of a category with finite limits is also a locally connected category.

Let us now fix a lextensive category $\cat E$.
We denote by $\cat E^\con$ the full subcategory of of $\cat E$ of connected objects. It is important to note that even if $\cat E$ is a Grothendieck topos, this category is in general not a small category, as the next example illustrates.

\begin{example}\label{ex:arrow_sets}
If $\cat E = \Set^{[1]} = \Fam \Set$, then the connected objects of $\cat E$ are the objects of the form $X \rightarrow *$ for an arbitrary set $X$. In particular $\cat E^\con$ is equivalent to the category of all sets.
More generally, if $\cat{C}$ is a category with finite limits, and $\Fam \cat{C}$ is its coproduct completion, then $(\Fam \cat{C})^\con = \cat{C}$.
\end{example}

\begin{lemma}\label{lem:connected=preserves_coprod}
Let $X$ be a connected object in a lextensive category. Then $\Hom_\Set(X, \uvar)$ commutes with van Kampen coproducts.
\end{lemma}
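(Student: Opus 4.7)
The plan is to fix a van Kampen coproduct $Y = \bigcoprod_{i \in I} Y_i$ and show that the canonical map $\bigcoprod_{i \in I} \Hom_\Set(X, Y_i) \to \Hom_\Set(X, \bigcoprod_{i \in I} Y_i)$ is a bijection by exhibiting an inverse that assigns to each morphism $f\from X \to Y$ a unique summand through which it factors.

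First I would use universality of the van Kampen coproduct: pulling the diagram $(Y_i \to Y)_{i \in I}$ back along $f$ produces a \cartesian diagram with colimit $X$, i.e., a decomposition $X = \bigcoprod_{i \in I} X_i$ where $X_i = X \pull_Y Y_i$. The key step is then to argue that in this decomposition exactly one $X_{i_0}$ is non-initial and that $X_{i_0} \to X$ is an isomorphism. Since $X$ is connected, $X$ is not initial, so not every $X_i$ can be initial (else $X$ itself would be a coproduct of initial objects, hence initial). Pick any $i_0$ with $X_{i_0}$ non-initial; writing $X = X_{i_0} \coprod \bigcoprod_{i \neq i_0} X_i$ and using connectedness, the second summand must be initial, and strictness of the initial object in a lextensive category then forces $X_i = \emptyset$ for $i \neq i_0$. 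This shows $f$ factors through the single summand inclusion $Y_{i_0} \to Y$, giving surjectivity of the canonical map.

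For injectivity (well-definedness of the inverse), I would use disjointness of van Kampen coproducts: if $g \from X \to Y_{i_0}$ is given and $f = \iota_{i_0} \circ g$, then $X \pull_Y Y_{i_0} \iso X$ while, for $i \neq i_0$, the pullback $X \pull_Y Y_i$ sits over $Y_{i_0} \pull_Y Y_i = \emptyset$ and is therefore initial by strictness; so the index $i_0$ recovered from $f$ is unique, and the two constructions are mutually inverse.

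The only potential subtlety is the handling of the possibly infinite coproduct, which is why the argument hinges on the binary splitting $X = X_{i_0} \coprod \bigcoprod_{i \neq i_0} X_i$ combined with strictness of the initial object; once this is in place, the proof reduces to formal manipulations with the universality and disjointness clauses of the definition of a van Kampen colimit.
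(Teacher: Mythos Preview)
Your proposal is correct and follows essentially the same approach as the paper's proof: pull back the coproduct along $f$ using universality to decompose $X$, then invoke connectedness to conclude that all but one summand is initial. You are simply more explicit than the paper, which compresses the surjectivity argument into one sentence and leaves the uniqueness of the factoring index implicit; your separate injectivity paragraph via disjointness and strictness fills in what the paper's phrase ``for a unique $i$'' takes for granted.
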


\begin{proof}
  Given a map $f \from X \rightarrow \bigcoprod A_i$, then $X = \bigcoprod X_i$ where $X_i =X \fibprod_A A_i$,
  but as $X$ is connected all the $X_i$ except one are the initial object. As $X$ is itself non-initial, then exactly one of the $X_i$ is non initial
  and hence $X=X_i$ and the map $X \rightarrow \bigcoprod A_i$ factors into $X \rightarrow A_i$ for a unique $i$.
\end{proof}

For a possibly large category  $\cat D$, we write $\psh \cat D$ for the category of small presheaves on $\cat D$, that is the category of presheaves on $\cat D$ that can be written as small colimits of representables. We denote by $\spsh \cat D$ the category of small simplicial presheaves, or equivalently simplicial objects in $\psh \cat D$. In general, limits of small presheaves can fail to be small, but if we assume that $\cat D$ has $\alpha$-small limits, then $\psh \cat D$ also has $\alpha$-small limits. This is proved in~\cite{DayLack} as Theorem~4.3 applied to Example~4.1.1.

\begin{proposition}\label{prop:large_proj_MS}
Let $\cat D$ be a category with finite limits. Then  $\spsh \cat D$ carries the projective model structure, in which an arrow $f:X \rightarrow Y$ if a fibration, trivial fibration or weak equivalence if and only if for all $d \in \cat D$, the arrow $f_d \from X(d) \to Y(d)$ is one.\end{proposition}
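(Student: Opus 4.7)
The plan is to establish the six axioms of a model category on $\spsh \cat D$ with the three distinguished classes defined levelwise. Two-out-of-three for weak equivalences, closure of fibrations and trivial fibrations under pullback (using that small presheaves admit finite limits by \cite{DayLack}*{Theorem~4.3}), and closure under retracts all follow immediately by applying the corresponding facts in the Kan--Quillen model structure on $\sSet$ at each $d \in \cat D$. Cofibrations and trivial cofibrations are defined as maps with the left lifting property against trivial fibrations and fibrations respectively, and the axiom relating cofibrations, weak equivalences, and trivial cofibrations follows from the retract argument once both factorizations are in place.

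The natural candidates for generating (trivial) cofibrations are
\[
I = \{\, \yon(d) \cdot \bdsimp{n} \to \yon(d) \cdot \simp{n} \mid d \in \cat D, \ n \geq 0 \,\}, \qquad
J = \{\, \yon(d) \cdot \horn{n,k} \to \yon(d) \cdot \simp{n} \mid d \in \cat D, \ n > k \geq 0 \,\}.
\]
Via the tensor--evaluation adjunction $((-) \cdot \yon(d)) \adj \ev_d$, a map has the right lifting property against $I$ or $J$ if and only if it is a levelwise trivial fibration or fibration of simplicial sets, respectively, which matches the claimed characterisation of the right classes.

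The principal obstacle, and the heart of the proof, is the construction of the two factorisations, since $I$ and $J$ are proper classes indexed by $\ob \cat D$ and Quillen's small object argument does not apply without modification. The remedy is Chorny's class-cofibrantly generated small object argument~\cite{Chorny2}, applied as follows. Given a morphism $f \from X \to Y$ in $\spsh \cat D$, choose a small full subcategory $i \from \cat D_0 \hookrightarrow \cat D$ supporting the canonical small colimit presentations of $X$ and $Y$ as colimits of representables. At each stage of the transfinite construction one attaches cells corresponding to the lifting squares against generators in $I$ or $J$ whose sources are representables $\yon(d)$ with $d$ in the currently chosen $\cat D_0$; by Yoneda these squares form a set, controlled by the values of the presheaves on $\ob \cat D_0$. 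One iteratively enlarges $\cat D_0$ to absorb the new representing objects introduced at each stage, keeping it small throughout, and the construction terminates at a sufficiently large small ordinal.

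The main technical subtlety is verifying that the resulting right factor has the global lifting property against the full class $I$ (or $J$), not merely against the subclass indexed by any intermediate $\cat D_0$. This is established by the stability of the construction under further enlargement of $\cat D_0$: adding generators for additional objects of $\cat D$ does not disturb the factorisation once $\cat D_0$ is large enough to contain all representables appearing in $X$, $Y$, and the cells attached so far; dually, a given $d \in \cat D$ can always be added to $\cat D_0$ without loss. Once both factorisations are established, the remaining axiom follows from the retract argument, and the model structure is projective by construction.
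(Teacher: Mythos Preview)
Your overall strategy matches the paper's: both appeal to Chorny's generalised small object argument for the factorisations and to Day--Lack for finite limits in small presheaves. The paper's proof is essentially a citation to~\cite{Chorny1}, observing only that the argument there, originally stated for $\cat D$ complete, goes through assuming merely that $\spsh \cat D$ has finite limits, since completeness of $\cat D$ is used in~\cite{Chorny1} solely to obtain limits in $\psh \cat D$ for the model-category axioms.

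Where you go beyond the paper and attempt to sketch the small object argument itself, there is a gap at exactly the step you correctly flag as the ``main technical subtlety''. Having run the construction over a small $\cat D_0$, you need the right factor to lift against generators indexed by \emph{all} $d \in \cat D$. Your justification --- that ``adding generators for additional objects of $\cat D$ does not disturb the factorisation'' and that ``a given $d \in \cat D$ can always be added to $\cat D_0$ without loss'' --- is not an argument: at best it shows that for each $d$ there exists \emph{some} factorisation handling that $d$, not that a single factorisation handles all of them simultaneously. Nor do you explain what ``new representing objects'' are being absorbed, since attaching $\yon(d_0)$-cells for $d_0 \in \cat D_0$ keeps everything supported on $\cat D_0$. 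The mechanism in~\cite{Chorny1} is different: one shows that the generating class is \emph{locally small}, meaning that every lifting square from a generator into a fixed map $f$ between small presheaves factors through one of a set of such squares (using that a map from a representable into a small colimit of representables factors through a representable in the diagram), so a set of cell attachments suffices at each stage and the global lifting property is immediate. The relevant reference for this is~\cite{Chorny1}, not~\cite{Chorny2}.
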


\begin{proof}
 This is proved in~\cite{Chorny1} under the assumption that $\cat D$ has all limits. However, the proof applies unchanged if we only assume that $\spsh \cat D$ has finite limits, as long as we do not require that a model category has all limits, but only finite limits. Indeed the only use of limits in $\cat D$ in the proof is to show that $\psh \cat D$ has all limits. Moreover, \cite{DayLack}*{Theorem 4.3 applied to Example 4.1.1} shows that if the category $\cat D$ has finite limits then the category $\psh \cat D$ of small presheaves on $\cat D$ also has finite limits. Note that the existence of the corresponding weak factorisation system in $\spsh \cat D$ follows from the generalised small object argument with respect to locally small class of arrows exactly as explained
 in~\cite{Chorny1}
 % \fxnote{Does it follows from our version of the small object argument?}, and the rest of the proof is exactly as in the case of a small category.
% I have written to Boris Chorny, and he agrees that the proof works in this situation.
\end{proof}

The claim of \cref{prop:large_proj_MS} follows also from the assumption that $\spsh \cat D$ has finite limits, which  is a weaker condition than the existence of finite limits in $\cat D$.

\begin{remark}
The $\infty$-category associated to the projective model structure on $\spsh \cat D$ is really the $\infty$-category of small presheaves of spaces on $\cat D$, essentially by the same argument as for small categories.
\end{remark}

\begin{lemma}\label{lem:conected_yoneda}
Given a locally connected countably lextensive category $\cat E$.
\begin{parts}
\item The restricted Yoneda embedding $\yon \from \cat E \rightarrow \psh(\cat E^\con)$ is well-defined, fully faithful and preserves limits and all van Kampen coproducts.
\item The restricted Yoneda embedding $\yon \from \cats E \rightarrow \spsh(\cat E^\con)$ is well-defined, fully faithful and preserves limits, pushouts along a cofibration, tensoring by objects of $\cat E$ and $\sSet$ and colimits of sequences of cofibrations.
\end{parts}
\end{lemma}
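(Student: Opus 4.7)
My plan is to reduce everything to the fact that, by local connectedness, any $X \in \cat E$ decomposes as a van Kampen coproduct $X = \bigcoprod_{i \in I} C_i$ into connected summands, combined with the observation from \cref{lem:connected=preserves_coprod} that such a decomposition is visible to the restricted Yoneda embedding.

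For part (i), given such a decomposition, \cref{lem:connected=preserves_coprod} gives, for every $C \in \cat E^\con$, the identity $\cat E(C, X) = \coprod_{i \in I} \cat E(C, C_i)$. Since coproducts of presheaves are computed pointwise in $\Set$, this identifies $\yon X \iso \bigcoprod_{i \in I} \yon C_i$ as a small colimit of representables in $\psh(\cat E^\con)$, yielding well-definedness of the functor. Fully faithfulness will then follow from the ordinary Yoneda lemma applied at each $C_i$, combined with the universal properties of coproducts on both sides: a natural transformation $\yon X \to \yon Y$ is a family $(C_i \to Y)_{i \in I}$, which is in turn a map $X \to Y$ by the universal property of the coproduct $X = \bigcoprod C_i$. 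Preservation of limits is immediate because they are computed pointwise on both sides and $\yon$ is, by definition, pointwise evaluation, and preservation of van Kampen coproducts is a direct invocation of \cref{lem:connected=preserves_coprod}.

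For part (ii), applying (i) levelwise gives $\yon \from \cats E \to \spsh(\cat E^\con)$; well-definedness, fully faithfulness and limit preservation transfer directly from~(i). The remaining colimits all reduce to van Kampen coproducts using \cref{cofibration-as-retract-of-cell-complex}: a cofibration in $\cats E$ is a levelwise complemented inclusion, so pushouts along cofibrations and sequential colimits of cofibrations are computed levelwise and are van Kampen by \cref{colimits-diagram-category}, and at each simplicial degree they reduce to forming a van Kampen coproduct with the complement via \cref{lem:pushout_decidable}. Tensors with $E \in \cat E$ are levelwise finite products $E \times X_n$, preserved by~(i). Tensors with $K \in \sSet$ (when they exist) are isomorphic to $\fset{K} \times X$ by \cref{sSet-tensor}, and since $\fset{K}$ is a levelwise van Kampen coproduct of copies of $1$ whose image under $\yon$ is again $\fset{K}$ in $\spsh(\cat E^\con)$, the tensor is preserved by combining the van Kampen coproduct and finite product preservation from~(i).

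\textbf{Main obstacle.} The only substantive step is the identification $\yon X \iso \bigcoprod_i \yon C_i$ in~(i), after which both fully faithfulness and all the subsequent colimit preservation statements in~(ii) boil down to the same principle. The minor technical point to keep track of is smallness: each constructed colimit in $\spsh(\cat E^\con)$ must remain a small presheaf, which is automatic since these colimits are all indexed by sets arising from decompositions of objects of $\cat E$ and small presheaves are closed under small colimits.
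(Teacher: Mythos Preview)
Your proposal is correct and follows essentially the same route as the paper: decompose objects into connected summands via local connectedness, invoke \cref{lem:connected=preserves_coprod} to exhibit $\yon X$ as a small coproduct of representables, deduce fully faithfulness from the Yoneda lemma on connected pieces extended along coproducts, and for part~(ii) apply~(i) levelwise and reduce the remaining colimits to preservation of (countable) coproducts using that cofibrations are levelwise complemented inclusions. Your treatment is somewhat more explicit about the $\sSet$-tensor case and about smallness, but the underlying argument is the same.
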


\begin{proof}
For any connected object $X \in \cat E^\con$, $\Hom_\Set(X,\uvar)$ preserves coproducts by \cref{lem:connected=preserves_coprod}, hence as every object $Y \in E$ is a small van Kampen coproduct of connected objects, its image under the restricted Yoneda embedding is a small coproduct of representables, and hence is a small presheaf. This proves the existence and the preservation of coproducts by the Yoneda embedding. Preservation of limits is immediate.
It is fully faithful on connected objects by the Yoneda lemma, and this implies that it is fully faithful in general as morphisms between van Kampen coproducts of connected objects can be explicitly described as maps between their components.

The simplicial version is just the ordinary version applied levelwise in the simplicial direction so all results of part~(ii)  follow immediately. For the preservation of colimits we use the fact that a functor that preserves countable coproducts preserves pushouts of complemented inclusions and colimits of sequences of complemented inclusions, and all the colimits considered in the lemma are levelwise of this form.
\end{proof}

\begin{theorem}[Generalised Elmendorf's theorem]\label{th:Elmendorf} Let $\cat E$ a locally connected countably lextensive category.
\begin{parts}
\item\label{th:Elmendorf:detect} A map in $\cats E$ is a cofibration, fibration or weak equivalence if and only if its image by the restricted Yoneda embedding is one for the projective model structure.
\item\label{th:Elmendorf:equiv} If $\cat E$ is in addition completely lextensive, then the restricted
Yoneda embedding induces an equivalence between the full subcategories of cofibrant objects of $\cats E$ and $\spsh(\cat E^\con)$. In particular it induces an equivalence of the corresponding $\infty$-categories.
\end{parts}
\end{theorem}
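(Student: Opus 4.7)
The plan is to prove Part~(i) separately for each class of morphisms and then deduce Part~(ii) by lifting cell complexes through the fully faithful embedding. Throughout, the central computation is the following: writing any $E \in \cat{E}$ as a van Kampen coproduct $E = \bigcoprod_{i \in I} D_i$ of connected objects (possible since $\cat{E}$ is locally connected), one uses \cref{lem:connected=preserves_coprod} to obtain a natural isomorphism $\Hom_\sSet(E, -) \iso \prod_i \Hom_\sSet(D_i, -)$. Combined with \cref{prop:fibration_pointwise}, this immediately yields the (trivial) fibration case of Part~(i): the condition on all $E \in \cat{E}$ reduces to the condition on $D \in \cat{E}^\con$, since products of Kan (trivial) fibrations are Kan (trivial) fibrations.

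For (trivial) cofibrations, I would argue both directions. Forward: every cofibration in $\cats{E}$ is a cell complex (\cref{thm:cell-pres-cof}) built from generators $E \times (\fset{\bdsimp n} \to \fset{\simp n})$ with $E \in \cat{E}$ via pushouts along levelwise complemented inclusions, countable coproducts, and countable sequential compositions. Decomposing $E = \bigcoprod_i D_i$ into connected objects, each such generator maps under $\yon$ to $\bigcoprod_i \yon(D_i) \times (\bdsimp n \to \simp n)$, a coproduct of generating projective cofibrations. Since $\yon$ preserves the relevant colimits (\cref{lem:conected_yoneda}) and projective cofibrations are closed under these operations, $\yon(f)$ is a projective cofibration. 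Backward: if $\yon(f)$ is a projective cofibration, factor $f = p \circ i$ in $\cats{E}$ with $i$ a cofibration and $p$ a trivial fibration (using \cref{two-ewfss}). By the forward direction and the fibration case, $\yon(i)$ is a projective cofibration and $\yon(p)$ a projective trivial fibration, so the lifting property of $\yon(f)$ against $\yon(p)$ produces a retract of $\yon(i)$ onto $\yon(f)$ in the arrow category; by full faithfulness (\cref{lem:conected_yoneda}) this retract diagram lifts to $\cats{E}$, exhibiting $f$ as a cofibration by \cref{saturation}. The trivial cofibration case is handled identically, factoring through a fibration instead of a trivial fibration.

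For weak equivalences, I would factor an arbitrary map $f$ as $f = p \circ i$ with $i$ a cofibration and $p$ a trivial fibration. By 2-out-of-3 in both model structures, $f$ is a weak equivalence in $\cats{E}$ if and only if $i$ is a trivial cofibration, and $\yon(f)$ is a projective weak equivalence if and only if $\yon(i)$ is a projective trivial cofibration. The equivalence then follows from the trivial cofibration case.

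For Part~(ii), the functor $\yon$ restricted to cofibrant objects is fully faithful by \cref{lem:conected_yoneda}. For essential surjectivity, let $F$ be a projectively cofibrant simplicial presheaf, written as a retract of a transfinite cell complex $F'$ built from generating projective cofibrations $\yon(D) \times (\bdsimp n \to \simp n)$ for $D \in \cat{E}^\con$. Since $\cat{E}$ is completely lextensive, $\cats{E}$ admits all set-indexed van Kampen coproducts together with pushouts and transfinite compositions of levelwise complemented inclusions (extending \cref{colimits-diagram-category} to arbitrary ordinals), and $\yon$ preserves them. Mirroring the construction of $F'$ in $\cats{E}$ by transfinite induction yields a cofibrant $X' \in \cats{E}$ with $\yon(X') \iso F'$. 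The retract $F \to F' \to F$ defines an idempotent on $\yon(X')$ which, by full faithfulness, lifts to an idempotent $\tilde{e} \from X' \to X'$ in $\cats{E}$; since $\cats{E}$ has finite limits (\cref{cofibrant-limit} of \cref{thm:pullback-combined}), this idempotent splits via the equalizer $X = \mathrm{eq}(\tilde{e}, \id_{X'})$, with $X$ cofibrant as a retract of $X'$ and $\yon(X) \iso F$ by uniqueness of splittings. The $\infty$-categorical equivalence then follows from the ordinary equivalence of cofibrant subcategories together with Part~(i). The main obstacle I anticipate is ensuring that arbitrary transfinite cell complexes in $\spsh(\cat{E}^\con)$ can be faithfully mirrored in $\cats{E}$; this requires the full strength of the completely lextensive hypothesis to provide all necessary colimits as van Kampen, and is precisely the step that would fail in the merely countably lextensive setting of Part~(i).
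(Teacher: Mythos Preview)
Your proof is correct and follows essentially the same strategy as the paper's. One simplification worth noting: for the backward direction on (trivial) cofibrations, the paper avoids your factor-and-retract argument by observing directly that if $\yon(f)$ has the left lifting property against all projective (trivial) fibrations, then since $\yon$ is fully faithful and preserves (trivial) fibrations, $f$ already has the left lifting property against all (trivial) fibrations in $\cats{E}$; this yields the conclusion in one line. Your route via factorisation and lifting the retract through full faithfulness is equally valid but slightly longer. Similarly, for Part~(ii) the paper absorbs your idempotent-splitting step into the single remark that the essential image of $\yon$ is closed under retracts because it is closed under finite limits, which amounts to the same equalizer construction you spell out.
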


\begin{proof}
The (cofibration, trivial fibration) and (trivial cofibration, fibration) weak factorisation systems on $\cats E$ are cofibrantly generated
in the (non-enriched) sense of \cite{Chorny1} by the classes of arrows $\set{i \tensorsSetsE E}{i \in I_\sSet, E \in \cat E}$ and
$\set{j \tensorsSetsE E}{j \in I_\sSet, E \in \cat E}$.
As every object in $\cat E$ is assumed to be a (van Kampen) coproduct of connected objects, one can restrict to $E \in \cat E^\con$.
Because of \cref{lem:conected_yoneda}, these generators are sent exactly to the generators of the projective model structure of $\spsh(\cat E^\con)$.

It immediately follows that an arrow in $\cats E$ is a (trivial) fibration if and only if it is one in $\spsh(\cat E^\con)$ as these classes are characterised by the same lifting property.

Moreover, also because of \cref{lem:conected_yoneda} the restricted Yoneda embedding preserves coproducts and pushouts of the generating cofibrations,
transfinite composition of cofibrations and retracts.
Thus because of how (trivial) cofibrations are constructed in $\cats E$ from the small object argument, it follows that their images in $\spsh(\cat E^\con)$
are projective (trivial) cofibrations. Conversely, an arrow in $\cats E$ which is a (trivial) cofibration in the projective model structure on $\spsh(\cat E^\con)$
has the lifting property against all (trivial) fibrations in $\spsh(\cat E^\con)$, but as the restricted Yoneda embedding is fully faithful and
preserves (trivial) fibrations, it follows that it also has the lifting property against all (trivial) fibrations in $\cats E$ and hence is a (trivial) cofibration in $\cats E$.
This proves \cref{th:Elmendorf:detect} for (trivial) cofibrations and (trivial) fibrations,
the case of equivalences also follows as an arrow is an equivalence if and only if it can be factored as trivial cofibration followed by a trivial fibration.

For \cref{th:Elmendorf:equiv}, we just make one additional observation. If $\cat E$ is completely lextensive, then any cofibrant object in $\spsh(\cat E^\con)$ is in the image of the Yoneda embedding. Indeed, the image of $\yon$ contains the initial object and the generating cofibrations, and is closed under pushout of cofibrations, transfinite composition of cofibrations and retract (because it is closed under finite limits). Therefore, it contains all cofibrant objects. So as $\yon$ is fully faithful it is an equivalence of categories between the categories of cofibrant objects.
\end{proof}

In short, \cref{th:Elmendorf} says that if $\cat E$ is completely lextensive and locally connected, the effective model category structure on $\cats E$ of \cref{model}  models the category of small presheaves of spaces on the large category $\cat E^\con$. Note that we cannot quite say that the restricted Yoneda embedding is a Quillen equivalence because it does not admit an adjoint in general. However it  follows from the theorem that if $\cat E$ has all colimits, then it is a right Quillen equivalence. Note that a very general Elmendorf's theorem was also proved in~\cite{Chorny2}*{Theorem 3.1}, which is similar to our version in many aspects. In fact, if we assume that $\cat E$ is both complete and cocomplete then we can deduce our result from Chorny's theorem.

\begin{example}\label{ex:arrow_sets2}
We take $\cat E$ to be the category $\Set^{[1]}$ of arrows in $\Set$.
It is completely lextensive and locally connected, and its connected objects are the ones of the form $X \to Y$ where $Y$ is the singleton.
Thus the category of connected objects can be identified with the category of sets, it hence follows by \cref{th:Elmendorf} that
the category $\Hoi(\cats E)$ can be identified with the category of small presheaves of spaces on the category of all sets. This $\infty$-category satisfies descent (all its colimits are van Kampen) and is locally cartesian closed, for example by \cref{thm-lccc-infty} and \cref{thm-descent-infty}. But, it is not a (locally) presentable $\infty$-category, so is not an $\infty$-topos in the sense of \cite{Lurie}*{Chapter~6}. It is also not an elementary $\infty$-topos in the sense of~\cite{Shulman-ncatcafe} or \cite{Rasekh}, for example its full subcategory of set-truncated objects is the category of small presheaves of sets on the category of all sets and is not an elementary topos as it does not have a subobject classifier. This category of set-truncated objects is however a pretopos (in the infinitary sense of the term) and is locally cartesian closed.
\end{example}

  \section{Semisimplicial objects and left properness}
\label{sec:Semi-simplicial}

In this section, we consider the category of semisimplicial objects $\catss{E}$.
While its homotopy theory is overall less well-behaved than its simplicial counterpart we developed so far, it is in some respects simpler.
This allows us to derive certain properties of $\cats{E}$ that we do not seem to be able to prove otherwise.
In particular, we use these results to show that the model structure on $\cats{E}$ is left proper (\cref{sE-left-proper})
and to establish certain universal property of the $\infty$-category associated with $\cats{E}$ in \cref{sec:the-infty-category}.

Our development will be mostly parallel to the simplicial one.
We will start under the assumption that $\cat{E}$ has finite limits and show that the category of Kan complexes in $\catss{E}$
carries a structure of a fibration category.
If $\cat{E}$ is countably lextensive, the category $\catss{E}$ also carries natural notions of cofibrations and trivial cofibrations,
but these do not fit into a model structure.
(They can be organised into certain weaker structures as discussed below in \cref{weak-model-structures}.)
Nonetheless, we show that they are sufficiently well-behaved for our purposes.
Indeed, a particularly simple characterisation of cofibrations (they coincide with \ldi{}s, see \cref{ex:cofibration_are_coprod_inclusion})
enables certain arguments unavailable in $\cats{E}$.

The critical result that is that the homotopy theories of simplicial and semisimplicial objects in $\cats{E}$ are equivalent (\cref{th:simp_ssimp_equivalence}).
We will show that under the assumption that $\cat{E}$ is either countably complete (\cref{th:simp_ssimp_eq_complete}) or
countably lextensive (\cref{th:simp_ssimp_eq_extensive}).

We begin by introducing some basic concepts.
Since these are largely analogous to the simplicial case, we only treat them briefly, mainly to fix the notation.
We write $\sSimp$ for the subcategory of $\Simp$ consisting of the face operators (i.e., the injective maps) and
$\catss{E} = [\sSimp^\op, \cat{E}]$ for the category of semisimplicial objects in $\cat{E}$.
In particular, $\ssSet$ is the category of semisimplicial sets.
The representable semisimplicial sets are denoted by $\ssimp{n}$.
For any finite semisimplicial set $K$, we define the \emph{evaluation functor} $\ev_K \from \cats{E} \to \cat{E}$ as
\begin{align*}
  \ev_K(X) = \coend_{[n] \in \sSimp} X_n^{K_n} \text{.}
\end{align*}

The category $\ssSet$ caries a non-\cartesian closed symmetric monoidal structure whose tensor is called the \emph{geometric product} and denoted by $\gprod$.
It is uniquely determined by the property that $\ssimp{m} \gprod \ssimp{n}$ is the semisimplicial set of non-degenerate simplices in
the nerve of the poset $[m] \times [n]$.

The forgetful functor $U \from \sSet \to \ssSet$ has both the left adjoint $L$ and the right adjoint $R$ given by
Kan extensions along the inclusion $\sSimp \to \Simp$.
The forgetful functor $U \from \cats{E} \to \catss{E}$ also has the left or the right adjoint if $\cat{E}$ is countably lextensive
(or even just finitely cocomplete) or countably complete, respectively.
These will be used in the proofs of the two variants of this section's main theorem announced above.

The homotopy theory of semisimplicial sets is well established.
Weak homotopy equivalences are defined as semisimplicial maps that become simplicial weak homotopy equivalences upon applying the functor $L$.
The category $\ssSet$ also carries classes of (trivial) fibrations and cofibrations, defined below.
These do not form a model structure, but they satisfy certain weaker axioms.
E.g., $\ssSet$ is a weak model category (and even a right semi-model category), see \cite{H}*{Section~5.5}.
For our purposes, \Cref{ssSet-fibcat} below is sufficient.

For a finite semisimplicial set $K$ and $X \in \catss{E}$ we define the cotensor $K \cotensor X \in \catss{E}$ by letting
\begin{align*}
  (K \cotensor X)_n = X(\ssimp{n} \gprod K)
\end{align*}
and the semisimplicial hom-object
\begin{align*}
  \Hom_\ssSet(X, Y)_n = \Hom_\Set(X, \ssimp{n} \cotensor Y) \text{.}
\end{align*}
Exactly as in the simplicial case, this makes $\catss{E}$ into a $\ssSet$-enriched category \wrt{} the geometric product and
$\cotensor$ becomes the cotensor for this enrichment.

The boundaries $\bdssimp{n}$ and horns $\shorn{n,k}$ are defined analogously to their simplicial counterparts
($\bdssimp{n}$ consists of non-degenerate simplices of $\bdsimp{n}$ and similarly for $\shorn{n,k}$).
This gives rise to the generating sets
\begin{align*}
  I_\ssSet = \{ \bdssimp{n} \to \ssimp{n} \} & \text{ and } J_\ssSet = \{ \shorn{n,k} \to \ssimp{n} \} \text{ in } \ssSet \\
  \text{and } I_\catss{E} = \{ \fset{\bdssimp{n}} \to \fset{\ssimp{n}} \} & \text{ and } J_\catss{E} = \{ \fset{\shorn{n,k}} \to \fset{\ssimp{n}} \}
  \text{ in } \catss{E} \text{.}
\end{align*}
Then a morphism $X \to Y$ in $\cats{E}$ is a \emph{fibration} if the pullback evaluation
\begin{align*}
  X(\ssimp{n}) \to X(\shorn{n,k}) \times_{Y(\shorn{n,k})} Y(\ssimp{n})
\end{align*}
has a section for all horn inclusions $\shorn{n,k} \to \ssimp{n}$ in $J_\ssSet$ and a \emph{trivial fibration} if
\begin{align*}
  X(\ssimp{n}) \to X(\bdssimp{n}) \times_{Y(\bdssimp{n})} Y(\ssimp{n})
\end{align*}
has a section for all boundary inclusions $\bdssimp{n} \to \ssimp{n}$ in $I_\ssSet$.
Similarly, \emph{cofibrations} and \emph{trivial cofibrations} are defined as $I_\catss{E}$-cofibrations and $J_\catss{E}$-cofibrations
in the sense of \cref{I-cofibration}.
Note that fibrations and trivial fibrations defined above coincide with $J_\catss{E}$-fibrations and $I_\catss{E}$-fibrations by
the same argument as in \Cref{fibration-levelwise}.

\begin{lemma}\label{ss-wfs}
  If $\cat{E}$ is countably lextensive, then $\catss{E}$ carries two \ewfs{}s consisting of:
  \begin{itemize}
    \item cofibrations and trivial fibrations,
    \item trivial cofibrations and fibrations.
  \end{itemize}
\end{lemma}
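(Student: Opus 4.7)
The plan is to imitate the proof of \cref{two-ewfss} by invoking the enriched small object argument of \cref{esmo}, now applied in the diagram category $\catss{E} = [\sSimp^\op, \cat{E}]$ with the generating sets $I_{\catss{E}}$ and $J_{\catss{E}}$. To justify this invocation, I need to check that both generating sets are countable collections of levelwise complemented inclusions between finite objects of $\catss{E}$ in the sense of \cref{E-finite}.

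First, countability of $I_{\catss{E}}$ and $J_{\catss{E}}$ is immediate from their indexing sets. The morphisms in $I_{\ssSet}$ and $J_{\ssSet}$ are levelwise subset inclusions of finite sets; in particular, each $\bdssimp{n}_k \to \ssimp{n}_k$ and $\shorn{n,k}_m \to \ssimp{n}_m$ is a complemented subset inclusion. Since the partial left adjoint $S \mapsto \fset{S}$ preserves complemented inclusions (as in \cref{sec:cofibrations} for the simplicial case), the images $\fset{\bdssimp{n}} \to \fset{\ssimp{n}}$ and $\fset{\shorn{n,k}} \to \fset{\ssimp{n}}$ in $\catss{E}$ are levelwise complemented inclusions.

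Next, I need finiteness of the sources and targets. The boundaries $\bdssimp{n}$, horns $\shorn{n,k}$, and simplices $\ssimp{n}$ are levelwise countable (in fact levelwise finite) presheaves on $\sSimp$ which are finite colimits of representables: $\ssimp{n}$ is representable, and $\bdssimp{n}$ and $\shorn{n,k}$ are finite unions of representable faces. Because $\sSimp$ is locally countable (indeed each hom-set is finite), \cref{finite-colimit-finite}, applied with $D = \sSimp^\op$, then yields that the objects $\fset{\ssimp{n}}$, $\fset{\bdssimp{n}}$, $\fset{\shorn{n,k}}$ are all finite in $\catss{E}$.

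With these hypotheses verified, \cref{esmo} produces in each case the desired enriched weak factorisation system: one with $I_{\catss{E}}$-cofibrations as left class and $I_{\catss{E}}$-fibrations (that is, trivial fibrations) as right class, and one with $J_{\catss{E}}$-cofibrations (trivial cofibrations) as left class and $J_{\catss{E}}$-fibrations (fibrations) as right class. No real obstacle is expected here; the argument is a straightforward transfer of the simplicial case of \cref{two-ewfss}, where the only substantive verifications are the preservation of complemented inclusions by $\fset{(-)}$ and the finiteness of the representables, boundaries and horns on the semisimplicial site.
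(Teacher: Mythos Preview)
Your proof is correct and follows exactly the paper's approach: the paper's proof simply invokes \cref{esmo} with the hypotheses verified as in \cref{two-ewfss}, and you have spelled out precisely those verifications (countability, levelwise complemented inclusions via $S \mapsto \fset{S}$, and finiteness via \cref{finite-colimit-finite} applied with $D = \sSimp^{\op}$).
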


\begin{proof}
  This follows from \cref{esmo} with the assumptions verified exactly as in the proof of \cref{two-ewfss}.
\end{proof}

\begin{theorem}\label{ssSet-fibcat}
  The category of fibrant semisimplicial sets with weak homotopy equivalences as defined above
  (i.e., created by the free functor $L \from \ssSet \to \sSet$) is a fibration category.
\end{theorem}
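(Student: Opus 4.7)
The plan is to verify the four fibration category axioms (F1)--(F4) following the template of \cref{fibcat-Kan}, with semisimplicial-specific inputs drawn from the already-cited \cite{H}*{Section~5.5}.

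Axioms (F1), (F2), and (F4) are essentially formal. For (F1), the terminal object exists in $\ssSet$ (it is a presheaf category) and is fibrant, and the ambient category consists of fibrant objects by construction. For (F2), pullbacks exist in $\ssSet$, and fibrations and trivial fibrations are closed under pullback by the argument of \cref{item:retract_pb_com} of \cref{lem:basic_pointwise_fibrations} applied in $\ssSet$, using only that split epimorphisms are stable under pullback. For (F4), the 2-out-of-6 property transfers from $\sSet$ to $\ssSet_\fib$ because $L$ creates weak equivalences by definition and 2-out-of-6 holds for weak homotopy equivalences in $\sSet$.

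The substantive axiom is (F3). By Brown's factorisation lemma (as used in the proof of \cref{fibcat-Kan}), it suffices to construct, for each fibrant $Y \in \ssSet$, a path object: a factorisation $Y \to PY \to Y \times Y$ of the diagonal as a weak equivalence followed by a fibration. The natural candidate is $PY = \ssimp{1} \cotensor Y$, with $PY \to Y \times Y$ induced by the cofibration $\bdssimp{1} \to \ssimp{1}$. That this map is a fibration is a consequence of the pullback cotensor property of Kan fibrations in $\ssSet$ with respect to the geometric product, which is established in \cite{H}*{Section~5.5}; it is the pullback cotensor of $\bdssimp{1} \to \ssimp{1}$ with $Y \to 1$.

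The main obstacle, and the genuinely semisimplicial input, is the construction of the weak equivalence $Y \to \ssimp{1} \cotensor Y$, since unlike in the simplicial case there is no canonical degenerate edge on the identity of $Y$. Here one uses the fibrancy of $Y$ to choose such an edge (via a horn filler in the semisimplicial hom-object $\Hom_\ssSet(Y,Y)$, whose fibrancy follows from the pullback cotensor property applied to $Y \to 1$), obtaining a (non-natural) map $Y \to \ssimp{1} \cotensor Y$ whose composite with the projection to $Y \times Y$ is the diagonal. That this map is a weak equivalence may be checked after applying $L$, where it becomes a section of a trivial fibration in $\sSet$ by the analogous classical argument, hence a weak homotopy equivalence. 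This completes the verification of (F3) and hence of the theorem.
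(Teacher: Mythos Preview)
Your approach is in the right spirit but has two genuine gaps, both in the path-object argument for (F3).

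First, a single horn filler in $\Hom_\ssSet(Y,Y)$ cannot produce an edge whose \emph{both} endpoints are $\id_Y$. A $1$-horn $\shorn{1,k}$ is a single vertex, so filling prescribes only one endpoint of the resulting edge; the other is uncontrolled. To obtain a loop at $\id_Y$ one needs a two-step construction: first fill $\shorn{1,0}$ at $\id_Y$ to get an edge $e \from \id_Y \to g$, then fill $\shorn{2,2}$ with $d_0 = d_1 = e$ to obtain $d_2 \from \id_Y \to \id_Y$. This is precisely the ``weak cylinder'' $D$ that the paper builds explicitly in the proof of \cref{th:ss_fib_cat}; your sketch hides this nontrivial step.

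Second, your weak-equivalence check is incorrect as stated: $L$ is a left adjoint and has no reason to preserve cotensors or trivial fibrations, so $L(Y \to \ssimp{1} \cotensor Y)$ is not visibly a section of a trivial fibration in $\sSet$. The correct argument stays in $\ssSet$: each projection $\ssimp{1} \cotensor Y \to Y$ is the pullback cotensor of the horn inclusion $\shorn{1,k} \to \ssimp{1}$ with $Y \to 1$, hence already a trivial fibration in $\ssSet$, and any section of it is a weak equivalence by 2-out-of-3---\emph{provided} one knows that trivial fibrations in $\ssSet$ are weak equivalences. That identification (acyclic fibration $=$ trivial fibration) is exactly what you dismissed as formal in (F2); it is not formal, and is the substantive content the paper extracts either from Steimle's lifting theorem or from the weak model structure of \cite{H}*{Theorem~5.5.6}.

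The paper's own proof takes a rather different route: it uses \cite{Steimle} to lift any fibrant semisimplicial set $K$ to a simplicial Kan complex $A$ with $UA = K$, and then simply applies $U$ to the simplicial path object on $A$. The same lifting result yields the acyclic/trivial fibration identification by reducing to $\sSet$. Your direct approach can be made to work (and is essentially what the paper does later for general $\catss{E}$ in \cref{th:ss_fib_cat}), but it requires the $D$-construction and an independent proof that trivial fibrations are weak equivalences, neither of which your sketch supplies.
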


\begin{proof}[Proof sketch]
  The claim can be deduced from the existence of the fibration category of fibrant simplicial sets in~\cite{GSS}*{Theorem~2.2.2}.
  The proof is analogous to the proof of~\cite{GSS}*{Theorem~2.2.2} itself and depends on the following fact.
  If $f \from X \to Z$ is a map between simplicial sets and $U f$ factors (in semisimplicial sets) as a composite of a cofibration $i \from U X \to B$
  and a fibration $p \from B \to U Z$, then $f$ factors as a composite of $i' \from X \to Y$ and $p' \from Y \to Z$ \st{} $i = U i'$ and $p = U p'$.
  (Note that, in particular, $B = U Y$, $i$ is a cofibration and $p$ is a fibration.)
  This holds by \cite{Steimle}*{Theorem~2.1 and~Addendum~2.2}.
  It will also rely the fact that $U$ preserves and reflects weak equivalences by~\cite{H}*{Lemma~2.2.1}.

  Compared to the proof of~\cite{GSS}*{Theorem~2.2.2}, the present argument requires only two modifications.
  First, to construct a path object on a fibrant semisimplicial set $K$, we first apply the fact above (with $X = \emptyset$, $Y = K$ and $Z = 1$) to obtain
  a simplicial Kan complex $A$ \st{} $U A = K$.
  Then we obtain a path object on $K$ by applying $U$ to a path object on $A$.
  Second, we observe that the facts above imply that a fibration in $\ssSet$ is acyclic \iff{} it is trivial (by reducing it to the same statement in $\sSet$).
  Thus acyclic fibrations are stable under pullback.\footnote{This is non-constructive, because of the use of \cite{Steimle}. An alternative argument which works constructively can be found in \cite{H}*{Theorem 5.5.6}. It shows that semisimplicial set have a weak model structure analogous to the Kan--Quillen model structure. Given that even constructively all semisimplicial sets are cofibrant this is enough to obtain that the full subcategory of fibrant objects is a fibration category.}
 \end{proof}

\begin{lemma}\label{ex:cofibration_are_coprod_inclusion}
  A map $f \from X \to Y$ in $\catss{E}$ is a cofibration \iff{} for all $n$ the map $X_n \to Y_n$ is a \di{}.
  In particular, every object of $\catss{E}$ is cofibrant.
\end{lemma}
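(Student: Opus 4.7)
For the forward direction, by the semisimplicial analog of \cref{cofibration-as-retract-of-cell-complex}, every cofibration in $\catss E$ is a retract of an $I_{\catss E}$-cell complex. Since each generating cofibration $\fset{\bdssimp n} \to \fset{\ssimp n}$ is a levelwise \di{}, and since the semisimplicial analog of \cref{ldi-saturation} (whose proof goes through verbatim) shows that levelwise \di{}s in $\catss E$ are closed under tensors with objects of $\cat E$, countable coproducts, pushouts, sequential colimits, and retracts, every cofibration is a levelwise \di{}.

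For the reverse direction, assume $f \from X \to Y$ is a levelwise \di{}, and for each $n$ let $Z_n \in \cat E$ denote the complement of $X_n$ in $Y_n$. The plan is to exhibit $f$ explicitly as an $I_{\catss E}$-cell complex via a relative skeletal filtration. Set $\Sk_X^{-1} Y \defeq X$ and inductively define $\Sk_X^n Y$ as the pushout of the span
\begin{equation*}
  \Sk_X^{n - 1} Y \xleftarrow{g_n} Z_n \tensorEsE \fset{\bdssimp n} \to Z_n \tensorEsE \fset{\ssimp n},
\end{equation*}
where the attaching map $g_n$ is assembled from the face operators of $Y$: for each $k < n$ and each face operator $\phi \in (\bdssimp n)_k$, its $\phi$-component $Z_n \to (\Sk_X^{n-1} Y)_k$ is the restriction of $\phi^* \from Y_n \to Y_k$ to $Z_n \subseteq Y_n$. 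This pushout exists by \cref{colimits-diagram-category} since its right leg is a levelwise \di{}. A direct levelwise computation shows that $(\Sk_X^n Y)_k = Y_k$ for $k \leq n$ and $(\Sk_X^n Y)_k = X_k$ for $k > n$, so that $Y = \colim_n \Sk_X^n Y$, which exists again by \cref{colimits-diagram-category}. This exhibits $f$ as a cell complex in the sense of \cref{thm:cell-complex-def}, hence as a cofibration.

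The final claim that every object of $\catss E$ is cofibrant follows because $\varnothing \to X$ is a levelwise \di{} via the canonical decomposition $X \iso \varnothing \coprod X$ in a lextensive category. The only real subtlety in the reverse direction is verifying that the attaching maps are well-defined and that the pushouts reproduce the correct levels of $Y$; this relies on the crucial fact that $\sSimp$ contains no non-identity degeneracy operators, so that faces of simplices in $Z_n$ land in strictly lower levels, where inductively $(\Sk_X^{n-1} Y)_k = Y_k$.
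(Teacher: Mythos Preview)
Your proof is correct. The forward direction matches what the paper's approach implicitly contains, and your backward direction via an explicit relative skeletal filtration is a valid way to exhibit a levelwise complemented inclusion as an $I_{\catss{E}}$-cell complex. The levelwise computation you sketch does go through exactly as stated, since $(\fset{\bdssimp{n}})_k$ and $(\fset{\ssimp{n}})_k$ agree for $k<n$ and both vanish for $k>n$.

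The paper takes a shorter, more conceptual route: it invokes the semisimplicial version of \cref{cof-as-reedy-di}, identifying cofibrations with Reedy complemented inclusions, and then observes that in $\sSimp$ all morphisms are face operators, so the coboundary $\coboundary{m}$ is empty, all latching objects vanish, and the relative latching map at level $m$ reduces to $X_m \to Y_m$ itself. Thus Reedy complemented inclusion is literally the same as levelwise complemented inclusion. Your explicit skeletal construction is, in effect, the Reedy factorisation written out by hand in this degenerate case: the same ``no degeneracies'' observation that makes your attaching maps land correctly is what makes the latching objects trivial in the paper's argument. The paper's version is more concise because it reuses machinery already set up for $\cats{E}$; yours is more self-contained and makes the cell structure visible.
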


\begin{proof}
  The claim follows already from the semisimplicial version of \cref{cof-as-reedy-di} since latching objects are empty, which is simpler to prove than \cref{cof-as-reedy-di}  due to absence of degeneracy operators.
\end{proof}

\begin{corollary} \label{lem:triv_fib_have_section}
  If $\cat{E}$ has finite limits, then every trivial fibration in $\catss{E}$ admits a section.
\end{corollary}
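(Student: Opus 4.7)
The plan is to construct a section $s \from Y \to X$ by induction on the semisimplicial level, building components $s_n \from Y_n \to X_n$ such that $f_n s_n = \id_{Y_n}$ and the $s_n$ commute with face operators. The key point is that semisimplicial objects carry only face operators (no degeneracies), so the inductive structure is particularly clean: at each stage, compatibility with the previously constructed components $s_k$ for $k < n$ is encoded by a single condition on the image of $s_n$ in the matching object $X(\bdssimp{n})$. Because each $\bdssimp{n}$ is a finite semisimplicial set, the evaluation $X(\bdssimp{n})$ exists under the sole assumption that $\cat{E}$ has finite limits.

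For the base case, note that $\bdssimp{0} = \emptyset$, so $X(\bdssimp{0}) = 1 = Y(\bdssimp{0})$ and the pullback evaluation at $\bdssimp{0} \to \ssimp{0}$ reduces to $f_0 \from X_0 \to Y_0$; the trivial fibration hypothesis supplies a section which we take as $s_0$. For the inductive step, suppose $s_k \from Y_k \to X_k$ have been defined for $k < n$, commuting with face operators and satisfying $f_k s_k = \id_{Y_k}$. By functoriality of evaluation, these components assemble into a canonical map
\[
  \alpha \from Y_n \to X(\bdssimp{n}),
\]
obtained by applying face operators $\sigma^* \from Y_n \to Y_k$, composing with $s_k$, and gluing into $X(\bdssimp{n})$. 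Compatibility with $f$ at levels $k < n$ ensures that the composite of $\alpha$ with $X(\bdssimp{n}) \to Y(\bdssimp{n})$ coincides with the canonical map $Y_n \to Y(\bdssimp{n})$, so $\alpha$ and $\id_{Y_n}$ induce
\[
  (\alpha, \id) \from Y_n \to X(\bdssimp{n}) \pull_{Y(\bdssimp{n})} Y_n.
\]

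Since $f$ is a trivial fibration, the pullback evaluation $\hat{\ev}(f) \from X_n \to X(\bdssimp{n}) \pull_{Y(\bdssimp{n})} Y_n$ admits a section $\sigma_n$; we define $s_n \defeq \sigma_n \circ (\alpha, \id)$. The projection of $\hat{\ev}(f)$ onto the second factor is $f_n$, so $f_n s_n = \id_{Y_n}$, and the projection onto the first factor is the structural map $X_n \to X(\bdssimp{n})$, so the composite $Y_n \xrightarrow{s_n} X_n \to X(\bdssimp{n})$ recovers $\alpha$. This guarantees that $s_n$ is compatible with every face operator $[k] \to [n]$ with $k < n$, closing the induction. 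I do not anticipate any real obstacle: the argument is a direct inductive construction, and no cocompleteness or lextensivity assumption on $\cat{E}$ is required beyond finite limits.
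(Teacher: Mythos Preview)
Your proof is correct and takes a genuinely different route from the paper. The paper's argument is indirect: it first observes that when $\cat{E}$ is countably lextensive, the weak factorisation system of \cref{ss-wfs} together with the fact that every semisimplicial object is cofibrant (\cref{ex:cofibration_are_coprod_inclusion}) yields the section by the usual lifting property; it then reduces the general finitely complete case to this one by embedding $\cat{E}$ fully faithfully into the countably lextensive category $\Fam_{\omega_1}\cat{E}$ and pulling back the section along the embedding. Your argument instead constructs the section directly by Reedy induction, exploiting only that $\sSimp$ is a direct category with finite latching (here, matching) data at each level, so that $X(\bdssimp{n})$ is a finite limit. This is more elementary and self-contained: it avoids invoking the small object argument, the existence of the enriched weak factorisation system, and the auxiliary embedding into $\Fam_{\omega_1}\cat{E}$. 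The paper's approach, by contrast, reuses machinery already developed for the model structure and illustrates the utility of the $\Fam$-embedding trick, which recurs elsewhere in the paper. Both approaches ultimately rest on the same Reedy-theoretic phenomenon, but yours makes it explicit at the level of the single lifting problem $\emptyset \to Y$ against $X \to Y$.
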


\begin{proof}
  First, note that if $\cat{E}$ is countably lextensive,
  this follows from \cref{ss-wfs,ex:cofibration_are_coprod_inclusion}.
  If $\cat{E}$ is merely finitely complete, then $\Fam_{\omega_1} \cat{E}$ is countably lextensive and
  the conclusion holds since the functor $\catss{E} \to \catss{\Fam}_{\omega_1} \cat{E}$ is fully faithful,
  \cf the explicit construction of $\Fam_\alpha$ in \cref{thm:example-of-lextensive}.
\end{proof}

A morphism $X \to Y$ between Kan complexes in $\catss{E}$ is a \emph{pointwise weak equivalence} if
\[
\Hom_{\ssSet}(E,X) \to \Hom_{\ssSet}(E,Y)
\]
is a weak equivalence in $\ssSet$ for all $E \in \cat{E}$.

\begin{theorem}\label{th:ss_fib_cat}
  Pointwise weak equivalences, fibrations and trivial fibrations equip the category of Kan complexes in~$\catss{E}$ with the structure of a fibration category.
\end{theorem}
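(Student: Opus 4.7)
The proof follows the template of \cref{fibcat-Kan}, transferring the fibration category structure on $\ssSet_\fib$ of \cref{ssSet-fibcat} to Kan complexes in $\catss{E}$ via the $\ssSet$-enrichment. First I would establish the semisimplicial analog of \cref{prop:fibration_pointwise}: a map $f$ in $\catss{E}$ is a (trivial) fibration if and only if $\Hom_\ssSet(E, f)$ is so in $\ssSet$ for every $E \in \cat{E}$, by the identical adjointness argument. Combined with \cref{ssSet-fibcat}, this yields the semisimplicial analog of \cref{lem:basic_pointwise_fibrations}: closure of (trivial) fibrations under composition, pullback and retract, and the pullback cotensor property with cofibrations between finite semisimplicial sets.

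With these ingredients in hand, axioms (F1), (F2) and (F4) are formal: $\catss{E}$ has finite limits computed levelwise, all Kan complexes are fibrant by definition, pullback stability of (trivial) fibrations follows from the pointwise characterization, and 2-out-of-6 for pointwise weak equivalences reduces to the same property in $\ssSet_\fib$. For (F3), I would invoke Brown's factorization lemma to reduce the task to constructing a path object on each Kan complex $X$, for which I would take $PX = \ssimp{1} \cotensor X$. The map $PX \to X \times X$ is a fibration by the pullback cotensor property applied to the cofibration $\bdssimp{1} \to \ssimp{1}$, and the map $X \to PX$ is a pointwise weak equivalence since, under $\Hom_\ssSet(E, -)$, it becomes $Y \to \ssimp{1} \cotensor Y$ for $Y = \Hom_\ssSet(E, X)$ a Kan complex in $\ssSet$.

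The main obstacle is verifying that the latter map is a weak equivalence in $\ssSet_\fib$. I would address this by identifying $\ssimp{1} \cotensor (-)$ with the path object construction already used in the proof of \cref{ssSet-fibcat}: by the factorization lifting theorem of Steimle one writes $Y = U(A)$ for some Kan complex $A \in \sSet$; the $L \dashv U$ adjunction together with the identification $L(\ssimp{n} \gprod \ssimp{1}) \iso \simp{n} \times \simp{1}$ (which holds because the geometric product yields precisely the non-degenerate simplices of the cartesian product, which $L$ then freely equips with degeneracies) gives a natural isomorphism $\ssimp{1} \cotensor U(A) \iso U(A^{\simp{1}})$, through which the standard simplicial path object factorization transfers to the required semisimplicial one.
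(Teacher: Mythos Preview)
Your argument has a genuine gap in the treatment of (F3): you assert the existence of ``the map $X \to PX = \ssimp{1} \cotensor X$'' without constructing it, and no such natural map exists. In the simplicial case this map is induced by cotensoring with the unique morphism $\simp{1} \to \simp{0}$, but there is no semisimplicial map $\ssimp{1} \to \ssimp{0}$ since the target has no $1$-simplex. Concretely, $(\ssimp{1} \cotensor X)_0 \iso X_1$, and without degeneracy operators there is no canonical map $X_0 \to X_1$. Your final paragraph does not repair this: the factorisation you obtain via Steimle's theorem and the isomorphism $\ssimp{1} \cotensor U(A) \iso U(A^{\simp{1}})$ lives only in $\ssSet$, depends on a non-canonical choice of simplicial lift $A$ of $Y = \Hom_\ssSet(E, X)$, is therefore not natural in $E$, and hence is not induced by any morphism in $\catss{E}$. (Moreover, \cref{th:ss_fib_cat} assumes only finite limits in $\cat{E}$, so $X$ itself need not be of the form $U A$ for any $A \in \cats{E}$.)

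The paper's proof isolates precisely this obstruction and resolves it by building a \emph{weak cylinder} on $\ssimp{0}$: one forms the pushout $D$ of the horn inclusion $\shorn{2,2} \to \ssimp{2}$ along the unique map $\shorn{2,2} \to \ssimp{1}$. This $D$ receives a trivial cofibration $b \from \ssimp{0} \to D$ and a weak equivalence $e \from \ssimp{1} \to D$ fitting in a commuting square under $\bdssimp{1}$. Cotensoring into a Kan complex $X$, the map $D \cotensor X \to X$ is then a trivial fibration and hence admits a section by \cref{lem:triv_fib_have_section}; composing this section with the weak equivalence $D \cotensor X \to \ssimp{1} \cotensor X$ produces the missing first factor of the path object factorisation, entirely inside $\catss{E}$ and using only finite limits in $\cat{E}$.
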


\begin{proof}
  The proof is entirely analogous to the proof of \cref{fibcat-Kan} except for the construction of path objects.
  A path object on $X \in \catss{E}$ can be constructed as $X \to \ssimp{1} \cotensor X \fto X \times X$ as before.
  However, there is no semisimplicial map $\ssimp{1} \to \ssimp{0}$
  (i.e., $\ssimp{0}$ does not admit a cylinder object) and so the morphism $X \to \ssimp{1} \cotensor X$
  cannot be induced by functoriality of cotensors.
  The problem can be fixed by constructing a ``weak cylinder object'' on $\ssimp{0}$ in the sense of~\cite{Hwms}.

  There is a unique map $\shorn{2,2} \to \ssimp{1}$.
  It sends both $1$-simplices to the unique $1$-simplex of $\ssimp{1}$.
  We define $D$ to be the pushout of this map along the trivial cofibration $\shorn{2,2} \to \ssimp{2}$:
  \begin{tikzeq*}
    \matrix[diagram,column sep={between origins,6em}]
    {
      |(h)|  \shorn{2,2} & |(s1)| \ssimp{1}  \\
      |(s2)| \ssimp{2}   & |(D)|  D \text{.} \\
    };

    \draw[->] (h)  to (s2);
    \draw[->] (s1) to (D);
    \draw[->] (h)  to (s1);
    \draw[->] (s2) to (D);
  \end{tikzeq*}
  Thus $D$ has two $0$-simplices $b$ and $x$, two $1$-simplices $f \from b \to x$ and $e \from b \to b$ and
  a unique $2$-simplex that witnesses that $f \circ e \sim e$.
  Informally speaking, this forces $e$ to behave as an ``identity cell'' of~$b$.
  More precisely, we obtain a diagram
  \begin{tikzeq*}
    \matrix[diagram,column sep={between origins,6em}]
    {
      |(b)|  \bdssimp{1} & |(s0)| \ssimp{0} \\
      |(s1)| \ssimp{1}   & |(D)|  D         \\
    };

    \draw[->]   (b)  to (s1);
    \draw[tcof] (s0) to node[right] {$b$} (D);
    \draw[->]   (b)  to (s0);
    \draw[->]   (s1) to node[below] {$e$} node[above] {$\weq$} (D);
  \end{tikzeq*}
  which upon cotensoring into $X \in \catss{E}$ yields
  \begin{tikzeq*}
    \matrix[diagram,column sep={between origins,6em}]
    {
      |(b)|  X \times X            & |(s0)| X                      \\
      |(s1)| \ssimp{1} \cotensor X & |(D)|  D \cotensor X \text{.} \\
    };

    \draw[->]   (s1) to (b);
    \draw[tfib] (D)  to (s0);
    \draw[->]   (s0) to (b);
    \draw[->]   (D) to node[below] {$\weq$} (s1);
  \end{tikzeq*}
  When $X$ is a Kan complex, the right vertical morphism is a trivial fibration and hence it has a section
  by \cref{lem:triv_fib_have_section}.
  We obtain the required factorisation by composing $D \cotensor X \weto \ssimp{1} \cotensor X$ with such section. This last map is a pointwise weak equivalence, because applying $\Hom_{\ssSet}(E,\uvar)$ to it gives, up to isomorphism, the map
  \[ D \cotensor \Hom_{\ssSet}(E,X) \to \ssimp{1} \cotensor \Hom_{\ssSet}(E,X) \]
  which is a semisimplicial weak equivalence for each fibrant semisimplicial set $\Hom_{\ssSet}(E,X)$, for example because both evaluation maps to $\Hom_{\ssSet}(E,X)$ are trivial fibrations as the weak factorisation systems on $\ssSet$ are compatible to the monoidal structure on $\ssSet$ (see for eg. Theorem~5.5.6.(iii) of \cite{H}).
\end{proof}

The following theorem is the main result of this section.
It is valid under two separate sets of assumptions which require two independent proofs.
Thus we will consider them separately as \cref{th:simp_ssimp_eq_complete} and \cref{th:simp_ssimp_eq_extensive}.

\begin{theorem}\label{th:simp_ssimp_equivalence} If $\cat E$ is either countably lextensive or countably complete, then the forgetful functor $\cats E \to \catss E$ induces an equivalence of fibration categories between the fibration categories of \cref{fibcat-Kan,th:ss_fib_cat}.
\end{theorem}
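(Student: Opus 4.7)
The plan is to show that $U \colon \cats E \to \catss E$, restricted to Kan complexes, is an exact functor between the fibration categories of \cref{fibcat-Kan} and \cref{th:ss_fib_cat} that admits a homotopy inverse. Under the two hypotheses on $\cat E$, the inverse is built from different Kan extensions along $\sSimp \hookrightarrow \Simp$: the right adjoint $R$ when $\cat E$ is countably complete, and the left adjoint $L$ when $\cat E$ is countably lextensive. This gives two independent arguments, matching the split into \cref{th:simp_ssimp_eq_complete} and \cref{th:simp_ssimp_eq_extensive} promised in the theorem statement.

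Exactness of $U$ reduces to the classical case $\cat E = \Set$. The functor preserves the terminal object and pullbacks levelwise and commutes with the ends defining $\ev_K$ for finite (semi)simplicial $K$, so from \cref{def:fibration_algebraic} and its semisimplicial analogue it sends Kan complexes to Kan complexes and preserves (trivial) fibrations. For preservation of weak equivalences between Kan complexes, the key observation is the natural isomorphism $\Hom_\ssSet(E, U X) \iso U \Hom_\sSet(E, X)$ for $E \in \cat E$; combined with the fact recalled in the proof of \cref{ssSet-fibcat} that the classical $U \colon \sSet \to \ssSet$ preserves and reflects weak equivalences, this reduces preservation of pointwise weak equivalences on both sides to a single statement in $\sSet$.

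For the countably complete case, the pointwise formula for $R$ involves only countable limits, so $R$ exists; the natural isomorphism $\Hom_\sSet(E, R Y) \iso \tilde R \Hom_\ssSet(E, Y)$, with $\tilde R \colon \ssSet \to \sSet$ the classical right adjoint to $U$, follows because $\Hom_{\cat E}(E, -)$ commutes with limits, and it reduces both ``$R$ preserves Kan complexes'' and ``the counit $U R Y \to Y$ is a pointwise weak equivalence on Kan complexes'' to the known classical facts, yielding the desired homotopy inverse. For the countably lextensive case, $L$ exists via a pointwise formula involving only finite coproducts indexed by surjections in $\Simp$, and every semisimplicial object is cofibrant by \cref{ex:cofibration_are_coprod_inclusion}; the plan is to factor $L Y \to 1$ as a trivial cofibration $L Y \to \widehat{L Y}$ followed by a fibration, and to show that the composite $Y \to U L Y \to U \widehat{L Y}$ of the unit with $U$ applied to the fibrant replacement is a pointwise weak equivalence. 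The main obstacle here is that $\Hom_\sSet(E, -)$ does not commute with $L$ when $E$ is not connected, so the reduction to the classical case is less direct; handling this will likely require exploiting that $L Y \to \widehat{L Y}$ is a levelwise complemented inclusion by \cref{cofibration-as-retract-of-cell-complex} together with a more careful levelwise analysis, perhaps reducing to the case of connected $E$ via the coproduct completion $\Fam \cat E$.
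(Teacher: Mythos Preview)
Your overall architecture is right and matches the paper's: split into the two hypotheses, use the right adjoint $R$ for the countably complete case and the left adjoint $L$ (followed by fibrant replacement) for the countably lextensive case; in both cases reduce exactness of $U$ to $\cat E=\Set$ via the natural isomorphism $\Hom_\ssSet(E,UX)\cong U\Hom_\sSet(E,X)$. Your treatment of the countably complete case is essentially the paper's proof of \cref{th:simp_ssimp_eq_complete}.

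The gap is in the countably lextensive case. You correctly identify the obstacle: $\Hom_\sSet(E,-)$ does not commute with $L$, so you cannot reduce the unit $Y\to ULY$ to the classical statement by evaluating at $E$. But your proposed workaround via connected $E$ and $\Fam\cat E$ will not close the gap: a countably lextensive $\cat E$ need not be locally connected, so there is no reason every $E$ decomposes as a coproduct of connected objects, and even in $\Fam\cat E$ the relevant Hom-sets do not simplify. The paper sidesteps this entirely. Instead of analysing $\Hom(E,-)$, it proves directly that the unit $Y\to ULY$ is a \emph{trivial cofibration} in $\catss E$ (\cref{lem:unit_acyclic}) by a saturation argument: both $\id$ and $UL$ preserve tensors by objects of $\cat E$, pushouts along levelwise complemented inclusions, and sequential colimits of such, so by \cref{pushout-application-instance2} it suffices to check that the pushout application of the unit to each generating cofibration $\fset{\bdssimp{n}}\to\fset{\ssimp{n}}$ is a trivial cofibration. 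Since $U$, $L$, and $\fset{(-)}$ commute appropriately, this reduces to the single classical fact that $UL(\bdssimp{n})\sqcup_{\bdssimp{n}}\ssimp{n}\to UL\ssimp{n}$ is a trivial cofibration in $\ssSet$. No evaluation at $E$ is needed. From there, \cref{lem:Forget_pres_(triv)cof} shows $U$ preserves trivial cofibrations, so $U(LY\to\widehat{LY})$ is one as well; the other zigzag $L'U\weq\id$ comes from the counit $LUX\to X$, which is a weak equivalence by the triangle identity once you know the unit is one and $U$ reflects weak equivalences (\cref{U-detects-we}). Replace your levelwise analysis by this cell-induction and the argument goes through.
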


We start with the case of a category $\cat E$ with countable limits, this is the proof that relies on the adjunction $U \dashv R$.

\begin{proposition} If $\cat E$ is countably complete, then the forgetful functor $U \from \cats E \to \catss E$
has a right adjoint $R$. Moreover, for every object $E \in \cat E$, evaluation at $E$ commutes with this right adjoint, \ie, the square
\begin{tikzeq*}
  \matrix[diagram]
  {
    |(ssE)| \catss{E} & |(sE)| \cats{E} \\
    |(ssS)| \ssSet    & |(sS)| \sSet    \\
  };

  \draw[->] (ssE) to node[above] {$R$}     (sE);
  \draw[->] (ssS) to node[below] {$R$}     (sS);
  \draw[->] (sE)  to node[right] {$\ev_E$} (sS);
  \draw[->] (ssE) to node[left]  {$\ev_E$} (ssS);
\end{tikzeq*}
commutes (up to canonical isomorphism).
\end{proposition}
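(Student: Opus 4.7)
The forgetful functor $U \from \cats E \to \catss E$ is restriction along the inclusion $i \from \sSimp \to \Simp$ (equivalently, $i^\op \from \sSimp^\op \to \Simp^\op$), so its candidate right adjoint $R$ is the right Kan extension $\Ran_{i^\op}$. The first step is to verify that this pointwise right Kan extension is well defined under the sole assumption that $\cat E$ has countable limits. The pointwise formula at $[n]$ gives
\[
R(Y)_n \;=\; \lim_{([m], f) \in ([n] \downarrow i^\op)} Y_m,
\]
where the comma category $([n] \downarrow i^\op)$ has objects pairs $([m], f \from [m] \to [n])$ with $f$ an arbitrary morphism of $\Simp$, and morphisms from $([m],f)$ to $([m'],f')$ given by face maps $g \from [m'] \to [m]$ of $\sSimp$ satisfying $f g = f'$. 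I would observe that this category is countable: objects are indexed by a pair consisting of $[m] \in \mathbb{N}$ and one of finitely many maps $[m] \to [n]$ in $\Simp$, while hom-sets are bounded by the finite sets $\sSimp([m'],[m])$. Hence the limit exists in $\cat E$ by countable completeness, and assembling these into a functor $\sSimp^\op \to \cat E$ and verifying the universal property of the pointwise Kan extension yields the right adjoint $R \adj U$.

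For the second claim, I would exploit that the evaluation functor $\ev_E = \Hom_{\cat E}(E, \uvar)$ is applied levelwise (both in the simplicial and semisimplicial cases) and that $\Hom_{\cat E}(E, \uvar) \from \cat E \to \Set$ is continuous. Thus, for $Y \in \catss E$,
\[
\ev_E(R(Y))_n \;=\; \Hom_{\cat E}\bigl(E, \lim_{([m], f)} Y_m\bigr) \;\iso\; \lim_{([m], f)} \Hom_{\cat E}(E, Y_m) \;=\; R(\ev_E(Y))_n,
\]
naturally in $[n]$, which gives the required commutation (up to canonical isomorphism). Equivalently, one may observe that the analogous pointwise formula for $R \from \ssSet \to \sSet$ uses the same comma categories, and $\ev_E$ preserves the limits involved because it is levelwise continuous.

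I do not anticipate a substantial obstacle: the argument is essentially a verification of the standard pointwise Kan extension formula, with the only delicate point being the countability of the indexing comma category, which bounds the size of the required limits in $\cat E$. Naturality and functoriality of $R$ then follow from the universal property of right Kan extensions.
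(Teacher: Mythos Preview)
Your proposal is correct and follows essentially the same approach as the paper: both construct $R$ as the pointwise right Kan extension along $\sSimp^\op \to \Simp^\op$, observe that the relevant comma category at each $[n]$ is countable so the limit exists in $\cat{E}$, and deduce the commutation with $\ev_E$ from the fact that $\Hom_{\cat{E}}(E,\uvar)$ preserves limits and hence pointwise right Kan extensions.
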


\begin{proof}
We claim that for any $X \in \catss E$, seen as a functor $\sSimp^\op \to E$, its right Kan extension along $\sSimp^\op \to \Simp^\op$ exists and is a pointwise right Kan extension. Indeed, the pointwise right Kan extension computed at $[n] \in \Simp$ should be
\[
RV = \lim_{[m] \to [n] \in E} V([m])
\]
where $E$ is the comma category of $[m] \in \sSimp^\op$ endowed with a map $[m] \to [n]$ in $\Simp$. This category is countable, so as $\cat E$ is countably complete, the limit exists, and hence the pointwise right Kan extension exists. By definition taking this right Kan extension is right adjoint to the forgetful functor $\cats E \to \catss E$, so this proves the existence of the right adjoint. The commutation of the square in the proposition is because the evaluation functor preserves limits, and hence preserves this pointwise right Kan extension as well.
\end{proof}

\begin{theorem}
  \label{th:simp_ssimp_eq_complete} If $\cat{E}$ is countably complete, then both the forgetful functor and its right adjoint
  \begin{equation*}
    U : \cats E \leftrightarrows \catss E : R
  \end{equation*}
  restrict to equivalences of fibration categories between $\cats{E}_\fib$ and $\catss{E}_\fib$.
\end{theorem}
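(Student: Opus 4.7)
The plan is to reduce the theorem to the classical equivalence between the homotopy theories of Kan complexes and of Kan semisimplicial sets, realised by the adjunction $U \colon \sSet \leftrightarrows \ssSet \colon R$. The main reduction tool will be the pair of commutative squares
\[
  \Hom_\sSet(E, UX) \iso U \Hom_\sSet(E, X), \qquad \Hom_\sSet(E, RY) \iso R \Hom_\ssSet(E, Y),
\]
for $E \in \cat{E}$, $X \in \cats{E}$ and $Y \in \catss{E}$. The first square is automatic since both functors act pointwise in the simplicial direction, while the second is precisely the content of the preceding proposition.

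First I would check that $U$ and $R$ each send the indicated fibrant subcategory to the other and preserve fibrations, trivial fibrations, and pointwise weak equivalences. Using the pointwise characterisation of (trivial) fibrations in \cref{prop:fibration_pointwise} and its semisimplicial counterpart, together with the two commutative squares above, this reduces to the assertions that $U \colon \sSet \to \ssSet$ and $R \colon \ssSet \to \sSet$ each preserve (semi)simplicial Kan complexes, (trivial) Kan fibrations, and weak equivalences between them. For $U$, the statements on fibrations follow formally from the identification $UX(\shorn{n,k}) \iso X(\horn{n,k})$ obtained, via Yoneda, from the fact that the left adjoint $L$ of $U$ satisfies $L \shorn{n,k} = \horn{n,k}$; preservation of weak equivalences between Kan complexes is \cite{H}*{Lemma~2.2.1}. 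For $R$, the corresponding statements are classical and can be extracted from the treatment in \cite{Steimle}.

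Next I would verify that the unit $\eta_X \colon X \to RUX$ for fibrant $X \in \cats{E}$ and the counit $\varepsilon_Y \colon URY \to Y$ for fibrant $Y \in \catss{E}$ are pointwise weak equivalences. Applying $\Hom_\sSet(E, \uvar)$ to $\eta_X$ and using both commutative squares reduces this to showing that the classical unit $X' \to RUX'$ is a weak equivalence whenever $X'$ is a Kan complex; dually for $\varepsilon_Y$. These are standard consequences of the Rourke--Sanderson-type equivalence between the homotopy theories of Kan complexes and Kan semisimplicial sets, which is also what underlies the construction of the fibration category structure in \cref{ssSet-fibcat}.

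Once $U$ and $R$ are known to be exact functors between $\cats{E}_\fib$ and $\catss{E}_\fib$ whose unit and counit consist of weak equivalences, a formal argument will conclude that the adjunction restricts to an equivalence of fibration categories. I expect the main obstacle to lie in the second step, namely in producing the precise classical statements about the $U \dashv R$ adjunction on Kan (semi)complexes in a form directly applicable here; although essentially known, their careful formulation may require some appeal to the technology of \cite{Steimle} and \cite{H}*{Chapter~5}.
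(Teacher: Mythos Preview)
Your proposal is correct and follows essentially the same approach as the paper: reduce everything to the case $\cat{E} = \Set$ via the commutation of $U$ and $R$ with the evaluation functors $\Hom(E,-)$, then invoke the known equivalence between the fibration categories of simplicial and semisimplicial Kan complexes. The paper's proof is in fact much terser than yours---it simply asserts the $\Set$ case as known and observes that both functors commute with evaluation, so that preservation of fibrant objects, fibrations, weak equivalences, and the acyclicity of the unit and counit all follow at once. Your version spells out more of the $\Set$-level verifications (and there is a minor notational slip: the left-hand side of your first displayed isomorphism should be $\Hom_\ssSet(E, UX)$, not $\Hom_\sSet$), but the substance is the same.
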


\begin{proof}
  The theorem is valid for simplicial and semisimplicial sets, i.e., in the case of $\cat{E} = \Set$.
  As both $U$ and $R$ commute with evaluation at $E \in \cat{E}$ and weak equivalences and fibrations are
  detected by these evaluations, it follows that:
  \begin{itemize}
  \item $U$ and $R$ preserve fibrant objects and are morphisms of fibrations categories;
  \item the unit and counit of the adjunctions are weak equivalences on fibrant objects. \qedhere
  \end{itemize}
\end{proof}

We now move to the case of a countably lextensive category $\cat{E}$.
Despite the fact that the theorem concerns only the fibrant objects of $\catss{E}$,
the proof will depend on the homotopy theory of all, not necessarily fibrant, semisimplicial objects in $\cat{E}$.
We define a general morphism of $\catss{E}$ to be a weak equivalence if it has a fibrant replacement (as constructed from factorisations of \Cref{ss-wfs})
that is a pointwise weak equivalence in $\catss{E}_\fib$.
This is analogous to the characterisation of weak equivalences between simplicial objects in the model structure of \Cref{model}.
The weak equivalences, fibrations and cofibrations defined in this section do not form a model structure on $\catss{E}$,
but we can still prove that they are sufficiently well-behaved for our purposes.
For example, the definition of weak equivalences immediately implies that trivial cofibrations are weak equivalences.
On the other hand, not all trivial fibrations are weak equivalences.

\begin{remark}\label{weak-model-structures}
  If $\cat{E}$ is countably lextensive then $\catss{E}$ is a weak model category in the sense of \cite{Hwms} with weak equivalences, fibrations and cofibrations as defined above.
  This can be derived from (the dual of) \cite{Hwms}*{Proposition~2.3.3} and properties of the classes established in this section. In fact, as every object of $\catss{E}$ is cofibrant, this is even a right semi-model category, as long as we use the definition of a semi-model category in~\cite{Fresse} and not that in~\cite{Spitzweck} (see~\cite{Hcwms}*{Section~3} for the explanation of differences between the two definitions). Our discussion of homotopy theory of semisimplicial objects can be phrased both in terms of this weak model structure or right semi-model structure. However, we prefer to provide more elementary arguments to make this section more self-contained.
\end{remark}

\begin{proposition}
  If $\cat E$ has finite coproducts, then the forgetful functor $\cats{E} \to \catss{E}$ has a left adjoint.
  It is given by
  \begin{equation*}
    (L X)_n = \bigcoprod_{[n] \sto [m]} X_m
  \end{equation*}
  where the coproduct is over all degeneracy operators $[n] \sto [m]$ in $\Simp$.
\end{proposition}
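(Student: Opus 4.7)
The plan is to recognise the forgetful functor $U \from \cats{E} \to \catss{E}$ as restriction along $j^{\op}$, where $j \from \sSimp \to \Simp$ is the inclusion of the wide subcategory of face operators, and to exhibit its left adjoint as the pointwise left Kan extension $L = \Lan_{j^{\op}}$. Concretely, $(LX)_n$ is the colimit of the diagram sending $(\phi \from [n] \to [m]) \in (j^{\op} \downarrow [n])$ to $X_m$, where $(j^{\op} \downarrow [n])$ is the comma category whose objects are morphisms $\phi \from [n] \to [m]$ in $\Simp$ and whose morphisms from $\phi$ to $\phi' \from [n] \to [m']$ are face operators $\eta \from [m'] \to [m]$ satisfying $\eta \circ \phi' = \phi$.

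The key step is to exploit the unique epi--mono factorisation in $\Simp$: every $\phi \from [n] \to [m]$ decomposes uniquely as $\phi = \delta \circ \sigma$ with $\sigma \from [n] \sto [k]$ a degeneracy and $\delta \from [k] \to [m]$ a face operator. Letting $C_0 \subseteq (j^{\op} \downarrow [n])$ denote the full subcategory spanned by the degeneracy operators $\sigma \from [n] \sto [k]$, I would verify that the inclusion $C_0 \hookrightarrow (j^{\op} \downarrow [n])$ is final. Indeed, for each $\phi$ the slice $\phi \downarrow C_0$ is the singleton $\{(\sigma, \delta)\}$ by uniqueness of the factorisation, hence connected; and a morphism in $C_0$ from $\sigma \from [n] \sto [k]$ to $\sigma' \from [n] \sto [k']$ is a face operator $\eta \from [k'] \to [k]$ with $\eta \sigma' = \sigma$, so $\eta$ is simultaneously monic (being a face) and surjective (since $\sigma$ is), hence an identity. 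Thus $C_0$ is discrete.

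Because only finitely many degeneracy operators emanate from $[n]$, finality of $C_0$ reduces the colimit to a finite coproduct, which exists in $\cat{E}$ by hypothesis, yielding the announced formula $(LX)_n = \bigcoprod_{[n] \sto [m]} X_m$. The functoriality in $[n]$ and the adjunction $L \dashv U$ then follow from the general theory of pointwise left Kan extensions.

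I expect the main subtlety to be the finality verification for $C_0$, namely the bookkeeping showing that it has no nontrivial morphisms and that each $\phi \downarrow C_0$ is a singleton. However, both reduce to the standard uniqueness of epi--mono factorisation in $\Simp$ together with the fact that a map in $\Simp$ which is both monic and epic must be an identity, so no essentially new ingredients are required.
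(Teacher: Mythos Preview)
Your proposal is correct and follows essentially the same approach as the paper: both identify $L$ as the pointwise left Kan extension along the inclusion $\sSimp^\op \to \Simp^\op$ and use the unique degeneracy/face factorisation in $\Simp$ to show that the discrete subcategory of degeneracy operators is (co)final in the relevant comma category, reducing the colimit to the stated finite coproduct. The paper's proof is terser, simply asserting cofinality as a consequence of the factorisation system, whereas you spell out the verification that each slice $\phi \downarrow C_0$ is a singleton and that $C_0$ is discrete; both arguments are the same in substance.
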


\begin{proof}
  The functor $L$ is the left Kan extension along $\sSimp \to \Simp$.
  If it can be computed pointwise, it is given by the formula
  \begin{equation*}
    (L X)_n = \colim_{[n] \to [m]} X_m
  \end{equation*}
  where the colimit is taken over the comma category $[n] \slice \sSimp^\op$.
  (Its objects are arbitrary simplicial operators $[n] \to [m]$, but its morphisms are just the face operators.)
  It follows from the existence of the degeneracy/face unique factorisation system in $\Simp$ that
  the discrete category of degeneracy operators $[n] \sto [m]$ is cofinal in this category.
  Hence the colimit above can be rewritten as the the coproduct in the statement of the proposition.
  Thus if $\cat E$ has finite coproducts, this colimit exists which concludes the proof.
\end{proof}

\begin{lemma}\label{free-preserves-cofibrations}
  The free functor $L \from \catss{E} \to \cats{E}$ preserves cofibrations and trivial cofibrations.
\end{lemma}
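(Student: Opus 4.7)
The plan is to exploit that $L \adj U$ makes $L$ a left adjoint, hence a colimit-preserver, and combine this with the characterisation of cofibrations as retracts of cell complexes from \cref{cofibration-as-retract-of-cell-complex}. First I would verify that $L$ preserves tensors by objects of $\cat{E}$: using the explicit formula for $L$ together with countable distributivity in $\cat{E}$ (as in \cref{product-with-discrete}), a direct computation gives $(L(E \tensorEsE X))_n = \bigcoprod_{[n] \sto [m]} (E \times X_m) \iso E \times (LX)_n$, naturally in $E \in \cat{E}$ and $X \in \catss{E}$.

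Next I would check that $L$ sends the generating (trivial) cofibrations of $\catss{E}$ to the generating (trivial) cofibrations of $\cats{E}$. By Yoneda applied to $L \adj U$, $L\ssimp{n} = \simp{n}$. The identifications $L\bdssimp{n} = \bdsimp{n}$ and $L\shorn{n,k} = \horn{n,k}$ follow from the unique degeneracy--face factorisation in $\Simp$: at each level, a map $[k] \to [n]$ lying in $\bdsimp{n}$ (resp.\ $\horn{n,k}$) decomposes uniquely as a degeneracy $[k] \sto [m]$ followed by an injective map $[m] \hookrightarrow [n]$ itself belonging to $\bdssimp{n}$ (resp.\ $\shorn{n,k}$), yielding the required bijection with the coproduct $\bigcoprod_{[k] \sto [m]} (\bdssimp{n})_m$ (resp.\ $\bigcoprod_{[k] \sto [m]} (\shorn{n,k})_m$). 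Since $L$ commutes with the coproducts used to form $\fset{(-)}$, it sends each generator in $I_\catss{E}$ or $J_\catss{E}$ to the corresponding generator in $I_\cats{E}$ or $J_\cats{E}$.

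With these two preservation properties in hand, the rest is formal: by \cref{cofibration-as-retract-of-cell-complex}, any (trivial) cofibration in $\catss{E}$ is a codomain retract of a cell complex built from sequential colimits of pushouts of countable coproducts of tensors of generators; applying $L$ yields the corresponding cell complex in $\cats{E}$, which is again a (trivial) cofibration, and retracts are preserved by functoriality. The main point requiring care is to confirm that the pushouts and sequential colimits involved in these cell complexes exist in $\catss{E}$ and are preserved by $L$ --- not automatic from adjointness alone since $\catss{E}$ does not admit all colimits --- but this follows from \cref{colimits-diagram-category} since the maps involved are levelwise complemented inclusions, and such colimits are then computed levelwise in both $\catss{E}$ and $\cats{E}$.
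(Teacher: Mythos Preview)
Your proposal is correct and follows essentially the same route as the paper. The paper packages the saturation step by invoking \cref{pushout-application-instance2} applied to the natural transformation from the initial functor to $L$ (whose pushout application is just $L$ itself), whereas you unpack that argument directly via \cref{cofibration-as-retract-of-cell-complex}; both approaches verify tensor preservation and the effect of $L$ on generators, then conclude by a cell-complex argument. One small remark: once the relevant colimits are known to exist in $\catss{E}$, their preservation by $L$ \emph{is} automatic from adjointness, so the only genuine issue in your last paragraph is existence, which you correctly handle via \cref{colimits-diagram-category}.
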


\begin{proof}
It can be checked easily that the natural transformation from the initial functor to $L$ satisfies the assumptions of \cref{pushout-application-instance2}, so it is enough to verify that $L$ sends the generating cofibrations and trivial cofibrations to cofibrations and trivial cofibrations, respectively. These generators are of the form $\fset{\shorn{n,k}} \cto \fset{\ssimp{n}}$ or $\fset{\bdssimp{n}} \cto \fset{\ssimp{n}}$ the image by $L$ is computed as in $\Set$, thus giving $\fset{\horn{n,k}} \cto \fset{\simp{n}}$ or $\fset{\bdsimp{n}} \cto \fset{\simp{n}}$, \ie, the generating cofibrations and trivial cofibrations in $\cats{E}$.
\end{proof}

%\begin{example}\label{ex:free_triv_cofibration} If $f :X \to Y$ is a cofibration or trivial cofibration of semisimplicial sets, with $X$ and $Y$ $\alpha$-small, and if $\cat E$ has $\alpha$-small coproducts, then $\fset{f} \from \fset{X} \to \fset{Y}$ is a cofibration, or trivial cofibration in $\catss E$.
%Indeed, this follows from the fact that (trivial) cofibration between $\alpha$-small semisimplicial sets can be written as retract of $\alpha$-small transfinite compositions of pushout of $\alpha$-small coproducts of the generating (trivial) cofibration, and both the generating (trivial) cofibrations and all these construction are preserved by $X \mapsto \fset{X}$. \end{example}

\begin{lemma}\label{lem:Forget_pres_(triv)cof}
  The forgetful functor $U \from \cats{E} \to \catss{E}$ preserves cofibrations and trivial cofibrations.
\end{lemma}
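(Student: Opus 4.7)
The plan is to apply \cref{pushout-application-instance2} to the natural transformation $\lambda \from 0 \to U$ of functors $\cats{E} \to \catss{E}$, where $0$ is the constant functor at the initial object $\emptyset$. The hypotheses are straightforward to verify: $U$ is computed levelwise, so it preserves levelwise complemented inclusions, pushouts and sequential colimits of them, and tensors by objects of $\cat{E}$; for the zero functor these properties hold vacuously by strictness of $\emptyset$ in a countably lextensive category. Since $\hat{\app}(\lambda, i) \iso U i$ (the pushout $\emptyset \push_\emptyset UA$ being just $UA$), \cref{pushout-application-instance2} will reduce the statement to checking that $U$ sends each generator of $I_{\cats{E}}$, respectively $J_{\cats{E}}$, to a cofibration, respectively trivial cofibration, in $\catss{E}$.

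On generators, $U$ commutes with the partial functor $\fset{\uvar}$, so these become $\fset{U \bdsimp{n}} \to \fset{U \simp{n}}$ and $\fset{U \horn{n,k}} \to \fset{U \simp{n}}$. The semisimplicial analogue of \cref{fset-pres-cof}, proved identically via the adjunction of $\fset{\uvar}$ with the levelwise global sections functor together with the semisimplicial counterpart of \cref{prop:fibration_pointwise} (detecting fibrations pointwise), will then further reduce the problem to showing that $U \bdsimp{n} \to U \simp{n}$ is a cofibration in $\ssSet$ and that $U \horn{n,k} \to U \simp{n}$ is a trivial cofibration in $\ssSet$.

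The first is immediate, as these maps are levelwise injections. For the second, I would observe that $U \horn{n,k} \to U \simp{n}$ is both a cofibration (being a levelwise injection) and a weak equivalence in $\ssSet$, the latter because the forgetful functor $U \from \sSet \to \ssSet$ preserves weak equivalences by~\cite{H}*{Lemma~2.2.1}, recalled in the proof sketch of \cref{ssSet-fibcat}. Every semisimplicial set is cofibrant by \cref{ex:cofibration_are_coprod_inclusion}, and from the proof of \cref{ssSet-fibcat} we also have that a fibration in $\ssSet$ is a trivial fibration exactly when it is a weak equivalence. Combining these facts with the factorisation from \cref{ss-wfs} into a trivial cofibration followed by a fibration and the standard retract argument will upgrade our cofibration-and-weak-equivalence into a trivial cofibration in $\ssSet$. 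I do not anticipate any serious obstacle; the only non-elementary input is the preservation of weak equivalences by $U \from \sSet \to \ssSet$, which is already invoked elsewhere in the paper.
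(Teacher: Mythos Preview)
Your proposal is correct and follows the same overall pattern as the paper: reduce to generators via a saturation argument, commute $U$ with $\fset{\uvar}$, and finish in $\ssSet$. The differences are minor but worth noting.

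For cofibrations, the paper bypasses the reduction to generators entirely: since cofibrations in $\cats{E}$ are levelwise complemented inclusions by \cref{lem:cof_charac} and cofibrations in $\catss{E}$ are exactly the levelwise complemented inclusions by \cref{ex:cofibration_are_coprod_inclusion}, preservation is immediate. Your route via generators works but is longer.

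For trivial cofibrations, the paper simply cites~\cite{Hwms}*{Corollary~5.5.15~(ii)} for the fact that $U\horn{n,k} \to U\simp{n}$ is a trivial cofibration in $\ssSet$. Your reconstruction via the retract argument is a legitimate alternative: $U$ preserves weak equivalences by~\cite{H}*{Lemma~2.2.1}, the map is a levelwise injection hence a cofibration, and the retract argument applies because (i) trivial cofibrations in $\ssSet$ are weak equivalences (by \cref{free-preserves-cofibrations}, since $L$ sends them to trivial cofibrations in $\sSet$), (ii) weak equivalences satisfy $2$-out-of-$3$ as they are created by $L$, and (iii) acyclic fibrations coincide with trivial fibrations in $\ssSet$ as observed in the proof of \cref{ssSet-fibcat}. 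You should make the use of (i) explicit, as it is the step that makes the retract argument go through; it is available at this point in the paper but not stated in your sketch.
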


\begin{proof}
  The forgetful functor preserves all colimits that exist so it is enough to show that
  the generating (trivial) cofibrations of $\cats{E}$ are sent to (trivial) cofibrations.
  The case of cofibrations follows from \cref{lem:cof_charac,ex:cofibration_are_coprod_inclusion}.
  For trivial cofibrations, note that if $X \in \sSet$, then $\fset{U X} = U \fset X$
  (the first $U$ is the forgetful functor $\sSet \to \ssSet$, the second one is $\cats{E} \to \catss{E}$).
  Thus it is enough to show that $\fset{U \horn{k,n}} \to \fset{U \simp{n}}$ is
  a trivial cofibration in $\catss{E}$ for all $0 \le k \le n$.
  For this it is sufficient to show that $\horn{k,n} \to U \simp{n}$ is a trivial cofibration in $\ssSet$ which
  was proven in~\cite{Hwms}*{Corollary~5.5.15~(ii)}.
\end{proof}

Note that the forgetful functor $U$ preserves trivial fibrations, but trivial fibrations in $\catss{E}$ are not necessarily weak equivalences.
Nonetheless, the following statement is valid.

\begin{lemma}\label{lem:forget_send_tcof_to_weq}
  The forgetful functor $U \from \cats{E} \to \catss{E}$ sends trivial fibrations to weak equivalences.
\end{lemma}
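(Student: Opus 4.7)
The plan is to fibrantly replace $p$ first inside $\cats{E}$, transport the replacement through $U$, and then appeal to the fibration category on Kan complexes in $\catss{E}$ established in \cref{th:ss_fib_cat}. The initial observation is that the adjunction $L \dashv U$ combined with the preservation of (trivial) cofibrations by $L$ (\cref{free-preserves-cofibrations}) yields, by adjointness, that $U$ preserves trivial fibrations and fibrations; moreover, since $L$ sends the horn and boundary inclusions of $\catss{E}$ to the corresponding generating maps of $\cats{E}$, $U$ also preserves fibrant objects.

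Next, I will build a fibrant replacement of $p$ in $\cats{E}$ using the two \wfs{}s of \cref{two-ewfss}: factor $Y \to 1$ as a trivial cofibration $Y \ato Y'$ followed by $Y' \fto 1$, then factor the composite $X \to Y'$ as $X \ato X' \fto Y'$. The composite $X' \fto Y' \fto 1$ is a fibration, so $X'$ is fibrant, and the construction yields a commuting square with $p$ on the left, the fibration $p' \from X' \to Y'$ on the right, and trivial cofibrations horizontally. By 2-out-of-3 for weak equivalences in the effective model structure (\cref{model}) applied along the two factorisations of $X \to Y'$, namely $X \ato X' \fto Y'$ and $X \fto Y \ato Y'$, the map $p'$ is a weak equivalence, hence a trivial fibration between fibrant objects in $\cats{E}$.

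Applying $U$ then produces a square in $\catss{E}$ with $UX \ato UX'$ and $UY \ato UY'$ trivial cofibrations (\cref{lem:Forget_pres_(triv)cof}), $UX'$ and $UY'$ fibrant, and $Up' \from UX' \to UY'$ a trivial fibration. By \cref{th:ss_fib_cat}, $Up'$ is a pointwise weak equivalence between fibrant objects, so the square exhibits a fibrant replacement of $Up$ whose replacement arrow is a pointwise weak equivalence, making $Up$ a weak equivalence in $\catss{E}$ by definition.

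The main obstacle I anticipate is a bookkeeping point: the definition of weak equivalence preceding the lemma calls for fibrant replacements constructed from factorisations of \cref{ss-wfs} specifically in $\catss{E}$, whereas the replacement above has been obtained by factorising inside $\cats{E}$ and applying $U$. This can be dealt with by further factorising each trivial cofibration $UX \ato UX'$ and $UY \ato UY'$ through \cref{ss-wfs} and noting that precomposing the already-fibrant replacement with such additional trivial cofibrations still produces a fibrant replacement whose comparison map remains pointwise weakly equivalent to $Up'$.
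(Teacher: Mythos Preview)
Your proof is correct. The paper's own proof simply defers to an external argument in \cite{H}, whereas you give a self-contained argument using the effective model structure on $\cats{E}$ together with the fibration category of \cref{th:ss_fib_cat}. In fact, your strategy---fibrantly replace in $\cats{E}$, transport through $U$, and read off the conclusion from the fibration category on Kan complexes in $\catss{E}$---is exactly the pattern the paper itself uses a few lines later in the proof of \cref{U-detects-we}, so your approach meshes cleanly with the surrounding development.

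Your final bookkeeping concern is overcautious, and the fix you sketch is both unnecessary and too vague to stand on its own. The definition of weak equivalence in $\catss{E}$ asks for \emph{a} fibrant replacement whose comparison map is a pointwise weak equivalence; the parenthetical ``(as constructed from factorisations of \cref{ss-wfs})'' is recording that such replacements exist, not mandating a specific functorial one. The square you built already \emph{is} a fibrant replacement in the required sense: the horizontal maps $UX \to UX'$ and $UY \to UY'$ lie in the left class of the (trivial cofibration, fibration) \wfs{} of \cref{ss-wfs} by \cref{lem:Forget_pres_(triv)cof}, and the right-hand objects are fibrant since $U$ preserves fibrations. Hence the definition applies directly and no further factorisation is needed. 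If one did insist on a canonical replacement, establishing independence of the choice would require showing that trivial cofibrations between fibrant objects of $\catss{E}$ are pointwise weak equivalences---a fact your proposed fix does not actually supply, and which is not entirely straightforward in the semisimplicial setting (this is related to the complications around path objects in the proof of \cref{th:ss_fib_cat}).
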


\begin{proof}
  This follows by the same argument as the second part of~\cite{H}*{Lemma~2.2.1}.
\end{proof}

\begin{lemma}\label{lem:unit_acyclic}
  For each $X \in \catss{E}$, the unit $X \to U L X$ is a trivial cofibration.
\end{lemma}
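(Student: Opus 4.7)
I would split the proof into verifying that $\eta_X$ is a cofibration and that it is a weak equivalence; in fact I would aim directly for trivial cofibration, which is stronger than what is stated.

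\textbf{Cofibration part.} By the formula of the previous proposition, $(ULX)_n = \bigcoprod_{[n] \sto [m]} X_m$, and $(\eta_X)_n$ is the summand inclusion corresponding to the identity degeneracy $[n] \sto [n]$. This is a complemented inclusion in $\cat E$ with complement $\bigcoprod_{[n] \sto [m], \, m < n} X_m$, so by \cref{ex:cofibration_are_coprod_inclusion} the map $\eta_X$ is a cofibration in $\catss E$.

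\textbf{Trivial cofibration part.} I would argue by induction on the skeletal filtration $X = \colim_n \Sk^n X$. The functor $L$ preserves this colimit as a left adjoint, and $U$ does because colimits of sequences of levelwise complemented inclusions are computed levelwise in both $\cats E$ and $\catss E$ by \cref{colimits-diagram-category:sequential} of \cref{colimits-diagram-category}. Thus $\eta_X = \colim_n \eta_{\Sk^n X}$, and by closure of trivial cofibrations under sequential colimits (\cref{saturation}, available via \cref{ss-wfs}) it suffices to show each $\eta_{\Sk^n X}$ is a trivial cofibration. The base case $\Sk^{-1} X = \emptyset$ is immediate. For the inductive step, $\Sk^n X$ is the pushout of $X_n \otimes \bdssimp{n} \cto X_n \otimes \ssimp{n}$ along the attaching map $X_n \otimes \bdssimp{n} \to \Sk^{n-1} X$; using $L \ssimp{n} = \simp{n}$, $L \bdssimp{n} = \bdsimp{n}$, preservation of pushouts by $L$, and preservation of pushouts of cofibrations by $U$ (again via \cref{colimits-diagram-category}), $UL \Sk^n X$ is the analogous pushout of $X_n \otimes U\bdsimp{n} \cto X_n \otimes U\simp{n}$ along $X_n \otimes U\bdsimp{n} \to UL\Sk^{n-1} X$, and $\eta_{\Sk^n X}$ is the induced map between the two pushouts.

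The three vertical maps of this map-of-spans are all trivial cofibrations: $\eta_{\Sk^{n-1} X}$ by the inductive hypothesis, while $X_n \otimes \eta_{\ssimp{n}}$ and $X_n \otimes \eta_{\bdssimp{n}}$ are trivial cofibrations because the units $\eta_{\ssimp{n}} \from \ssimp{n} \to U\simp{n}$ and $\eta_{\bdssimp{n}} \from \bdssimp{n} \to U\bdsimp{n}$ are trivial cofibrations in $\ssSet$ (by \cite{Hwms}*{Corollary~5.5.15~(ii)}, applied via the semisimplicial analog of \cref{fset-pres-cof}) and tensoring with $X_n \in \cat E$ preserves trivial cofibrations by \cref{saturation}. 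To conclude, I would decompose $\eta_{\Sk^n X}$ as a finite composite of pushouts of these trivial cofibrations along arbitrary maps, using closure of the left class of a wfs under such pushouts. The main obstacle is the ``base change'' step: moving from the pushout of the vertical trivial cofibrations glued over $X_n \otimes \bdssimp{n}$ to the final pushout glued over $X_n \otimes U\bdsimp{n}$. This requires presenting the change-of-base map itself as a pushout of a trivial cofibration, exploiting the fact that $X_n \otimes \bdssimp{n} \to X_n \otimes U\bdsimp{n}$ is a trivial cofibration and that the two maps $X_n \otimes U\bdsimp{n} \rightrightarrows$ (intermediate pushout) used to glue the boundaries already agree on $X_n \otimes \bdssimp{n}$ by naturality of $\eta$; this is the only delicate aspect of the argument.
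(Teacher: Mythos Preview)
Your skeletal-induction approach is close in spirit to the paper's, but the ``delicate aspect'' you identify is a genuine obstruction, not just a technicality. Writing $A = \Sk^{n-1} X$, $A' = UL\Sk^{n-1} X$, $B = X_n \otimes \bdssimp{n}$, $B' = X_n \otimes U\bdsimp{n}$, $C = X_n \otimes \ssimp{n}$, $C' = X_n \otimes U\simp{n}$, your decomposition produces the intermediate object $A' \cup_B C'$ and then needs the base-change map $A' \cup_B C' \to A' \cup_{B'} C'$ to be a trivial cofibration. But this map is a pushout of the fold $B' \cup_B B' \to B'$, which is not even a monomorphism: in any degree $k$ where $(\bdssimp{n})_k = \emptyset$ but $(U\bdsimp{n})_k \neq \emptyset$ (e.g.\ $k \geq n$), it collapses two disjoint copies of $B'_k$. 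So the base-change map is not a cofibration in $\catss{E}$, and no rewriting as ``a pushout of a trivial cofibration'' is possible. Knowing $\eta_{\bdssimp{n}}$ and $\eta_{\ssimp{n}}$ separately is not enough; the left class of a \wfs{} does not satisfy the cube lemma without a Reedy-type relative condition.

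The paper avoids this by applying \cref{pushout-application-instance2} to the natural transformation $\eta \from \id \to UL$: one checks that the \emph{pushout application} $\hat{\app}(\eta, i) \from UL(\fset{\bdssimp{n}}) \cup_{\fset{\bdssimp{n}}} \fset{\ssimp{n}} \to UL(\fset{\ssimp{n}})$ is a trivial cofibration for each generating cofibration $i$, which reduces to $\ssSet$ and is~\cite{Hwms}*{Proposition~5.5.14}; saturation then gives the same for all cofibrations, and $i = (\emptyset \to X)$ yields $\eta_X$. If you want to keep the skeletal induction, the fix is to use this relative datum in the inductive step: factor $\eta_{\Sk^n X}$ as $\Sk^n X \to UL\Sk^{n-1} X \cup_{\Sk^{n-1} X} \Sk^n X \to UL\Sk^n X$; the first map is a pushout of $\eta_{\Sk^{n-1} X}$, and the second is a pushout of $X_n \otimes \hat{\app}(\eta, \bdssimp{n} \to \ssimp{n}) = X_n \otimes \bigl(U\bdsimp{n} \cup_{\bdssimp{n}} \ssimp{n} \to U\simp{n}\bigr)$, with no base change needed.
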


\begin{proof}
  The composite $U L$ preserves all the relevant colimits, so it is enough to check that
  for each generating cofibration $\fset{\bdssimp{n}} \to \fset{\ssimp{n}}$, the map
  \begin{equation*}
    U L(\fset{\bdssimp{n}}) \coprod_{\fset{\bdssimp{n}}} \fset{\ssimp{n}} \to UL \fset{\ssimp{n}}
  \end{equation*}
  is a trivial cofibration.
  It then follows from \cref{pushout-application-instance2} that the same holds for all cofibrations and
  the case of $\emptyset \to X$ concludes the proof.
  Thus it suffices to prove the statement in the case of semisimplicial sets which is~\cite{Hwms}*{Proposition~5.5.14}.
\end{proof}

\begin{proposition}\label{U-detects-we}
  The forgetful functor $U \from \cats{E} \to \catss{E}$ preserves and reflects weak equivalences.
\end{proposition}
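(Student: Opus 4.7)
The plan is to first establish that $U$ preserves and reflects pointwise weak equivalences between fibrant objects, and then to extend to general maps via Reedy fibrant replacement in the model structure of \cref{model}. The principal tool is the natural identification $\Hom_\ssSet(E, UZ) = U \Hom_\sSet(E, Z)$ for $E \in \cat{E}$ and $Z \in \cats{E}$, which holds since both sides evaluate levelwise to $\Hom_\Set(E, Z_n)$ for $n \in \sSimp$, combined with the fact that $U \from \sSet \to \ssSet$ preserves and reflects weak equivalences (cited as \cite{H}*{Lemma~2.2.1} in the proof of \cref{ssSet-fibcat}). Since $U$ also sends Kan complexes in $\cats{E}$ to fibrant objects in $\catss{E}$ (the horn-filling conditions being formulated only in terms of face operators), this immediately gives the claim for pointwise weak equivalences between fibrant objects.

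For a general morphism $f \from X \to Y$ in $\cats{E}$, I would take a Reedy fibrant replacement $f' \from X' \to Y'$ in the model structure of \cref{model}, yielding trivial cofibrations $X \to X'$ and $Y \to Y'$ into Kan complexes with $f'$ extending $f$. By \cref{we-invariant} together with \cref{model:between-fibrant} of \cref{model}, $f$ is a weak equivalence in $\cats{E}$ if and only if $f'$ is a pointwise weak equivalence in $\cats{E}_\fib$. Applying $U$ and invoking \cref{lem:Forget_pres_(triv)cof}, the maps $UX \to UX'$ and $UY \to UY'$ become trivial cofibrations in $\catss{E}$ with fibrant codomains, exhibiting $Uf'$ as a fibrant replacement of $Uf$ in $\catss{E}$.

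The proof is completed provided that the defined notion of weak equivalence in $\catss{E}$ is independent of the choice of fibrant replacement; granted this, $Uf$ is a weak equivalence in $\catss{E}$ iff $Uf'$ is a pointwise weak equivalence in $\catss{E}_\fib$ iff (by the first paragraph) $f'$ is a pointwise weak equivalence in $\cats{E}_\fib$ iff $f$ is a weak equivalence in $\cats{E}$. The main obstacle is establishing this invariance, since $\catss{E}$ lacks a full model structure. Given two fibrant replacements of a map, forming the pushouts on source and target and applying the trivial cofibration/fibration factorisation of \cref{ss-wfs} yields a common third replacement connected to both by trivial cofibrations between fibrant objects. This reduces the invariance to showing that any trivial cofibration $j \from A \to B$ between fibrant objects is a pointwise weak equivalence, which I would prove by exhibiting $j$ as a homotopy equivalence: the lifting property of $j$ against the fibration $A \to 1$ produces a retraction $r \from B \to A$, and lifting $j$ against the fibration $\ssimp{1} \cotensor B \to B \times B$ (using the path object construction from the proof of \cref{th:ss_fib_cat} via the intermediate $D \cotensor B$) produces a homotopy from $jr$ to $\id_B$. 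Homotopy equivalences in $\catss{E}$ are pointwise weak equivalences by the evident analogue of \cref{h-equiv-is-we}, and 2-out-of-3 in $\catss{E}_\fib$ (\cref{th:ss_fib_cat}) then delivers the invariance.
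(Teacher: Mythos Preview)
Your proof is correct and follows essentially the same route as the paper: reduce to the fibrant case via the identification $\Hom_{\ssSet}(E, UZ) \iso U\Hom_{\sSet}(E, Z)$ together with~\cite{H}*{Lemma~2.2.1}, then extend to arbitrary maps by pushing a fibrant replacement through $U$ using \cref{lem:Forget_pres_(triv)cof}. The paper's proof is terser and does not spell out the invariance of the semisimplicial notion of weak equivalence under change of fibrant replacement; you make this step explicit and supply the standard argument (common refinement plus the fact that a trivial cofibration between fibrant objects is a homotopy equivalence via the path object of \cref{th:ss_fib_cat}), which is exactly what is needed for the ``reflects'' direction to go through.
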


\begin{proof}
  The conclusion is valid for $\cats{E} = \sSet$ by~\cite{H}*{Lemma~2.2.1} and thus it holds for morphisms between fibrant objects.
  Indeed, $\Hom_{\ssSet}(E, U X) = U \Hom_{\sSet}(E, X)$ and weak equivalences between fibrant objects in both $\cats{E}$ and $\catss{E}$ are detected
  by pointwise evaluation.

  For a general morphism $X \to Y$, we consider its fibrant replacement as constructed in the small object argument.
  Since $U$ preserves trivial cofibrations (by~\Cref{lem:Forget_pres_(triv)cof}) and fibrations, it follows that it preserves such fibrant replacements.
  Thus the conclusion follows from the special case of morphisms between fibrant objects.
\end{proof}

\begin{corollary}\label{counit-we}
  For each $X \in \cats{E}$, the counit $L U X \to X$ is a weak equivalence.
\end{corollary}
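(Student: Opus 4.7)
The strategy is to exploit the triangle identities of the adjunction $L \dashv U$ together with the already-established facts that the unit is a trivial cofibration (\cref{lem:unit_acyclic}) and that $U$ preserves and reflects weak equivalences (\cref{U-detects-we}).

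Concretely, fix $X \in \cats E$ and write $\eta \from \Id \to UL$ and $\epsilon \from LU \to \Id$ for the unit and counit. The triangle identity at $X$ reads
\[
  U(\epsilon_X) \circ \eta_{UX} \;=\; \id_{UX}
\]
in $\catss E$. By \cref{lem:unit_acyclic}, the map $\eta_{UX} \from UX \to ULUX$ is a trivial cofibration in $\catss E$, hence in particular a weak equivalence. The identity $\id_{UX}$ is trivially a weak equivalence. First, I would invoke the 2-out-of-3 property for weak equivalences in $\catss E$ to conclude that $U(\epsilon_X)$ is a weak equivalence in $\catss E$. Then, applying the ``reflects'' half of \cref{U-detects-we}, the counit $\epsilon_X \from LUX \to X$ is a weak equivalence in $\cats E$, which is exactly what we want.

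The only point needing a little care is the use of 2-out-of-3 for weak equivalences in all of $\catss E$ (not just between fibrant objects), since a full model structure on $\catss E$ is not claimed. This however follows from the same argument as in the simplicial case (compare \cref{we-cof-invariant,we-cof-2-out-of-3} and the subsequent passage to $\mathbf{W}$): weak equivalences in $\catss E$ are defined via fibrant replacement obtained from the small object argument of \cref{ss-wfs}; such replacements are functorial on the Reedy cofibrant diagram $[2] \to \catss E$ given by a 2-out-of-3 triple, yielding fibrant replacements of the three maps in which one of them is a pointwise weak equivalence between fibrant objects as soon as the other two are, by 2-out-of-3 in the fibration category of \cref{th:ss_fib_cat}. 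Invariance of weak equivalences under the choice of fibrant replacement (the analogue of \cref{we-cof-invariant}), using that $U$ and hence pointwise weak equivalence behave well with respect to trivial cofibrations via \cref{lem:Forget_pres_(triv)cof}, then gives 2-out-of-3 for the class of all weak equivalences in $\catss E$. I expect this bookkeeping about fibrant replacements to be the only mildly technical point; once it is in place, the proof of \cref{counit-we} is a two-line triangle-identity argument.
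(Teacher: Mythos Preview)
Your proposal is correct and is exactly the approach the paper takes: the paper's proof is a one-liner citing the triangle identities together with \cref{lem:unit_acyclic} and \cref{U-detects-we}. Your extra paragraph justifying 2-out-of-3 for weak equivalences in $\catss{E}$ is a point the paper leaves implicit, and your sketch (Reedy fibrant replacement of the $[2]$-diagram plus the analogue of \cref{we-cof-invariant}) is the right way to fill it in.
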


\begin{proof}
  This follows from the triangle identities using \cref{lem:unit_acyclic,U-detects-we}.
\end{proof}

\begin{theorem}\label{th:simp_ssimp_eq_extensive}
  When $\cat{E}$ is countably lextensive, the functor $U \from \cats{E}_\fib \to \catss{E}_\fib$ is an equivalence
  of fibration categories.
\end{theorem}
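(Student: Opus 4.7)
The strategy is to verify that $U$ is exact (preserves the defining structure of a fibration category) and that it induces an equivalence on the associated homotopy categories; together these give the claimed equivalence of fibration categories. Most of the needed ingredients are already in place from the preceding development.

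\textbf{Exactness.} I would first check that $U$ preserves the terminal object and pullbacks (both are computed levelwise in $\cat E$, and $U$ is simply restriction along $\sSimp \hookrightarrow \Simp$), fibrations (immediate from the definitions, since the horn inclusions in $\ssSet$ are precisely the underlying semisimplicial sets of the horn inclusions in $\sSet$), and weak equivalences (this is Proposition \ref{U-detects-we}). In particular $U$ sends $\cats E_\fib$ into $\catss E_\fib$.

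\textbf{Equivalence on homotopy categories.} For essential surjectivity of $\Ho(U)$, given $Y \in \catss E_\fib$, I would form a fibrant replacement $j : LY \weto X$ in $\cats E$ via the (trivial cofibration, fibration) factorisation of Theorem \ref{two-ewfss}. Then $UX$ is fibrant by exactness, and the composite
\[
Y \xrightarrow{\eta_Y} ULY \xrightarrow{Uj} UX
\]
is a weak equivalence, since $\eta_Y$ is a trivial cofibration by Lemma \ref{lem:unit_acyclic} and $Uj$ is the image under $U$ of a trivial cofibration, hence a weak equivalence by Proposition \ref{U-detects-we}. For full faithfulness of $\Ho(U)$, I would use the adjunction $L \dashv U$. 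The functor $L$ preserves trivial cofibrations by Lemma \ref{free-preserves-cofibrations}, and every semisimplicial object is cofibrant by Lemma \ref{ex:cofibration_are_coprod_inclusion}; hence by Ken Brown's lemma, applied in the weak model structure of Remark \ref{weak-model-structures}, $L$ preserves all weak equivalences. The adjunction $L \dashv U$ then descends to an adjunction $\Ho(L) \dashv \Ho(U)$ between homotopy categories. Its unit is represented by $\eta_Y$, a weak equivalence by Lemma \ref{lem:unit_acyclic}, and its counit by $\epsilon_X : LUX \to X$, a weak equivalence by Corollary \ref{counit-we}. So $\Ho(L)$ and $\Ho(U)$ are mutually inverse equivalences, which restrict to the subcategories of fibrant objects.

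\textbf{Main obstacle.} The delicate step is the full faithfulness, which rests on $L$ preserving weak equivalences --- a fact not established in the main text and that requires appealing to Ken Brown's lemma in the weak model structure flagged in Remark \ref{weak-model-structures}. Should one wish to avoid this appeal, an alternative route is to verify Waldhausen's approximation property directly: given $X \in \cats E_\fib$ and any map $f : UX \to Y$ in $\catss E_\fib$, transpose across the adjunction to $f^\flat : LUX \to Y$, factor as a trivial cofibration into a fibrant object $LUX \weto X'$ in $\cats E_\fib$ and use the counit $LUX \weto X$ together with factorisations to produce the required comparison. Either route yields the desired equivalence.
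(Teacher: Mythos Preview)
Your strategy has a genuine gap at the Ken Brown step. The paper explicitly notes, just before \cref{lem:forget_send_tcof_to_weq}, that trivial fibrations in $\catss{E}$ are \emph{not} weak equivalences in general. The standard proof of Ken Brown's lemma factors $(f,\id_B)\from A \coprod B \to B$ as a cofibration followed by a trivial fibration and then uses that the latter is a weak equivalence; that step fails here, and simply invoking the weak/right semi-model structure of \cref{weak-model-structures} does not rescue it. A repair is possible but nontrivial: first reduce to weak equivalences between fibrant objects (since $L$ preserves the trivial cofibrations arising as fibrant replacements), then observe that trivial fibrations \emph{between fibrant objects} are pointwise weak equivalences, and finally run Ken Brown in $\catss{E}_\fib$ together with a retract argument showing that a cofibration with fibrant target which is a weak equivalence is a trivial cofibration. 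That is considerably more than your one-line appeal, and you do not flag the obstruction. (Your sketched alternative via the approximation property also has a typing slip: $f\from UX \to Y$ does not transpose to a map $LUX \to Y$ unless $Y$ is in the image of $U$.)

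The paper's proof sidesteps the issue entirely by never asking whether $L$ preserves weak equivalences. It directly exhibits a homotopy inverse to $U$: set $L' = (\text{fibrant replacement}) \circ L \from \catss{E}_\fib \to \cats{E}_\fib$, obtain a natural weak equivalence $\id_{\catss{E}_\fib} \to UL'$ from \cref{lem:unit_acyclic} and \cref{U-detects-we}, and connect $L'U$ to $\id_{\cats{E}_\fib}$ by a natural zig-zag $L'U \leftarrow T \to \id$ of weak equivalences, where $T$ is obtained by functorially factoring $LUX \to L'UX \times X$ (needed since $LUX$ is not fibrant). The only inputs are that the unit and counit are weak equivalences (\cref{lem:unit_acyclic}, \cref{counit-we}) and that $U$ preserves weak equivalences (\cref{U-detects-we}).
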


\begin{proof}
  Consider the functor $L' \from \catss{E}_\fib \to \cats{E}_\fib$ obtained by composing $L$ with
  a chosen fibrant replacement functor in $\cats{E}$.
  Such fibrant replacement along with the unit of the adjunction $L \adj U$ induce a natural transformation
  $\id_{\catss{E}_\fib} \to U L'$ which is a weak equivalence by \cref{lem:unit_acyclic,U-detects-we}.
  Similarly, using the counit we obtain two natural transformations $L' U X \leftarrow L U X \to X$ for $X \in \cats{E}$.
  They are weak equivalences by definition and by \cref{counit-we}, but $L U$ is not an endofunctor of $\cats{E}_\fib$,
  just of $\cats{E}$.
  However, we can apply a functorial factorisation to the morphism $L U X \to L' U X \times X$ to obtain
  a weak equivalence $L U X \weto T X$ and a fibration $T X \fto L' U X \times X$.
  Then $T$ is an endofunctor of $\cats{E}_\fib$ and we have
  two natural weak equivalences $L' U \leftarrow T \to \id_{\cats{E}_\fib}$ as required.
\end{proof}

\begin{corollary}\label{sE-left-proper}
Let $\cat{E}$ be a countably lextensive category.  Then the effective model structure on $\cats{E}$ is left proper.
\end{corollary}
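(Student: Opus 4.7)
The plan is to transfer the statement to the category of semisimplicial objects $\catss{E}$, where every object is cofibrant by \cref{ex:cofibration_are_coprod_inclusion}, and then apply a gluing argument.

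First I would apply the forgetful functor $U \from \cats{E} \to \catss{E}$ to the given pushout, where $X \to Y$ is a cofibration and $X \to A$ is a weak equivalence. As $U$ is restriction along $\sSimp \hookrightarrow \Simp$, it preserves all limits and colimits computed pointwise, in particular pushouts; by \cref{lem:Forget_pres_(triv)cof} it sends cofibrations to cofibrations; and by \cref{U-detects-we} it both preserves and reflects weak equivalences. Consequently it suffices to show that in $\catss{E}$, the pushout of a weak equivalence along a cofibration is a weak equivalence.

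In $\catss{E}$, given such a pushout $P = Y \coprod_X A$, I would factor the weak equivalence $f \from X \to A$ using \cref{ss-wfs} as a trivial cofibration $j \from X \to B$ followed by a fibration $p \from B \to A$; by 2-out-of-3 applied in the fibration category of Kan complexes (\cref{th:ss_fib_cat}) after fibrant replacement, $p$ is a trivial fibration. The induced map $Y \to P$ then decomposes as $Y \to Y \coprod_X B \to P$, and the first factor, being the pushout of the trivial cofibration $j$ along a cofibration, is itself a trivial cofibration and hence a weak equivalence. By 2-out-of-3 it suffices to show the second factor is a weak equivalence.

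The hard part will be this final step: showing that the pushout of the trivial fibration $p$ along the induced cofibration $B \to Y \coprod_X B$ is a weak equivalence, since trivial fibrations are not pushout-stable in general. The key observation is that every object of $\catss{E}$ is cofibrant, so by the lifting property $p$ admits a section $s \from A \to B$; using the fibration category structure of \cref{th:ss_fib_cat} together with the equivalence of \cref{th:simp_ssimp_eq_extensive}, one can further arrange that $s$ is homotopy inverse to $p$. The section and homotopies must then be propagated through the pushout, which requires carefully choosing them to be compatible with the cofibration $X \to Y$ (this is the main coherence obstacle). Exploiting cofibrancy of $B$ and $Y \coprod_X B$ and lifting through the path object fibration of \cref{th:ss_fib_cat}, one exhibits a homotopy inverse $P \to Y \coprod_X B$ over $Y \coprod_X B$, producing a homotopy equivalence between $Y \coprod_X B$ and $P$, which is a pointwise weak equivalence by \cref{h-equiv-is-we}.
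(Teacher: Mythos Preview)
Your strategy of transferring to $\catss{E}$, where every object is cofibrant, is natural, but it runs into a genuine obstacle: $\catss{E}$ does \emph{not} carry a model structure, and the two weak factorisation systems of \cref{ss-wfs} do not interact as you assume. The paper explicitly notes (just after defining weak equivalences in $\catss{E}$) that not all trivial fibrations are weak equivalences; the companion inference you need---that a fibration which is a weak equivalence is a trivial fibration---is equally unavailable. The retract argument that would prove it requires knowing that every cofibration which is a weak equivalence is a trivial cofibration, another model-category axiom that is not established for $\catss{E}$. So your claim that $p$ is a trivial fibration is unjustified, and with it the existence of a section. Even granting a section, your ``hard part'' (propagating a homotopy inverse compatibly through a pushout along a cofibration) is a real coherence problem that you leave as a sketch; nothing in the paper supplies the tools to carry it out inside $\catss{E}$.

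The paper's argument avoids working inside $\catss{E}$ altogether. It uses the endofunctor $LU \from \cats{E} \to \cats{E}$: this preserves the relevant pushouts, preserves cofibrations (\cref{lem:Forget_pres_(triv)cof} and \cref{free-preserves-cofibrations}), lands in cofibrant objects (everything in $\catss{E}$ is cofibrant by \cref{ex:cofibration_are_coprod_inclusion} and $L$ preserves cofibrancy), and the counit $LU \to \id$ is a natural weak equivalence (\cref{counit-we}). Applying $LU$ to the given pushout square produces a pushout square of cofibrant objects in the \emph{genuine} model category $\cats{E}$, where left properness holds by the standard Ken Brown argument; the conclusion then transfers back along the counit by 2-out-of-3. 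In short, $LU$ serves as a pushout-preserving cofibrant replacement, and that is all one needs.
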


\begin{proof}
  This follows by the combination of the following facts. First, the functor $L U \from \cats{E} \to \cats{E}$ preserves colimits;
  secondly, $L U$ preserves cofibrations by \cref{free-preserves-cofibrations,lem:forget_send_tcof_to_weq};
  thirdly, $L U$ takes values in cofibrant objects by \cref{ex:cofibration_are_coprod_inclusion,free-preserves-cofibrations};
  and, finally, the counit $L U X \to X$ is a weak equivalence by \cref{counit-we}.
\end{proof}

  \section{The \texorpdfstring{$\infty$}{infinity}-category \texorpdfstring{$\Hoi (\cats{E}_\fib)$}{Ho \textunderscore infty(sE \textunderscore fib)}.}
\label{sec:the-infty-category}

\Cref{sec:Elmendorf} provides a description of the $\infty$-category $\Hoi \cats{E}$ presented by the effective model structure on $\cats{E}$
when $\cat{E}$ is completely lextensive and locally connected.
The goal of this section is to give an alternative characterisation of this $\infty$-category under fewer assumptions on $\cat E$.
As shown in \cref{sec:fib_cat}, if $\cat{E}$ is only a category with finite limits, we already have a fibration category structure on $\cats{E}_\fib$,
which, in the case where $\cat{E}$ is countably lextensive corresponds to the category of fibrant objects of the effective model structure
hence models the same $\infty$-category.
We will consider the more general problem of describing the $\infty$-category $\Hoi \cats{E}_\fib$ in this case.

We do not know such description for a general category $\cat{E}$ with finite limits, but we will present an answer that applies when $\cat{E}$ is
either countably complete or countably lextensive.
More precisely, we will give a description of the $\infty$-category $\Hoi \catss{E}_\fib$,
which we showed in \cref{sec:Semi-simplicial} is equivalent to $\Hoi \cats{E}_\fib$ when $\cat E$ is either countably lextensive or countably complete.

\begin{theorem}\label{th:Joyal_Szumilo_conjecture}
Let $\cat E$ be a category that is either countably complete or countably lextensive.
Then, evaluations at all $E \in \cat{E}$ induce a fully faithful embedding of $\Hoi(\cats{E}_\fib)$ into the category of presheaves of spaces over $\cat E$.
More precisely, $\Hoi(\cats{E}_\fib)$ is equivalent to the full subcategory of presheaves of spaces over $\cat{E}$
that are homotopy colimits (geometric realisations) of Kan complexes in $\cat{E}$.
\end{theorem}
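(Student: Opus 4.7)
The plan is to reduce the problem to the semisimplicial case via \cref{th:simp_ssimp_equivalence} and then to exhibit the embedding explicitly as a homotopy-colimit functor. Throughout, we work with $\catss{E}_\fib$: by \cref{th:simp_ssimp_eq_complete} (when $\cat{E}$ is countably complete) or \cref{th:simp_ssimp_eq_extensive} (when $\cat{E}$ is countably lextensive) this presents the same $\infty$-category as $\cats{E}_\fib$, and every object of $\catss{E}$ is cofibrant by \cref{ex:cofibration_are_coprod_inclusion}, which avoids cofibrant replacement in the mapping-space computations.

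For a Kan complex $X \in \catss{E}$, viewed as a diagram $X_\bullet \from \sSimp^\op \to \cat{E}$, let $\Phi(X)$ be the presheaf of spaces sending $E \in \cat{E}$ to the geometric realisation $\lvert \Hom_\ssSet(E, X) \rvert$ of the Kan semisimplicial set $\Hom_\ssSet(E, X)$ (a Kan complex since $X$ is fibrant in $\catss{E}$). Equivalently, $\Phi(X)$ is the homotopy colimit of the diagram $[n] \mapsto \yon(X_n)$ in the $\infty$-category of presheaves of spaces on $\cat{E}$. Since pointwise weak equivalences in $\catss{E}_\fib$ are by definition detected by the family of functors $\Hom_\ssSet(E, \uvar)$, $\Phi$ preserves weak equivalences (and reflects them using $E = 1$), hence descends to a functor of $\infty$-categories $\Phi \from \Hoi(\catss{E}_\fib) \to \operatorname{PSh}_\infty(\cat{E})$.

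For full faithfulness, one compares the mapping spaces on both sides. Cocontinuity of the mapping space in its first variable gives
\[
\operatorname{Map}(\Phi X, \Phi Y) \simeq \operatorname{holim}_{[n] \in \sSimp^\op} \Phi(Y)(X_n) \simeq \operatorname{holim}_{[n] \in \sSimp^\op} \lvert \Hom_\ssSet(X_n, Y) \rvert ,
\]
while the derived mapping space in the fibration category $\catss{E}_\fib$ between cofibrant $X$ and fibrant $Y$ is modelled by $\lvert \Hom_\ssSet(X, Y) \rvert$. The two are identified by the standard totalisation formula expressing $\Hom_\ssSet(X, Y)$ as the end of the cosemisimplicial diagram $[n] \mapsto \Hom_\ssSet(X_n, Y)$, which realises to a homotopy end because this cosemisimplicial diagram is Reedy fibrant thanks to fibrancy of $Y$. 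For the essential image, an object of $\operatorname{PSh}_\infty(\cat{E})$ lies in the image of $\Phi$ if and only if it is a homotopy colimit of the form $\operatorname{hocolim}_{[n]} \yon(X_n)$ for $X$ a Kan complex in $\catss{E}$ (equivalently, in $\cats{E}$), which is exactly the statement of the theorem.

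The main obstacle I anticipate is the mapping-space comparison in the previous paragraph: the fibration category $\catss{E}_\fib$ of \cref{th:ss_fib_cat} uses the non-standard weak cylinder object $D$ to build path objects rather than a direct cotensor with $\ssimp{1}$, so one must check carefully that the derived hom still coincides with $\lvert \Hom_\ssSet(X, Y) \rvert$. The cleanest route is presumably to transport the comparison across the equivalence $\catss{E}_\fib \simeq \cats{E}_\fib$ of \cref{th:simp_ssimp_equivalence} to the simplicial setting, where cotensor with $\simp{1}$ provides a standard path object and the identification of the derived hom with $\lvert \Hom_\sSet(X, Y) \rvert$ is routine via the classical formula for mapping spaces in a simplicial fibration category.
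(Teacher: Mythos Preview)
Your approach is correct in spirit but takes a genuinely different route from the paper, and the obstacle you flag is precisely where the paper's argument is cleverer.

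The paper does not compute mapping spaces at all. Instead it factors the comparison through the free coproduct completion: one always has an embedding $\catss{E}_\fib \hookrightarrow (\ssFam \cat{E})_\fib$, and since $\Fam\cat{E}$ is completely lextensive and locally connected with $(\Fam\cat{E})^\con \simeq \cat{E}$, the generalised Elmendorf theorem (\cref{th:Elmendorf}) already identifies $\Hoi(\sFam\cat{E})$ with presheaves of spaces on $\cat{E}$. The remaining task is to show that $\catss{E}_\fib \to (\ssFam\cat{E})_\fib$ is fully faithful on $\infty$-categories. The paper reduces this to the \emph{homotopy category} level via the general \cref{lem:hom_cat_fully_faithful} (a finite-limit-preserving functor that is fully faithful on $\Ho$ is fully faithful on $\Hoi$), and that reduction is nearly trivial: since every semisimplicial object is cofibrant, the homotopy category is the naive quotient by the homotopy relation, and the embedding preserves the cotensor $\ssimp{1}\cotensor(-)$ defining that relation (\cref{th:semisimplicial_JS}).

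Your direct computation would also work, but it requires justifying that $\lvert \Hom_\sSet(X,Y)\rvert$ (or its semisimplicial analogue) models the derived mapping space in the fibration category. In the countably lextensive case you can appeal to the full simplicial model structure and this is standard; in the merely countably complete case you only have the fibration category of \cref{fibcat-Kan}, and the identification of derived homs with the simplicial enrichment needs an extra argument (\eg, via frames), which you have not supplied. The paper's detour through $\Fam\cat{E}$ and the homotopy-category lemma sidesteps this uniformly for both hypotheses. A minor slip: reflection of weak equivalences by $\Phi$ follows from testing against \emph{all} $E$, not just $E=1$.
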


This is closely related to the exact completion (or ex/lex completion) of $\cat{E}$.
In general, the exact completion (see, \eg,~\cite{Carboni-Vitale}) of a category $\cat{E}$ with finite limits can be described as
the full subcategory of $\psh \cat{E}$ of objects that can be written as colimits of ``setoids objects'' in $\cat E$, \ie, as coequalisers of ``proof-relevant equivalence relations'', that is diagrams $R \rightrightarrows X$ in $\cat{E}$, such that the image of the map
\begin{equation*}
  \Hom_\Set(E,R) \to \Hom_\Set(E,X) \times \Hom_\Set(E,X)
\end{equation*}
is an equivalence relation on $\Hom_\Set(E,X)$ for each $E \in \cat{E}$.
The term ``proof-relevant'' refers to the fact that we do not assume that $R \to X \times X$ is a monomorphism, or equivalently that $\Hom_\Set(E,R)$ is a subset of $\Hom_\Set(E,X) \times \Hom_\Set(E,X)$.
The fact that $R \to X \times X$ is a proof-relevant equivalence relation can be encoded as a structure consisting of morphisms in $\cat{E}$ witnessing transitivity ($R \times_X R \to R$), symmetry ($R \to R$) and reflexivity ($X \to R$). \Cref{prop:fibration_pointwise} can be seen as a higher categorical version of this observation, i.e.,
Kan simplicial objects are a higher categorical generalisation of proof-relevant equivalence relations.
In fact, it is easy to deduce from the theorem above that the full subcategory of set-truncated objects in $\Hoi(\cats{E}_\fib)$ is equivalent to
the ex/lex completion of $\cat E$.

However, it does not seem accurate to think of $\Hoi(\cats{E}_\fib)$ as the $\infty$-categorical version of the ex/lex completion.
Let us say that an $\infty$-category is exact if it has finite limits and quotients of groupoid objects exist and are van Kampen colimits.
Lurie has shown that this condition together with complete lextensivity and local presentability characterises $\infty$-toposes~\cite{Lurie}.
We can then define the ex/lex completion of an $\infty$-category $\cat{C}$ with finite limits in the usual way:
it is an exact $\infty$-category $\cat C^{\text{ex/lex}}$ with a functor $\cat{C} \to \cat{C}^{\mathrm{ex/lex}}$ such that
any finite limit preserving functor to an exact $\infty$-category $\cat{C} \to \cat{D}$ extends essentially uniquely to
an exact functor $\cat{C}^{\text{ex/lex}} \to \cat{D}$.
We conjecture that the effective model structure is related to this ex/lex completion operation in the following way:

\begin{conjecture} \label{thm:conj-exlex} Let $\cat E$ be a countably lextensive category or countably complete category. The ex/lex completion of the $\infty$-category associated to $\cat E$ is equivalent to the full subcategory of $\Hoi(\cats{E}_\fib)$ on objects that are $n$-truncated for some $n$.
\end{conjecture}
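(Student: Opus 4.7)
The plan is to apply the universal property of the $\infty$-categorical ex/lex completion. Write $\cat{C}^{\mathrm{tr}}$ for the full subcategory of $\Hoi(\cats E_\fib)$ consisting of objects that are $n$-truncated for some $n$. First I would construct the canonical functor $\iota \colon \cat E \to \cat{C}^{\mathrm{tr}}$ sending $E$ to the constant simplicial object on $E$; this is a Kan complex since horn inclusions evaluate to identities, it is $0$-truncated because $\Hom_\sSet(A, E)$ is discrete for every $A$ (using that $K \cotensor E \iso E$ for constant $E$), and $\iota$ preserves finite limits since limits in $\cats E$ are computed levelwise. I would then verify that $\cat{C}^{\mathrm{tr}}$ is an exact $\infty$-category: finite limits exist in $\Hoi(\cats E_\fib)$ by \cref{thm-descent-infty} and preserve truncation; quotients of groupoid objects exist since $\Hoi(\cats E_\fib)$ admits countable colimits; they are van Kampen by \cref{thm-descent-infty}; and groupoid quotients of $n$-truncated objects are $(n+1)$-truncated, so $\cat{C}^{\mathrm{tr}}$ inherits both these quotients and their van Kampen property.

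By the universal property of the ex/lex completion, $\iota$ then extends essentially uniquely to an exact functor $F \colon \cat E^{\mathrm{ex/lex}} \to \cat{C}^{\mathrm{tr}}$, and the task reduces to showing that $F$ is an equivalence. Fully faithfulness on $\cat E$ follows because maps between constant objects in $\Hoi(\cats E_\fib)$ recover the discrete hom-sets of $\cat E$; extending this to all of $\cat E^{\mathrm{ex/lex}}$ uses that $\cat E$ generates $\cat E^{\mathrm{ex/lex}}$ under iterated groupoid quotients, combined with exactness of $F$. Essential surjectivity would proceed by induction on the truncation level: a $0$-truncated object of $\Hoi(\cats E_\fib)$ is represented by a Kan complex $X$ whose face maps $X_1 \rightrightarrows X_0$, with composition and identities extracted from $X_2, X_3$, exhibit it as a pseudo-equivalence relation in $\cat E$ and therefore as an object of $\cat E^{\mathrm{ex/lex}}$; an $n$-truncated object for $n \geq 1$ should be presented as a groupoid object whose fibres are $(n-1)$-truncated, hence lie in $\cat E^{\mathrm{ex/lex}}$ by induction, and the quotient then lies in the image of $F$ by exactness.

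The main obstacle is the essential surjectivity, specifically extracting a presentation of an arbitrary $n$-truncated Kan complex in $\cat E$ as an iterated groupoid quotient of objects of $\cat E$. Classically, an $n$-truncated Kan complex in $\sSet$ is equivalent to an $(n+1)$-coskeletal hypergroupoid, and the analogous statement for $\cats E$ would furnish the required presentation, but verifying this requires a careful analysis of Postnikov decompositions of internal Kan complexes and of the universality of the resulting quotients in the $\infty$-category $\Hoi(\cats E_\fib)$. A further subtlety is that the argument must go through uniformly under both hypothesis regimes; the semisimplicial model of \cref{sec:Semi-simplicial} and the equivalence in \cref{th:simp_ssimp_equivalence} should provide cleaner combinatorial access to the iterated groupoid structure and may help unify the countably lextensive and countably complete cases.
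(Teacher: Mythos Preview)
The statement you are attempting to prove is \cref{thm:conj-exlex}, which is explicitly a \emph{conjecture} in the paper, not a theorem. The paper does not provide a proof; it introduces the conjecture after \cref{th:Joyal_Szumilo_conjecture} as a direction for future work, remarking only that ``we believe that this holds for any finitely complete category $\cat E$ when $\Hoi(\cats{E}_\fib)$ is replaced with $\Hoi(\catss{E}_\fib)$.'' There is therefore no argument in the paper to compare your proposal against.

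That said, your outline is a reasonable strategy, and you have correctly identified the essential surjectivity step as the heart of the problem. One concrete gap worth flagging: you invoke \cref{thm-descent-infty} to obtain existence and van Kampen-ness of groupoid quotients in $\Hoi(\cats{E}_\fib)$, but that theorem requires $\cat{E}$ to be $\alpha$-lextensive. In the countably complete case the paper establishes the fibration category structure (\cref{fibcat-Kan}) and the equivalence with the semisimplicial model (\cref{th:simp_ssimp_eq_complete}), but not the existence of colimits in the associated $\infty$-category, so your appeal to \cref{thm-descent-infty} does not cover that half of the hypothesis. You would need a separate argument, or a reduction to the lextensive case via the $\Fam$ construction as in \cref{sec:the-infty-category}, to handle countably complete $\cat{E}$.
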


More generally, we believe that this holds for any finitely complete category $\cat E$ when $\Hoi(\cats{E}_\fib)$ is replaced with $\Hoi(\catss{E}_\fib)$.

The general idea of the proof of \cref{th:Joyal_Szumilo_conjecture} is that the category $\Fam \cat{E}$ of families of objects of $\cat{E}$ is always
a completely lextensive locally connected category, such that $\cat{E}$ can be identified with its category of connected objects.
Hence, we can apply \cref{th:Elmendorf} to it and show that
\begin{equation*}
  \Hoi (\sFam \cat{E}) \simeq \Hoi \spsh \cat{E}
\end{equation*}
The right hand side is a model for the $\infty$-category of small presheaves of spaces on $\cat{E}$ (in the $\infty$-categorical sense).
We always have a fully faithful embedding $\cats{E} \to \sFam \cat{E}$ which identifies $\cats{E}$ with the full subcategory of levelwise connected simplicial objects.
Moreover, a map is a fibration or a weak equivalence (between fibrant objects) in $\cats{E}$ if and only if its image in~$\sFam \cat E$ is one,
so this embedding also restricts to a morphism of fibration categories.

Our goal is to show that (under the assumptions of \cref{th:Joyal_Szumilo_conjecture}) this also induces a fully faithful embedding on the level of the $\infty$-categories.
Unfortunately, we are able to give a proof of this only when we consider instead the semisimplicial version of this embedding $\catss{E} \to \ssFam \cat{E}$.
But as $\Fam \cat E$ is always countably lextensive we have an equivalence of fibration categories $\sFam \cat{E} \simeq \ssFam \cat{E}$
by \cref{th:simp_ssimp_eq_extensive}, and as soon as $\cat{E}$ is countably complete or countably lextensive
we have an equivalence $\Hoi(\catss{E}_\fib) \simeq \Hoi(\cats{E}_\fib)$ by \cref{th:simp_ssimp_equivalence}.
So we need to show that $\catss{E} \to \ssFam \cat{E}$ induces a fully faithful functor between the corresponding $\infty$-categories.
Because of the following lemma, it is enough to prove that it is fully faithful at the level of the homotopy categories.

\begin{lemma}\label{lem:hom_cat_fully_faithful} A finite limit preserving functor between two $\infty$-categories which is an equivalence (\resp fully faithful) on the homotopy categories is an equivalence (\resp fully faithful).
\end{lemma}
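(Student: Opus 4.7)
My plan is the following. Essential surjectivity of $F$ on objects in the equivalence case is immediate from the equivalence on homotopy categories, since two objects of an $\infty$-category are equivalent precisely when they are isomorphic in its homotopy category. This reduces the equivalence case to fully faithfulness.

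For fully faithfulness, I need to show that the induced map of mapping spaces
\[
F_{X, Y} \colon \operatorname{Map}_{\cat{C}}(X, Y) \to \operatorname{Map}_{\cat{D}}(FX, FY)
\]
is a weak homotopy equivalence of Kan complexes for all $X, Y \in \cat{C}$. Since $\pi_0 \operatorname{Map}$ computes the hom-sets of the homotopy category, the hypothesis provides the $\pi_0$-bijection. What remains is to show bijection on all higher homotopy groups at every basepoint $f \colon X \to Y$.

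The main idea is to use preservation of finite limits to control higher homotopy. For any finite simplicial set $K$ (in particular $K = S^n$, presented by a finite cell decomposition built out of pushouts of boundary inclusions), the cotensor $Y^K$ can be constructed in $\cat{C}$ as an iterated finite limit following the cell structure; consequently $F(Y^K) \simeq (FY)^K$ canonically. This gives an identification of $\Omega_f^n \operatorname{Map}_{\cat{C}}(X, Y)$ as the homotopy fiber over $f$ of $\operatorname{Map}_{\cat{C}}(X, Y^{S^n}) \to \operatorname{Map}_{\cat{C}}(X, Y)$ (evaluation at the basepoint of $S^n$), compatibly with the analogous identification after applying $F$. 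Comparing the long exact sequences of these fibrations in $\cat{C}$ and $\cat{D}$ under $F$, an induction on $n$ reduces the $\pi_n$-bijection to $\pi_0$-bijections for mapping spaces into the cotensor objects $Y^{S^k}$, which are instances of the hypothesis.

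The main obstacle is organizing the induction carefully: the $\pi_n$-claim for a pair $(X, Y)$ reduces via the long exact sequence to $\pi_k$-claims for $k < n$ at auxiliary pairs such as $(X, Y^{S^k})$, so the argument is a joint induction over $n$ and over all pairs of objects simultaneously. Preservation of finite limits by $F$ guarantees that the construction of the cotensors and their evaluation fibrations match on both sides of the comparison, so that the comparison of long exact sequences is induced entrywise by $F$; the five lemma then closes the induction.
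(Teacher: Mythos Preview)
Your approach is genuinely different from the paper's. The paper handles the equivalence case by citing \cite{cisinski2020higher}*{Theorem~7.6.10} as a black box, and then reduces the fully faithful case to the equivalence case by restricting $F$ to its essential image $\cat{Y}' \subseteq \cat{Y}$ (which is closed under finite limits because cospans in $\cat{Y}'$ can be lifted to $\cat{C}$ and their pullbacks are preserved). You instead attempt a direct argument on $\pi_n$ of mapping spaces via cotensors.

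There is a genuine gap in your induction, and it occurs already at $n=1$. Write $M = \operatorname{Map}_{\cat C}(X,Y)$. In the long exact sequence of the fibration $\Omega_f^n M \to M^{S^n} \to M$, the term $\pi_0(\text{fiber}) = \pi_n(M,f)$ sits between $\pi_1(M,f)$ and $\pi_0(M^{S^n})$. The five lemma therefore requires bijectivity of $\pi_1(M,f)$ and $\pi_1(M^{S^n},\tilde f)$, not just $\pi_0$-terms. For $n=1$ this is exactly the statement you are trying to prove, so the claimed reduction ``$\pi_n$ to $\pi_k$ for $k<n$'' is circular at the base step. Exploiting the section by constant loops does not fix this: it makes the connecting map $\pi_1(M) \to \pi_0(\Omega_f^n M)$ vanish, but the map $\pi_0(\Omega_f^n M) \to \pi_0(M^{S^n})$ is still not injective --- its image identifies $\pi_n(M,f)$ only modulo the $\pi_1(M,f)$-action (for $n=1$, modulo conjugacy). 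Hence the $\pi_0$-data of $\operatorname{Map}(X,Y^{S^k})$ alone recover at best $\pi_n(M)/\pi_1(M)$, not $\pi_n(M)$ itself.

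What is missing is precisely a mechanism to control that $\pi_1$-action, or equivalently to show that the hypothesis ``fully faithful on homotopy categories'' descends to the slice categories $\cat{C}/Y$. That is real content --- essentially what Cisinski's theorem supplies --- and it is not addressed in your sketch. The paper's route avoids reproving this by citing it and then using the clean essential-image reduction for the fully faithful case.
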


\begin{proof}
  This is shown for the case of equivalences in~\cite{cisinski2020higher}*{Theorem~7.6.10}.
  The case of fully faithful functors can be deduced from the case of equivalences.
  Let $f \from \cat{X} \to \cat{Y}$ be a finite limit preserving functor which is fully faithful on the homotopy category and let $\cat{Y}'$ denote its essential image.
  Then $\cat{Y}'$ contains the terminal object since $f$ preserves finite limits.
  Similarly, $\cat{Y}'$ is closed under pullbacks.
  Indeed, since $f$ is fully faithful on the homotopy categories, any cospan in $\cat{Y}'$ can be lifted to a cospan in $\cat{X}$.
  Its pullback exists in $\cat{X}$ and is preserved by $f$.
  It follows that $f$ induces a finite limit preserving functor $\cat X \to \cat Y'$ which is fully faithful and essentially surjective on the homotopy categories,
  so it is an equivalence, and hence by the result above, $f$ induces an equivalence between $\cat{X}$ and $\cat{Y}'$, i.e., it is fully faithful.
\end{proof}

% \begin{lemma}\label{lem:h_cat_ssk_C} The homotopy category of $\catss{E}_\fib$ can be described as follows:
% \begin{itemize}
% \item objects are the objects of $\catss{E}_\fib$, i.e., the Kan semisimplicial objects of $\cat E$.
% \item morphisms are homotopy classes of maps $X \to Y$, where the homotopy relation is defined via maps $X \to \ssimp{1} \cotensor Y$.
% \end{itemize}
% \end{lemma}

% \begin{proof}
% This is simply because in semisimplicial objects, every object is cofibrant, so every trivial fibration admits a section. Hence $\catss{E}_\fib$ is not just a Brown fibration category, but a path category in the sense of~\cite{berg2018exact}. Moreover $\ssimp{1} \pitchfork Y$ is a path object for $Y$, hence this follows from the description of the homotopy category given in~\cite{berg2018exact}.
% \end{proof}

\begin{theorem}\label{th:semisimplicial_JS}
For any category $\cat E$ with finite limits, the functor $\catss{E}_\fib \to (\ssFam \cat{E}) _\fib$ is fully faithful on the homotopy categories.
\end{theorem}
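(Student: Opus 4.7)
The strategy is to show that the inclusion $\Phi \from \catss{E} \to \ssFam \cat{E}$ induced by the finite-limit preserving, fully faithful embedding $\cat{E} \hookrightarrow \Fam \cat{E}$, $E \mapsto (1, E)$, is (a)~fully faithful at the $1$-categorical level, (b)~preserves fibrations and trivial fibrations, and (c)~preserves and reflects pointwise weak equivalences between fibrant objects. Granted these, the classical description of hom-sets in the homotopy category of a fibration category of fibrant objects as $\Hom(X, Y) / {\sim}$, with $\sim$ the homotopy relation, will imply the claim, once both the underlying hom-sets and the homotopy relations are shown to coincide.

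Property~(a) is immediate from the corresponding properties of $\cat{E} \hookrightarrow \Fam \cat{E}$ applied levelwise. Property~(b) follows because, via the semisimplicial analogue of \cref{enriched-lifting-as-pullback-evaluation}, (trivial) fibrations in $\catss{E}$ and in $\ssFam \cat{E}$ are characterised by the existence of sections to the pullback evaluations at the horn and boundary inclusions; split epimorphisms in $\cat{E}$ remain split epimorphisms in $\Fam \cat{E}$ since $\cat{E} \hookrightarrow \Fam \cat{E}$ is fully faithful and preserves finite limits.

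The main technical step is (c). Here, I would exploit the fact that for $F = (S, (X_s)_{s \in S}) \in \Fam \cat{E}$ and $W \in \catss{E}$ viewed in $\ssFam \cat{E}$, one has a natural isomorphism
\[
\Hom_\ssSet(F, W) \iso \prod_{s \in S} \Hom_\ssSet(X_s, W)
\]
(with the left-hand side computed in $\ssFam \cat{E}$ and the right-hand side in $\catss{E}$), obtained by unwinding $\Hom_{\Fam \cat{E}}(F, (1, Z)) = \prod_{s \in S} \Hom_{\cat{E}}(X_s, Z)$ and applying it levelwise with $Z = W_n$. For a pointwise weak equivalence $\phi \from W \to W'$ between fibrant objects of $\catss{E}$, each $\Hom_\ssSet(X_s, \phi)$ is a weak equivalence between Kan complexes by the semisimplicial analogue of \cref{prop:fibration_pointwise}, and the resulting small product is again a weak equivalence between Kan complexes. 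Reflection is immediate by taking $F = (1, E)$. This is arguably the main obstacle, as it hinges on the classical (non-constructive) fact that small products of weak equivalences between Kan complexes are weak equivalences.

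To conclude, I would fix the path object $Y \to Y^P \to Y \times Y$ for $Y \in \catss{E}_\fib$ constructed in the proof of \cref{th:ss_fib_cat}; by (b) and (c) it is also a path object for $\Phi(Y)$ in $(\ssFam \cat{E})_\fib$. For $X, Y \in \catss{E}_\fib$, $1$-categorical full faithfulness gives $\Hom_{\catss{E}}(X, Y) = \Hom_{\ssFam \cat{E}}(X, Y)$. A $\catss{E}$-homotopy through $Y^P$ is automatically an $\ssFam \cat{E}$-homotopy; conversely, invoking the standard independence of the homotopy relation from the choice of path object in a fibration category of fibrant objects, any $\ssFam \cat{E}$-homotopy between parallel maps $X \to Y$ is witnessed by a map $H \from X \to Y^P$ in $\ssFam \cat{E}$, which by $1$-categorical full faithfulness already lives in $\catss{E}$. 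Thus the homotopy relations coincide, and the induced functor on homotopy categories is fully faithful.
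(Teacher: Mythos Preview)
Your argument is correct, but it takes a detour that the paper avoids. You establish~(c) --- preservation and reflection of pointwise weak equivalences --- by expanding $\Hom_\ssSet(F,W)$ as a product over the index set of $F$ and invoking the (non-constructive) fact that small products of weak equivalences between Kan complexes are weak equivalences. You then use~(c) to show that the path object $Y^P$ built in the proof of \cref{th:ss_fib_cat} remains a path object in $(\ssFam\cat{E})_\fib$, so that independence-of-path-object lets you compare homotopy relations.

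The paper's proof dispenses with~(c) entirely. The key observation is that $\Phi$ preserves finite limits and therefore preserves the cotensor $\ssimp{1}\cotensor Y$ (together with the two evaluation maps to $Y$). Since \cref{th:ss_fib_cat} applies equally to $\cat{E}$ and to $\Fam\cat{E}$, the object $\ssimp{1}\cotensor Y$ is already a path object in \emph{both} fibration categories, and both are path categories in the sense of van den Berg--Moerdijk (all objects being cofibrant). Hence in each category the homotopy category is the quotient by the relation ``there exists $H\from X \to \ssimp{1}\cotensor Y$ with the correct restrictions''. Full faithfulness of $\Phi$ at the $1$-categorical level then makes the two relations coincide on the nose. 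This avoids ever having to check that $\Phi$ preserves weak equivalences, and in particular avoids the appeal to products of weak equivalences --- which is what lets the paper's proof stay within its constructive framework.
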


\begin{proof}
  % This follows immediately from \cref{lem:h_cat_ssk_C}.
  The homotopy category of $\catss{E}_\fib$ is the quotient by the homotopy relation defined via maps $X \to \ssimp{1} \cotensor Y$.
  This follows since all semisimplicial objects are cofibrant and $\catss{E}_\fib$ is a path category in the sense of~\cite{berg2018exact}.
  The functor $\catss{E}_\fib \to (\ssFam \cat{E}) _\fib$ preserves finite limits and hence it preserves cotensors by $\ssimp{1}$.
  Thus morphisms in $\catss{E}_\fib$ are homotopic in $\catss{E}_\fib$ if and only if they are homotopic in $\ssFam \cat{E}_\fib$.
\end{proof}

\begin{remark}
  The crucial difference between semisimplicial and simplicial settings is that every semisimplicial object in $\catss{E}$ is cofibrant in $\ssFam \cat{E}$.
  However, a non-constant simplicial object in $\cats{E}$ is levelwise connected in $\sFam \cat{E}$ and thus not cofibrant by \Cref{lem:cof_charac}.
\end{remark}

We are now ready to prove \cref{th:Joyal_Szumilo_conjecture}.

\begin{proof}[Proof of \cref{th:Joyal_Szumilo_conjecture}]
We always have a diagram of functors:
\begin{tikzeq*}
  \matrix[diagram,column sep={between origins,10em}]
  {
     |(A)| \cats{E}_\fib & |(B)| \catss{E}_\fib \\
     |(C)| (\sFam \cat E)_\fib & |(D)| (\ssFam \cat E)_\fib \\
  };

  \draw[->] (A) to (B);
  \draw[->] (A) to (C);
  \draw[->] (B) to (D);
  \draw[->] (C) to (D);
  \end{tikzeq*}

\Cref{th:simp_ssimp_eq_extensive} shows that the bottom horizontal functor is always an equivalence of the homotopy categories
as $\Fam \cat{C}$is always a completely lextensive category.
The top horizontal map is also an equivalence on the homotopy categories by \cref{th:simp_ssimp_equivalence} since $\cat{E}$ is countably lextensive or countably complete.
Finally, we have shown in \cref{th:semisimplicial_JS} that the right vertical functor is fully faithful on the homotopy categories.
It follows that the left vertical functor is also fully faithful on the homotopy categories, and hence by \cref{lem:hom_cat_fully_faithful} induces
a fully faithful embedding of $\infty$-categories $\cats{E}_\fib \to (\sFam \cat{E})_\fib$.

Now, $\Fam \cat E$ is a locally connected completely lextensive category, and $\cat{E}$ is its category of connected objects.
Hence, by \cref{th:Elmendorf}, the $\infty$-category $\Hoi(\sFam \cat{E})_\fib$ is equivalent to the category of presheaves of spaces over $\cat{E}$,
which proves the first half of the theorem.

For the description of the essential image we simply investigate the precise nature of the embedding constructed above. If $X \in \cats{E}_\fib$ then its image in $(\sFam \cat E)_\fib$ is also fibrant, and the objects corresponding to $E \in \cat E$ are cofibrant, so, as this is a simplicial model category, the Hom space in the corresponding $\infty$-category between them is simply $\Hom_\sSet(E,X)$. Hence $X$ is sent to the presheaf of spaces $E \mapsto \Hom_\sSet(E,X)$. Note that as colimits in presheaf categories are computed levelwise and the colimit of a simplicial set in the $\infty$-category of spaces is the the spaces represented by this simplicial sets, this can equivalently be expressed as the fact that $X$ is sent to its geometric realisation in the presheaf category.
\end{proof}

  \appendix

\renewcommand\thesection{Appendix~\Alph{section}}

  \section{Remarks on constructivity}
\label{app:constructivity}

While the present paper has been written within ZFC for simplicity, many of our results
and proofs are constructive, \ie, do not rely on the law of excluded middle or the axiom of choice,
subject to some clarifications, which we will discuss briefly here.

First of all, in the constructive reading of the paper, a \emph{finite set} means a finite cardinal, or a finite decidable set, \ie, a set equipped with a bijection with $\{1,\dots,n\}$, for some $n \in \mathbb{N}$.  A \emph{countable set} is a set which is equipped with  a bijection with either $\{1,\dots,n\}$ or~$\mathbb{N}$. With this definition, a countable coproduct of countable sets is countable.

Secondly, we restrict ourselves to consider finitely lextensive, countably lextensive and completely lextensive categories. Here,
by a \emph{finitely lextensive} category we mean a category with a strict initial object and van Kampen binary coproducts and by a \emph{countably lextensive} category we mean a finitely lextensive category that in addition has $\mathbb{N}$-indexed van Kampen coproducts. With this definition, the category of countable sets is countably lextensive. Without these changes, we would run into problems as $\omega_1$ is not a regular cardinal in ZF and the axiom of countable choice is needed to show that a countable union of countable set is countable, and therefore \cref{thm:lextensive} would be problematic, as we could not show that the category of countable
sets is countably lextensive.

Finally, one should assume the convention that every time we discuss existence of an object, this involves explicit structure, rather than a mere property. For example, when we say that a map $f$ has the left lifting property against $g$, we mean that $f$ comes equipped with a function that associates a solution to each lifting problem.

We  make no claim on whether it is possible to make the results in \cref{sec:Elmendorf} and \cref{sec:the-infty-category}  constructive. Indeed, both of these sections involve $\infty$-categories, for which a constructive theory has not been developed yet. Also, \cref{lem:connected=preserves_coprod} is non-constructive: in a constructive setting, its conclusion should be taken as the definition of a connected object. Finally, \cref{sec:Elmendorf} relies on the existence of the projective model structure on the category of small presheaves on a large category, which is not known to exist constructively.

\begin{bibdiv}
\begin{biblist}

\bib{Awodey-Warren}{article}{
  author={Awodey, S.},
  author={Warren, M.},
  title={Homotopy-theoretic models of identity types},
  journal={Math. Proc. Camb. Phil. Soc.},
  volume={146},
  number={1},
  date={2009},
  pages={45--55},
  % issn={0002-9947},
  % review={\MR{0341469 (49 \#6220)}},
}

\bib{barwick2010left}{article}{
  title={On left and right model categories and left and right Bousfield localizations},
  author={Barwick, Clark},
  journal={Homology, Homotopy and Applications},
  volume={12},
  number={2},
  pages={245--320},
  year={2010},
  publisher={International Press of Boston}
}

\bib{berg2018exact}{article}{
  label={BM18a},
  title={Exact completion of path categories and algebraic set theory: Part I: Exact completion of path categories},
  author={van den Berg, Benno},
  author={Moerdijk, Ieke},
  journal={Journal of Pure and Applied Algebra},
  volume={222},
  number={10},
  pages={3137--3181},
  year={2018},
  publisher={Elsevier}
}

\bib{berg2018univalent}{article}{
 label={BM18b},
  title={Univalent completion},
  author={van den Berg, Benno},
  author={Moerdijk, Ieke},
  journal={Mathematische Annalen},
  volume={371},
 pages={1337--1350},
  year={2018}
  }

\bib{Brown}{article}{
  author={Brown, Kenneth S.},
  title={Abstract homotopy theory and generalized sheaf cohomology},
  journal={Trans. Amer. Math. Soc.},
  volume={186},
  date={1973},
  pages={419--458},
  % issn={0002-9947},
  % review={\MR{0341469 (49 \#6220)}},
}

\bib{Bergner-Rezk}{article}{
  author={Bergner, Julia},
  author={Rezk, Charles},
  title={Reedy categories and the $\Theta$-construction},
  journal={Mathematische Zeitschrift},
  volume={274},
  number={1-2},
  pages={499--514},
  year={2013},
  publisher={Springer}
}

\bib{CLW}{article}{
  author={Carboni, Aurelio},
  author={Lack, Stephen},
  author={Walters, R. F. C.},
  title={Introduction to extensive and distributive categories},
  journal={J. Pure Appl. Algebra},
  volume={84},
  date={1993},
  number={2},
  pages={145--158},
  issn={0022-4049},
  % review={\MR{1201048}},
  % doi={10.1016/0022-4049(93)90035-R},
}

\bib{Carboni-Vitale}{article}{
  author={Carboni, Aurelio},
  author={Vitale, Enrico Maria},
  title={Regular and exact completions},
  journal={J. Pure Appl. Algebra},
  volume={125},
  number={1--3},
  date={1998},
  pages={79--116},
 %  issn={0022-4049},
  % review={\MR{1201048}},
  % doi={10.1016/0022-4049(93)90035-R},
}

\bib{Carboni}{article}{
  author={Carboni, Aurelio},
  title={Some free constructions in proof theory and realizability},
  journal={J. Pure Appl. Algebra},
  volume={103},
  number={2},
  date={1995},
  pages={117-148},
 %  issn={0022-4049},
  % review={\MR{1201048}},
  % doi={10.1016/0022-4049(93)90035-R},
}

\bib{Chorny1}{inproceedings}{
  title={Homotopy theory of small diagrams over large categories},
  author={Chorny, Boris},
  author={Dwyer, William G.},
  booktitle={Forum Mathematicum},
  volume={21},
  number={2},
  pages={167--179},
  year={2009},
  organization={Walter de Gruyter GmbH \& Co. KG}
}

\bib{Chorny2}{article}{
  title={Homotopy theory of relative simplicial presheaves},
  author={Chorny, Boris},
  journal={Israel Journal of Mathematics},
  volume={205},
  number={1},
  pages={471--484},
  year={2015},
  publisher={Springer}
}

\bib{cisinski2020higher}{book}{
  title={Higher categories and homotopical algebra},
  author={Cisinski, Denis-Charles},
  publisher={Cambridge Studies in Advanced Mathematics, Cambridge University Press},
  year={2020}
}

\bib{christensen2002quillen}{article}{
  title={Quillen model structures for relative homological algebra},
  author={Christensen, J. Daniel},
  author={Hovey, Mark},
  journal={Math. Proc. Cambridge Philos. Soc.},
  volume={133},
  number={2},
  pages={261-293},
  year={2002},
  note={See also \url{https://arxiv.org/abs/math/0011216}}
}

\bib{DayLack}{article}{
  title={Limits of small functors},
  author={Day, Brian J.},
  author={Lack, Stephen},
  journal={Journal of Pure and Applied Algebra},
  volume={210},
  number={3},
  pages={651--663},
  year={2007},
  publisher={Elsevier}
}

\bib{DHI}{article}{
  title={Hypercovers and simplicial presheaves},
  author={Dugger, Daniel},
  author={Hollander, Sharon},
  author={Isaksen, Daniel C.},
  journal={Math. Proc. Camb. Phil. Soc.},
  volume={136},
  number={1},
  pages={9--51},
  year={2004}
}

\bib{dwyer1984singular}{article}{
  title={Singular functors and realization functors},
  author={Dwyer, William G.},
  author={Kan, Daniel M.},
  booktitle={Indagationes Mathematicae (Proceedings)},
  volume={87},
  number={2},
  pages={147--153},
  year={1984},
  organization={Elsevier}
}

\bib{elmendorf}{article}{
  title={Systems of fixed point sets},
  author={Elmendorf, Anthony D.},
  journal={Transactions of the American Mathematical Society},
  volume={277},
  number={1},
  pages={275--284},
  year={1983}
}

\bib{Emmenegger-Palmgren}{article}{
  author={Emmenegger, Jacopo},
  author={Palmgren, Erik},
  eprint={https://arxiv.org/abs/1710.10685},
  title={Exact completion and constructive theories of sets},
  date={2017},
  note={To appear in Journal of Symbolic Logic}
}

\bib{dror1987homotopy}{article}{
  label={F87},
  title={Homotopy theories for diagrams of spaces},
  author={Dror Farjoun, E.},
  journal={Proceedings of the American Mathematical Society},
  volume={101},
  number={1},
  pages={181--189},
  year={1987}
}

\bib{Fresse}{article}{
  title={Modules over operads and functors},
  author={Fresse, Benoit},
  journal={Lecture Notes in Mathematics. Springer-Verlag, Berlin},
  volume={1967},
  year={2009}
}

\bib{GZ}{book}{
  author={Gabriel, P.},
  author={Zisman, M.},
  title={Calculus of fractions and homotopy theory},
  series={Ergebnisse der Mathematik und ihrer Grenzgebiete, Band 35},
  publisher={Springer-Verlag New York, Inc., New York},
  date={1967},
  pages={x+168},
  % review={\MR{0210125}},
}

\bib{Gambino-Sattler}{article}{
  author={Gambino, N.},
  author={Sattler, C.},
  title={The {F}robenius property, right properness and uniform fibrations},
  journal={J. Pure Appl. Algebra},
  volume={221},
  date={2017},
  pages={No. 12, 3027--3068},
  % issn={1431-0635},
  % review={\MR{1405671}},
}

\bib{Gepner-Kock}{article}{
  author={Gepner, D.},
  author={Kock, J.},
  title={Univalence in locally \cartesian closed $\infty$-categories},
  journal={Forum Math.},
  volume={29},
  date={2017},
  pages={617--652},
  % issn={1431-0635},
  % review={\MR{1405671}},
}

\bib{GSS}{article}{
  author={Gambino, N.},
  author={Sattler, C.},
  author={Szumi\l{}o, K.},
  eprint={https://arxiv.org/abs/1907.05394},
  title={The constructive Kan--Quillen model structure: two new proofs},
  date={2019},
}

\bib{Garner}{article}{
   author={Garner, Richard},
   title={Understanding the small object argument},
   journal={Appl. Cat. Struct.},
   volume={17},
   number={3},
   date={2009},
   pages={247--285},
}

\bib{Goerss-Jardine}{book}{
  title={Simplicial homotopy theory},
  author={Goerss, P.},
  author={Jardine, J.~F.},
  publisher={Birkauser},
  year={1999}
}

\bib{Hwms}{article}{
  author={Henry, S.},
  eprint={https://arxiv.org/abs/1807.02650},
  title={Weak model categories in classical and constructive mathematics},
  date={2018},
}

\bib{H}{article}{
  author={Henry, S.},
  eprint={https://arxiv.org/abs/1905.06160},
  title={A constructive account of the Kan--Quillen model structure and of Kan's $\Ex^\infty$ functor},
  date={2019},
}

\bib{Hcwms}{article}{
  author={Henry, S.},
  eprint={https://arxiv.org/abs/2005.02360},
  title={Combinatorial and accessible weak model categories},
  date={2020},
}

\bib{Hofmann}{book}{
   author={Hofmann, Martin},
   title={Extensional concepts in intensional type theory},
   publisher={Springer},
   date={1997}
}

\bib{Hormann}{article}{
  author={H\"ormann, F.},
  eprint={https://arxiv.org/abs/2103.01156},
  title={Model category structures on simplicial objects},
  date={2021},
}

\bib{Hyland}{article}{
  author={Hyland, Martin},
  booktitle ={The {L.} {E}. {J}. {B}rouwer Centenary {S}ymposium},
  title={The effective topos},
  editor={A. S. Troelstra and D. van Dalen},
  pages={165-216},
  publisher={North-Holland},
  date={1982}
}

\bib{Hu-Tholen}{article}{
  title={A note on free regular and exact completions and their infinitary generalizations},
  author={Hu, H.},
  author={Tholen, W.},
  journal={Theor. App. Cat.},
  volume={2},
  number={10},
  pages={113-132},
  year={1996}
}

\bib{Elephant}{book}{
   author={Johnstone, Peter T.},
   title={Sketches of an elephant: a topos theory compendium. Vol. 1},
   series={Oxford Logic Guides},
   volume={43},
   publisher={The Clarendon Press, Oxford University Press, New York},
   date={2002},
   pages={xxii+468+71},
   isbn={0-19-853425-6},
   review={\MR{1953060}},
}

\bib{Jardine}{article}{
  title={Boolean localisation in practice},
  author={Jardine, Rick},
  journal={Documenta Mathematica},
  volume={1},
  pages={245--275},
  year={1996}
}

\bib{Joyal}{article}{
  author={Joyal, Andr\'e},
  eprint={https://webusers.imj-prg.fr/~georges.maltsiniotis/ps/lettreJoyal.pdf},
  title={Letter to A. Grothendieck},
  date={1984},
}

\bib{Joyal-Tierney}{article}{
  author={Joyal, Andr\'e},
  author={Tierney, Myles},
  booktitle ={Categories in algebra, geometry and mathematical physics},
  title={Quasi categories vs {S}egal spaces},
  volume={431},
  series={Comtemp. Math.},
  pages={277-326},
  publisher={Amer. Math. Soc.},
  date={2007},
}

\bib{Kapulkin-Lumsdaine}{article}{
  label={KL12},
  author={Kapulkin, Chris},
  author={LeFanu Lumsdaine, Peter},
  eprint={https://arxiv.org/abs/1211.2851},
  title={The Simplicial Model of Univalent Foundations (after Voevodsky)},
  date={2012},
}

\bib{Lurie}{book}{
   author={Lurie, Jacob},
   title={Higher topos theory},
   publisher={Princeton University Press},
   date={2009},
}

\bib{maclane-moerdijk}{book}{
  title={Sheaves in geometry and logic - {A} first introduction to topos theory},
  author={Mac {Lane}, Saunders},
  author={Moerdijk, Ieke},
  publisher={Springer},
  year={1992}
}

\bib{Menni}{article}{
   author={Menni, Matias},
   title={A characterization of the left exact categories whose exact completions are toposes},
   journal={Journal of Pure and Applied Algebra},
   volume={177},
   number={3},
   date={2003},
   pages={287--301},
}

\bib{Morel-Voevodsky}{article}{
  author={Morel, F.},
  author={Voevodsky, V.},
  title={$A^1$-homotopy theory of schemes},
  journal={Publ. Math. I.H.E.S},
  volume={90},
  date={1999},
  pages={45-143},
  % issn={1431-0635},
  % review={\MR{1405671}},
}

\bib{Quillen}{book}{
   author={Quillen, D. G.},
   title={Homotopical algebra},
   series={Lecture Notes in Mathematics},
   volume={43},
   publisher={Springer},
   date={1967},
}

\bib{rb}{article}{
  author={R\u{a}dulescu-Banu, Andrei},
  title={Cofibrations in Homotopy Theory},
  date={2006},
  eprint={https://arxiv.org/abs/math/0610009v4},
}

\bib{Rasekh}{article}{
  title={A Theory of Elementary Higher Toposes},
  author={Rasekh, Nima},
  eprint={https://arxiv.org/abs/1805.03805},
  year={2018}
}

\bib{Rezk-Segal}{article}{
  author={Rezk, Charles},
  title={A model for the homotopy theory of homotopy theories},
  journal={Transactions of the American Mathematical Society},
  volume={353},
  number={3},
  pages={973--1007},
  year={2001}
}

\bib{Rezk}{article}{
  author={Rezk, Charles},
  eprint={https://faculty.math.illinois.edu/~rezk/homotopy-topos-sketch.pdf},
  title={Toposes and Homotopy Toposes},
  date={2010},
}

\bib{RiehlE:catht}{book}{
   author={Riehl, Emily},
   title={Categorical Homotopy Theory},
   publisher={Cambridge University Press},
   date={2014},
}

\bib{Riehl-Verity}{article}{
   author={Riehl, Emily},
   author={Verity, Dominic},
   title={The theory and practice of Reedy categories},
   journal={Theory Appl. Categ.},
   volume={29},
   date={2014},
   pages={256--301},
}

\bib{Sattler}{article}{
  author={Sattler, Christian},
  eprint={https://arxiv.org/abs/1704.06911},
  title={The Equivalence Extension Property and Model Structures},
  date={2017},
}

\bib{Shulman-ncatcafe}{article}{
  author={Shulman, Mike},
  eprint={https://golem.ph.utexas.edu/category/2017/04/elementary_1topoi.html},
  title={Elementary $(\infty,1)$-topoi},
  date={2017},
}

\bib{Shulman}{article}{
  author={Shulman, Mike},
  eprint={https://arxiv.org/abs/1904.07004},
  title={All $(\infty,1)$-toposes have strict univalent universes},
  date={2019},
}

\bib{Spitzweck}{article}{
  author={Spitzweck, Markus},
  eprint={https://arxiv.org/abs/math/0101102},
  title={Operads, Algebras and Modules in General Model Categories},
  date={2001},
}

\bib{Steimle}{article}{
  author={Steimle, Wolfgang},
  eprint={https://arxiv.org/abs/1702.08696},
  title={Degeneracies in quasi-categories},
  date={2017},
}

\bib{stephan-elmendorf}{article}{
  title={On equivariant homotopy theory for model categories},
  author={Stephan, Marc},
  journal={Homology, Homotopy and Applications},
  volume={18},
  number={2},
  pages={183--208},
  year={2016},
  publisher={International Press of Boston}
}

\bib{Szumilo}{article}{
  title={Homotopy theory of cocomplete quasicategories},
  author={Szumi{\l}o, Karol},
  journal={Algebraic \& Geometric Topology},
  volume={17},
  pages={765--791},
  year={2017}
}

\bib{Toen-Vezzosi}{article}{
  title={Homotopical algebraic geometry {I}: higher topos theory},
  author={To\"en, Betrand},
  author={Vezzosi, Gabriele},
  journal={Advances in Mathematics},
  volume={193},
  pages={257--372},
  year={2005}
}

\end{biblist}
\end{bibdiv}

\smallskip

  {
  \small
  \noindent
  N.~Gambino, \textsc{University of Leeds}, \texttt{N.Gambino@leeds.ac.uk} \\[1ex]
  S.~Henry, \textsc{University of Ottawa}, \texttt{shenry2@uottawa.ca} \\[1ex]
  C.~Sattler, \textsc{Chalmers University of Technology}, \texttt{sattler.christian@gmail.com} \\[1ex]
  K.~Szumi\l{}o, \textsc{University of Leeds}, \texttt{K.Szumilo@leeds.ac.uk}
 }

\end{document}